\newcommand{\supera}[2]{{\mathbf A}(#1|#2)}
\newcommand{\superb}[2]{{\mathbf B}(#1|#2)}
\newcommand{\superd}[2]{{\mathbf D}(#1|#2)}
\newcommand{\superda}[1]{{\mathbf D}(2,1;#1)}
\newcommand{\superf}{{\mathbf F}(4)}
\newcommand{\superg}{{\mathbf G}(3)}
\newcommand{\dpn}{\widetilde{\mathcal{B}}}
\newcommand{\cP}{\mathcal{P}}
\newcommand{\Mat}{B}
\newcommand{\mat}{b}
\newcommand{\Matr}{C}
\newcommand{\matr}{c}
\newcommand{\Nt}{\mathtt{N}}
\newcommand{\lu}{\mathcal{L}}
\newcommand{\luq}{\lu_{\bq}}
\newcommand{\luqm}{\lu_{\bq^{(-1)}}}
\newcommand{\hopf}{\mathcal H}
\newcommand{\ydh}{{}^{H}_{H}\mathcal{YD}}
\newcommand{\ydgcq}{{}^{\Cset(\nu)\Gamma}_{\Cset(\nu)\Gamma}\mathcal{YD}}
\newcommand{\ydgcqp}{{}^{\Cset(\nu)\Gamma^+}_{\Cset(\nu)\Gamma^+}\mathcal{YD}}
\newcommand{\ydgcp}{{}^{\Cset \Gamma^+}_{\Cset \Gamma^+}\mathcal{YD}}
\newcommand{\ydgcqm}{\mathcal{YD}^{\Cset(\nu)\Gamma^-}_{\Cset(\nu)\Gamma^-}}
\newcommand{\ydgcm}{\mathcal{YD}^{\Cset \Gamma^-}_{\Cset \Gamma^-}}
\newcommand{\hyd}{\mathcal{YD}^{H}_{H}}
\newcommand{\kyd}{\mathcal{YD}^{K}_{K}}
\newcommand{\G}{\mathbb{G}}
\newcommand{\I}{\mathbb{I}}
\newcommand{\N}{\mathbb{N}}
\newcommand{\bq}{\mathfrak{q}}
\newcommand{\Pg}{\mathfrak{P}}
\newcommand{\bg}{\mathfrak{b}}
\newcommand{\h}{\mathfrak{h}}
\newcommand{\g}{\mathfrak{g}}
\newcommand{\n}{\mathfrak{n}}
\newcommand{\mm}{\mathfrak{m}}
\newcommand{\bp}{\mathfrak{p}}
\newcommand{\bpf}{\mathbf p}
\newcommand{\pt}{\mathtt{p}}
\newcommand{\Alg}{\operatorname{Alg}}
\newcommand{\ev}{\operatorname{ev}}
\newcommand{\res}{\operatorname{res}}
\newcommand{\Eq}{e}
\newcommand{\Fq}{f}
\newcommand{\Zc}{Z}
\numberwithin{equation}{section}
\theoremstyle{plain}
\newtheorem{maintheorem}{Theorem}
\newtheorem{theorem}{Theorem}[section]
\newtheorem{lemma}[theorem]{Lemma}
\newtheorem{definition-theorem}[theorem]{Definition-Theorem}
\newtheorem{proposition}[theorem]{Proposition}
\newtheorem{question}[theorem]{Question}
\newtheorem{corollary}[theorem]{Corollary}
\newtheorem{property}{}
\theoremstyle{definition}
\newtheorem{definition}[theorem]{Definition}
\newtheorem{example}[theorem]{Example}
\newtheorem{remark}[theorem]{Remark}
\newtheorem*{remark*}{Remark}
\newtheorem{remarks}[theorem]{Remarks}
\newtheorem{conjecture}[theorem]{Conjecture}
\newtheorem{genericityassumption}[theorem]{Non-degeneracy Assumption}
\newtheorem{step}{Step}
\newtheorem{case}{Case}
\newcommand\bth[1] { \begin{theorem}\label{t#1} }
\newcommand\ble[1] { \begin{lemma}\label{l#1} }
\newcommand\bpr[1] { \begin{proposition}\label{p#1} }
\newcommand\bqu[1] { \begin{question}\label{q#1} }
\newcommand\bco[1] { \begin{corollary}\label{c#1} }
\newcommand\bde[1] { \begin{definition}\label{d#1}\rm }
\newcommand\bex[1] { \begin{example}\label{e#1}\rm }
\newcommand\bre[1] { \begin{remark}\label{r#1}\rm }
\newcommand\bres[1] { \begin{remarks}\label{r#1}\rm }
\newcommand\bcj[1] { \begin{conjecture}\label{j#1}\rm }
\renewcommand {\eth} { \end{theorem} }
\newcommand{\ele} { \end{lemma} }
\newcommand{\epr} { \end{proposition} }
\newcommand{\equ} {\end{question} }
\newcommand{\eco} { \end{corollary} }
\newcommand{\ede} { \end{definition} }
\newcommand{\eex} { \end{example} }
\newcommand{\ere} { \end{remark} }
\newcommand{\eres} { \end{remarks} }
\newcommand{\ecj} { \end{conjecture} }
\newcommand{\enota} { \end{notation} }
\newcommand\pf{\begin{proof}}
\newcommand\epf{\end{proof}}
\newcommand{\Cset} {\mathbb C}
\newcommand{\Zset} {\mathbb Z}
\newcommand{\Kset} {\mathbb K}
\newcommand{\Fset}{\mathbb F}
\newcommand{\Z}{\mathbb Z}
\newcommand{\ku}{\mathbb C}
\newcommand{\Ac}{\mathcal{A}}          
\newcommand{\toba}{{\mathcal B}}
\newcommand{\WP}{{\mathscr{P}}}
\newcommand{\Bgl}{\mathtt{wk}}
\newcommand{\Brown}{\mathtt{br}}
\newcommand{\fO}{\mathfrak O}
\newcommand{\Mg}{\mathfrak M}
\newcommand{\Ig}{\mathfrak I}
\newcommand{\CC} {\mathscr{C}}
\newcommand{\Ls}{\mathscr{L}}
\newcommand{\Ks}{\mathscr{K}}
\newcommand{\II} {{\mathcal{I}}}
\newcommand{\NN} {{\mathcal{N}}}
\newcommand{\QQ} {{\mathcal{Q}}}
\newcommand{\JJ} {{\mathcal{J}}}
\newcommand{\VV} {{\mathcal{V}}}
\newcommand{\Ss}{\mathcal{S}}
\newcommand{\LL}{\mathcal{L}}
\newcommand{\TT} {{\mathcal{T}}}
\newcommand{\ZZ}{\mathcal{Z}}
\newcommand{\Oc}{\mathcal{O}}
\newcommand{\EE}{\mathcal{E}}
\newcommand\cM{\mathcal{M}}
\newcommand\cW{\mathcal{W}}
\newcommand{\cX}{\mathcal{X}}
\newcommand{\ee}{\eta}
\newcommand{\ff}{\theta}
\newcommand{\zz}{\mathfrak{Z}}
\newcommand{\wfO}{\underline{\mathfrak O}}
\newcommand{\wbeta}{\underline{\beta}}
\newcommand{\walpha}{\underline{\alpha}}
\newcommand{\wgamma}{\underline{\gamma}}
\newcommand{\weta}{\underline{\eta}}
\newcommand{\wdelta}{\underline{\delta}}
\newcommand{\wmu}{\underline{\mu}}
\newcommand{\be} {\beta}
\newcommand{\vpi}{\varpi}
\newcommand{\Ga} {\Gamma}
\newcommand\mt  {\mapsto}
\newcommand\rcor {\rangle}
\newcommand\lcor {\langle}
\newcommand\ol {\overline}
\newcommand\wt {\widetilde}
\newcommand\wh {\widehat}
\newcommand{\id} {\operatorname{id}}
\newcommand{\Lie} {\operatorname{Lie}}
\newcommand\Acf {\mathsf{A}}
\newcommand{\bqf}{\mathbf{q}}
\newcommand{\qf}{\mathbf{q}}
\newcommand{\Aut}{\operatorname{Aut}}
\newcommand{\Der}{\operatorname{Der}}
\newcommand{\charr} {\operatorname{char}} 
\newcommand{\opp} {\operatorname{op}} 
\newcommand{\lcm}  {\operatorname{lcm}}
\newcommand{\ord} {\operatorname{ord}} 
\newcommand{\ad}{\operatorname{ad}}
\newcommand{\diag} {\operatorname{diag}} 
\newcommand{\Ker} {\operatorname{Ker}}
\newcommand{\End} {\operatorname{End}} 
\newcommand{\Hom}{\operatorname{Hom}}
\newcommand{\co}{\operatorname{co}}
\newcommand{\Inder}{\operatorname{Inn der}}
\newcommand{\Irr} {\operatorname{Irr}}
\renewcommand {\Im} {\operatorname{Im}}
\newcommand{\MaxSpec} {\operatorname{MaxSpec}}
\newcommand{\hopfq}{/ \hspace{-3pt} /}
\begin{document}
\title[Poisson orders on large quantum groups]
{Poisson orders on large quantum groups}
\author[Nicol\'as Andruskiewitsch]{Nicol\'as Andruskiewitsch}
\address{\noindent FaMAF-CIEM (CONICET) \\ 
Universidad Nacional de C\'ordoba \\
Medina Allende s/n, Ciudad Universitaria \\
5000 C\'ordoba \\
Rep\'ublica Argentina
}
\email{nicolas.andruskiewitsch@unc.edu.ar}
\email{ivan.angiono@unc.edu.ar}
\author[Iv\'an Angiono]{Iv\'an Angiono}
\author[Milen Yakimov]{Milen Yakimov}
\address{
Department of Mathematics \\
Northeastern University \\
Boston, MA 02115
U.S.A.
}
\email{m.yakimov@northeastern.edu}
\date{}
\keywords{Hopf algebras, Nichols algebras, integral forms of quantum groups, Poisson orders, Poisson algebraic groups and homogeneous spaces.
\\
MSC2020: 16T05, 16T20, 17B37, 17B62.}

\thanks{This material is based upon work supported by the National Science Foundation under
Grant No. DMS-1440140 while N. A. was in residence at the Mathematical Sciences
Research Institute in Berkeley, California, in the Spring 2020 semester. 
The work of N. A. and I. A. was partially supported by CONICET and Secyt (UNC). 
The work of M.Y was partially supported by NSF grants DMS-2131243 and DMS-2200762}

\begin{abstract} 
We develop a Poisson geometric framework for studying the representation theory of all contragredient quantum super groups at roots of unity. 
This is done in a uniform fashion by treating the larger class of quantum doubles of bozonizations of all distinguished pre-Nichols algebras \cite{Ang-dpn} 
belonging to a one-parameter family; we call these algebras \emph{large} quantum groups. We prove that each of these quantum algebras has a central Hopf subalgebra 
giving rise to a Poisson order in the sense of \cite{BG}. We describe explicitly the underlying Poisson algebraic groups and Poisson homogeneous spaces in terms of 
Borel subgroups of complex semisimple algebraic groups of adjoint type. The geometry of the Poisson algebraic groups and Poisson homogeneous spaces that are involved and 
its applications to the irreducible representations of the algebras $U_{\bq} \supset U_{\bq}^{\geqslant} \supset U_{\bq}^+$ are also described. Besides all (multiparameter) 
big quantum groups of De Concini--Kac--Procesi and big quantum super groups at roots of unity, our framework also contains the quantizations in characteristic 0 
of the 34-dimensional Kac-Weisfeler Lie algebras in characteristic 2 and the 10-dimensional Brown Lie algebras in characteristic 3. The previous approaches 
to the above problems relied on reductions to rank two cases and direct calculations of Poisson brackets, which is not possible in the super case since there are 
13 kinds of additional Serre relations on up to 4 generators. We use a new approach that relies on perfect pairings between restricted and non-restricted integral forms.
\end{abstract}
\maketitle

\setcounter{tocdepth}{1}
\tableofcontents

\section{Introduction}
\label{intro}

\subsection{Quantum groups and Poisson orders}
Let $\g$ be a complex finite-dimensional simple  Lie algebra and let
$\xi \in \ku$ be a root of 1 with some restrictions on its order depending on $\g$.
In the papers \cite{DK, DKP, DP} a quantized enveloping algebra
$U_{\xi}(\g)$ at  $\xi$
was introduced and studied; it is a version of the Drinfeld-Jimbo quantized 
universal enveloping algebra different from the one defined in \cite{L-fdHa-JAMS,L-roots of 1}.

The algebra $U_{\xi}(\g)$ 
is module-finite over a central Hopf subalgebra $Z_{\xi}(\g)$ and 
the corresponding small quantum group of Lusztig \cite{L-fdHa-JAMS,L-roots of 1} arises as the quotient 
$U_{\xi}(\g) \hopfq Z_{\xi}(\g)$ in the sense of Hopf algebras.
A geometric approach to the representation theory of $U_{\xi}(\g)$ was proposed in 
\cite{DP}, based on these facts.
The key ingredients of this approach are: 
\begin{itemize}[leftmargin=*]\renewcommand{\labelitemi}{$\circ$}
\item The existence of a Poisson structure on $Z_{\xi}(\g)$ so that the algebraic group $M$ corresponding to this algebra is a Poisson algebraic group, 
whose Lie bialgebra is dual to the standard Lie bialgebra structure on $\g$.
\item The Hamiltonian vector fields on $M$ extend to (explicit) derivations of $U_{\xi}(\g)$. 
\end{itemize}

The approach consists in packing the irreducible finite-dimensional representations of
$U_{\xi}(\g)$ along the symplectic leaves of $M$ and predicting their dimensions.
These ideas were distilled in the notion of Poisson order  in \cite{BG}, see Section \ref{sec:poisson}. The construction of a Poisson 
order structure on an algebra has substantial applications to the representation theory of the algebra: using this route
the irreducible representations of quantum function algebras were studied in \cite{DL}, 
the Azumaya loci of symplectic reflection algebras was described in \cite{BG}, the irreducible 
representations of the 3 and 4-dimensional PI Sklyanin algebras were fully classified in \cite{WWY1,WWY2}, 
the Azumaya loci of the multiplicative quiver varieties and quantum character varieties were studied in 
\cite{GJS}. See \cite[Part III]{BGb} for a comprehensive exposition of the applications 
the notion of Poisson order to the representation theory of quantum algebras at roots of unity. 

\subsection{Large quantum groups and pre-Nichols algebras}
\label{subsec:largeQG}
The main goal of this paper is to study by means of Poisson orders 
the representation theory of a larger class of Hopf algebras
introduced by the second author in \cite{A-presentation}
and studied in \cite{Ang-dpn}. They contain as special cases 
\begin{itemize}[leftmargin=*]\renewcommand{\labelitemi}{$\circ$}
\item all big quantum groups of De Concini--Kac--Procesi, 
\item all big contragredient quantum super groups,
\item and exceptional families that can be viewed as quantizations of
the universal enveloping algebras of simple Lie algebras in positive characteristic.
\end{itemize}
The keystone of the definition of
these Hopf algebras is the notion of \emph{distinguished pre-Nichols algebra}. 
It allows us to treat all of the above families uniformly without case-by-case considerations and 
computational arguments with quantum Serre relations. 

\smallbreak
Nichols algebras of diagonal type are essential for various classification problems of Hopf algebras.
Those of finite dimension were classified in the celebrated paper \cite{H-classif RS} while the defining relations
were provided in \cite{A-jems,A-presentation}. 
Let $\bq$ be a braiding matrix as in the list of \cite{H-classif RS} and let $\toba_{\bq}$
be the corresponding finite-dimensional Nichols algebra of diagonal type.
The distinguished pre-Nichols algebra $\dpn_{\bq}$
of $\toba_{\bq}$ is a covering of the latter defined by excluding 
the powers of the root vectors of Cartan type from its defining ideal.  
The Hopf algebras dealt with in the present paper are 
Drinfeld doubles of the bosonizations of the distinguished pre-Nichols algebras; they are denoted 
$U_\bq$, see \S \ref{subsec:Ubq}. They are shown to be module-finite over canonical central Hopf subalgebras $\Zc_{\bq}$,
which are the ones defined in \cite{Ang-dpn} if $\bq$ is not of type super $A$ and a one-dimensional extension of those in the 
super $A$ case, see \S \ref{subsec:Zq}. 

On the other hand, the graded dual  of $\dpn_{\bq}$ gives rise to a Lie algebra $\n_{\bq}$, which is either 0 or the 
nilpotent part of a semisimple Lie algebra $\g_{\bq}$ that is explicitly determined \cite{AAR3}.

\medbreak
We focus on Hopf algebras $U_\bq$ with a further restriction: the related Nichols algebra $\toba_{\bq}$ 
is deformable, i.e., belongs to a one-parameter family of Nichols algebras. 
We call them \emph{large quantum groups}. By inspection, the matrix $\bq$ is of one of three types:

\begin{enumerate}[leftmargin=*,label=\rm{(\alph*)}]
\item\label{item:Cartan-type} Cartan type (multiparameter versions of the quantum groups from \cite{DKP} without restrictions on $\xi$);

\smallbreak
\item\label{item:super-type} super type (multiparameter quantum groups associated to finite dimensional simple contragredient Lie superalgebras at roots of unity);

\smallbreak
\item\label{item:modular-type} modular types $\Bgl(4)$ or $\Brown(2)$ (quantizations at a root of unity 
in characteristic 0 of some simple Lie algebras in characteristics 2 and 3 respectively). 
\end{enumerate}

But it stems from the list in \cite{H-classif RS} that
there are finite-dimensional Nichols algebras of diagonal type that do not belong to such one-parameter families.

\begin{remark*} To be precise we need three technical assumptions:
\begin{enumerate} [leftmargin=*,label=\rm{(\roman*)}]
\item The base field is $\ku$ to have on hand symplectic leaves. 
(For other algebraically closed fields of characteristic 0, one can use symplectic cores \cite{BG} 
and argue as in \cite[\S 6.4]{WWY1}).  See Appendix \ref{subsec:symplectic-core} for a brief discussion of symplecitc leaves and cores.

\smallbreak
\item Condition \eqref{eq:condition-cartan-roots} is needed for the centrality of $Z_{\bq}$ in $U_{\bq}$.

\smallbreak
\item The Non-degeneracy Assumption \ref{genericityassumption} is used to identify some dual vector spaces in order to 
compute some Lie bialgebras. However this is not a constraint; each of the Hopf algebras $U_{\bq}$
that we consider can be obtained as a specialization from a family in many different ways. We prove in Proposition
\ref{prop:specialization-integer} that there  always  exist
ways that satisfy Assumption \ref{genericityassumption} and choose one such way. 
\end{enumerate}

\end{remark*}

See the Appendix \ref{sec:Nichols-specialization} and the survey \cite{AA-diag-survey} for full details on these algebras. 
We consider the chain of subalgebras $U_\bq^+ \subset U_\bq^{\geqslant} \subset U_\bq$ where
\begin{itemize}[leftmargin=*]\renewcommand{\labelitemi}{$\circ$}
\item $U_\bq^{\geqslant}$, the \emph{large quantum Borel subalgebra}, is identified with the bosonization of $\dpn_{\bq}$; 
\item $U_\bq^+$, the \emph{large quantum unipotent subalgebra}, is identified with $\dpn_{\bq}$.
\end{itemize}

Intersecting the central subalgebra $\Zc_{\bq}$ of $U_\bq$ gives the chain of central Hopf subalgebras 
\begin{equation}
\label{eq:nonsign-cent}
Z_\bq^+ \subset Z_\bq^{\geqslant} \subset Z_\bq.
\end{equation}
Each of these central Hopf subalgebras is actually isomorphic to a tensor product of a 
polynomial algebra and a Laurent polynomial algebra. We introduce the  algebraic groups 
$M_{\bq}$,  $M_{\bq}^\geqslant$ and $M_{\bq}^+$ as the maximal spectra of the Hopf algebras
$\Zc_{\bq}$, $\Zc_{\bq}^\geqslant$ and $ \Zc_{\bq}^+$,  respectively. 
We shall also need the opposite Borel $U_\bq^{\leqslant}$ and its central Hopf subalgebra
$\Zc_{\bq}^\leqslant$ with maximal spectrum $M_{\bq}^\leqslant$ and correspondingly $U_\bq^{-}$,
$\Zc_{\bq}^-$ and $M_{\bq}^-$.

\subsection{Main results} 
As said, this paper deals with the geometry of the Poisson algebraic group $M_{\bq}$ towards understanding
the representation theory of large quantum groups. 
This last question contain the description of the irreducible representations of contragredient quantum supergroups at roots of unity,
an important problem which is wide open even in the simplest case of $U_q({\mathfrak{sl}}(m|n))$. 
We present a foundation for a thorough investigation of these representations. 
We first summarize the main results in the following statement. Define the 
reductive Lie algebra
\[
\wt{\g}_{\bq} := 
\begin{cases}
\g_{\bq} \oplus \Cset, & \mbox{if $\bq$ is of type super $A$}
\\
\g_{\bq}, & \mbox{otherwise}.
\end{cases}
\]
Type super $A$ has the peculiarity that the rank of the Lie algebra $\g_{\bq}$ is one less than the rank of Nichols algebra $\toba_{\bq}$. Hence we need to  to enlarge the central subalgebra originally defined in \cite{Ang-dpn} adding an extra group-like element in order to have a central subalgebra $\Zc_{\bq}$  such that $U_\bq$ is module-finite over $\Zc_{\bq}$. 

Our first main result is the following:

\begin{maintheorem}\label{mainth:summary}
Let $U_{\bq}$ be a large quantum group as above. Then
\begin{enumerate} [leftmargin=*,label=\rm{(\alph*)}]
\item\label{item-mainth:poisson} The pair $(U_{\bq}, \Zc_{\bq})$ has the structure of a Poisson order in the sense of \cite{BG}.

\smallbreak
\item\label{item-mainth:solvable} 
The algebraic Poisson group $M_{\bq}$ is \emph{solvable}.
The Lie bialgebra of $M_{\bq}$ is dual to a Lie bialgebra structure on $\wt{\g}_{\bq}$
coming from an empty  Belavin--Drinfeld triple; the symplectic leaves of $M_{\bq}$
can be classified and related to conjugacy classes of the adjoint Lie group of $\wt{\g}_{\bq}$.

\smallbreak
\item\label{item-mainth:findimalg-reps} Every $z \in M_{\bq}$ with corresponding maximal ideal $\Mg_z$ gives rise to a finite-dimensional algebra 
$\hopf_z = U_{\bq}/  U_{\bq}\Mg_z$. Then $\hopf_z \simeq \hopf_{z'}$ whenever 
$z$ and $z'$ belong to the same symplectic leaf $S$. By abuse of notation we set $\hopf_S 
= \hopf_{z}$.
Every irreducible representation of $U_{\bq}$ is finite-dimensional and 
\begin{align*}
\Irr U_{\bq} = \bigcup_{S \text{ symplectic leaf of } M} \Irr \hopf_S.
\end{align*}
\end{enumerate}
\end{maintheorem}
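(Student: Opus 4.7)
\emph{Proof plan.} For part~\ref{item-mainth:poisson}, the first step is to show that $U_\bq$ is a finitely generated module over $\Zc_\bq$, which follows from the PBW-type decomposition of $\dpn_\bq$ together with the enlargement of $\Zc_\bq$ by the extra group-like element in the super~$A$ case. Next one needs centrality of $\Zc_\bq$ in $U_\bq$, guaranteed under condition~\eqref{eq:condition-cartan-roots} by the perfect pairing between the restricted and non-restricted integral forms associated to $\dpn_\bq$: this pairing translates the vanishing of $[z,x]$ for $z\in\Zc_\bq$ into a duality identity that holds uniformly, bypassing a case analysis against the 13 families of super Serre relations on up to four generators. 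Given module-finiteness and centrality, the Poisson order structure then comes from the defining one-parameter family of Nichols algebras containing $\toba_\bq$: this family lifts to a flat one-parameter family of Drinfeld doubles $U_{\bq,t}$ with $U_{\bq,1}=U_\bq$, and the standard recipe of~\cite{DP,BG} produces $\Cset$-linear derivations of $U_\bq$ lifting the Hamiltonian vector fields of elements of $\Zc_\bq$.

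For part~\ref{item-mainth:solvable}, the Lie bialgebra structure is read off as the semiclassical limit of the deformation $U_{\bq,t}$, and the Non-degeneracy Assumption~\ref{genericityassumption} identifies the relevant dual spaces so that the cobracket on $\Zc_\bq$ matches the bracket on $\wt\g_\bq^*$. Since the deformation involves only a single scalar parameter, the resulting classical $r$-matrix carries no discrete component along the Cartan, which is precisely the data of an empty Belavin--Drinfeld triple on $\wt\g_\bq$. Solvability of $M_\bq$ follows because $\Zc_\bq$ is a polynomial $\otimes$ Laurent-polynomial algebra whose factorization is compatible with the triangular decomposition $U_\bq^-\cdot U_\bq^0\cdot U_\bq^+$, so that $M_\bq$ decomposes into Borel-type factors $M_\bq^{\leqslant}$ and $M_\bq^{\geqslant}$ glued along a torus. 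The classification of symplectic leaves and their parametrization by conjugacy classes in the adjoint group of $\wt\g_\bq$ is then obtained by transporting the dressing-action picture of Semenov-Tian-Shansky and the Hodges--Levasseur--Toro stratification to this setting, using the explicit description of the dual Poisson group.

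For part~\ref{item-mainth:findimalg-reps}, finite-dimensionality of $\hopf_z=U_\bq/U_\bq\Mg_z$ is immediate from module-finiteness in part~\ref{item-mainth:poisson}. Every $V\in\Irr U_\bq$ has a central character because $\Zc_\bq$ is central, and this character determines a unique $z\in M_\bq$ through which $V$ factors; hence every irreducible of $U_\bq$ is finite-dimensional and $\Irr U_\bq=\bigcup_{z\in M_\bq}\Irr\hopf_z$. The isomorphism $\hopf_z\simeq\hopf_{z'}$ for $z,z'$ on the same symplectic leaf $S$ is a general consequence of Brown--Gordon~\cite{BG,BGb} applied to our Poisson order: the derivations constructed in part~\ref{item-mainth:poisson} restrict and integrate along $S$ to algebra isomorphisms between fibers, upon which the displayed decomposition becomes the indexing of $\Irr U_\bq$ by symplectic leaves.

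The principal obstacle is the \emph{uniform} execution of part~\ref{item-mainth:poisson}: proving centrality of $\Zc_\bq$ and producing the lift of Hamiltonian vector fields without reducing to rank-two cases, where in the super and modular settings the proliferation of Serre-type relations defeats direct bracket calculations. The perfect-pairing method between restricted and non-restricted integral forms is designed precisely to bypass this difficulty, and establishing that it applies uniformly across all three families~\ref{item:Cartan-type}, \ref{item:super-type}, \ref{item:modular-type} is the technical heart of the argument.
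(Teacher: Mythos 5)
Your plan captures the broad strategy of specializing an integral form and invoking the Brown--Gordon framework, and parts \ref{item-mainth:findimalg-reps} and the broad outline of \ref{item-mainth:solvable} are correctly sketched. However, there are two concrete gaps in part \ref{item-mainth:poisson} that, taken together, change the architecture of the proof.

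First, you misattribute the role of the perfect pairing between restricted and non-restricted integral forms. You claim it is used to prove centrality of $\Zc_\bq$, translating $[z,x]=0$ into a duality identity. In fact centrality of $\Zc_\bq$ under condition \eqref{eq:condition-cartan-roots} is an input imported from \cite{Ang-dpn}; it is not established via the pairing. The pairing $U_{\bqf,\Acf}^{\res\pm}\times U_{\bqf,\Acf}^{\mp}\to\Acf$ serves a different purpose entirely: in \S\ref{subsec:Luszig-pm} and Proposition \ref{prop:lem2} it identifies the specializations of $U_{\bqf,\Acf}^{\res\pm}$ with the Lusztig algebras $\luq$, $\luqm$, which is the mechanism by which the paper computes the tangent Lie bialgebras $(\mm_\bq^{\pm})^*\simeq\n_\bq^\pm$ intrinsically, i.e.\ without rank-two reductions. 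So the pairing belongs to part \ref{item-mainth:solvable}, not part \ref{item-mainth:poisson}.

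Second, and more seriously, your plan omits the step of descending the Poisson order from $\ZZ(U_\bq)$ to $Z_\bq$. The specialization machinery of Proposition \ref{prop:der_a-gral} produces a Poisson order on $(U_\bq,\ZZ(U_\bq))$, but the theorem asserts one on $(U_\bq,Z_\bq)$, and $Z_\bq$ is a proper subalgebra. One must show that $Z_\bq$ is a Poisson subalgebra of $\ZZ(U_\bq)$, i.e.\ closed under the specialization Poisson bracket. This is the content of Theorem \ref{thm:poisson-structure-ker} (the Poisson order restricts to central subalgebras realized as simultaneous kernels of lifted algebra automorphisms and skew-derivations), applied to $Z_\bq^\pm$ using the skew-derivations $\partial_i^\bq$ and automorphisms $\varsigma_i^\bq$ via \eqref{eq:Z+-intersection-ker-derivations}. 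And because $Z_\bq$ involves root vectors $\Eq_\beta^{N_\beta},\Fq_\beta^{N_\beta}$ for non-simple $\beta$, the computation of mixed brackets $\{\Eq_\beta^{N_\beta},\Fq_\gamma^{N_\gamma}\}$ requires the Weyl-groupoid invariance Theorem \ref{th:Zq-invariant-under-Ti}, which lets one conjugate a general Cartan root to a simple one and reduce to an explicit rank-one identity such as \eqref{eq:conmut-EN-FN}. Without these two ingredients --- the restriction theorem and the Weyl-groupoid invariance of $Z_\bq$ --- the claim that the Hamiltonian vector fields ``restrict to $Z_\bq$'' is not established, and the plan does not close.
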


Furthermore, we have analogous results for the pairs $(U_\bq^\geqslant, Z_\bq^\geqslant)$ and  $(U_\bq^+, Z_\bq^+)$.
Next we make more precise the claims of Theorem \ref{mainth:summary}. 
We fix a large quantum group  $U_{\bq}$.

\subsubsection{Poisson orders} We denote by $\ZZ(A)$ the center of an algebra $A$.
Because of the assumption that  $\toba_{\bq}$ is deformable in the class of Nichols algebras as mentioned above, 
we get Poisson order structures on the pairs $(U_\bq, \ZZ(U_\bq))$, $(U_\bq^\geqslant, \ZZ(U_\bq^\geqslant))$ and 
$(U_\bq^+, \ZZ(U_\bq^+))$ by specialization. As these centers are singular, it is more convenient to look at the
central subalgebras in \eqref{eq:nonsign-cent}. Part \ref{item-mainth:poisson} of Theorem \ref{mainth:summary}
is  included in the following result, see Theorem \ref{thm:Poisson-orders}. 

\begin{maintheorem}\label{mainthm:yak-A}
The pairs $(U_{\bq}, \Zc_{\bq})$, $(U_\bq^\geqslant, Z_\bq^\geqslant)$ and  $(U_\bq^+, Z_\bq^+)$
have Poisson order structures in the sense of \cite{BG} obtained from specialization.
\end{maintheorem}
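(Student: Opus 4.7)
The plan is to realize the pair $(U_\bq, \Zc_\bq)$ as the specialization at a distinguished point of a flat one-parameter family of Hopf algebras, and then apply the standard De Concini--Kac--Procesi machinery for producing a Poisson order out of such a specialization. Concretely, the deformability hypothesis built into the definition of large quantum group means that $\toba_\bq$ belongs to a continuous family of Nichols algebras $\toba_{\bq(t)}$, indexed by a parameter $t$ in an open neighborhood of $\xi$ in $\ku^\times$, where $\bq(t)$ is a braiding matrix whose specialization at $t=\xi$ gives $\bq$ and whose specialization at a generic value is of ``super'' Cartan type with no root-of-unity relations. Taking distinguished pre-Nichols algebras, bosonizations and Drinfeld doubles across the family yields a $\ku[t^{\pm 1}]_{(t-\xi)}$-algebra $\cU$ that is flat over the base, with $\cU / (t-\xi)\cU \cong U_\bq$ and whose generic fibre is a torsion-free deformation in which the generators of $\Zc_\bq$ have canonical lifts $\widetilde z\in\cU$ that are \emph{not} central but whose commutators with the algebra generators of $U_\bq$ lie in $(t-\xi)\cU$.

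Once such ``near-central'' lifts are on hand, one puts a Poisson bracket on $\Zc_\bq$ by the familiar formula
\[
\{z, z'\} \;:=\; \frac{[\widetilde z,\widetilde{z'}]}{t-\xi}\bigg|_{t=\xi}, \qquad z,z'\in \Zc_\bq,
\]
and a $\ku$-linear map $\partial\colon \Zc_\bq \to \Der(U_\bq)$ by
\[
\partial_z(u) \;:=\; \frac{[\widetilde z,\widetilde u]}{t-\xi}\bigg|_{t=\xi}, \qquad z\in\Zc_\bq,\ u\in U_\bq,
\]
for any lift $\widetilde u\in \cU$ of $u$. That these expressions are well defined (independent of the choice of lifts), Poisson and satisfy the axioms of a Poisson order in the sense of \cite{BG}---namely, that $\partial$ is a Lie algebra map to $\Der(U_\bq)$, that $\partial_z$ restricts to the Hamiltonian of $z$ on $\Zc_\bq$, and that $\partial_{zz'}=z\,\partial_{z'}+z'\,\partial_z$---all follow by transferring the Jacobi and Leibniz identities for the commutator in $\cU$ modulo $(t-\xi)^2$. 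The statements for $(U_\bq^{\geqslant}, Z_\bq^{\geqslant})$ and $(U_\bq^+, Z_\bq^+)$ are obtained by the same mechanism applied respectively to the bosonizations of $\dpn_{\bq(t)}$ and to $\dpn_{\bq(t)}$ itself, using that $Z_\bq^{\geqslant}$ and $Z_\bq^+$ are intersections of $\Zc_\bq$ with the corresponding Hopf subalgebras and are preserved under the flat deformation.

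The crux of the argument, and the main obstacle, is verifying that each generator $z\in\Zc_\bq$ admits a lift $\widetilde z\in\cU$ whose commutator with every algebra generator of $\cU$ is divisible by $t-\xi$. For the Cartan type of \cite{DP} this is done by rank-two reductions to $\mathfrak{sl}_2,\mathfrak{sl}_3,\mathfrak{so}_5,\mathfrak{g}_2$ and direct quantum Serre computations; in our setting the presence of up to thirteen additional types of super/modular Serre relations on up to four generators renders such a case-by-case attack unworkable. Our strategy is to avoid explicit computation altogether by pairing $\dpn_\bq$ against its graded dual---a Nichols algebra---via the canonical nondegenerate Hopf pairing and arguing that the powers of root vectors defining $\Zc_\bq$ lie in the radical of the deformed pairing modulo $(t-\xi)$, which forces their commutators with all generators to vanish to first order at $t=\xi$. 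The Non-degeneracy Assumption~\ref{genericityassumption} and the centrality condition \eqref{eq:condition-cartan-roots} guarantee that the pairing on the family behaves as expected, and the extra group-like generator appearing in type super $A$ is handled separately since it is group-like and hence automatically near-central up to a diagonal scalar.
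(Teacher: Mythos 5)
The starting point of your proposal---realize $U_{\bq}$ as a specialization of a one-parameter integral form and apply the De Concini--Kac--Procesi/Brown--Gordon machinery to get a Poisson order on $(U_{\bq}, \ZZ(U_{\bq}))$---is indeed exactly what the paper does (Theorem \ref{thm:Uq-special} together with Proposition \ref{prop:der_a-gral}). But you then misidentify the genuine difficulty. You claim the ``crux'' is finding lifts $\widetilde z$ of the generators of $\Zc_{\bq}$ whose commutators with all generators of the integral form lie in $(\nu-\xi)\cU$. That is automatic: once condition \eqref{eq:condition-cartan-roots} guarantees $\Zc_{\bq}\subset\ZZ(U_{\bq})$ (a result of \cite{Ang-dpn}, not established in this theorem), any lift whatsoever of a central element $z$ has $[\widetilde z,\widetilde u]$ specializing to $[z,u]=0$, hence lying in $(\nu-\xi)\cU$. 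Nothing about pairings or radicals is needed for that.

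The actual content of the theorem is that the Poisson bracket \eqref{eq:Poisson-str-center}, which a priori takes values only in $\ZZ(U_{\bq})$, restricts to $\Zc_{\bq}$, i.e. $\{\Zc_{\bq},\Zc_{\bq}\}\subset\Zc_{\bq}$. Your proposal does not address this closure at all, and your ``radical of the deformed pairing'' heuristic does not obviously yield it. The paper handles it with a dedicated tool that you are missing: Theorem \ref{thm:poisson-structure-ker}, which shows that the intersection $\ZZ\cap\bigl(\cap_i\ker\overline{\partial}_i\bigr)\cap\bigl(\cap_i\ker(\overline{\varsigma}_i-\id)\bigr)$ is automatically a Poisson subalgebra when the $\varsigma_i$, $\partial_i$ lift to the integral form; combined with the identity $\Zc_{\bq}^{\pm}=\cap_i\ker\partial_i^{\bq}$ from \eqref{eq:Z+-intersection-ker-derivations}, this gives the unipotent parts. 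For the Borel parts and for $\Zc_{\bq}$ itself one still needs short explicit calculations of $\{\Eq_\beta^{N_\beta},K_\gamma^{N_\gamma}\}$ and $\{\Eq_\beta^{N_\beta},\Fq_\gamma^{N_\gamma}\}$, reduced to simple roots via the Weyl groupoid invariance of $\Zc_{\bq}$ (Theorem \ref{th:Zq-invariant-under-Ti}); your proposal has no substitute for these.

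Two smaller inaccuracies: the graded dual of $\dpn_{\bq}$ is the Lusztig algebra $\luq$, not a Nichols algebra (the Nichols algebra $\toba_{\bq^t}$ is only a subalgebra of it, cf.\ \eqref{eq:extension-braided-lu}); and the Non-degeneracy Assumption \ref{genericityassumption} plays no role in this theorem---it is introduced only later for identifying the tangent Lie bialgebras in Section~\ref{sec:PLgrps}. Invoking it here signals a misreading of which hypothesis is doing what work.
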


Presently it is not known whether for the remaining braiding matrices $\bq$ in the list of \cite{H-classif RS} 
the pair $(U_{\bq}, \Zc_{\bq})$ has the structure of a Poisson order.
Indeed the other Nichols algebras of diagonal type with arithmetic root system
in the classification given in \cite{H-classif RS} do not admit such a one-parameter family
and for instance our proof of Theorem \ref{thm:Poisson-orders} does not generalize to them.

\subsubsection{Poisson algebraic groups and Lie bialgebras} Let $\h_{\bq}$ be a Cartan subalgebra of the semisimple Lie algebra $\g_{\bq}$ and extend it to a Cartan subalgebra
$\wt{\h}_{\bq}$ of the reductive Lie algebra $\wt{\g}_{\bq}$. We consider 
a Lie bialgebra structure on $\g_{\bq}$ that corresponds to the empty Belavin--Drinfeld triple \cite{BD}
and is explicitly defined in Theorem \ref{thm:mm*-Manin} and let
$\mm_{\bq}$ be the Lie bialgebra dual  to  $\wt{\g}_{\bq}\oplus \wt{\h}_{\bq}$, cf.  Eq. \eqref{eq:ident}.

Let $G_\bq$ be the semisimple algebraic group of adjoint type with  $\Lie G_{\bq} \simeq \g_{\bq}$. Denote
\[
\wt{G}_{\bq} := 
\begin{cases}
G_{\bq} \times \Cset^\times, & \mbox{if $\bq$ is of type super $A$}
\\
G_{\bq}, & \mbox{otherwise}.
\end{cases}
\]
For instance, when $U_{\bq} = U_q({\mathfrak{sl}}(m|n))$, 
$\wt{G}_{\bq} \simeq {\operatorname{PSL}}_m(\Cset) \times {\operatorname{PSL}}_n(\Cset) \times \Cset^\times$. 
Let $\wt{B}_{\bq}^{\pm}$ be a pair of opposite Borel subgroups of $\wt{G}_\bq$, $\wt{T}_{\bq} = \wt{B}_{\bq}^{+} \cap \wt{B}_{\bq}^{-}$ be the 
corresponding maximal torus and $N_{\bq}^\pm$ be the unipotent radicals of $\wt{B}_{\bq}^{+}$; we identify $N_{\bq}^{+} \simeq \wt{B}_{\bq}^{+} / \wt{T}_{\bq}$.

Here is a more precise statement of Theorem \ref{mainth:summary} Part \ref{item-mainth:solvable}, 
see Theorems \ref{thm:Ms-to-Bs} and \ref{Poisson0}.

\begin{maintheorem}\label{mainthm:yak-B}
\emph{(a)} The Poisson algebraic group $M_{\bq}$ is isomorphic 
to the product of two Borel subgroups of $\wt{G}_\bq$
and $\Lie M_{\bq} \simeq \mm_{\bq}$ as Lie bialgebras. 

\smallbreak 
\emph{(b)} The symplectic leaves of $M_{\bq}$ are in bijective correspondence  
with the conjugacy classes of $\wt{G}_\bq\times (\wt{T}_{\bq}/ \exp(\wt{Q}_\bq))$, where $\wt{Q}_\bq$ is a lattice 
in $\wt{\h}_\bq$; each leaf is isomorphic to an open
dense subset of the corresponding conjugacy class.
\end{maintheorem}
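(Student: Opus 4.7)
The plan is to establish (a) by identifying $M_\bq$ with a product of two Borel subgroups of $\wt{G}_\bq$ both as an algebraic variety and as a Poisson algebraic group, and then to deduce (b) by applying the Semenov-Tian-Shansky and Lu-Weinstein description of symplectic leaves on the Poisson group dual to $\wt{G}_\bq$ equipped with its standard Poisson structure from the empty Belavin-Drinfeld datum.

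For the algebraic part of (a), the structural remark following \eqref{eq:nonsign-cent} realizes $\Zc_\bq$ as a tensor product of a polynomial algebra whose generators are $\ell$-th powers of the PBW root vectors of $U_\bq^+$ and $U_\bq^-$, and a Laurent polynomial algebra whose generators are powers of the group-likes indexed by the weight lattice of $\wt{\g}_\bq$. Since each Borel $\wt{B}_\bq^\pm = \wt{T}_\bq \ltimes N_\bq^\pm$ has a coordinate algebra that factors in exactly the same way, a quantum-Frobenius-type map sending each positive root vector of $\wt{\g}_\bq$ to a suitable normalization of the corresponding $\ell$-th power, and each torus character to the appropriate group-like, extends to a Hopf algebra isomorphism $\Oc(\wt{B}_\bq^+ \times \wt{B}_\bq^-) \simeq \Zc_\bq$. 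For the Lie bialgebra claim, the Poisson bracket on $\Zc_\bq$ supplied by Theorem \ref{mainthm:yak-A} restricts to a Lie cobracket on the cotangent space at the identity, which together with the bracket read from multiplication yields $\Lie M_\bq$ as a Lie bialgebra. To match it with the dual of $\wt{\g}_\bq \oplus \wt{\h}_\bq$ carrying the empty Belavin-Drinfeld structure of Theorem \ref{thm:mm*-Manin}, I would invoke the perfect pairing between the restricted (Nichols) and non-restricted (distinguished pre-Nichols) integral forms highlighted in the abstract. Under Non-degeneracy Assumption \ref{genericityassumption}, this pairing identifies the relevant dual vector spaces, relating in particular $\n_\bq$ with the linear span of $\ell$-th powers of PBW root vectors in $\Zc_\bq^+$; the structure constants computed on generators via one-parameter deformation then match those prescribed by an empty BD triple.

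For (b), once $M_\bq$ is recognized as a connected solvable Poisson group dual to $\wt{G}_\bq$ with empty BD Poisson structure, its symplectic leaves are the orbits of the dressing action of $\wt{G}_\bq$. For an empty BD triple this dressing action extends canonically to an action of the larger group $\wt{G}_\bq \times \wt{T}_\bq$, and its orbits are identified with the preimages in $M_\bq$ of the conjugacy orbits in $\wt{G}_\bq \times (\wt{T}_\bq/\exp(\wt{Q}_\bq))$; the discrete quotient by $\exp(\wt{Q}_\bq)$ arises because two elements of $\wt{T}_\bq$ differing by $\exp(\wt{Q}_\bq)$ induce the same dressing transformation, a consequence of the root lattice entering the duality. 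The density and openness of each leaf in its associated conjugacy class follow from the fact that the corresponding moment-type Gauss factorization map $M_\bq \to \wt{G}_\bq \times \wt{T}_\bq$ is dominant with generically finite fibers.

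The main obstacle is the Lie bialgebra identification in (a) for the super and modular cases. The classical De Concini-Kac-Procesi strategy reduces the Poisson bracket verification to rank two and checks explicit formulas against the quantum Serre relations; this is infeasible here because the super case brings in thirteen extra kinds of Serre relations on up to four generators, with further exceptional relations in modular type \ref{item:modular-type}. This is precisely why Step 2 must be routed through the perfect pairing machinery rather than direct bracket calculations, and constructing and exploiting these pairings uniformly for all matrices $\bq$ in types \ref{item:Cartan-type}--\ref{item:modular-type} under the Non-degeneracy Assumption is the principal technical hurdle.
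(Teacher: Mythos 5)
Your overall plan — recognize $M_{\bq}$ as a product of two Borel subgroups, control its tangent Lie bialgebra through the restricted/non-restricted pairing, and then read off leaves via Semenov--Tian--Shansky dressing — is the paper's strategy, but two of your steps have genuine gaps.

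For (a), the assertion that a quantum-Frobenius-type assignment on PBW root vectors and group-likes "extends to a Hopf algebra isomorphism" is where the work lies, and you do not explain why. For non-simple Cartan roots $\beta$ the coproduct $\Delta(\Eq_\beta^{N_\beta})$ has middle terms mixing torus and unipotent generators (see the explicit $\superda{\alpha}$-style expansion displayed just after \eqref{eq:copod-eNfN} in Example \ref{exa:bgl4-Ms-to-Bs}, where $\Delta(\Eq_{\alpha_1+\alpha_2}^N)$ has a cross term $(q-1)^N \Eq_{\alpha_1}^N K_{\alpha_2}^N \otimes \Eq_{\alpha_2}^N$), so defining the map on generators does not trivially respect comultiplication. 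The paper instead \emph{integrates} the Lie algebra isomorphism $\mm_{\bq}^\geqslant \simeq \wt{\bg}_{\bq}^+$ of Theorem \ref{thm:mm*}: since $G_{\bq}$ is of adjoint type, $B_{\bq}^+$ is the identity component of $\Aut(\bg_{\bq}^+)$, so the adjoint action of $M_{\bq}^\geqslant$ on $\mm_{\bq}^\geqslant$ yields the map $\tau_+^1 : M_{\bq}^\geqslant \twoheadrightarrow B_{\bq}^+$, and its bijectivity is then verified on the Levi decomposition using the group-likes of $Z_{\bq}^\geqslant$ via \eqref{zqu-Laurent} and \eqref{eq:dual-iso}. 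Your Hopf-algebraic route is plausible (the paper computes one such isomorphism by hand in the $\Bgl(4)$ example), but it is not a one-line extension from generators.

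For (b), your identification of the lattice $\wt{Q}_{\bq}$ is incorrect, and your density argument does not close. You attribute the quotient by $\exp(\wt{Q}_{\bq})$ to "the root lattice entering the duality," but in the paper $\wt{Q}_{\bq} = 2\pi i\, \wt{\WP}^{-1}\wt{\WP}^{\,\mathrm{T}}(P_{\bq}\spcheck)$, built from the \emph{coweight} lattice $P_{\bq}\spcheck$ and the operators $\wt{\WP}, \wt{\WP}^{\,\mathrm{T}}$ encoding the continuous Belavin--Drinfeld parameter; the quotient arises as $\Ker \pi_-$ for the covering $\pi_- : \widetilde{M}_{\bq}^\leqslant \twoheadrightarrow \wt{B}_{\bq}^-$, not from a dressing degeneracy. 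In particular, when $\bqf$ is symmetric (e.g., the De Concini--Kac--Procesi case) the group $\wt{C}_{\bq}$ is trivial, which your "root lattice" heuristic would not predict. Moreover, the dressing is by $\diag(\wt{G}_{\bq} \times \wt{T}_{\bq})$ from the outset (the double of the Manin triple of Theorem \ref{thm:mm*-Manin}, which is $\wt{\g}_{\bq}\oplus\wt{\h}_{\bq}$, not just $\wt{\g}_{\bq}$), so there is no "extension to a larger group" step. Finally, openness and density of a leaf in its conjugacy class is not a consequence of a dominant factorization map with generically finite fibers; the paper invokes \cite[Theorem 1.10]{Y}, which gives directly that $\widetilde{M}_{\bq}^{\geqslant} \times \widetilde{M}_{\bq}^{\leqslant}$ meets each double coset of the diagonal in a dense, open, connected set. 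Without that precise input you do not control the fiber structure.
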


Note that the symplectic leaves are not algebraic varieties in general. 
The lattice $\wt{Q}_\bq$ is related to the continuous parameter of the corresponding Lie bialgebra, see Appendix \ref{subsec:B.1}.
In the case considered in \cite{DP} the Poisson structure is the standard one and $\wt{Q}_\bq$ coincides 
with the kernel of the exponential map restricted to the Cartan subalgebra.

Here are the promised versions for $M_{\bq}^\geqslant$ and $M_{\bq}^+$.

\begin{maintheorem}\label{mainthm:yak-C}
\emph{(a)} The Poisson algebraic group $M_{\bq}^\geqslant$ is isomorphic 
to the Borel subgroup $\wt{B}_{\bq}^{+}$. 
The Poisson structure is invariant under the left and right actions of  $\wt{T}_{\bq}$.

\smallbreak
\emph{(b)} The torus orbits of symplectic leaves of $M_{\bq}^{\geqslant}$ are 
the double Bruhat cells of $\wt{G}_{\bq}$ that lie in $\wt{B}_{\bq}^{+}$. 

\smallbreak
\emph{(c)} The  algebraic group $M_{\bq}^+$ is isomorphic 
to the unipotent radical $N_{\bq}^{+}$ of $\wt{B}_{\bq}^{+}$. 
It has a Poisson structure arising from the identification $N_{\bq}^{+} \simeq \wt{B}_{\bq}^{+} / \wt{T}_{\bq}$ which
is invariant under the left action of $\wt{T}_{\bq}$ 
and is a reduction of the Poisson structure on $\wt{B}_{\bq}^{+}$ from (a)
under the right action of $\wt{T}_{\bq}$.

\smallbreak
\emph{(d)} The torus orbits of symplectic leaves of $M_{\bq}^+$ are 
the open Richardson varieties of the flag variety $\wt{G}_{\bq}/\wt{B}_{\bq}^{+}$ that lie 
inside an open Schubert cell identified with $\wt{N}_{\bq}^{+}$. 
\end{maintheorem}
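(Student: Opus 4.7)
\emph{Plan for (a).} The inclusion of Hopf algebras $U_\bq^\geqslant \hookrightarrow U_\bq$ restricts to an inclusion $Z_\bq^\geqslant \hookrightarrow \Zc_\bq$ of central Hopf subalgebras, which dualizes to a surjective morphism of algebraic groups $M_\bq \twoheadrightarrow M_\bq^\geqslant$. Combining this with the factorization $M_\bq \simeq \wt B^+_\bq \times \wt B^-_\bq$ provided by Theorem~\ref{mainthm:yak-B}(a), I would identify which central generators of $\Zc_\bq$ survive in $Z_\bq^\geqslant$---namely the powers of the positive root vectors together with the powers of the group-like generators spanning $\wt T_\bq$---and conclude that the quotient map is the projection onto the first factor, giving $M_\bq^\geqslant \simeq \wt B^+_\bq$. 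The induced Poisson bracket would be verified at the Lie bialgebra level: $\Lie M_\bq^\geqslant$ is the quotient Lie bialgebra of $\mm_\bq$ dual to the Lie sub-bialgebra of $\wt\g_\bq\oplus\wt\h_\bq$ corresponding to the empty Belavin--Drinfeld triple on $\wt B^+_\bq$, and hence coincides with the standard Sklyanin bracket on $\wt B^+_\bq$. In this setting $\wt T_\bq$ is a Poisson subgroup carrying the zero bracket, so both left and right translations by $\wt T_\bq$ on $\wt B^+_\bq$ preserve the Poisson structure.

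\emph{Plan for (b).} Under the identification of (a), the $\wt T_\bq$-orbits of symplectic leaves of the standard Poisson--Lie structure on $\wt B^+_\bq$ admit a classical description going back to Hodges--Levasseur--Toro, Kogan--Zelevinsky and Goodearl--Yakimov: they are exactly the double Bruhat cells $\wt G_\bq^{u,v}$ of $\wt G_\bq$ that lie inside $\wt B^+_\bq$, equivalently those with $v = e$. Transferring this description through the isomorphism of (a) delivers the claim.

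\emph{Plan for (c) and (d).} The inclusion $U_\bq^+ \hookrightarrow U_\bq^\geqslant$ produces an inclusion $Z_\bq^+ \hookrightarrow Z_\bq^\geqslant$ and hence a surjection $M_\bq^\geqslant \twoheadrightarrow M_\bq^+$. Its kernel is the subgroup corresponding to the group-like generators absent from $Z_\bq^+$, i.e.\ the full Cartan torus $\wt T_\bq \subset \wt B^+_\bq$, so $M_\bq^+ \simeq \wt B^+_\bq / \wt T_\bq \simeq N^+_\bq$. Since the right $\wt T_\bq$-action on $\wt B^+_\bq$ is Poisson by (a), the Poisson structure on $N^+_\bq$ arises by standard Poisson reduction, and it is left $\wt T_\bq$-invariant because the two translation actions on $\wt B^+_\bq$ commute. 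For (d), I would use the open embedding $N^+_\bq \hookrightarrow \wt G_\bq/\wt B^+_\bq$ onto the big cell and transfer the $\wt T_\bq$-leaf classification on the standard Poisson flag variety---whose $\wt T_\bq$-orbits of symplectic leaves are exactly the open Richardson varieties---back to $N^+_\bq$.

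\emph{Main obstacle.} The leaf classifications in (b) and (d) follow from the cited results once the Poisson structures are pinned down; the core difficulty is therefore the matching of brackets, namely showing that the Poisson structures produced by the Poisson order specialization of Theorem~\ref{mainthm:yak-A} on $U_\bq^\geqslant$ and $U_\bq^+$ agree with the standard Belavin--Drinfeld brackets on $\wt B^+_\bq$ and $N^+_\bq$. This reduces to a Lie bialgebra computation parallel to that of Theorem~\ref{mainthm:yak-B}, but the super $A$ case requires extra care, since the auxiliary abelian factor in $\wt\g_\bq = \g_\bq\oplus\Cset$ enters only the torus part and one must check that it yields a trivial contribution to the reduced bracket on $N^+_\bq$ so as not to obstruct the identifications in (c) and (d).
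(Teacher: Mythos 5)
Your overall architecture is reasonable---use the Lie bialgebra identification of Theorem~B to pin down the Poisson structure, then transfer known leaf classifications through the group isomorphisms, then descend to $N^+_\bq$ by reduction. But there are two genuine errors that would derail the argument.

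\textbf{The Poisson structure is not standard.} You assert that the Lie bialgebra on $\wt\bg_\bq^+$ ``coincides with the standard Sklyanin bracket'' and that the bracket matching needed is ``with the standard Belavin--Drinfeld brackets.'' The paper proves exactly the opposite: Theorem~\ref{thm:mm*-Manin} and Remark~\ref{rem:bialg-str}(c) show the Lie bialgebra corresponds to an empty BD triple with an \emph{arbitrary} continuous parameter, and the opening of Section~\ref{sec:Poisson-geom} explicitly warns that ``the Poisson structures in Remark~\ref{rem:bialg-str} are not of standard type in general and the results in this section can not be deduced from \cite{HL,DKP,GY}.'' The continuous parameter is encoded by the possibly non-symmetric matrix $\WP^{\bqf}$, and the group $\wt{C}_\bq$ of \eqref{eq:Cbq} is non-trivial precisely when $\wt\WP \ne \wt\WP^{\,\mathrm T}$. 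So your plan for (b) and (d)---transferring the Hodges--Levasseur--Toro / Goodearl--Yakimov leaf classifications for the standard structure---is not applicable as stated. The paper instead invokes the general Lu--Yakimov results \cite{LY} (Lemma~2.12, Theorem~2.7, Proposition~2.15) on regular Poisson partitions, which apply for any continuous parameter. The conclusion (double Bruhat cells, open Richardson varieties) happens to be the same as in the standard case, but you need a different argument to get there.

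\textbf{The torus invariance does not follow from $\wt T_\bq$ having zero bracket.} In a Poisson--Lie group, left translations almost never preserve the Poisson bivector, since it vanishes at the identity; the vanishing of the bracket on the maximal torus has nothing to do with whether left/right translations by the torus are Poisson automorphisms. The correct reason is \cite[Lemma~2.12]{LY}: the torus action preserves the Poisson structure because $\wt T_\bq$ lies in the normalizer, inside the double $\wt G_\bq\times\wt T_\bq$, of the dual Lie algebra $(\mm_\bq^\geqslant)^*$. That is a normalizer condition on the Manin triple, not a statement about the bracket on $\wt T_\bq$.

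Two smaller points. First, the paper proves Theorem~C(a), i.e.\ $M_\bq^\geqslant\simeq\wt B_\bq^+$, \emph{before} deducing $M_\bq\simeq\wt B_\bq^+\times\wt B_\bq^-$, by integrating the adjoint action of $M_\bq^\geqslant$ on $\mm_\bq^\geqslant\simeq\wt\bg_\bq^+$ (Theorem~\ref{thm:Ms-to-Bs}); running your plan the other way (deducing (a) as a projection of Theorem~B(a)) risks a circularity unless you supply a proof of the factorization that does not already use (a). Second, the genuinely hard content---the explicit computation in Theorems~\ref{thm:mm*} and~\ref{thm:mm*-Manin} identifying $\mm_\bq^*$ with $\wt\g_\bq\oplus\wt\h_\bq$ via the pairing of restricted and non-restricted integral forms (Propositions~\ref{prop:evaluation-Ures} and \ref{prop:lem2})---is glossed over in your plan; without it you cannot produce either the Cartan matrix identification or the continuous parameter.

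Your plan for (c) is essentially correct and matches the paper: $Z_\bq^+$ is the algebra of coinvariants for the $Z_\bq^{0+}$-coaction on $Z_\bq^\geqslant$ (see~\eqref{eq:P-hom-iso}), giving $M_\bq^+\simeq M_\bq^\geqslant/\tau_+^{-1}(\wt T_\bq)\simeq N^+_\bq$, with the reduced Poisson structure $\wt T_\bq$-invariant because the left and right actions commute.
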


See Theorems \ref{Poisson+} and \ref{Poisson-unip}. We refer to \cite{FZ,KLS} 
for  information on double Bruhat cells and open Richardson varieties.
Here we recall briefly the definitions.

Let $v,w\in W$.
The corresponding double Bruhat cell 
  is $\wt{G}_{\bq}^{v,w} := \wt{B}^+_{\bq}v\wt{B}^+_{\bq}\cap \wt{B}^{-}_{\bq} w \wt{B}^{-}_{\bq}$. 
These cells form a partition:
$\wt{G}_{\bq} = \bigsqcup _{v,w\in W} \wt{G}_{\bq}^{v,w}$.

In turn the corresponding open Richardson variety is 
$\mathring{X}^v_w =   \mathring{X}^v \cap \mathring{X}_w \subset \wt{G}_{\bq}/ \wt{B}_{\bq}^+$
where $\mathring{X}_w = \wt{B}^-_{\bq}w / \wt{B}_{\bq}^+$ and $\mathring{X}^v  = \wt{B}^+_{\bq}v / \wt{B}_{\bq}^+$ are the Schubert cell and the opposite Schubert cell corresponding to $w$ and $v$ respectively.

\subsubsection{Representations}
 Since $U_{\bq}$ is a free $Z_{\bq}$-module of finite rank, it
is a PI-algebra. Let $V$ be an irreducible representation of $U_{\bq}$;
by the preceding $V$ is finite-dimensional and by the Schur Lemma, $Z_{\bq}$ acts on $V$ by 
some $z \in M_{\bq}$ (a central character) with corresponding maximal ideal $\Mg_z$.
Now the algebra $\hopf_z = U_{\bq}/  U_{\bq}\Mg_z$ is non-zero and finite-dimensional and 
$V$ becomes a $\hopf_z$-module. In other words the irreps of $U_{\bq}$ with central character $z$
are in bijective correspondence with the irreps of $\hopf_{z}$. Thus
\begin{align*}
\Irr U_{\bq} = \bigsqcup_{z \in M_{\bq}} \Irr \hopf_z.
\end{align*}
This circle of ideas is already present in \cite{DP}.
In this way, Part \ref{item-mainth:findimalg-reps} 
of Theorem \ref{mainth:summary} boils down to the following statement.

\begin{maintheorem}\label{mainthm:yak-Bc}
For every two points $z,z'$ in the same symplectic leaf of 
$M_{\bq}$, the algebras $\hopf_z$ and $\hopf_{z'}$ are isomorphic. In particular 
there is a dimension preserving bijection between the 
irreps of $U_{\bq}$ with central characters $z$ and $z'$. 
\end{maintheorem}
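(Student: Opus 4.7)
The plan is to deduce Theorem~\ref{mainthm:yak-Bc} from the general isomorphism theorem for Poisson orders of De~Concini--Kac--Procesi, formalized by Brown--Gordon in \cite{BG}, whose conclusion is exactly that the fibers of a module-finite Poisson order over two points of a common symplectic leaf are isomorphic as algebras.

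First, I would invoke Theorem~\ref{mainthm:yak-A}: the pair $(U_{\bq},\Zc_{\bq})$ carries a Poisson order structure, i.e.\ a $\ku$-linear map $\partial\colon \Zc_{\bq}\to\Der_{\ku}(U_{\bq})$ with $\partial_z|_{\Zc_{\bq}}=\{z,-\}$, and $U_{\bq}$ is module-finite over $\Zc_{\bq}$. This is all the input that is needed for the Brown--Gordon machine to run.

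Next, given $z,z'$ in the same symplectic leaf $S$ of $M_{\bq}$, I would connect them by a finite concatenation of local Hamiltonian flows: there exist $f_1,\dots,f_k\in \Zc_{\bq}$ and analytic parameters $t_1,\dots,t_k$ such that $z'=\phi_{f_k,t_k}\cdots\phi_{f_1,t_1}(z)$, where $\phi_{f,t}$ denotes the flow of the Hamiltonian vector field $\{f,-\}$. For each $i$, the derivation $\partial_{f_i}\in\Der_{\ku}(U_{\bq})$ lifts $\{f_i,-\}$ from $\Zc_{\bq}$ to $U_{\bq}$, and I would integrate it to a one-parameter family of algebra isomorphisms between completions
\[
U_{\bq}\otimes_{\Zc_{\bq}}\widehat{\Oc}_{M_{\bq},z}\;\simeq\;U_{\bq}\otimes_{\Zc_{\bq}}\widehat{\Oc}_{M_{\bq},\phi_{f_i,t_i}(z)}
\]
covering $\phi_{f_i,t_i}$ on the base. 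Quotienting by the appropriate maximal ideals $\Mg_z$ and $\Mg_{\phi_{f_i,t_i}(z)}$ then gives $\hopf_z\simeq\hopf_{\phi_{f_i,t_i}(z)}$, and concatenating along the chain yields the desired $\hopf_z\simeq\hopf_{z'}$. The resulting bijection on irreducible modules, with preserved dimensions, is automatic from the isomorphism of finite-dimensional algebras.

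The main technical obstacle is the integration step, since a generic $\partial_{f_i}$ is not expected to be locally nilpotent, so one cannot simply form $\exp(t\partial_{f_i})$ inside $U_{\bq}$ itself. This is circumvented by the fact that $U_{\bq}$ is a finitely generated projective $\Zc_{\bq}$-module on a Zariski neighborhood of $z$: on a free local basis the action of $\partial_{f_i}$ becomes a matrix ODE with coefficients in $\widehat{\Oc}_{M_{\bq},z}$, and this ODE has a unique convergent (or formal) exponential solution providing the required family of isomorphisms. Since this is precisely the mechanism underlying the isomorphism theorem of \cite{BG}, Theorem~\ref{mainthm:yak-Bc} reduces to invoking it once Theorem~\ref{mainthm:yak-A} is in hand.
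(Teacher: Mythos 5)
Your proposal is correct and follows the same route as the paper: once the Poisson order structure on $(U_{\bq},\Zc_{\bq})$ is in place (Theorem~\ref{mainthm:yak-A}), the claim is exactly the conclusion of the Brown--Gordon isomorphism theorem \cite[Theorem 4.1]{BG}, which is precisely how the paper proves it in Theorem~\ref{Poisson0}(b). The extra detail you give about integrating Hamiltonian flows is really a sketch of the proof of \cite[Theorem 4.1]{BG} itself, which the paper simply cites.
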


See Theorem \ref{Poisson0}. 
For instance, let $z = e$ be the identity of $M_{\bq}$. Then its symplectic leaf is $S =\{e\}$
and $\hopf_S = \hopf_e$ is the Drinfeld double of a suitable bosonization of the Nichols algebra $\toba_{\bq}$.
Assume that the matrix $\bq$ is of Cartan type. Then $\hopf_e$ is a variation of
the small quantum group of Lusztig (with an extra copy of the finite torus), 
with a notoriously difficult representation theory treated intensively in the literature.
Also, arguing as in \cite{DP} one concludes that $U_{\bq}$ is a maximal order. It is a domain, hence a prime algebra.
It follows that for generic $z$, $\hopf_{z}$ is semisimple by \cite[Theorem III.1.7]{BGb}.
But for super and modular types,  the representation theory of $\hopf_e$ is largely unknown, 
except for the somewhat standard fact that simple modules are classified by highest weights 
(but there is not even a conjecture for their characters). Also, $U_{\bq}$ is not a maximal order because 
it has nilpotent elements.

\bigbreak
We next write down the corresponding formulations for $M_{\bq}^\geqslant$, $M_{\bq}^\leqslant$, $M_{\bq}^+$ and $M_{\bq}^-$.
Let $\star \in \{\geqslant, \leqslant, +, -\}$. If $z \in M_{\bq}^{\star}$, then we denote by 
$\Mg_z^{\star}$ its maximal ideal in $Z_{\bq}^{\star}$ and
\begin{align}\label{eq:def-small-algebra}
\hopf_z^{\star} &= U_{\bq}^{\star}/ U_{\bq}^{\star} \left(\Mg_z ^{\star}\right).
\end{align}

Clearly these are finite-dimensional algebras.

\begin{maintheorem}\label{mainthm:yak-Cc}
\emph{(a)} For every $z,z'$ in the same double Bruhat cell inside $\wt{B}_{\bq}^{+}$, 
the algebras $\hopf^{\geqslant}_{z}$ and $\hopf^{\geqslant}_{z'}$ are isomorphic.
Analogously for $\hopf^{\leqslant}_{z}$ and $\hopf^{\leqslant}_{z'}$.

\smallbreak
\emph{(b)} For every $z,z'$ in the same open Richardson variety, 
the algebras $\hopf^{+}_{z}$ and $\hopf^{+}_{z'}$ are isomorphic.
Analogously for $\hopf^{-}_{z}$ and $\hopf^{-}_{z'}$.
\end{maintheorem}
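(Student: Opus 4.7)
The plan is to mirror the argument behind Theorem \ref{mainthm:yak-Bc}, combining the Brown--Gordon principle for Poisson orders with the natural $\wt{T}_\bq$-symmetry of the setup. Theorem \ref{mainthm:yak-A} asserts that $(U_\bq^{\geqslant}, Z_\bq^{\geqslant})$ and $(U_\bq^{+}, Z_\bq^{+})$ are Poisson orders with the top algebra module-finite over the central subalgebra; the general machinery of \cite{BG} (see \cite[Theorem III.4.2]{BGb}) then yields $\hopf^{\geqslant}_z \simeq \hopf^{\geqslant}_{z'}$ and $\hopf^{+}_z \simeq \hopf^{+}_{z'}$ whenever $z,z'$ lie in a common symplectic leaf of $M_\bq^{\geqslant}$, respectively $M_\bq^{+}$. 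Since Theorem \ref{mainthm:yak-C}(b) and (d) describes double Bruhat cells and open Richardson varieties as \emph{torus orbits} of symplectic leaves, it suffices to upgrade these isomorphisms to be invariant under the full $\wt{T}_\bq$-action.

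For part (a), use the bosonization structure $U_\bq^{\geqslant} \simeq \dpn_{\bq} \# \Cset[\Gamma]$, which exhibits a group algebra of grouplikes whose spectrum is identified with $\wt{T}_\bq$. Conjugation by these grouplikes gives a rational action of $\wt{T}_\bq$ on $U_\bq^{\geqslant}$ by Hopf algebra automorphisms preserving the central Hopf subalgebra $Z_\bq^{\geqslant}$; by the $\wt{T}_\bq$-invariance of the Poisson structure on $M_\bq^{\geqslant} \simeq \wt{B}_\bq^{+}$ stated in Theorem \ref{mainthm:yak-C}(a), the induced action on $M_\bq^{\geqslant}$ coincides with the translation action of $\wt{T}_\bq \subset \wt{B}_\bq^{+}$. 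Hence every $t \in \wt{T}_\bq$ yields an isomorphism $\hopf^{\geqslant}_z \xrightarrow{\sim} \hopf^{\geqslant}_{t \cdot z}$, which together with the symplectic-leaf isomorphism connects any two points of a common double Bruhat cell. The case of $\hopf^{\leqslant}$ is entirely symmetric via the opposite Borel.

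For part (b), the torus $\wt{T}_\bq$ acts on $U_\bq^{+} \simeq \dpn_{\bq}$ by restriction of the adjoint action from $U_\bq^{\geqslant}$, equivalently via the characters attached to the diagonal braiding grading of $\dpn_{\bq}$. This action preserves $Z_\bq^{+}$, and under $M_\bq^{+} \simeq N_\bq^{+} \simeq \wt{B}_\bq^{+}/\wt{T}_\bq$ from Theorem \ref{mainthm:yak-C}(c) it descends via Poisson reduction to the conjugation action of $\wt{T}_\bq$ on $N_\bq^{+}$ appearing in Theorem \ref{mainthm:yak-C}(d). Combining the resulting $\wt{T}_\bq$-equivariant isomorphisms with the Brown--Gordon isomorphism along symplectic leaves proves the statement for $\hopf^{+}$; the case of $\hopf^{-}$ follows by symmetry.

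The main obstacle is the compatibility verification in part (b): one must track the $\wt{T}_\bq$-action on $U_\bq^{+}$ through the bosonization $U_\bq^{\geqslant} \simeq U_\bq^{+} \# \Cset[\Gamma]$ and the Poisson reduction $\wt{B}_\bq^{+} \twoheadrightarrow \wt{B}_\bq^{+}/\wt{T}_\bq$, and check that the resulting geometric action matches the torus action whose orbits on symplectic leaves produce the open Richardson varieties in Theorem \ref{mainthm:yak-C}(d). Once this identification of torus actions is in place, the remainder is a routine application of the Brown--Gordon formalism to the Poisson orders of Theorem \ref{mainthm:yak-A}, combined with the explicit geometric descriptions of Theorem \ref{mainthm:yak-C}.
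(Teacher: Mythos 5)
Your overall strategy matches the paper's: establish the Poisson order structures (Theorem~\ref{thm:Poisson-orders}), invoke the Brown--Gordon isomorphism across symplectic leaves, and then upgrade to torus orbits of leaves (double Bruhat cells, open Richardson varieties) by exhibiting a torus action on $U_\bq^{\geqslant}$ and $U_\bq^{+}$ by algebra automorphisms whose induced action on the central subalgebra realizes the torus action on the Poisson variety. This is exactly the content of the proofs of Theorems~\ref{Poisson+}(b) and \ref{Poisson-unip}(b).

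There is, however, a genuine gap in the mechanism you propose for producing that torus action. You assert that ``conjugation by these grouplikes gives a rational action of $\wt{T}_\bq$ on $U_\bq^{\geqslant}$ by Hopf algebra automorphisms.'' Conjugation by $K_i \in \Gamma^+$ acts on the root vector $E_j$ by the braiding scalar $q_{ij}$, which at a root of unity has finite order; indeed $K_i^{N}$ is already central for $N$ a suitable multiple of the $N_j$. Consequently the conjugation action of the lattice $\Gamma^+$ on $U_\bq^{\geqslant}$ factors through a \emph{finite} quotient, and its Zariski closure in $\Aut(U_\bq^{\geqslant})$ is a finite group, not a torus. So conjugation by grouplikes cannot produce a $\wt{T}_\bq$-action, and the subsequent identification with the translation action of $\wt{T}_\bq \subset \wt{B}_\bq^+$ on $M_\bq^{\geqslant}$ fails as stated. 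The same confusion recurs in part (b), where ``the adjoint action from $U_\bq^{\geqslant}$'' and ``the characters attached to the diagonal braiding grading'' are treated as equivalent, although only the latter (suitably interpreted) gives a continuous torus action.

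What the paper actually uses is not conjugation but the $\Z^{\I}$-grading of $U_\bq^{\geqslant}$ and $U_\bq^{\pm}$ with $\deg E_i = -\deg F_i = \alpha_i$ and $\deg K_i = \deg L_i = 0$. This grading canonically gives an action of $(\Cset^\times)^{\I}$ by algebra automorphisms — a rescaling action on homogeneous components, not an inner action — which visibly preserves the central subalgebras $Z_\bq^{\geqslant}$, $Z_\bq^{\pm}$, and one checks by direct comparison that on $\MaxSpec Z_\bq^{\geqslant} = M_\bq^{\geqslant} \simeq \wt{B}_\bq^+$ this realizes the left $\tau_+^{-1}(\wt{T}_\bq)$-translation action, and on $M_\bq^+ \simeq \wt{B}_\bq^+/\wt{T}_\bq$ the induced left torus action. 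Replacing your conjugation mechanism by this grading-induced $(\Cset^\times)^{\I}$-action closes the gap and yields exactly the paper's argument; the remaining ingredients you list (Poisson orders, the Brown--Gordon theorem, and Theorem~\ref{mainthm:yak-C}(b),(d)) are applied correctly.
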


See Theorems \ref{Poisson+} and \ref{Poisson-unip}. 

\medbreak
Notice also that $\hopf_{z}$ is a Hopf-Galois $\hopf_e$-object since $U_{\bq}$ is a 
cleft $\hopf_{\varepsilon}$-comodule algebra, see \S \ref{subsec:hopf-systems}.
Analogously,  
$\hopf_{z}^{\star}$ is a Hopf-Galois $\hopf_e^{\star}$-object  for $\star \in \{\geqslant, \leqslant\}$.

\subsection{Strategy and organization}
Our proofs of Theorems \ref{mainthm:yak-B}--\ref{mainthm:yak-Cc} follow a  different strategy from that of
\cite{DK, DKP, DP}. These papers  
rely on direct computations of Poisson brackets in terms of coordinates coming from Cartesian products of one-parameter unipotent 
groups and subsequent reductions to the rank 2 case. This approach does not work in
the more general context of \S \ref{subsec:largeQG} for several reasons, 
the simplest of which is that the quantum Serre relations for quantum supergroups or for quantum groups at $-1$
 involve more than two Chevalley generators. 

\smallbreak
Instead our approach is based on intrinsic properties of pairings between restricted and 
non-restricted integral forms of Hopf algebras. 
It does not rely on reduction to low rank cases. 
In particular, this approach provides new proofs of results in \cite{DK, DKP, DP}.
We expect that these ideas could be applied to other situations not covered in this paper.

\smallbreak
Next we overview briefly the main steps of the strategy:

\begin{step}
Let $\Cset(\nu)$ be the field of rational functions on $\nu$ and $\Acf$ the subalgebra  defined in \eqref{eq:A-alg}.
Since $\bq$ belongs to a family, there exists a chain of $\Cset(\nu)$-algebras
\begin{align*}
U_{\bqf}^+ \subset U_{\bqf}^{\geqslant} \subset U_{\bqf}
\end{align*}
and non-restricted integral forms over  $\Acf$  
\begin{align*}
U_{\bqf, \Acf}^+ \subset U_{\bqf, \Acf}^{\geqslant} \subset U_{\bqf, \Acf}
\end{align*}
such that the algebras $U_{\bq}^+ \subset U_{\bq}^{\geqslant} \subset U_{\bq}$ arise as specializations 
from these integral forms. This provides Poisson order structures on the pairs $(U_{\bq}^+, \ZZ(U_{\bq}^+))$, $(U_{\bq}^{\geqslant}, \ZZ(U_{\bq}^{\geqslant}))$
and  $(U_{\bq}, \ZZ(U_{\bq}))$. 
This step is carried out in Section \ref{sec:spec} in the framework of \cite{DP,BG} evoked in Section \ref{sec:poisson}.
\end{step}

\begin{step} We use Theorem \ref{thm:poisson-structure-ker} (on the restriction of Poisson order structures 
obtained from specialization to central subalgebras) to prove that the Poisson order structures on 
$(U_{\bq}^+, \ZZ(U_{\bq}^+))$ and $(U_{\bq}^{\geqslant}, \ZZ(U_{\bq}^{\geqslant}))$ restrict to 
$(U_{\bq}^+, Z_{\bq}^+)$ and $(U_{\bq}^{\geqslant}, Z_{\bq}^{\geqslant})$. 
To get a Poisson order structure on $(U_{\bq}, Z_{\bq})$ by restriction from $(U_{\bq}, \ZZ(U_{\bq}))$,
we need first to establish in Theorem \ref{th:Zq-invariant-under-Ti}
that the Weyl groupoid action preserves the central subalgebras $Z_{\bq}$. 
Along the way we also obtain that these Poisson structures 
on the  algebras $Z_{\bq}$ are equivariant under the Weyl groupoid. This step is carried out in Section \ref{sec:Pord}.
\end{step}

\begin{step} This is the matter of Section \ref{sec:PLgrps}
We introduce in \S \ref{subsec:Luszig-pm} non-restricted integral forms $U_{\bqf,\Acf}^{\res \pm}$ of $U_\bqf^\pm$ and $\Acf$-linear perfect pairings 
$U_{\bqf,\Acf}^{\res \pm}\times U_{\bqf,\Acf}^{\mp}\to \Acf$.  We prove 
\begin{enumerate}[leftmargin=*,label=\rm{(\roman*)}]
\item the specializations of $U_{\bqf,\Acf}^{\res \pm}$ are isomorphic to the Lusztig algebras 
defined in \cite{AAR-MRL}, see Proposition \ref{prop:evaluation-Ures};

\smallbreak
\item the cobrackets of the tangent Lie bialgebras to $M_{\bq}^\geqslant$ and 
$M_{\bq}^\leqslant$ are linearizations of those specializations, see Proposition \ref{prop:lem2}.
\end{enumerate}
In this way we control tangent Lie bialgebras intrinsically and consequently
we compute in Theorems \ref{thm:mm*} and \ref{thm:mm*-Manin} the tangent Lie bialgebras of the 
Poisson algebraic groups $M_{\bq}$, $M_{\bq}^\geqslant$ and $M_{\bq}^\leqslant$ by means of a Manin pair. 
Since these algebraic groups are connected we   
describe them as Poisson algebraic groups in terms of Borel subgroups of complex semisimple algebraic groups of adjoint type. 
Also, $M_{\bq}^\pm$ are presented as Poisson homogeneous spaces. 
\end{step}

\smallbreak
Finally, we discuss in Section \ref{sec:Poisson-geom} 
the Poisson geometry of the Poisson algebraic groups $M_{\bq}$, $M_{\bq}^\geqslant$ and the Poisson homogeneous 
space $M_{\bq}^+$, and the applications to the irreducible representations of $U_{\bq}$, $U_{\bq}^\geqslant$ and $U_{\bq}^+$. 

Besides, we discuss in Section \ref{sec:poisson} Poisson orders and their restrictions 
to central subalgebras, see Theorem \ref{thm:poisson-structure-ker}; 
Section \ref{sec:braided-bosonization} is devoted to preliminaries on Hopf algebra theory
while we present the main actors of this paper in Section \ref{sec:large}.

An in depth study of the restricted and nonrestricted integral forms of multiparameter 
quantum groups of Cartan type at roots of unity was carried out in \cite{GG}, based on the interpretation 
of those algebras as cocycle twists of the standard ones \cite{DK,DKP}. 
The authors completely describe the specialization at $1$ as the Poisson algebra of regular functions
on an explicit Poisson algebraic group and construct a Frobenius map which amounts to 
a Hopf algebra isomorphism between the specialization at $1$ and a central subalgebra 
of the specialization at a root of unity. Our results show that this is an isomorphism of Poisson algebras 
and that in the root of unity case it gives rise to a Poisson order structure.

\subsection*{Acknowledgements} This project started in visits of M. Y. to the University of  C\'ordoba in September 2017 and December 2018 supported by the program of guest professors of FaMAF. It was continued during visits  of I. A. and N. A. in February 2019 to the Lousiana State University. Progress on this material was reported at the plenary talk of I. A. at the XXIII \emph{Coloquio Latinoamericano de \'Algebra}, Mexico City (2019) 
and at the talk of M. Y. at the International conference on Hopf algebras, Nanjing (2019). 
We are grateful to the referee whose valuable suggestions helped us to improve the paper.

\subsection*{Notations}
The base field is $\ku$; all algebras, Hom's and tensor products are over $\ku$. 
If $t \in\N_0$, $n \in \N$ and $t < n$, then $\I_{t, n} := \{t, t+1,\dots,n\}$,
$\I_{n} := \I_{1, n}$.

For each integer $N>1$, let $\G_N$  be
the group of $N$-th roots of unity in $\Cset$ and let $\G'_N$ be its subset of primitive roots (of order $N$).
Also  $\G_{\infty} = \bigcup_{N \in \N} \G_N$, $\G'_{\infty} =\G_{\infty} - \{1\}$.

\section{Poisson orders and restrictions to central subalgebras} \label{sec:poisson}
This section contains background on Poisson orders, their construction from specializations, and their relations to 
Hopf algebras. We prove a general result on restrictions of Poisson orders to central subalgebras, Theorem \ref{thm:poisson-structure-ker}, which plays a key role later.
\subsection{Poisson orders}
Here we follow the exposition in \cite[Chapter 3, \S 11]{DP}. 
Consider
\begin{itemize}[leftmargin=*]\renewcommand{\labelitemi}{$\circ$}
\item a commutative $\ku$-algebra $A$ and $h \in A$ such that $A/h \simeq \ku$,

\item an $A$-algebra $U$ such that $h$ is not a zero divisor of $U$. The natural map $U \to U/(h)$ is denoted by $x \mapsto \overline{x}$.
\end{itemize}
For any $u \in U$ such that $\overline{u} \in \ZZ(U/(h))$ there is a linear map  $D_u \in \Hom U/(h)$ given by
\begin{align}\label{eq:der_a-gral}
D_u (y) &= \overline{\frac{[u,v]}{h}}, & \text{if } y &= \overline{v}.
\end{align}

\begin{proposition}\label{prop:der_a-gral} \cite[11.7]{DP} Let $u \in U$ such that $\overline{u} \in \ZZ(U/(h))$.
\begin{enumerate}[leftmargin=*,label=\rm{(\alph*)}]
\item\label{item:der_a-gral1} $D_u \in \Der U/(h)$. 

\medbreak
\item\label{item:der_a-gral2} Let $w \in U$. If $u' = u + hw$ so that $\overline{u} = \overline{u'}$, then $D_u  - D_{u'} = \ad \overline{w}$ is an inner derivation.
Conversely  the inner derivation $\ad \overline{w}$ coincides with $D_{hw}$.

\medbreak
\item\label{item:der_a-gral3}  Let $\varphi \in \Aut_{A-\rm{alg}} (U)$ and let $\overline{\varphi}$ be the induced automorphism of $U/(h)$. Then
\begin{align*}
\overline{\varphi} \circ D_u \circ \overline{\varphi}^{\, -1} &= D_{\varphi(u)}. 
\end{align*}

\medbreak
\item\label{item:der_a-gral4} There is natural Poisson structure on $\ZZ := \ZZ(U/(h))$ given by
\begin{align}\label{eq:Poisson-str-center}
\{x, y\} = D_u (y) &= \overline{\frac{[u,v]}{h}}, & \text{if } x &= \overline{u}, y = \overline{v}.
\end{align}

\medbreak
\item\label{item:der_a-gral5} 
The map $\varphi \mapsto \overline{\varphi}$ gives a group homomorphism $\Aut_{A-\rm{alg}} (U) \to \Aut_{\rm{Poisson}} (\ZZ)$.

\medbreak
\item\label{item:der_a-gral6}  $\LL = \{D_v: v \in U, \overline{v} \in \ZZ \}$
is a Lie subalgebra of $\Der U/(h)$. Indeed 
\begin{align*}
[D_u, D_v] &= D_{\frac{[u,v]}{h}}, & v \in U, \, \overline{v} &\in \ZZ.
\end{align*}

\medbreak\item\label{item:der_a-gral7} 
The Poisson structure gives rise to a Lie subalgebra $\LL'$ of $\Der\ZZ$ that fits into the complex  
\begin{align}\label{eq;exactseq-DCP}
\xymatrix@C=10pt{0 \ar[rr] & & \Inder (U/(h)) \ar[rr] & & \LL \ar[rr] & & \LL' \ar[rr] & & 0. }
\end{align}
The sequence \eqref{eq;exactseq-DCP} is exact if and only if the Poisson center of $\ZZ$ is trivial (i.e., there are no \emph{Casimir elements} except 0). \qed
\end{enumerate}
\end{proposition}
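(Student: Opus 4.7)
The plan is to establish all seven statements from a single observation: since $\overline{u}\in\ZZ(U/(h))$ means $[u,U]\subseteq hU$ and $h$ is not a zero divisor, the quotient $[u,v]/h$ is a well-defined element of $U$. Its image in $U/(h)$ depends only on $\overline{v}$: if $v-v'=hw$, then $[u,v-v']/h=[u,w]$, whose image is $[\overline{u},\overline{w}]=0$ by centrality of $\overline{u}$. This justifies the definition of $D_u$ in \eqref{eq:der_a-gral} and is the basic tool throughout.

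For (a), the Leibniz rule follows by dividing $[u,v_1v_2]=[u,v_1]v_2+v_1[u,v_2]$ by $h$ and reducing mod $h$. Part (b) comes from expanding $[u+hw,v]/h=[u,v]/h+[w,v]$, whose image in $U/(h)$ identifies $D_u-D_{u+hw}$ with (up to a sign convention) the inner derivation $\ad\overline{w}$. For (c), since $\varphi$ is $A$-linear it fixes $h$ and commutes with division by $h$, so $\varphi([u,v]/h)=[\varphi(u),\varphi(v)]/h$, and reducing yields the conjugation identity. Part (d) is then immediate from (b): different lifts of $x\in\ZZ$ produce $D_u$'s differing by inner derivations, which vanish on $\ZZ$, so $\{x,y\}$ is well-defined; antisymmetry, the Jacobi identity, and the Leibniz rule all descend from the commutator bracket modulo $h$. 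Part (e) combines (c) and (d).

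The computational crux is (f). One first checks that $\overline{[u,v]/h}\in\ZZ$ whenever $\overline{u},\overline{v}\in\ZZ$: writing $[v,w]=hr_1$ and $[u,w]=hr_2$, the Jacobi identity gives $[[u,v],w]=[u,hr_1]-[v,hr_2]=h([u,r_1]-[v,r_2])\in h^2U$, since $[u,r_1],[v,r_2]\in hU$ by centrality of $\overline{u},\overline{v}$. The same expansion, divided by $h^2$ and reduced, yields the bracket identity $[D_u,D_v]=D_{[u,v]/h}$. Together with (d), this shows that $\LL$ is a Lie subalgebra of $\Der(U/(h))$ and that the restriction map $D_u\mapsto D_u|_\ZZ=\{\overline{u},-\}$ is a Lie algebra homomorphism onto a subspace $\LL'\subseteq\Der\ZZ$ of Hamiltonian derivations.

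I expect part (g) to be the main obstacle. Surjectivity onto $\LL'$ is built into the definition, and inner derivations of $U/(h)$ lie in the kernel of the restriction because they vanish on $\ZZ$; together these already give the complex \eqref{eq;exactseq-DCP}. The subtle point is exactness at $\LL$: an element $D_u$ with $\overline{u}$ Poisson-central in $\ZZ$ (a Casimir) gives $D_u|_\ZZ=0$, so the kernel is controlled by the parametrization $\ZZ\to\LL/\Inder$, $\overline{u}\mapsto [D_u]$, provided by (b). Exactness therefore amounts to showing that this map induces an isomorphism between the Poisson center of $\ZZ$ and the quotient of the kernel by $\Inder$, so that triviality of Casimirs is equivalent to the equality $\Inder=\ker$. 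The careful bookkeeping of what ``Casimir'' means for lifted elements — and ruling out non-inner derivations that vanish on $\ZZ$ only when the Poisson center collapses — is the nontrivial part of the argument.
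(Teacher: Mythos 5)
The paper does not give a proof of this proposition; it is quoted verbatim from \cite[11.7]{DP}, so your sketch is supplying an argument the authors take for granted. Your treatment of \ref{item:der_a-gral1}--\ref{item:der_a-gral6} is correct and is the natural, and presumably intended, line of reasoning: the key points are that $[u,v]/h$ is well-defined because $[u,U]\subset hU$ and $h$ is regular, that its class mod $h$ depends only on $\overline v$, that $\varphi$ being $A$-linear fixes $h$, and that the Jacobi identity gives both $\overline{[u,v]/h}\in\ZZ$ (dividing by $h$ once) and $[D_u,D_v]=D_{[u,v]/h}$ (dividing by $h$ twice, at which point the Jacobi relation holds on the nose, with no further reduction mod $h$). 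Your parenthetical about the sign in \ref{item:der_a-gral2} is warranted: with $u'=u+hw$ one gets $D_u-D_{u'}=-\ad\overline w$ if $\ad\overline w=[\overline w,\cdot\,]$, so the paper's sign reflects either the opposite $\ad$-convention or a slip copied from \cite{DP}; the companion statement $D_{hw}=\ad\overline w$ is the one actually used later.

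In \ref{item:der_a-gral7} there is a genuine gap, and you flag it yourself. That the diagram is a complex is easy, as you say: $\Inder(U/(h))\subseteq\LL$ via $\ad\overline w=D_{hw}$, and inner derivations kill $\ZZ$. That triviality of the Poisson center implies exactness at $\LL$ is also implicit in your observations: $D_u\in\ker(\LL\to\LL')$ forces $\overline u$ to be Poisson-central, hence $\overline u=0$, hence $u\in hU$, hence $D_u$ inner. What you do not show is the converse, and your proposed reduction, that the map $\overline u\mapsto[D_u]\in\LL/\Inder$ restricts to an \emph{isomorphism} from the Poisson center onto $\ker(\LL\to\LL')/\Inder$, is only known to be a surjection from what precedes. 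Injectivity is exactly the missing step: $D_u\in\Inder$ means $D_{u-hw}=0$ for some $w$, i.e.\ $[u-hw,U]\subseteq h^2U$, and this does not force $\overline u=0$ in general. Indeed if $U$ is commutative every $D_u$ is the zero map and the complex collapses to $0\to 0\to 0\to 0\to 0$, which is exact even though $\ZZ=U/(h)$ is entirely Poisson-central; so the ``only if'' direction needs a hypothesis (or a subtler argument) that neither your sketch nor the bare statement of the proposition supplies.
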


Brown and Gordon \cite{BG} axiomatized the ingredients of the above setting as follows:
\begin{definition}\label{def:Porders} A pair of $\ku$-algebras $(R, Z)$ is called a \emph{Poisson order} if $Z$ is a central subalgebra of $R$, $R$ is a  finitely generated $Z$-module  and the following  conditions hold:
\begin{enumerate}[leftmargin=*,label=\rm{(\alph*)}]
\item $Z$ is equipped a structure of Poisson algebra $\{\cdot, \cdot\}$; 
\item There exists a linear map $D : Z \to \Der_\ku(R)$ such that $D_z|_Z = \{ z, - \}$ for all $z \in Z$.
\end{enumerate}
\end{definition}
Reshetikhin, Voronov and Weinstein defined earlier a related notion of a {\em{Poisson fibered algebra}}, see \cite[Definition 2.1]{RVW}.  
In the above terminology, such an algebra is a Poisson order with the additional property that
\[
D_{z_1 z_2} (r) = z_1 D_{z_2}(r) + z_2 D_{z_1}(r) \quad \mbox{for all} \; \; z_1, z_2 \in Z, r \in R.
\]

Proposition \ref{prop:der_a-gral} proves that the pair $(U/(h), \ZZ(U/(h)))$ has a canonical 
structure of Poisson order when $U/(h)$ is module finite over $\ZZ(U/(h))$. The Poisson bracket on $\ZZ(U/(h))$ is given by \eqref{eq:Poisson-str-center}. 
The linear map $D$ is the map induced from the one in \eqref{eq:der_a-gral} by taking a linear section
of the canonical projection $U \to U/(h)$.
\medskip

The main application of Poisson orders for us is the following result, inspired by \cite[Cor. 11.8]{DP}, \cite[Cor. 9.2]{DL}. 
Assume that $R$ is affine, i.e., it is a finitely generated algebra (hence also $Z$ is affine).

\begin{theorem}\label{theorem:BG-reps} \cite[Theorem 4.1]{BG}
Let $(R, Z)$ be a Poisson order and 
$M:= \MaxSpec Z$. 
Given $x \in M$ with maximal ideal $\Mg_x$, let $\Ac_x \coloneqq R/\Mg_xR$, a finite dimensional algebra.
If $x$ and $y$ belong to the same symplectic core, then $\Ac_x  \simeq \Ac_y$ as algebras.
\end{theorem}

\subsection{Restrictions of Poisson orders from specializations}
In the setting of Proposition \ref{prop:der_a-gral} the center $\ZZ= \ZZ(U/(h))$ can be singular and is more useful to work with suitable subalgebras $\ZZ'$.
Next we prove a general fact for the construction of Poisson orders on pairs $(U/(h),\ZZ')$ for subalgebras $\ZZ'$ defined from algebra automorphisms and skew-derivations. For this purpose we fix:
\begin{itemize}[leftmargin=*]
\item $A$-algebra endomorphisms $\varsigma_i:U\to U$, $i\in\I$. We denote by $\overline{\varsigma}_i$ the corresponding $\mathbb C$-algebra endomorphisms of $U/(h)$ induced by $\varsigma_i$.
\item $A$-linear $(\id,\varsigma_i)$-derivations 
$\partial_i:U\to U$, $i\in\I$. We denote by $\overline{\partial}_i$ the corresponding $\mathbb C$-linear $(\id,\overline{\varsigma}_i)$-derivations induced by $\partial_i$.
\end{itemize}

\begin{theorem}\label{thm:poisson-structure-ker}
The Poisson order structure on $(U/(h), \ZZ(U/(h)))$ from Proposition \ref{prop:der_a-gral} 
restricts to a Poisson order structure on $(U/(h), \ZZ')$, where 
\begin{align}\label{eq:poisson-structure-subalgebra}
\ZZ' &:= \ZZ \cap \left(\cap_{i\in\I} \ker \overline{\partial}_i\right) \cap
\left(\cap_{i\in\I} \ker (\overline{\varsigma}_i-\id) \right).
\end{align}
\end{theorem}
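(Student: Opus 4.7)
The plan is to reduce the theorem to a single claim: $\ZZ'$ is closed under the Poisson bracket of $\ZZ = \ZZ(U/(h))$. Granting this, centrality of $\ZZ'$ in $U/(h)$ is inherited from that of $\ZZ$, the $\ku$-linear map $D\colon \ZZ \to \Der_\ku(U/(h))$ of Proposition \ref{prop:der_a-gral}\ref{item:der_a-gral4} restricts to $\ZZ'$ and produces the required compatibility of Definition \ref{def:Porders}, and finite generation of $U/(h)$ as a $\ZZ'$-module will be verified directly in each application (typically from a PBW-type basis), hence is not addressed by the abstract statement.

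To prove Poisson closure, fix $x, y \in \ZZ'$, lift them to $u, v \in U$, and write $[u,v] = hr$ uniquely in $U$ (possible since $[x,y] = 0$ and $h$ is a nonzerodivisor), so that $\{x,y\} = \overline{r}$ by formula \eqref{eq:Poisson-str-center}. The hypotheses $x, y \in \ker \overline{\partial}_i \cap \ker(\overline{\varsigma}_i - \id)$ translate to
\begin{equation*}
\partial_i(u),\ \partial_i(v),\ \varsigma_i(u)-u,\ \varsigma_i(v)-v \ \in\ hU,
\end{equation*}
and each of these elements can unambiguously be divided by $h$ to produce an element of $U$.

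The main step is to apply $\partial_i$ to $[u,v] = hr$ using the twisted Leibniz rule (since $\partial_i$ is an $(\id, \varsigma_i)$-derivation), and to apply $\varsigma_i$ to $[u,v] = hr$ using that $\varsigma_i$ is an algebra endomorphism. A short calculation using $A$-linearity of $\partial_i$ and $\varsigma_i$, together with the congruences $\varsigma_i(u) \equiv u$ and $\varsigma_i(v) \equiv v \pmod{hU}$, expresses $\overline{\partial_i(r)}$ as the sum of two commutators $[\overline{\partial_i(u)/h},\, y] + [x,\, \overline{\partial_i(v)/h}]$, and $\overline{\varsigma_i(r) - r}$ as the sum $[x,\, \overline{(\varsigma_i(v)-v)/h}] + [\overline{(\varsigma_i(u)-u)/h},\, y]$. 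Since $x$ and $y$ are central in $U/(h)$, all four commutators vanish, proving $\overline{\partial}_i\{x,y\} = 0$ and $(\overline{\varsigma}_i - \id)\{x,y\} = 0$, hence $\{x,y\} \in \ZZ'$.

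The only delicate aspect is the bookkeeping in the Leibniz rule for $\partial_i$: one must verify that the $\varsigma_i$-twist on one factor combines with the congruence $\varsigma_i(u) \equiv u \pmod{hU}$ precisely so that after dividing by $h$ the surviving terms are commutators with $x$ or $y$, and similarly the quadratic cross term $h^2[(\varsigma_i(u)-u)/h,\, (\varsigma_i(v)-v)/h]$ that appears in $[\varsigma_i(u),\varsigma_i(v)]$ disappears after dividing by $h$ and reducing modulo $(h)$. The argument is otherwise purely formal and relies only on $h$ being central and a nonzerodivisor together with the $A$-linearity of $\partial_i$ and $\varsigma_i$, which makes the result applicable to all three pairs $(U_\bq^+, Z_\bq^+)$, $(U_\bq^\geqslant, Z_\bq^\geqslant)$ and $(U_\bq, Z_\bq)$ of interest.
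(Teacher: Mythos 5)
Your proposal is correct and follows essentially the same route as the paper: lift $x,y\in\ZZ'$ to $u,v\in U$, record the hypotheses as $\partial_i(u),\partial_i(v),\varsigma_i(u)-u,\varsigma_i(v)-v\in hU$, apply the twisted Leibniz rule for $\partial_i$ and the algebra-map property of $\varsigma_i$ to $[u,v]=hr$, divide by $h$, and reduce modulo $(h)$ so that the surviving terms are commutators with the central elements $x,y$ and therefore vanish. The paper's proof carries out exactly this bookkeeping, including the observation that the $h^2$ cross term in $[\varsigma_i(u),\varsigma_i(v)]$ disappears after division by $h$, so your argument matches it step for step.
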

\pf
We have to check that $\{\ZZ',\ZZ'\} \subset \ZZ'$. Let $x_j\in\ZZ'$ and $u_j\in U$ such that $x_j= \overline{u_j}$, $j=1,2$.
Fix $i\in\I$. As $\overline{\varsigma}_i(x_j)=x_j$ and $\partial_i(x_i)=0$, there are $v_j, w_j\in U$ such that
\begin{align*}
\varsigma_i (u_j)& = u_j + h\, v_j, & \partial_i (u_j) &= h\, w_j, & j&=1,2.
\end{align*}
Now we compute
\begin{align*}
\overline{\varsigma}_i \{x_1,x_2\} &= \overline{\varsigma}_i \left( \overline{\frac{[u_1,u_2]}{h}} \right)
= \overline{\frac{[\varsigma_i(u_1),\varsigma_i(u_2)]}{h}}
\\
&= \overline{\frac{[u_1,u_2]}{h}} +\overline{[u_1,v_2]} +\overline{[v_1,u_2]}
= \{x_1,x_2\} +[x_1,\overline{v_2}] +[\overline{v_1},x_2] = \{x_1,x_2\},
\\
\overline{\partial}_i \{x_1,x_2\} &=\overline{\partial}_i \left( \overline{\frac{[u_1,u_2]}{h}} \right)
= \overline{\frac{\partial_i(u_1)\varsigma_i(u_2) + u_1\partial_i(u_2) - \partial_i(u_2)\varsigma_i(u_1) - u_2\partial_i(u_1)}{h}}
\\
&= \overline{w_1(u_2 + h\, v_2) + u_1 w_2 - w_2 (u_1 + h\, v_1) - u_2w_1} =[x_1,\overline{w_2}] +[\overline{w_1},x_2] =0.
\end{align*}
Hence $\{x_1,x_2\}\in \ker \overline{\partial}_i \cap \ker (\overline{\varsigma}_i-\id)$ for all $i\in\I$ so $\{x_1,x_2\}\in\ZZ$.
\epf

\subsection{Poisson-Hopf algebras} 
Assume that in the above setting $U$ is a Hopf algebra over $A$. 
Then $U/(h)$ has a canonical structure of Hopf algebra over $\ku$.

Let $u \in U$ such that $\overline{u} \in \ZZ(U/(h))$ and furthermore $\Delta(\overline{u}) \in \ZZ \left( U/(h) \otimes U/(h) \right)$.
Then
\begin{align}\label{eq:Delta-D}
D_{\Delta (u)} \Delta (y) &= \Delta (D_u (y)),& y &\in U/(h).
\end{align}

\begin{proposition}\label{prop:der_a-gral-centralHopf} \cite[11.7]{DP} Let $B$ be a central Hopf subalgebra of $U/(h)$.Then 
\begin{align*}
T := \text{ minimal subalgebra of $\ZZ$ containing $B$ and closed under the Poisson bracket}
\end{align*}
is a central Hopf subalgebra of $U/(h)$, hence a Poisson-Hopf algebra.
\end{proposition}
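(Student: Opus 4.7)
The plan is to construct $T$ as an increasing union $\bigcup_{n \geq 0} T_n$, where $T_0 := B$ and $T_{n+1}$ is the $\ku$-subalgebra of $\ZZ$ generated by $T_n$ together with $\{T_n, T_n\}$, and then to show by induction on $n$ that each $T_n$ is stable under $\Delta$, $S$, and $\varepsilon$. The base case $n = 0$ is precisely the hypothesis that $B$ is a Hopf subalgebra of $U/(h)$ contained in $\ZZ$.

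For the inductive step, the main tool is formula \eqref{eq:Delta-D}. The inductive hypothesis $\Delta(T_n) \subset T_n \otimes T_n$ combined with $T_n \subset \ZZ$ gives $\Delta(T_n) \subset \ZZ \otimes \ZZ \subset \ZZ(U/(h) \otimes U/(h))$, so \eqref{eq:Delta-D} applies and yields $\Delta\{x, y\} = \{\Delta x, \Delta y\}$ for $x, y \in T_n$, where the right-hand side denotes the Poisson bracket on $\ZZ(U/(h) \otimes U/(h))$. Expanding via the identity $[a \otimes c, b \otimes d] = ab \otimes [c, d] + [a, b] \otimes dc$ in $U \otimes_A U$ and reducing modulo $h$ places this element in $T_n \cdot T_n \otimes \{T_n, T_n\} + \{T_n, T_n\} \otimes T_n \cdot T_n \subset T_{n+1} \otimes T_{n+1}$. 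For the antipode, choosing lifts $u, v \in U$ of $x, y \in T_n$ and using that $S$ is an $A$-algebra anti-homomorphism gives $S([u, v]/h) = -[S(u), S(v)]/h$, hence $S\{x, y\} = -\{S(x), S(y)\}$ on $\ZZ$; combined with the inductive hypothesis $S(T_n) \subset T_n$ this yields $S(T_{n+1}) \subset T_{n+1}$. For the counit, commutativity of $A$ forces $\varepsilon([u, v]/h) = 0$, so $\varepsilon$ annihilates every Poisson bracket in $\ZZ$ and $\varepsilon(T_{n+1}) = \varepsilon(T_n) \subset \ku$.

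Taking the union, $T = \bigcup_n T_n$ is a central Hopf subalgebra of $U/(h)$ closed under $\{-,-\}$, and the compatibility of $\Delta$, $S$, $\varepsilon$ with the Poisson bracket established in the induction exhibits $T$ as a Poisson-Hopf algebra. The subtle point I expect to require care is the interlocking nature of the induction: the hypothesis of \eqref{eq:Delta-D} at step $n + 1$ demands the $\Delta$-invariance $\Delta(T_n) \subset T_n \otimes T_n$ proved at step $n$, so one must simultaneously propagate algebra generation and $\Delta$-stability through the bracket closure at each stage. Once this threading is in place, everything else reduces to standard Leibniz and anti-homomorphism identities already packaged in the formalism of \cite[\S 11]{DP}.
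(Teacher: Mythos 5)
Your proof is correct, but it takes a genuinely different route from the paper. The paper's argument is a short minimality trick: it defines $\widetilde T := \{t \in T : \Delta(t) \in T \otimes T\}$, observes that $\widetilde T$ is a subalgebra containing $B$ and (via \eqref{eq:Delta-D}) closed under the Poisson bracket, and concludes $\widetilde T = T$ from the minimality built into the definition of $T$ — no construction of $T$ at all. Your version instead realizes $T$ concretely as the union of an increasing chain $T_n$ and threads $\Delta$-stability through the bracket-closure steps by induction; the two characterizations of $T$ coincide, and both hinge on \eqref{eq:Delta-D} as the engine. What your proof buys is completeness: the paper only spells out $\Delta(T) \subset T \otimes T$, leaving the antipode and counit stability to the reader, whereas you verify $S\{x,y\} = -\{S(x),S(y)\}$ and $\varepsilon\{x,y\} = 0$ explicitly, which is what actually finishes the claim that $T$ is a Hopf (not merely bi-) subalgebra. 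Note for the record that the paper's one-sentence minimality argument adapts just as easily to $S$, by setting $\widetilde T' := \{t \in T : S(t) \in T\}$, so the two proofs are of comparable strength; yours is longer but leaves nothing implicit. One small point of care you should make explicit in a final write-up: the identity $[a \otimes c, b \otimes d] = ab \otimes [c,d] + [a,b] \otimes dc$ is applied after choosing a lift of $\Delta(x) \in T_n \otimes T_n$ as a sum of simple tensors with components lifting elements of $T_n$; this is legitimate because the bracket in Proposition \ref{prop:der_a-gral}\ref{item:der_a-gral2} is independent of the chosen lift, but the independence is what licenses replacing $\Delta(u)$ by the more convenient lift.
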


We recall the elegant proof of \cite{DP}. 

\pf Apply \eqref{eq:Delta-D} to $y\in \ZZ$ and $x = \overline{u}$ to get $\Delta (\{x,y\}) = \{\Delta(x), \Delta(y)\}$ for all $x, y\in \ZZ$.
Hence $\widetilde T = \{t\in T: \Delta(t) \in T \otimes T \}$, which is a subalgebra containing $B$, is also closed under Poisson bracket; thus $\widetilde T = T$.
\epf

\section{Hopf algebras}\label{sec:braided-bosonization}
In this section we collect preliminaries on (braided) Hopf algebras (always with bijective antipode $\Ss$), bosonizations,
braided vectors spaces of diagonal type, Nichols algebras, Weyl groupoids, distinguished 
pre-Nichols algebras and Lusztig algebras. We refer to \cite{Rad-libro,A-leyva} for more information on Hopf algebras, Nichols algebras, Nichols algebras of diagonal type, respectively.

\subsection{Cleft comodule algebras}\label{subsec:hopf-systems}

Let  $\hopf$ be a Hopf algebra  with a central  Hopf subalgebra $Z$.
Given $z \in G = \Alg (Z, \ku)$ (the pro-algebraic group defined by $Z$), let
\begin{align*}
\Mg_z &= \ker z, & \Ig_z &= \hopf \Mg_z, & \hopf_z &= \hopf/  \Ig_z;
\end{align*}
thus $\hopf_{z}$ is an algebra (with multiplication $m_{z}$ and unit $u_{z}$)
and the natural projection $p_z : \hopf \to \hopf_z$ is an algebra map.  
Then $\hopf_{\varepsilon}$ is a quotient Hopf algebra of $\hopf$ and 
there is an exact sequence of Hopf algebras $Z \hookrightarrow \hopf \twoheadrightarrow \hopf_{\varepsilon}$.
Also  for any $z,z' \in G$ there are well-defined algebra morphisms
$\Delta_{z,z'}: \hopf_{zz'} \to \hopf_z \otimes \hopf_z'$ and in particular the maps
\begin{align*}
\varrho_{z} &:= \Delta_{z, \varepsilon}: \hopf_{z} \to \hopf_z \otimes \hopf_{\varepsilon}, &
\lambda_{z} := \Delta_{\varepsilon, z}: \hopf_{z} \to \hopf_{\varepsilon}\otimes \hopf_z,
\end{align*}
make $\hopf_{z}$ a $\hopf_{\varepsilon}$-bicomodule algebra for $z \in G$.
Clearly
\begin{align}
\varrho_{z} p_z &= (p_z \otimes p_\varepsilon) \Delta_{\hopf}, &
\lambda_{z} p_z = (p_\varepsilon \otimes p_z) \Delta_{\hopf}.
\end{align}

Recall that a right $K$-comodule algebra $A$ (over a Hopf algebra $K$)
is cleft if there exists a convolution-invertible morphism of $K$-comodules
$\chi: K \to A$.

\begin{lemma}\label{lemma:hopf-system-cleft}
If the $\hopf_{\varepsilon}$-comodule algebra $\hopf$ with coaction $\varrho = (\id \otimes p_{\varepsilon}) \Delta_{\hopf}$
is cleft, then so is $\hopf_{z}$ for any $z \in G$. In particular $\hopf_{z}$ is a Hopf-Galois object over $\hopf_{e}$.

If $\hopf$ is a pointed Hopf algebra, then $\hopf_{z}$ is $\hopf_{e}$-cleft for all $z \in G$.
\end{lemma}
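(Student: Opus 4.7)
\bigbreak
\noindent \textbf{Proof proposal.} The plan is to push the cleaving map of $\hopf$ forward through the algebra projection $p_z: \hopf \to \hopf_z$. Let $\chi: \hopf_\varepsilon \to \hopf$ be a convolution-invertible right $\hopf_\varepsilon$-comodule map, and set
\[
\chi_z := p_z \circ \chi : \hopf_\varepsilon \longrightarrow \hopf_z .
\]
I will show that $\chi_z$ is a cleaving map for the $\hopf_\varepsilon$-comodule algebra $(\hopf_z, \varrho_z)$, which together with the standard fact that cleft comodule algebras are Hopf--Galois over their coinvariants yields the first assertion.

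For the comodule property, I use the identity $\varrho_z \circ p_z = (p_z \otimes p_\varepsilon) \circ \Delta_\hopf$ displayed in \S \ref{subsec:hopf-systems} and the hypothesis $(\id \otimes p_\varepsilon) \Delta_\hopf \chi = (\chi \otimes \id) \Delta_{\hopf_\varepsilon}$. Composing and substituting,
\[
\varrho_z \chi_z  = \varrho_z p_z \chi = (p_z \otimes p_\varepsilon) \Delta_\hopf \chi = (p_z \otimes \id)(\chi \otimes \id) \Delta_{\hopf_\varepsilon} = (\chi_z \otimes \id_{\hopf_\varepsilon}) \Delta_{\hopf_\varepsilon}.
\]
For convolution-invertibility, let $\chi^{-1}$ be the inverse of $\chi$ in $\Hom(\hopf_\varepsilon,\hopf)$ and set $\chi_z^{-1}:= p_z \circ \chi^{-1}$. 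Since $p_z$ is an algebra map,
\[
\chi_z * \chi_z^{-1} = m_z (p_z \otimes p_z)(\chi \otimes \chi^{-1}) \Delta_{\hopf_\varepsilon} = p_z \circ (\chi * \chi^{-1}) = p_z \circ u_\hopf \circ \varepsilon_{\hopf_\varepsilon} = u_z \circ \varepsilon_{\hopf_\varepsilon},
\]
and symmetrically for $\chi_z^{-1} * \chi_z$. This proves $\hopf_z$ is cleft; the Hopf--Galois statement then follows from the classical equivalence between cleft and crossed-product/Hopf--Galois extensions with a normal basis property, once one checks that the coinvariants of $\hopf_z$ under $\varrho_z$ are reduced to scalars (which is automatic since $\hopf$ has coinvariants equal to $Z$, and the image of $Z$ in $\hopf_z$ is $\ku$).

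For the second assertion, I would reduce it to the first by invoking the known fact that for a pointed Hopf algebra $\hopf$ with a central Hopf subalgebra $Z$, the extension $Z \hookrightarrow \hopf \twoheadrightarrow \hopf_\varepsilon$ is cleft. The standard argument produces a cleaving map by lifting a set-theoretic section $G(\hopf_\varepsilon) \to G(\hopf)$ of the induced map on group-likes to a comodule section on the whole algebra using the coradical filtration (Masuoka, Schneider). The main obstacle I anticipate is not the categorical pushforward, which is essentially formal, but rather pinning down the right reference and formulation of this cleftness statement in the pointed setting and verifying that the technical hypotheses (faithful flatness, centrality of $Z$) hold in the setup of \S \ref{subsec:hopf-systems}; these, however, are guaranteed by the present framework where $\hopf$ is module-finite over $Z$ and $Z$ is central by construction.
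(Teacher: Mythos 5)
Your proof is correct and follows essentially the same route as the paper: define $\chi_z := p_z \circ \chi$, verify the comodule map property via $\varrho_z p_z = (p_z \otimes p_\varepsilon)\Delta_\hopf$, observe that convolution-invertibility transfers through the algebra map $p_z$, and reduce the pointed case to the known cleftness of the extension $Z \hookrightarrow \hopf \twoheadrightarrow \hopf_\varepsilon$ (the paper cites \cite[4.3]{Sc} for this). The only cosmetic difference is that you spell out $\chi_z^{-1} = p_z \circ \chi^{-1}$ and the triviality of the coinvariants, which the paper leaves implicit.
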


\pf If $\chi: \hopf_{\varepsilon} \to \hopf$ is a morphism  of $\hopf$-comodules, then so is 
$\chi_{z} := p_{z}\chi: \hopf_{\varepsilon} \to \hopf_{z}$: 
\begin{align*}
(\chi_{z} \otimes \id) \varrho_{\varepsilon} &= (p_{z} \otimes \id)(\chi \otimes \id)\Delta_{\hopf_{\varepsilon}}
\\
&= (p_{z} \otimes \id) (\id \otimes p_{\varepsilon}) \Delta_{\hopf} \chi =  \varrho_{z} p_z \chi =
\varrho_{z} \chi_z.
\end{align*}
If $\chi$ is convolution-invertible, then so is $\chi_{z}$
since $p_{z}$ is an algebra map.

For the last statement, $\hopf$ is $\hopf_{\varepsilon}$-cleft by \cite[4.3]{Sc}, and then we apply the first part.
\epf

We refer to \cite{S} for Hopf-Galois objects. 
In the setting of Cayley--Hamilton Hopf algebras, which is a refinement of the above setting for the pair $(\hopf, Z)$, 
a tensor product decomposition of the 
irreducible representations of $\hopf_z$ was obtained in \cite{DPRR}. 
\subsection{Braided Hopf algebras and bosonization}

Recall that a braided vector space is a pair $(\VV,c)$ where $\VV$ is a vector space 
and $c \in GL(\VV \otimes \VV)$ is a solution of the braid equation: $(c\otimes \id)(\id \otimes c)(c\otimes \id) = (\id \otimes c)(c\otimes \id)(\id \otimes c)$. There are natural notions of morphisms of braided vector spaces and braided Hopf algebras (braided vector spaces with compatible algebra and coalgebra structures), see \cite{Tk1} for details. To distinguish comultiplications of braided Hopf algebras from those of Hopf algebras,
we use a variation of the Sweedler notation for the former: $\underline{\Delta} (r) = r^{(1)} \otimes r^{(2)}$.

\medbreak
Let $H$ be a Hopf algebra. Then the category of (left) Yetter-Drinfeld modules $\ydh$  is
a  braided tensor category and there is a forgetful functor from $\ydh$ to the category of braided vector spaces, namely 
$\VV \in \ydh$ goes to $(\VV,c)$ where $c \in GL(\VV \otimes \VV)$ is given by $c(v \otimes w) = v_{(-1)} \cdot w \otimes v_{(0)}$ in  Sweedler notation. This forgetful functor sends Hopf algebras in $\ydh$ to braided Hopf algebras. In turn Hopf algebras in $\ydh$ are noteworthy because of the Radford-Majid bosonization that provides a bijective correspondence between their collection and the collection of triples $(A, \pi, \iota)$ where
 $A \overset{\pi}{\underset{\iota}{\rightleftarrows}} H$ are morphisms of Hopf algebras with $\pi\iota = \id_H$. See  \cite{Rad-libro} for an exposition. More precisely, the correspondence sends the Hopf algebra $R\in \ydh$ to the bosonization $R\# H$ and the triple $(A, \pi, \iota)$ to
the algebra of right coinvariants $R = A^{\co \pi}$.

\medbreak Similar notions and results hold for the category of (right) Yetter-Drinfeld 
modules $\hyd$ consisting of right $H$-modules and right $H$-comodules $\VV$ satisfying the compatibility 
\begin{align*}
(v \cdot h)_{(0)} \otimes (v \cdot h)_{(1)} &= v_{(0)} \cdot h_{(2)} \otimes \Ss(h_{(1)}) v_{(1)}  h_{(3)},&
v&\in \VV, \; h \in H.
\end{align*}
For convenience of the reader we spell out the precise definitions. First, any $\VV \in \hyd$ becomes a braided vector space
with $c \in GL(\VV \otimes \VV)$ and its inverse given by 
\begin{align}\label{eq:braiding-hyd}
c(v \otimes w) &=  w_{(0)}\otimes v \cdot w_{(1)}, &
c^{-1}(v \otimes w) &=  w \cdot \Ss^{-1} (v_{(1)}) \otimes v_{(0)},& v,w&\in V.
\end{align} 
Let $(A, \pi, \iota)$ be a triple as before. Then the subalgebra of left coinvariants 
\begin{align*}
S = {}^{\co \pi}A =\{s\in A: (\pi\otimes \id)\Delta(s) = 1 \otimes s\}
\end{align*}
 becomes a Hopf algebra in $\hyd$ with right action $\cdot$, right
 coaction $\rho$ and comultiplication $\underline{\Delta}$ given by
\begin{align*}
s\cdot h &= \Ss(h_{(1)}) s h_{(2)},& \rho(s) &= (\id\otimes \pi)\Delta(s),&
\underline{\Delta} (s) &= s_{(1)} \otimes \underline{\vartheta}(s_{(2)}),& s&\in S, h \in H,
\end{align*}
where $\underline{\vartheta}: A \to S$ is given by $\underline{\vartheta}(a) = \pi(\Ss(a_{(1)}))a_{(2)}$, $a \in A$.
Conversely, the bosonization $H\# S$ of a Hopf algebra $S$ in $\hyd$ is the vector space 
$H\otimes S$ with the right smash product and coproduct. That is, given $s,\widetilde{s} \in S$ and 
$h, \widetilde{h} \in H$,
\begin{align*}
(h\# s) (\widetilde{h}\# \widetilde{s}) &= h\widetilde{h}_{(1)} \# (s \cdot\widetilde{h}_{(2)})  \widetilde{s},
&
\Delta (h\# s) &= h_{(1)} \# (s^{(1)})_{(0)} \otimes h_{(2)}(s^{(1)})_{(1)} \#  s^{(2)}.
\end{align*}

\subsection{Nichols algebras}
Let $\VV \in \ydh$. Then the tensor algebra $T(\VV)$ is naturally a Hopf algebra in $\ydh$. 
A \emph{pre-Nichols algebra} of $\VV$ is a  factor of $T(\VV)$ 
by a graded Hopf ideal in $\ydh$  supported in degrees $\geqslant 2$. 
The maximal Hopf ideal among those is denoted by $\JJ(\VV)$;
the Nichols algebra of $\VV$ is the quotient $\toba(\VV) = T(\VV) / \JJ(\VV)$.

The tensor algebra of a braided vector space $(\VV,c)$ is also a braided Hopf algebra in the sense of \cite{Tk1};
a \emph{pre-Nichols algebra} of $\VV$ is a  factor of $T(\VV)$ 
by a braided graded Hopf ideal  supported in degrees $\geqslant 2$. 
The maximal Hopf ideal among those is denoted $\widetilde{\JJ}(\VV)$;
the Nichols algebra of $\VV$ is the quotient $\toba(\VV) = T(\VV) / \widetilde{\JJ}(\VV)$.

These two structures are compatible, i.e., if $\VV \in \ydh$ and $(\VV,c)$ is the corresponding 
 braided vector space, then $\JJ(\VV) = \widetilde{\JJ}(\VV)$. But a pre-Nichols algebra of
$(\VV,c)$ does not necessarily come as the forgetful functor applied to a pre-Nichols algebra of
$\VV \in \ydh$.

\begin{remark}\label{rem:right-Nichols}
Let $H$ be cosemisimple, $\VV \in \ydh$ and $G = \toba(\VV) \# H = \oplus_{n\in \N_0} G^n$, where $G^n = \toba^n(\VV) \# H$. 
By other characterizations of Nichols algebras, we know that
\begin{enumerate} [leftmargin=*,label=\rm{(\alph*)}]
\item $\toba(\VV)$ is coradically graded and generated in degree 1;

\item $G$ is coradically graded and generated in degree 1.
\end{enumerate}
Since the projection $\pi: G \to H$ is graded, the subalgebra of left coinvariants 
$S = {}^{\co \pi}G$ inherits the grading of $G$; by a standard argument it is also coradically graded and generated in degree 1. Thus $S$ is a Nichols algebra in $\hyd$.
\end{remark} 

\subsection{Hopf skew-pairings of bosonizations}
Let $\langle \cdot , \cdot \rangle: M \times V \to \ku$ be a bilinear form 
between two vector spaces $M$ and $V$. We denote by 
$\langle \cdot , \cdot \rangle: (M \otimes M) \times (V \otimes V) \to \ku$ the bilinear form 
determined by
\begin{align}\label{eq:bilfor-tensorproduct}
\langle m \otimes m', v \otimes v'\rangle &= \langle m,v' \rangle \langle m', v \rangle,&
m,m' &\in M, \; v,v'\in V.
\end{align}

Let $H$ and $K$ be two Hopf algebras. 
A bilinear form $\langle \cdot , \cdot \rangle: K \times H \to \ku$ is a \emph{Hopf skew-pairing} (or skew-pairing of Hopf algebras) if for all for $k, k'\in K$, $h, h' \in H$, 
\begin{align}\label{eq:hopf-pairing}
\begin{aligned}
\langle k, hh'\rangle &= \langle \Delta^{\opp}(k), h\otimes h' \rangle,&
\langle kk', h\rangle &= \langle k \otimes k',\Delta(h) \rangle, & & 
\\
\langle k,1\rangle &= \varepsilon(k),&
\langle 1, h\rangle &= \varepsilon(h), &
\langle \Ss(k), h\rangle &= \langle k, \Ss(h)\rangle.
\end{aligned}
\end{align}
A skew-pairing of braided Hopf algebras is defined by \eqref{eq:hopf-pairing} but  with the convention
\begin{align*}
\underline{\Delta}^{\opp} = c^{-1} \underline{\Delta}.
\end{align*}

Let us fix a Hopf skew-pairing $\langle \cdot , \cdot \rangle: K \times H \to \ku$.
A \emph{YD-pairing} between $\cM \in \kyd$ and $\VV \in \ydh$ is a bilinear form 
$\langle \cdot , \cdot \rangle: \cM \times \VV \to \ku$ such that
\begin{align}\label{eq:YD-pairing-def}
&\begin{aligned}
\langle m \cdot k, v\rangle &= \langle k,v_{(-1)}\rangle \langle m , v_{(0)}\rangle,&
\\
\langle m, h \cdot v\rangle &= \langle m_{(1)}, h\rangle \langle m_{(0)}, v\rangle,
\end{aligned}&
m\in \cM, \; k &\in K, \; v \in \VV, \; h \in H.
\end{align}

We recall the following well-known result, whose proof is straightforward.

\begin{lemma}\label{lemma:bilfor-bosonization} Let $R$ be a Hopf algebra in $\ydh$, $S$ be a Hopf algebra in $\kyd$
and $\langle \cdot , \cdot \rangle:  (K \# S) \times (R \# H) \to \ku$ be a bilinear form such that 
\begin{align}\label{eq:bilfor-bosonization}
\langle ky, x h\rangle & = \langle k,h\rangle \langle y, x\rangle,&
y \in S, \; k &\in K, \; x \in R, \;h \in H.
\end{align}
Then the following are equivalent:
\begin{enumerate}[leftmargin=*,label=\rm{(\alph*)}]
\item\label{item:bilfor-bosonization1} $\langle \cdot , \cdot \rangle$ is a Hopf skew-pairing.

\item\label{item:bilfor-bosonization2}  The restriction of  $\langle \cdot , \cdot \rangle$ to $K \times H$ is a Hopf skew-pairing
and the restriction of  $\langle \cdot , \cdot \rangle$ to $S \times R$ is both a skew-pairing of braided Hopf algebras and a YD-pairing.\qed
\end{enumerate}
\end{lemma}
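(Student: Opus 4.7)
The strategy would be to exploit the vector-space factorizations $K \# S \cong K \otimes S$ and $R \# H \cong R \otimes H$ together with the factorization property \eqref{eq:bilfor-bosonization}. Under these factorizations the Hopf skew-pairing axioms on the bosonizations decouple into pieces that live on the Hopf subalgebras $K, H$, on the braided Hopf subobjects $S, R$, and on mixed terms that couple the two via the action/coaction of $K$ on $S$ and of $H$ on $R$; these mixed terms are precisely what the YD-pairing conditions \eqref{eq:YD-pairing-def} govern. Note also that \eqref{eq:bilfor-bosonization} applied with $y = 1_S$, $x = 1_R$ (respectively with $k = 1_K$, $h = 1_H$) shows that the restriction of $\langle\cdot,\cdot\rangle$ to $K \times H$ (respectively to $S \times R$) is what one expects.

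For (a) $\Rightarrow$ (b), I would first observe that $k \mapsto k\#1$ and $h \mapsto 1\#h$ are Hopf algebra injections $K \hookrightarrow K\#S$ and $H \hookrightarrow R\#H$; specializing the Hopf skew-pairing axioms \eqref{eq:hopf-pairing} to these subalgebras yields at once that the restriction to $K \times H$ is a skew-pairing. For the remaining two conditions I would test the axioms on elements $1\#s$ and $x\#1$, using the bosonization coproducts
\[
\Delta(1\#s) = 1\#(s^{(1)})_{(0)} \otimes (s^{(1)})_{(1)}\# s^{(2)}, \qquad \Delta(x\#1) = x^{(1)}\#(x^{(2)})_{(-1)} \otimes (x^{(2)})_{(0)}\#1,
\]
and their $\opp$ counterparts. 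Expanding $\langle 1\#s, (x\#1)(x'\#1)\rangle$ and $\langle (1\#s)(1\#s'), x\#1\rangle$ via \eqref{eq:bilfor-bosonization}, and using the $K\times H$ skew-pairing axioms to collapse the factors $\langle k,1\rangle$ and $\langle 1, h\rangle$ to counits, produces exactly the braided skew-pairing axioms on $S\times R$ (with $\underline{\Delta}^{\opp} = c^{-1}\underline{\Delta}$). The YD-pairing identities \eqref{eq:YD-pairing-def} emerge by testing the axioms on mixed elements $1\#s$ against $x\#h$ and $k\#s$ against $x\#1$ and equating the two expansions provided by the bosonization formulas.

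For (b) $\Rightarrow$ (a), I would verify each of the five axioms in \eqref{eq:hopf-pairing} on general elements by expanding using the bosonization formulas
\[
(k\#s)(k'\#s') = k k'_{(1)} \#(s\cdot k'_{(2)}) s', \qquad (x\#h)(x'\#h') = x(h_{(1)}\cdot x')\# h_{(2)} h',
\]
together with the coproduct formulas above, and then applying \eqref{eq:bilfor-bosonization} to factorize each resulting pairing into a $K\times H$ piece times an $S\times R$ piece. Each side of each axiom then becomes a sum of products of evaluations which rearranges, by the Hopf skew-pairing axioms on $K\times H$, the braided skew-pairing axioms on $S\times R$, and both YD-pairing identities, into the other side. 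The counit and antipode identities follow in the same way once one records $\varepsilon(k\#s) = \varepsilon(k)\varepsilon(s)$ and the standard bosonization formulas for the antipode.

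The main obstacle is purely combinatorial bookkeeping: the braided opposite comultiplication $\underline{\Delta}^{\opp} = c^{-1}\underline{\Delta}$ in $\kyd$ uses the inverse braiding from \eqref{eq:braiding-hyd}, which introduces $\Ss^{-1}$ factors from $K$, and one must thread Sweedler strings carefully through the bosonization coproducts and through this inverse braiding to confirm that the cross terms precisely match the action/coaction tensors in the YD-pairing identities. Once this bookkeeping is organized, no new ideas are required and the verification reduces to repeated application of the hypotheses of (b) as rewriting rules.
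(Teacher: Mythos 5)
The paper itself gives no argument for this lemma; the statement is simply followed by \qed\ after the remark that it is ``a well-known result, whose proof is straightforward.'' Your plan --- use the linear factorization $\langle k\#s,\,x\#h\rangle = \langle k,h\rangle\langle s,x\rangle$ together with the bosonization product/coproduct formulas, specialize each axiom of \eqref{eq:hopf-pairing} to elements of the form $k\#1$, $1\#s$, $x\#1$, $1\#h$ and their mixtures, and read off the $K$--$H$ skew-pairing, the braided $S$--$R$ skew-pairing, and the YD-pairing --- is exactly the intended ``straightforward'' verification. So in spirit your approach matches what the authors have in mind.

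One concrete caution before you invest in the bookkeeping, however. When I carry out the test you describe on the elements $1\#s$, $1\#h$, $x\#1$ (i.e.\ apply the first axiom of \eqref{eq:hopf-pairing} with $k=1\#s$, $h=1\#h$, $h'=x\#1$), the left-hand side reduces to $\langle s, h\cdot x\rangle$, while the right-hand side, if computed literally with the ``swapped'' tensor-pairing convention \eqref{eq:bilfor-tensorproduct}, comes out as $\varepsilon_H(h)\langle s,x\rangle$; matching these would force the extra invariance $\langle s, h\cdot x\rangle=\varepsilon_H(h)\langle s,x\rangle$, which is strictly stronger than the YD-pairing condition $\langle s,h\cdot x\rangle = \langle s_{(1)},h\rangle\langle s_{(0)},x\rangle$ and fails in the intended application of \S\ref{subsec:large-bqf} (take $s=F_i$, $x=E_i$, $h=K_j$). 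If instead one uses the unswapped convention $\langle m\otimes m', v\otimes v'\rangle = \langle m,v\rangle\langle m',v'\rangle$ in the right-hand side of the axioms, the same test closes correctly via YD2, and the companion test with the roles of $1\#h$ and $x\#1$ exchanged reduces to the counit axioms alone. So you should nail down the intended order convention before starting: as printed, \eqref{eq:bilfor-tensorproduct} and \eqref{eq:hopf-pairing} do not appear to be mutually compatible with the lemma, and a literal execution of your plan with the stated convention will stall at precisely this point. Once the convention is fixed (and this is a genuine bookkeeping point, not a flaw in your overall strategy), the remaining verifications should follow as you outline, with the $c^{-1}$ in $\underline\Delta^{\opp}$ and the $\Ss^{-1}$ factors it brings in being absorbed by YD1/YD2 together with the antipode axiom $\langle\Ss(k),h\rangle = \langle k,\Ss(h)\rangle$.
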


A YD-pairing between $\cM \in \kyd$ and $\VV \in \ydh$ extends canonically to a YD-pairing 
$\langle \cdot , \cdot \rangle: T(\cM) \times T(\VV) \to \ku$. This extension is actually 
a braided Hopf skew-pairing, i.e., it satisfies \eqref{eq:hopf-pairing} with respect to the braided
comultiplications. The bilinear form
\begin{align*}
\langle \cdot , \cdot \rangle: & (K \# T(\cM)) \times (T(\VV) \# H) \to \ku, &
\langle k \# y, x \# h\rangle & := \langle k,h\rangle \langle y, x\rangle,
\end{align*}
$y \in T(\cM)$, $k \in K$, $x \in T(\VV)$, $h \in H$ is a Hopf skew-pairing by Lemma \ref{lemma:bilfor-bosonization}.

\medbreak Assume that $\dim \cM < \infty$. Then the radical $T(\cM^*)^{\perp}$
with respect to $\langle \cdot , \cdot \rangle$ coincides with $\JJ(\cM)$. Hence, for any $\VV$ YD-paired with $\cM$ we have
\begin{align*}
T(\VV)^{\perp} \supseteq \JJ(\cM).
\end{align*}
Consequently, if $\dim \cM < \infty$ and $\dim \VV < \infty$, $\toba$ is a pre-Nichols algebra of $\cM$ in $\kyd$ and $\EE$ is a pre-Nichols algebra of $\VV$ in $\ydh$, 
then $\langle \cdot , \cdot \rangle$ 
descends to Hopf skew-pairings  
$\langle \cdot , \cdot \rangle: \toba \times \EE  \to \ku$ and
$\langle \cdot , \cdot \rangle: (K \# \toba) \times (\EE \# H) \to \ku$.

\subsection{Nichols algebras of diagonal type}\label{subsec:diag-type} 
We fix $\theta \in \N$ and set $\I = \I_{\theta}$.
Let $(V, c)$ be a (complex) braided vector space of diagonal type with braiding matrix 
\begin{align}\label{eq:braiding-matrix}
\bq = (q_{ij} ) \in \big( \Cset^\times \big)^{\I \times \I}
\end{align}
with respect to a basis $(x_i)_{i\in \I}$, 
i.e., $c(x_i \otimes x_j) =q_{ij} x_j \otimes x_i$ for all $i,j\in \I$.
We assume that $\dim \toba(V) < \infty$. These Nichols algebras are classified in \cite{H-classif RS}. 
Throughout the paper we will also assume that the Dynkin diagram of $\bq$ is connected, for simplicity of the exposition.

\medbreak
The canonical basis of $\Z^{\I}$ is denoted $\alpha_1, \dots, \alpha_\theta$.
The algebra $T(V)$ is $\Z^{\I}$-graded, with grading $\deg x_i = \alpha_i$, $i\in \I$.
This grading naturally specializes to the standard $\N_0$-grading.

\smallbreak
Let  $\bq:\Z^{\I}\times\Z^{\I}\to \Cset^\times$
be the $\Z$-bilinear forms associated to the matrix $\bq$, i.e.,
$\bq(\alpha_j,\alpha_k) :=q_{jk}$, $j,k \in\I$.
If $\alpha,\beta  \in \Z^{\I}$ and $i\in \I$, then we set
\begin{align}\label{eq:notation-qab}
q_{\alpha\beta} &= \bq(\alpha,\beta), & q_{\alpha\alpha} &= \bq(\alpha,\alpha),&
N_{\alpha} &= \ord q_{\alpha\alpha}, & N_{i} &= \ord q_{\alpha_i\alpha_i} = N_{\alpha_i}.
\end{align}

\begin{remark}\label{rem:sig-partial}
Every  $\Z^{\I}$-graded pre-Nichols algebra of $V$ admits algebra automorphisms $\varsigma_i^{\bq}$
and $(\id,\varsigma_i^{\bq})$-derivations $\partial_i^{\bq}$  for each $i\in\I$; that is,
\begin{align*}
\partial_i^{\bq}(xy) &= \partial_i^{\bq}(x)\varsigma_i^{\bq}(y) + x \partial_i^{\bq}(y), & &x,y\in T(V).
\end{align*}
\end{remark}

Indeed the algebra automorphism $\varsigma_i^{\bq}:T(V)\to T(V)$ is given by
\begin{align*}
\varsigma_i^{\bq}(x) &= \bq(\alpha_i,\beta) x, & x\in T(V) & \; \; \text{homogeneous of degree }\beta\in\Z^{\I}.
\end{align*}
The linear endomorphisms $\partial_i^{\bq}:T(V)\to T(V)$ are defined as follows.
Let $\Delta_{m,n}(x)$ be the homogeneous component of $\Delta(x) \in T(V)\otimes T(V)$ of degree $(m,n) \in \N_0^2$. 
Then
\begin{align*}
\Delta_{n-1,1}(x) & = \sum_{i \in\I} \partial_i^{\bq}(x) \otimes x_i, & &x\in T^n(V).
\end{align*}
It is easy to see that $\partial_i^{\bq}$ is a $(\id,\varsigma_i^{\bq})$-derivation.
If $\toba$ is a quotient of $T(V)$ by a $\Z^{\I}$-homogeneous ideal, then $\varsigma_i^{\bq}$ induces an algebra automorphism of $\toba$, also denoted by $\varsigma_i^{\bq}$, and  $\partial_i^{\bq}$ induces a $(\id,\varsigma_i^{\bq})$-derivation of $\toba$, also denoted by $\partial_i^{\bq}$.

\subsection{Weyl groupoids}\label{subsec:weyl-gpd}The notions of Weyl groupoid and generalized root systems 
were introduced in \cite{H-Weyl gpd,HY-groupoid}. We recall the main features needed later. 
Let  $(c_{ij}^{\bq})_{i,j\in \I}\in\Z^{\I\times\I}$ be the (generalized Cartan) matrix
defined by $c_{ii}^{\bq} := 2$ and
\begin{align}\label{eq:defcij}
c_{ij}^{\bq}&:= -\min \left\{ n \in \N_0: (n+1)_{q_{ii}}
(1-q_{ii}^n q_{ij}q_{ji} )=0 \right\},  & i & \neq j.
\end{align}
Let $i\in \I$. 
First, the reflection $s_i^{\bq}\in GL(\Z^\I)$ is given by 
\begin{align}\label{eq:siq-definition}
s_i^{\bq}(\alpha_j)&:=\alpha_j-c_{ij}^{\bq}\alpha_i, & &j\in \I.
\end{align}
Second, the matrix  $\rho_i(\bq)$ is given by
\begin{align}\label{eq:rhoiq-definition}
(\rho_i (\bq))_{jk}&:= \bq(s_i^{\bq}(\alpha_j),s_i^{\bq}(\alpha_k)) = q_{jk} q_{ik}^{-c_{ij}^{\bq}} q_{ji}^{-c_{ik}^{\bq}} q_{ii}^{c_{ij}^{\bq}c_{ik}^{\bq}}, & &j, k \in \I.
\end{align}
Finally, the braided vector space $\rho_i(V)$ is of diagonal type with matrix  $\rho_i (\bq)$. Set
\begin{align*}
\cX := \{\rho_{j_1} \dots \rho_{j_n}(\bq): j_1, \dots, j_n \in \I, n \in \N \}.
\end{align*}

\medbreak
The set $\cX$ is called the Weyl-equivalence class of $\bq$.
The set $\varDelta_+^{\bq}$ of \emph{positive roots} consists of the $\Z^{\I}$-degrees of the generators of a PBW-basis of $\toba_\bq$, counted with multiplicities.
Let $\varDelta^{\bq} := \varDelta_+^{\bq} \cup -\varDelta_+^{\bq}$. 
Then the  generalized  root system of $\bq$ is the fibration $\Delta \to  \cX$, where the fiber of $\rho_{j_1} \dots \rho_{j_N}(\bq)$ is $\Delta^{\rho_{j_1} \dots \rho_{j_N}(\bq)}$. The Weyl groupoid  $\cW_{\bq}$
of $\toba_\bq$ 
acts on this fibration, generalizing the classical Weyl group.
Here is another characterization of  $\varDelta_+^{\bq}$, valid because it is finite.
Let $\omega_0^{\bq} \in \cW_{\bq}$ be an element of maximal length and $\omega_0^{\bq}=\sigma_{i_1}^{\bq} \sigma_{i_2}\cdots \sigma_{i_\ell}$ 
be a reduced expression. Then 
\begin{align} \label{eq:betak}
\beta_k &:= s_{i_1}^{\bq}\cdots s_{i_{k-1}}(\alpha_{i_k}), & 
k\in\I_{\ell}
\end{align}
are pairwise different vectors and  $\varDelta_+^{\bq}=\{\beta_k : k\in\I_{\ell}\}$  \cite[Prop. 2.12]{CH-grpd-3-objects}, so $\vert \varDelta_+^{\bq} \vert = \ell$.

\subsection{Cartan roots  \cite{Ang-dpn}}\label{subsec:cartan-roots}
This important notion is crucial for our purposes. First, $i\in\I$ is a \emph{Cartan vertex} of $\bq$  if
\begin{align}\label{eq:cartan-vertex}
q_{ij}q_{ji} &= q_{ii}^{c_{ij}^{\bq}}, & \text{for all } j \neq i.
\end{align}
Then the set of \emph{Cartan roots} of $\bq$ is
\begin{align*}
\fO^{\bq} &= \{s_{i_1}^{\bq} s_{i_2} \dots s_{i_k}(\alpha_i) \in \varDelta^{\bq}:
i\in \I  \text{ is a Cartan vertex of } \rho_{i_k} \dots \rho_{i_2}\rho_{i_1}(\bq) \}.
\end{align*}

Set $\fO^{\bq}_+ = \fO^{\bq} \cap \N_{0}^{\theta}$. Recall \eqref{eq:notation-qab}
and set $\widetilde N_\beta :=  N_{\beta}$,  if $\beta\notin\fO^{\bq}$,
or else $\infty$ if $\beta\in\fO^{\bq}$. 

\medbreak
The set of Cartan roots gives rise to a root system up to a rescaling. Set
\begin{align}\label{eq:root-system-distinguished}
\wfO^{\bq} &= \{N^{\bq}_{\beta}\beta: \beta \in \fO^{\bq} \}, &  \wfO^{\bq}_+ &= \wfO^{\bq} \cap \N_{0}^{\theta},&
\wbeta &= N^{\bq}_{\beta} \beta, \ \beta \in \fO^{\bq}.
\end{align}

\begin{theorem}\label{thm:Cartan-roots} \cite[Theorem 3.6]{AAR3} The set $\wfO^{\bq}$ is either empty or 
a root system inside the real vector space generated by $\fO^{\bq}$. 
The set $\varPi^{\bq}$ of all indecomposable elements of $\wfO^{\bq}_+$ is a basis of this root system. \qed
\end{theorem}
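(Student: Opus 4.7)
The plan is to derive the root system structure on $\wfO^{\bq}$ intrinsically from the action of the Weyl groupoid $\cW_{\bq}$ on the generalized root system $\varDelta^{\bq}$, using the rescaling factors $N_\beta^{\bq}$ to normalize the data that varies across objects of $\cX$. Since $\fO^{\bq}$ is by construction the orbit of Cartan vertices under the $\cW_{\bq}$-action, it already carries a well-behaved combinatorial structure; the point is that rescaling collapses the variability of the orders $N^{\bq}_{\beta}$ and recovers a classical crystallographic root system.

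First I would carry out a local analysis at a Cartan vertex. If $i$ is a Cartan vertex of some $\bq' \in \cX$, then by definition $q'_{ij}q'_{ji} = (q'_{ii})^{c_{ij}^{\bq'}}$ for all $j\neq i$, which is precisely the identity that governs the quantum Serre relations of Cartan type. This means the restriction of the braiding to the rank-two subdiagram through $i$ mirrors that of a Drinfeld--Jimbo quantum group, so the integer $c_{ij}^{\bq'}$ plays the role of the classical Cartan integer. Second, using the transformation rules \eqref{eq:rhoiq-definition}, I would compute for any two Cartan roots $\alpha,\beta\in\fO^{\bq}$ the Cartan-type integer $\langle \walpha, \wbeta^\vee\rangle$ determined by $q_{\alpha\beta}q_{\beta\alpha}$ and $q_{\beta\beta}$, and establish the reflection identity
\[
s_{\wbeta}(\walpha) \;=\; \walpha - \langle \walpha, \wbeta^\vee\rangle\, \wbeta \;\in\; \wfO^{\bq}.
\]
The core calculation reduces to showing that, up to rescaling, the groupoid reflection $s^{\bq'}_i$ at a Cartan vertex coincides with the classical reflection attached to $\widetilde{\alpha}_i = N^{\bq'}_i\alpha_i$. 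Third, once closure under reflections is in hand, the remaining root-system axioms (integrality of the pairing, finiteness, and the crystallographic condition) are immediate from the definitions, and the statement about $\varPi^{\bq}$ follows from the general fact that in a finite crystallographic root system the indecomposable positive elements form a basis.

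The main obstacle will be step two: verifying that the rescaling is consistent with the groupoid action, i.e.\ that if $\beta = s^{\bq'}_{i_1}\cdots s^{\bq^{(k-1)}}_{i_k}(\alpha)$ with $\alpha$ a Cartan vertex of $\bq^{(k)}$, then $N^{\bq}_\beta$ is determined by the correct rescaling of $\alpha$ under the sequence of reflections. Because $N^{\bq}_\beta$ is defined via $\ord q_{\beta\beta}$ and $q_{\beta\beta}$ transforms nontrivially under $\rho_i$, compatibility is not automatic. I would address this by proving a transformation lemma of the shape $N^{\rho_i(\bq')}_{s^{\bq'}_i(\gamma)} = N^{\bq'}_\gamma$ for $\gamma \in \fO^{\bq'}$ not proportional to $\alpha_i$, using the fact that the bilinear form $q_{\gamma\gamma}$ is invariant on the Cartan-root locus; this is where the hypothesis that $\gamma$ is Cartan (and not merely a root) is used in an essential way. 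Once such a lemma is available, the root-system axioms propagate uniformly along the orbits of the Weyl groupoid, avoiding case-by-case inspection along the Heckenberger list.
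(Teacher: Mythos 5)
The paper does not prove this statement; it is cited directly from \cite[Theorem 3.6]{AAR3} and appears with an immediate \verb|\qed|. There is therefore no ``paper's own proof'' to compare against, and I can only assess your sketch on its own merits.

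Your overall strategy --- propagate the local Cartan-vertex structure along orbits of the Weyl groupoid, using a rescaling-compatibility lemma --- is the natural one and is almost certainly the same route taken in \cite{AAR3}. However, your diagnosis of where the Cartan hypothesis is ``used in an essential way'' is misplaced. The invariance you single out, $N^{\rho_i(\bq')}_{s_i^{\bq'}(\gamma)} = N^{\bq'}_{\gamma}$, holds for \emph{every} $\gamma \in \Z^{\I}$ and needs no Cartan assumption: by \eqref{eq:rhoiq-definition} one has $(\rho_i(\bq))_{jk} = \bq_{s_i(\alpha_j)\,s_i(\alpha_k)}$, and extending by $\Z$-bilinearity gives $(\rho_i(\bq))_{\gamma\gamma} = \bq_{s_i(\gamma)\,s_i(\gamma)}$ for all $\gamma$, whence the orders agree. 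This is exactly what the paper itself records in Remark \ref{rem:condition-cartan-roots-simple}\ref{item:condition-cartan-roots-weyl}, which also cites $s_i^{\bq}(\fO^{\bq}) = \fO^{\rho_i(\bq)}$ as the companion fact. The place where the Cartan hypothesis is genuinely essential is different and is the part your sketch glosses over: (i) one must show that for $\alpha,\beta \in \fO^{\bq}_{+}$ the scalar $q_{\alpha\beta}q_{\beta\alpha}$ is an integral power of $q_{\beta\beta}$, so that the purported pairing $\langle \walpha, \wbeta^{\vee} \rangle$ is actually a well-defined integer (this is clear from \eqref{eq:cartan-vertex} only when $\beta$ is a \emph{simple} Cartan root, and for general $\beta$ requires conjugating to a Cartan vertex by the groupoid); and (ii) one must show that the reflected element $s_{\wbeta}(\walpha)$ lands back in $\wfO^{\bq}$, which again rests on the invariance $s_i^{\bq}(\fO^{\bq}) = \fO^{\rho_i(\bq)}$, not on the $N$-invariance. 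Your sketch mentions conjugating to a Cartan vertex but does not check that the rescaling factors thread through that conjugation consistently; that verification is the real content. The closing assertion that ``integrality of the pairing, finiteness, and the crystallographic condition are immediate'' is also too quick: finiteness is indeed immediate ($\wfO^{\bq}$ is a rescaling of a subset of the finite set $\varDelta^{\bq}$), but integrality \emph{is} the crystallographic condition and is exactly item (i) above, so it is not free.
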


Here $\wgamma \in \wfO^{\bq}_+$ is 
\emph{indecomposable} if it can not be represented as a non-trivial positive linear combination of elements of  $\wfO^{\bq}_+$.
Let $\g_{\bq}$ be either $0$ or the semisimple Lie algebra with root system as in Theorem \ref{thm:Cartan-roots}, accordingly.
We fix  a triangular decomposition 
\begin{align}
\label{eq:triangular}
\g_{\bq} &= \n_\bq^+ \oplus \h_{\bq} \oplus \n_\bq^-
\end{align}
and the Borel subalgebras $\bg_\bq^{\pm} = \h_{\bq} \oplus \n_{\bq}^\pm \subset \g_{\bq}$;
if $\g_{\bq} = 0$, then $\n_\bq^+ = \h_{\bq} = \n_\bq^- = 0$.
We denote the root lattice of $\g_{\bq}$ by
\begin{align}\label{eq:Cartan-root-lattice}
\QQ_{\bq} &:= \sum_{\gamma \in \wfO^{\bq}_+} \Z \gamma 
= \bigoplus_{\gamma \in \varPi^{\bq}} \Z \gamma.
\end{align}

\subsection{Distinguished pre-Nichols algebras}
\label{subsec:dpn}
The finite-dimensional Nichols algebras of diagonal type admit distinguished pre-Nichols algebras  introduced in \cite{A-presentation,Ang-dpn}. 
An ideal $\II(V)$ of $T(V)$ was introduced in \cite{Ang-dpn};
it is generated by all the defining relations of $\toba_{\bq}$ in \cite[Theorem 3.1]{A-presentation}, 
but excluding the power root vectors $x_\alpha^{N_\alpha}$, $\alpha\in\wfO_{\bq}$,
and adding some quantum Serre relations, see \cite{Ang-dpn} for the precise list of relations.

\begin{definition} \cite{Ang-dpn}
The distinguished pre-Nichols algebra $\dpn_{\bq}$ of $V$ is  the quotient
$\dpn_{\bq}=T(V)/\II(V)$. Since $\II(V)$ is a Hopf ideal, $\dpn_{\bq}$ is a braided Hopf algebra.
\end{definition}

 By Remark \ref{rem:sig-partial}, there are automorphisms $\varsigma_i^{\bq}$
and skew-derivations  $\partial_i^{\bq}$ of $\dpn_{\bq}$, $i \in \I$.

\subsection{Lusztig algebras}\label{subsubsec:Luszig-algs}

The \emph{Lusztig algebra} $\luq$ associated to $\bq$ is the graded dual of $\dpn_{\bq}$ \cite{AAR-MRL}.
Thus $\luq$ is a braided Hopf algebra  equipped with a bilinear form $( \, , \, ) : \luq \times \dpn_{\bq} \rightarrow \ku$,
which satisfies 
\begin{align}
\label{eq:pair-LqBq}
&( y, xx') = ( y^{(2)},x ) ( y^{(1)},x' ) \qquad \mbox{ and }
\qquad ( yy', x ) = ( y,x^{(2)} ) ( y',x^{(1)} )
\end{align}
for all $x,x' \in \dpn_{\bq}$, $y,y' \in \luq$.
Let $Z_{\bq}^+ = \,^{\co\vpi} \dpn_{\bq}$ be the subalgebra of coinvariants of the canonical projection
$$\vpi: \dpn_{\bq} \to \toba_{\bq}.$$ 
Then  $Z_{\bq}^+$ is a \emph{normal} Hopf subalgebra of $\dpn_{\bq}$ \cite[Theorem 29]{Ang-dpn} and 
we have an extension of braided Hopf algebras
$Z_{\bq}^+ \overset{\iota}{\hookrightarrow} \dpn_{\bq} \overset{\vpi}{\twoheadrightarrow} \toba_{\bq}$.
Taking graded duals, we obtain a new extension of braided Hopf algebras,  cf. \cite[Proposition 3.2]{AAR-Belgium}:
\begin{align}\label{eq:extension-braided-lu}
\toba_{\bq^t} \overset{\vpi^*}{\hookrightarrow} \luq \overset{\iota^*}{\twoheadrightarrow} \zz_{\bq},
\end{align}

\begin{remark}\label{rem:Lusztigalg-lie} Assume that \eqref{eq:condition-cartan-roots} below holds. Then 
the braided Hopf algebra $\zz_\bq$ is a Hopf algebra, isomorphic to the enveloping algebra 
of the Lie algebra $\cP(\zz_{\bq})$ \cite[3.3]{AAR-Belgium}.
Moreover $\cP(\zz_{\bq})\simeq  \n_\bq^-$ as in \eqref{eq:triangular} \cite{AAR3}. 
\end{remark}

\section{Large quantum groups}\label{sec:large}
In this section we  describe the \emph{large quantum groups}, i.e., Drinfeld doubles of  
bosonizations of the distinguished pre-Nichols algebras belonging to a one-parameter family;
these are the main focus of the paper. 
The large quantum Borel and unipotent subalgebras are also introduced here.
 Throughout the rest of the paper $\Gamma^{+}$ and $\Gamma^{-}$ denote free abelian groups of rank $\theta$ with bases denoted respectively $(K_i)_{i\in \I}$ and 
$(L_i)_{i\in \I}$. Let $\Gamma = \Gamma^{+} \times \Gamma^{-}$.

\subsection{Families of Nichols algebras}\label{subsec:prenichols-families}
From now on we assume that  $\bq$ belongs to a one-parameter family (except when explicitly stated otherwise). 
This means that there exists an indecomposable matrix
\begin{align}\label{eq:braiding-matrix-parameter}
\bqf = ( \qf_{ij}) \in   \big( \Cset[\nu^{\pm 1}]^\times \big)^{\I \times \I}
\end{align}
such that: 
\begin{itemize}[leftmargin=*]\renewcommand{\labelitemi}{$\circ$}
\item The Nichols algebra of the $\Cset(\nu)$-braided vector space of diagonal type $V_{\Cset(\nu)}$ with basis $(x_i)_{i\in \I}$ and braiding matrix \eqref{eq:braiding-matrix-parameter} has finite root system;
thus $\bqf$ is in the list of  \cite{H-classif RS}. 

\item There exists an open subset $\emptyset \neq O \subseteq \ku^{\times}$ such that for any $x \in O$, the matrix $\bqf(x)$ obtained by
evaluation $\nu \mapsto x$ has the same finite root system as $\bqf$.

\item There exists $\xi \in \G_{\infty}' \cap O$ such that $\bq = \bqf(\xi)$.
\end{itemize}

By inspection in \cite{H-classif RS}, all one-parameter families are listed in the Appendix \ref{sec:Nichols-specialization}.
We denote the Nichols algebras of $V$ and $V_{\Cset(\nu)}$, with braidings given by $\bq$, respectively $\bqf$, by
\begin{align*}
\toba_{\bq} &:= \toba(V)&  &\mbox{and}& \toba_{\bqf} &:= \toba(V_{\Cset(\nu)}).
\end{align*}
The defining relations and PBW-basis of $\toba_{\bq}$ and $\toba_{\bqf}$ are described in \cite{AA-diag-survey} 
over an algebraically closed  field of characteristic 0 but the same presentation and PBW-basis are valid over $\Cset(\nu)$. Indeed, apply to $\Fset = \Cset(\nu)$, $\Kset = \overline{\Cset(\nu)}$ the following remarks:

\begin{itemize}[leftmargin=*]\renewcommand{\labelitemi}{$\circ$}
\medbreak\item Let $\Kset/\Fset$ be a field extension and $(V,c)$ a braided $\Fset$-vector space. Then $(V\otimes_{\Fset} \Kset,c \otimes \id)$ is a braided $\Kset$-vector space  
and $\toba(V) \otimes_{\Fset} \Kset \simeq \toba(V \otimes_{\Fset} \Kset)$; use e.g.  quantum symmetrizers.

\medbreak\item Let $\Kset/\Fset$ be a faithfully flat extension of commutative rings.
Let $U$ be a $\Fset$-algebra with generators $(y_j)_{j\in J}$ and $U_{\Kset} = U \otimes_{\Fset} \Kset$ which is also generated by $(y_j)_{j\in J}$. 
Let $(r_{t})_{t\in T}$ be a set of elements in the tensor algebra over $\Fset$ of the free module $\Fset^{(J)}$. Then these are
defining relations of $U$ if and only if  they are defining relations of  $U_{\Kset}$.

\end{itemize}

\bigbreak

The discussions in \S \ref{subsec:diag-type} and \S \ref{subsec:weyl-gpd} apply to the matrix $\bqf$.
Let $\bqf:\Z^{\I}\times\Z^{\I}\to (\Cset[\nu^{\pm 1}])^\times$ as in  \S \ref{subsec:diag-type}; we also have the notation
$\bqf_{\alpha\beta}$ for $\alpha,\beta  \in \Z^{\I}$ as in \eqref{eq:notation-qab}. 
We denote by $\cW_{\bqf}$ the corresponding  Weyl groupoid, 
by $\rho_i (\bq)$ the related braiding matrices, etc.
As in  Remark \ref{rem:sig-partial}, there are  
$\varsigma_i^{\bqf} \in \Aut_{\rm{alg}}(\toba_{\bqf})$ and  $(\id,\varsigma_i^{\bqf})$-derivations
$\partial_i^{\bqf}:\toba_{\bqf}\to \toba_{\bqf}$, for every $i\in\I$.

\begin{remark}\label{rem:crucial} Let $\beta \in \Delta^{\bqf}$. Crucially,
$\beta$ is a Cartan root of $\bq$ if and only if $\ord\bqf_{\beta\beta}=\infty$.
\end{remark}

\subsection{The quantum group $U_\bqf$}\label{subsec:large-bqf} 
Here we work over $\Cset(\nu)$.
Let $W_{\Cset(\nu)}$ the $\Cset(\nu)$-vector space with basis $(y_i)_{i\in \I}$.
The group $\Gamma$ acts on $V_{\Cset(\nu)} \oplus W_{\Cset(\nu)}$ by 
\begin{align}\label{eq:double-action}
K_i \cdot x_j &= \qf_{ij} x_j,&  K_i \cdot y_j &= \qf_{ij}^{-1} y_j, & L_i \cdot x_j &= \qf_{ji} x_j,& L_i \cdot y_j &= \qf_{ji}^{-1} y_j,
\end{align}
$i,j \in \I$. The vector space $V_{\Cset(\nu)} \oplus W_{\Cset(\nu)}$ is $\Gamma$-graded by
\begin{align}\label{eq:double-grading}
\deg x_i &= K_i, & \deg y_i &= L_i, & i &\in \I.
\end{align}
Thus $V_{\Cset(\nu)} \oplus W_{\Cset(\nu)} \in \ydgcq$ with coaction given by the grading. In particular, 
$W_{\Cset(\nu)}$ is a braided vector space with braiding matrix $\bqf'$ where 
$\bqf'_{ij} = \bqf_{ji}^{-1}$, $i,j \in \I$.

We define $U_\bqf$ as the quotient Hopf algebra
of the bosonization $T(V_{\Cset(\nu)} \oplus W_{\Cset(\nu)}) \# \Cset(\nu)\Gamma$ modulo the ideal
generated by
\begin{align*}
&\JJ(V_{\Cset(\nu)}),&
&\JJ(W_{\Cset(\nu)}), &
&x_iy_{j} - \qf_{ij}^{-1} y_{j}x_i - \delta_{ij} (K_iL_i -
1), &  i,j &\in \I.
\end{align*}
The images of $x_i$, $y_i$, $K_i$ and $L_i$ in $U_\bqf$ will again be denoted by the same symbols. Let
$E_i:=x_{i}$, $F_i:=y_iL_{i}^{-1}$ in $U_\bqf$, $i \in \I$.
Then for all $i, j \in \I$ we have
\begin{align}
\label{eqn:g-con-Ei} K_iE_j &= \qf_{ij}E_jK_i,& L_iE_j &= \qf_{ji} E_jL_i,
\\\label{eqn:g-con-Fi}
K_iF_j &= \qf_{ij}^{-1}F_jK_i,& L_iF_j &= \qf_{ji}^{-1} F_jL_i,
\\ \label{eq:Ubqf-linking-EF}
E_iF_j - F_j E_i &= \delta_{ij} (K_i -L_i^{-1}),
\\ \label{eqn:comul-Ei}
\Delta(E_i) &= K_i \otimes E_i + E_i \otimes 1,&
\Delta(F_i) &=1 \otimes F_i + F_i \otimes L_i^{-1}.
\end{align}

We consider the following subalgebras of $U_\bqf$:
\begin{align*}
U_\bqf^{+0} &= \Cset(\nu) [K_i^{\pm 1}: i\in \I], & 
U_\bqf^{-0} &= \Cset(\nu) [L_i^{\pm 1}: i\in \I], \quad
U_\bqf^{0} = \Cset(\nu) [K_i^{\pm 1}, L_i^{\pm 1}: i\in \I],
\\
U_\bqf^{+} &= \Cset(\nu)\langle E_i: i\in \I \rangle, 
& U_\bqf^{-} &= \Cset(\nu)\langle F_i: i\in \I \rangle,
\\
U_\bqf^{\geqslant} &= \Cset(\nu)\langle E_i, K_i^{\pm1} : i\in \I \rangle, 
& U_\bqf^{\leqslant} &= \Cset(\nu)\langle F_i, L_i^{\pm 1}: i\in \I \rangle.
\end{align*}
The multiplication map induces linear isomorphisms
\begin{align}
\label{eq:triang-field-family}
U_\bqf \simeq U_\bqf^+ \otimes_{\Cset(\nu)} U_\bqf^0 \otimes_{\Cset(\nu)} U_\bqf^-
\simeq U_\bqf^{\geqslant} \otimes_{\Cset(\nu)} U_\bqf^{\leqslant}.
\end{align}

We have canonical isomorphisms of Hopf algebras
\begin{align*}
U_\bqf^{+0} & \simeq \Cset(\nu) \Ga^+, &
U_\bqf^{-0} & \simeq \Cset(\nu) \Ga^-,    &
U_\bqf^{0} & \simeq \Cset(\nu) \Ga. 
\end{align*}
The algebra $U_\bqf^{+}$ has a canonical structure of a Hopf algebra in $\ydgcqp$ and there are
isomorphisms of (braided) Hopf algebras
\begin{align*}
U_\bqf^{+} & \simeq \toba_{\bqf},&
U_\bqf^{\geqslant} &\simeq U_\bqf^{+}  \# U_\bqf^{+0},
\end{align*}
see e.g. \cite{ARS} for details.

Define the module $V_{\Cset(\nu)}^* \in \ydgcqm$ with basis $\{ x_i^* : i \in \I \}$ by
\begin{align*}
x_j^* \cdot L_i &= \qf_{ji} x_j^*,&  
\deg x_i^* = L_i^{-1},  && i, j \in \I. 
\end{align*}
Let $\pi^{-}: U_\bqf^{\leqslant} \to  U_\bqf^{-0}$ be the canonical Hopf algebra morphism; then
${}^{\co \pi^{-}}U_\bqf^{\leqslant} = U_\bqf^{-}$, cf. \cite[Corollary 3.9 (2)]{ARS}.
Hence  $U_\bqf^{-}$ has a canonical structure of a Hopf algebra in $\ydgcqm$.
By Remark \ref{rem:right-Nichols}, we have isomorphisms of (braided) Hopf algebras
\begin{align*}
U_\bqf^{-} & \simeq \toba (V_{\Cset(\nu)}^*) \simeq \toba_{\bqf^{(-1)}},&
U_\bqf^{\leqslant} &\simeq  U_\bqf^{-0} \# U_\bqf^{-}.
\end{align*}
Here $\bqf^{(-1)}$ means the matrix obtained by inverting every entry of $\bqf$. 

\medbreak
Now there is a unique Hopf skew-pairing 
$\langle \cdot , \cdot \rangle: U_\bqf^{\leqslant} \times U_\bqf^{\geqslant} \to \Cset(\nu)$ determined by
\begin{align*}
\langle L_i, K_j \rangle &= \bqf_{ji}^{-1},&
\langle F_i, E_j \rangle &= \delta_{ij},&
\langle L_i, E_j \rangle &= \langle F_i, K_j \rangle = 0,& i,j &\in \I,
\end{align*}
see \cite[Theorem 3.7]{ARS}. By  \cite[Theorem 3.11 (1)]{ARS}, we have
\begin{align*}
\langle x_-g_-, x_+g_+ \rangle &= \langle x_-, x_+ \rangle \langle g_-, g_+ \rangle,
& x_{\pm} &\in U_{\bqf}^{\pm},\; g_{\pm} \in \Gamma^{\pm}.
\end{align*}
The restriction $\langle \cdot , \cdot \rangle: U_\bqf^{-} \times U_\bqf^{+} \to \Cset(\nu)$
is non-degenerate by \cite[Theorem 3.11 (3)]{ARS} and is a 
Hopf skew-pairing of braided Hopf algebras by Lemma \ref{lemma:bilfor-bosonization}. 

\subsection{The large quantum group $U_\bq$}\label{subsec:Ubq}

Recall that $\bq  \in \big( \Cset^\times \big)^{\I \times \I}$  belongs to a one parameter family
given by a matrix $\bqf$, cf. \S \ref{subsec:prenichols-families}.

\begin{definition}\label{def:largeQG}
The \textbf{large quantum group} $U_\bq$ is the  Drinfeld double of the bosonization 
of the distinguished pre-Nichols algebra $\dpn_{\bq}$.
\end{definition}

The complex Hopf algebra $U_\bq$ was  defined  in \cite{Ang-dpn} for arbitrary $\bq$ with $\dim \toba_{\bq} < \infty$. 
Explicitly, let $W$ be the $\Cset$-vector space with basis $(y_i)_{i\in \I}$.
The group $\Gamma$ acts on $V  \oplus W$ by 
\begin{align*}
K_i \cdot x_j &= q_{ij} x_j,&  K_i \cdot y_j &= q_{ij}^{-1} y_j, & L_i \cdot x_j &= q_{ji} x_j,& L_i \cdot y_j &= q_{ji}^{-1} y_j,& i,j &\in \I.
\end{align*}
Now $V \oplus W$ is $\Gamma$-graded by
\eqref{eq:double-grading}, so $W$ is a braided vector space with braiding matrix $\bq'$ with entries
$q'_{ij}= q_{ji}^{-1}$ for $i,j \in \I$.
Recall the defining ideal $\II(V)$ of  $\dpn_{\bq}$.
Then  $U_\bq$ is the bosonization $T(V \oplus W) \# \Cset \Gamma$ modulo the ideal
generated by
\begin{align*}
 &\II(V),&  &\II(W),
&x_iy_{j} - q_{ij}^{-1} y_{j}x_i &- \delta_{ij} (K_iL_i -
1),&  i,j &\in \I.
\end{align*}

The images of $x_i$, $y_i$, $K_i$ and $L_i$ in $U_\bq$ will again be denoted by the same symbols. Let
$\Eq_i=x_{i}$, $\Fq_i=y_iL_{i}^{-1}$ in $U_\bq$, $i \in \I$.
Then for all $i, j \in \I$ we have
\begin{align}
\label{eqn:g-con-ei} K_i\Eq_j &= q_{ij}\Eq_jK_i,& L_i\Eq_j &= q_{ji} \Eq_jL_i,
\\\label{eqn:g-con-fi}
K_i\Fq_j &= q_{ij}^{-1}\Fq_jK_i,& L_i\Fq_j &= q_{ji}^{-1} \Fq_jL_i,
\\ \label{eq:Ubq-linking-ef}
\Eq_i\Fq_j - \Fq_j \Eq_i &= \delta_{ij} (K_i -L_i^{-1}),
\\ \label{eqn:comul-ei}
\Delta(\Eq_i) &= K_i \otimes \Eq_i + \Eq_i \otimes 1,&
\Delta(\Fq_i) &=1 \otimes \Fq_i + \Fq_i \otimes L_i^{-1}.
\end{align}

We consider the following subalgebras of $U_\bq$:
\begin{align*}
U_\bq^{+0} &= \Cset  [K_i^{\pm 1}: i\in \I], & 
U_\bq^{-0} &= \Cset  [L_i^{\pm 1}: i\in \I], &
U_\bq^{0} &= \Cset  [K_i^{\pm 1}, L_i^{\pm 1}: i\in \I],
\\
U_\bq^{+} &= \Cset \langle \Eq_i: i\in \I \rangle, 
& U_\bq^{-} &= \Cset \langle \Fq_i: i\in \I \rangle,
\\
U_\bq^{\geqslant} &= \Cset\langle \Eq_i, K_i^{\pm1} : i\in \I \rangle, 
& U_\bq^{\leqslant} &= \Cset\langle \Fq_i, L_i^{\pm 1}: i\in \I \rangle.
\end{align*}
\begin{definition}\label{def:largeBorel-unipotent}
The algebras $U_\bq^{\geqslant}$ and $U_\bq^{\leqslant}$ will be called \textbf{large quantum Borel algebras} and the algebras $U_\bq^{\pm}$  \textbf{large quantum unipotent algebras}. 
\end{definition}

The multiplication map induces the linear isomorphisms
\begin{align}
\label{eq:triang-field}
U_\bq \simeq U_\bq^+ \otimes_{\Cset} U_\bq^0 \otimes_{\Cset} U_\bq^{(-)} \simeq U_\bq^{\geqslant} \otimes_{\Cset} U_\bq^{\leqslant}.
\end{align}
We have canonical isomorphisms of Hopf algebras
\begin{align*}
U_\bq^{+0} & \simeq \Cset \Ga^+, &
U_\bq^{-0} & \simeq \Cset \Ga^-,    &
U_\bq^{0} & \simeq \Cset \Ga. 
\end{align*}
The algebra $U_\bq^{+}$ has a canonical structure of a Hopf algebra in $\ydgcp$.
We have  isomorphisms of (braided) Hopf algebras:
\begin{align*}
U_\bq^{+} & \simeq \dpn_{\bq},&
U_\bq^{\geqslant} &\simeq U_\bq^{+}  \# U_\bq^{+0},
\end{align*}
see \cite{Ang-dpn}. Define the module $V^* \in \ydgcm$ with basis $\{ x_i^* : i \in \I \}$ by
\begin{align*}
x_j^* \cdot L_i &= \bq_{ji} x_j^*,&  
\deg x_i^* = L_i^{-1},  && i, j \in \I. 
\end{align*}
Let $\pi^{-}: U_\bq^{\leqslant} \to  U_\bq^{-0}$ be the canonical Hopf algebra projection; 
then ${}^{\co \pi^{-}}U_\bq^{\leqslant} = U_\bq^{-}$ as in \cite[Corollary 3.9 (2)]{ARS}.
Hence  $U_\bq^{-}$ is a Hopf algebra in $\ydgcm$ and because of the defining relations of 
$U_\bq^{-}$, it
is isomorphic to the distinguished pre-Nichols algebra of $V^* \in \ydgcm$.
Combining the above, we get isomorphisms of (braided) Hopf algebras:
\begin{align}\label{eq:tobaq-1}
U_\bq^{-} & \simeq \dpn_{\bq^{(-1)}},&
U_\bq^{\leqslant} &\simeq  U_\bq^{-0} \# U_\bq^{-}.
\end{align}
Here, again, $\bq^{(-1)}$ denotes the matrix obtained by inverting every entry of $\bq$.

\subsection{Lusztig isomorphisms and root vectors}\label{subsec:lusztig-isom}
As in \cite[\S 3]{H-Lusztig} we consider
\begin{align}\label{eq:lambdaij-defn}
\lambda_{ij}^{\bqf} &=(\qf_{ii}^{-c_{ij}^{\bqf}} \qf_{ij}\qf_{ji})^{c_{ij}^{\bqf}} (-c_{ij}^{\bqf})_{\qf_{ii}}^!
\prod_{0\le s<-c_{ij}^{\bqf}} (\qf_{ii}^s \qf_{ij}\qf_{ji}-1) \in \Cset[\nu^{\pm 1}]^\times,
& i & \neq j \in\I. 
\end{align}
Notice that $\lambda_{ij}^{\bqf}  \neq 0$ by the definition \eqref{eq:defcij}. 

\medbreak
By \cite[Proposition 6.8]{H-Lusztig}, there exist algebra isomorphisms $T_i^{\bqf}:U_{\rho_i (\bqf)} \to U_{\bqf}$ such that
\begin{align}\label{eq:lusztig-isom-defn}
\begin{aligned}
T_i^{\bqf}(\underline K_j) &=  K_j K_i^{-c_{ij}^{\bqf}};
&
T_i^{\bqf}(\underline E_i) &= \begin{cases} 
F_i L_i, & j=i, \\ (\ad_c E_i)^{-c_{ij}^{\bqf}} E_j, & j\neq i,
\end{cases}
\\
T_i^{\bqf}(\underline L_j) &= L_j L_i^{-c_{ij}^{\bqf}};
&
T_i^{\bqf}(\underline F_i) &= \begin{cases} 
K_i^{-1} E_i, & j=i, \\ 
(\lambda_{ij}^{\bqf})^{-1} (\ad_c F_i)^{-c_{ij}^{\bqf}} F_j, & j\neq i,
\end{cases}
\end{aligned}
\end{align}
where the underlined letters denote the generators of  $U_{\rho_i (\bqf)}$.

\medbreak

Let $\omega_0^{\bqf}$ be the element of $\cW_{\bqf}$ 
of maximal lenght ending at $\bqf$ and $\omega_0^{\bqf}=\sigma_{i_1}^{\bqf}\sigma_{i_2}\cdots \sigma_{i_\ell}$ be a reduced expression. By 
\cite[Theorem 6.20]{H-Lusztig}, we have that
\begin{align}\label{eq:Ebeta-defn}
E_{\beta_k}&:=T_{i_1}^{\bqf} \dots T_{i_{k-1}} (E_{i_k}) \in U_{\bqf}^+, & 
F_{\beta_k}&:=T_{i_1}^{\bqf} \dots T_{i_{k-1}} (F_{i_k}) \in U_{\bqf}^-, &
&k \in \I_{\ell}.
\end{align}
By \cite[Theorem 4.5]{HY-shapov} the sets
\begin{align}\label{eq:PBW-basis-U+-U-}
\left\{  E_{\beta_1}^{n_1}E_{\beta_2}^{n_2} \dots E_{\beta_\ell}^{n_\ell} : 0\le n_j< \widetilde{N}_{\beta_j}, j \in \I_{\ell} \right\}, \,
\left\{  F_{\beta_1}^{m_1}F_{\beta_2}^{m_2} \dots F_{\beta_\ell}^{m_\ell} : 0\le m_j< \widetilde{N}_{\beta_j}, j \in \I_{\ell} \right\}
\end{align}
are bases of $U_{\bqf}^+$ and $U_{\bqf}^-$, respectively. Indeed, this follows from Property \ref{item:property-c} in the Appendix \ref{sec:Nichols-specialization} and Remark \ref{rem:crucial}. Thus the following set is a basis of $U_{\bqf}$:
\begin{align}\label{eq:PBW-basis-U}
\{ E_{\beta_1}^{m_1} \dots E_{\beta_\ell}^{m_\ell} K_1^{a_1} \dots K_{\theta}^{a_{\theta}}  L_1^{b_1} \dots L_{\theta}^{b_{\theta}} F_{\beta_1}^{n_1} \dots F_{\beta_\ell}^{n_\ell}:
0\le m_j, \, n_j< \widetilde{N}_{\beta}, \, a_i, \, b_i\in\Z \}.
\end{align}

\medbreak
We now turn to the algebras $U_{\bq}$. Let $\lambda_{ij}^{\bq}$ is defined as \eqref{eq:lambdaij-defn} with $\bq$ in place of $\bqf$. Notice that $\lambda_{ij}^{\bq} \neq 0$ by the definition \eqref{eq:defcij}. 
By \cite[Proposition 10]{Ang-dpn}, there exist algebra isomorphisms $T_i^{\bq}:U_{\rho_i (\bq)} \to U_{\bq}$ such that
\begin{align}\label{eq:lusztig-isom-defn-preNichols}
\begin{aligned}
T_i^{\bq}(\underline K_j) &= K_j K_i^{-c_{ij}^{\bq}};
&
T_i^{\bq}(\underline \Eq_i) &= \begin{cases} \Fq_i L_i, & j=i, \\ (\ad_c \Eq_i)^{-c_{ij}^{\bq}} \Eq_j, & j\neq i,
\end{cases}
\\ \\
T_i^{\bq}(\underline L_j) &= L_j L_i^{-c_{ij}^{\bq}};
&
T_i^{\bq}(\underline \Fq_i) &= \begin{cases} K_i^{-1} \Eq_i, & j=i, \\ 
(\lambda_{ij}^{\bq})^{-1} (\ad_c \Fq_i)^{-c_{ij}^{\bq}} \Fq_j, & j\neq i,
\end{cases}
\end{aligned}
\end{align}
The underlined letters denote the generators of $U_{\rho_i (\bq)}$.

\medbreak

Analogously, $\Eq_{\beta_k}=T_{i_1}^{\bq} \dots T_{i_{k-1}} (\Eq_{i_k})$ and $\Fq_{\beta_k}=T_{i_1}^{\bq} \dots T_{i_{k-1}} (\Fq_{i_k})$ belong to $U_{\bq}^+$ and $U_{\bq}^-$, respectively and by \cite[Theorem 11]{Ang-dpn} the sets
\begin{align}\label{eq:PBW-basis-preNicholsU+U-}
\big\{  \Eq_{\beta_1}^{n_1}\Eq_{\beta_2}^{n_2} \dots \Eq_{\beta_\ell}^{n_\ell} : 0\le n_i< \widetilde{N}_{\beta_i} \big\}
& &\text{and}
& &\big\{  \Fq_{\beta_1}^{m_1}\Fq_{\beta_2}^{m_2} \dots \Fq_{\beta_\ell}^{m_\ell} : 0\le m_j< \widetilde{N}_{\beta_j} \big\}
\end{align}
are bases of $U_{\bq}^+$ and $U_{\bq}^-$, respectively. Thus the following set is a basis of $U_{\bq}$:
\begin{align}\label{eq:PBW-basis-prenichols}
\big\{ \Eq_{\beta_1}^{m_1} \dots \Eq_{\beta_\ell}^{m_\ell} K_1^{a_1} \dots K_{\theta}^{a_{\theta}}  L_1^{b_1} \dots L_{\theta}^{b_{\theta}} \Fq_{\beta_1}^{n_1}  \dots \Fq_{\beta_\ell}^{n_\ell} : 0\le m_j, \, n_j< \widetilde{N}_{\beta_j}, \, a_i, \, b_i\in\Z \big\}.
\end{align}

\subsection{The central subalgebras $\Zc_{\bq}$, $\Zc_{\bq}^{\pm}$, $\Zc_{\bq}^{\geqslant}$, $\Zc_{\bq}^{\leqslant}$}\label{subsec:Zq}
\label{sec:Z}
In this subsection and the next $\bq$ does not need to be in a family, just $\dim \toba_{\bq} < \infty$ is assumed.
Set 
\begin{align}\label{eq:defn-lcm-Nbeta}
\Nt=\lcm \{N_{\beta}: \beta\in\varDelta^{\bq}_+\}.
\end{align}
To start with, we consider a subalgebra $\Zc_{\bq}$ of $U_{\bq}$.  Then $\Zc_{\bq}$ is generated by 
\begin{align}\label{eq:generators-Zc-Cartan-roots}
&\Eq_{\beta}^{N_{\beta}},& &\Fq_{\beta}^{N_{\beta}},& &K_{\beta}^{\pm N_{\beta}},& &L_{\beta}^{\pm N_{\beta}},& &\beta \in \fO^{\bq}_+,
\\\label{eq:generators-Zc-new}
&& && &K_{\beta}^{\pm\Nt},& &L_{\beta}^{\pm \Nt},& &\beta \in \varDelta^{\bq}_+;
\end{align}
this is a normal $\QQ_{\bq}$-graded Hopf subalgebra of $U_{\bq}$, \cite[Proposition 21, Theorem 33]{Ang-dpn}, which may be different from the one in \cite[p. 18]{Ang-dpn} 
since we add the generators in \eqref{eq:generators-Zc-new} what actually only affects the types $\supera{k-1}{\theta - k}$,
see Proposition \ref{prop:Zq-families}.
These new generators are necessary for $U_{\bq}$ to be finitely generated as $Z_{\bq}$-module.  

\medbreak
The following subalgebras of $\Zc_{\bq}$ are also needed: 
\begin{align*}
\Zc_{\bq}^{+} &= \Cset \langle \Eq_{\beta}^{N_{\beta}}: \beta \in \fO^{\bq}_{+} \rangle, & 
\Zc_{\bq}^{-} &= \Cset \langle \Fq_{\beta}^{N_{\beta}}: \beta \in \fO^{\bq}_{+} \rangle.
\end{align*}
Notice that  $\Zc_{\bq}^+$ coincides with the subalgebra introduced right after \eqref{eq:pair-LqBq}, see \cite[Theorem 29]{Ang-dpn}.
For $\Zc_{\bq}$ to be central in $U_{\bq}$ we need the following condition that 
\emph{we assume from now on:}
\begin{align}\label{eq:condition-cartan-roots}
&q_{\alpha\beta}^{N_\beta}=1, & \alpha \in \varDelta^{\bq}, \; \beta\in\fO^{\bq}.
\end{align}

\begin{remark}\label{rem:condition-cartan-roots-simple}
\begin{enumerate}[leftmargin=*,label=\rm{(\alph*)}]
\item If \eqref{eq:condition-cartan-roots} holds, then  $q_{\beta\alpha}^{N_\beta}=1$ \cite[Lemma 24]{Ang-dpn}.

\medbreak
\item Condition \eqref{eq:condition-cartan-roots} is equivalent to the following one:
\begin{align}\label{eq:condition-cartan-roots-simple}
&q_{\alpha_i\beta}^{N_\beta}=1, & \text{ for all }i\in\I, &\; \wbeta\in\varPi^{\bq}.
\end{align}
\end{enumerate}
The reduction to simple roots is clear. Since $q_{\alpha\beta}^{N_\beta}=q_{\alpha\wbeta}$ 
and $\varPi^{\bq}$ is a basis of the root system $\fO^{\bq}$, the reduction from $\fO^{\bq}_{+}$ to $\varPi^{\bq}$  holds.

\medbreak
\begin{enumerate}[leftmargin=*,resume,label=\rm{(\alph*)}]
\item\label{item:condition-cartan-roots-weyl} Let $i\in\I$.
Condition \eqref{eq:condition-cartan-roots} holds for $\bq$ if and only if it holds for
$\rho_i (\bq)$.
\end{enumerate}

Indeed, $\rho_i(\bq)_{\alpha\beta}=\bq_{s_i^{\bq}(\alpha)s_i^{\bq}(\beta)}$ for all $\alpha,\beta\in\Z^{\theta}$ by \eqref{eq:rhoiq-definition},
and by \cite[Lemma 2.3]{AAR3} we have $s_i^{\bq}(\fO^{\bq})=\fO^{\rho_i(\bq)}$, $N^{\rho_i(\bq)}_{s_i^{\bq}(\beta)}=N^{\bq}_{\beta}$ for all $\beta$.
\end{remark}

\medspace

When $\bq$ is symmetric, we can quotient the large quantum by a central group subalgebra to remove the extra Cartan generators as in quantum groups.
However the condition of $\bq$ being symmetric is not always compatible with \eqref{eq:condition-cartan-roots} as we see next.

\begin{example}
Assume that $\bq$ has Dynkin diagram $\xymatrix{\overset{-1}{\circ} \ar@{-}[r]^\xi & \overset{-1}{\circ} }$, $\xi\in\G_N'$, $N>2$: it is of super type $\supera{1}{0}$. In this case,
\begin{align*}
\varDelta^{\bq}_+ &=\{\alpha_1, \alpha_1+\alpha_2, \alpha_2 \}, 
& \fO^\bq_+ &=\{\alpha_1+\alpha_2\}.
\end{align*}
Condition \eqref{eq:condition-cartan-roots-simple} becomes
\begin{align*}
1&=(q_{11}q_{12})^N = (-q_{12})^N, &
1&=(q_{21}q_{22})^N = (-q_{21})^N& \iff q_{12}^N=(-1)^N = q_{21}^N.
\end{align*}

We have two possibilities: if $N$ is even, then $q_{12}=\xi^k$ for some $k\in\I_{N}$, so $q_{21}=\xi^{1-k}$, and $\bq$ is not symmetric.
If $N$ is odd, then $q_{12}=-\xi^k$ for some $k\in\I_{N}$, so $q_{21}=-\xi^{1-k}$. In this case $\bq$ is symmetric only when $k=\frac{N+1}{2}$.
\end{example}

We consider also the Hopf subalgebras
\begin{align*}
\Zc_{\bq}^{+0} &= \Cset \langle \{K_{\beta}^{\pm N_{\beta}}: \beta \in \fO^{\bq}_{+}\} \cup \{K_{\beta}^{\pm\Nt}:\beta \in \varDelta^{\bq}_+\}\rangle, &  
\Zc_{\bq}^{\geqslant} &= \Zc_{\bq}^{+}\Zc_{\bq}^{+0}, 
\\
\Zc_{\bq}^{-0} &= \Cset \langle \{L_{\beta}^{\pm N_{\beta}}: \beta \in \fO^{\bq}_{+}\} \cup \{L_{\beta}^{\pm\Nt}:\beta \in \varDelta^{\bq}_+\}\rangle, & \ \Zc_{\bq}^{\leqslant} &= \Zc_{\bq}^{-}\Zc_{\bq}^{-0}.
\\
\Zc_{\bq}^{0} &= \Zc_{\bq}^{+0}\Zc_{\bq}^{-0}.
&
\end{align*}

\begin{remark}\label{rem:Zq-properties} The  following properties hold:

\begin{enumerate}[leftmargin=*,label=\rm{(\alph*)}]
\item  \cite[Th. 23]{Ang-dpn}. $\Zc_{\bq}^{\pm}$ is a polynomial ring in variables $\Eq_{\beta}^{N_{\beta}}$, respectively $\Fq_{\beta}^{N_{\beta}}$, $\beta \in \fO^{\bq}_{+}$. 

\medbreak
\item The multiplication gives linear isomorphisms
$\Zc_{\bq}^{+} \otimes \Zc_{\bq}^{+0} \otimes \Zc_{\bq}^{-0} \otimes \Zc_{\bq}^{-} \simeq \Zc_{\bq} \simeq\Zc_{\bq}^\geqslant \otimes \Zc_{\bq}^\leqslant$.

\medbreak\item  Recall the skew-deriva\-tions $\partial^{\bq}_i$, $\partial^{\bq^{(-1)}}_i$ of  $U_{\bq}^{\pm}$, cf.  \eqref{eq:tobaq-1}. By \cite[Theorem 31]{Ang-dpn},
\begin{align}\label{eq:Z+-intersection-ker-derivations}
\Zc_{\bq}^{+} & = \bigcap_{i\in\I} \ker \partial_i^{\bq}, & \Zc_{\bq}^{-} &= \bigcap_{i\in\I} \ker \partial^{\bq^{(-1)}}_i.
\end{align}

\medbreak\item The algebras $U_{\bq}$, $U_{\bq}^{\geqslant}$, $U_{\bq}^{\leqslant}$ and $U_{\bq}^{\pm}$
are module finite over their central subalgebras $Z_{\bq}$, $Z_{\bq}^{\geqslant}$, $Z_{\bq}^{\leqslant}$ and $Z_{\bq}^{\pm}$; just consider the PBW-bases in \S \ref{subsec:lusztig-isom}.
\end{enumerate}

\end{remark}

\subsection{Action of the Weyl groupoid on $\Zc_{\bq}$}

Next we prove invariance of the central Hopf subalgebras $\Zc_{\bq}$ under the Lusztig isomorphisms $T_i^{\bq}: U_{\rho_i (\bq)}\to U_{\bq}$,
cf. \S \ref{subsec:lusztig-isom}.

\begin{theorem}\label{th:Zq-invariant-under-Ti} Let $i\in\I$. Then
$T_i^{\bq}$ restricts to an algebra isomorphism $T_i^{\bq}:\Zc_{\rho_i(\bq)}\to \Zc_{\bq}$.
\end{theorem}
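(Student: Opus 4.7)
The plan is to prove both containments $T_i^{\bq}(\Zc_{\rho_i(\bq)}) \subseteq \Zc_{\bq}$ and the reverse by evaluating $T_i^{\bq}$ on the explicit generating set \eqref{eq:generators-Zc-Cartan-roots}--\eqref{eq:generators-Zc-new}. The key auxiliary inputs are: (i) condition \eqref{eq:condition-cartan-roots} transfers to $\rho_i(\bq)$ by Remark \ref{rem:condition-cartan-roots-simple}\ref{item:condition-cartan-roots-weyl}; (ii) the reflection $s_i^{\bq}$ bijects $\fO^{\rho_i(\bq)}$ with $\fO^{\bq}$ and preserves the integers $N^{\bullet}_\gamma$, and sends $\varDelta^{\rho_i(\bq)}_+\setminus\{\alpha_i\}$ into $\varDelta^{\bq}_+$ by a standard sign analysis on simple-root coefficients; (iii) by Remark \ref{rem:Zq-properties}(a), $\Zc_{\bq}^{\pm}$ is canonically the polynomial ring on the Cartan-root-vector powers, hence intrinsic to $\bq$ and independent of the reduced-expression choice.

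The group-like generators are handled by iterating \eqref{eq:lusztig-isom-defn-preNichols} to obtain $T_i^{\bq}(\underline K_\gamma) = K_{s_i^{\bq}(\gamma)}$ and $T_i^{\bq}(\underline L_\gamma) = L_{s_i^{\bq}(\gamma)}$ for every $\gamma \in \Z^{\I}$; the $\pm N$- and $\pm\Nt$-power generators therefore map to listed generators of $\Zc_{\bq}$, with the sole sign flip $s_i^{\bq}(\alpha_i) = -\alpha_i$ absorbed by $K_\beta^{\mp M}$ being a generator whenever $K_\beta^{\pm M}$ is. For the root-vector generators I split cases. If $\gamma = \alpha_i \in \fO^{\rho_i(\bq)}_+$ (equivalent to $\alpha_i\in \fO^{\bq}_+$), a direct $q$-commutation from \eqref{eqn:g-con-ei}--\eqref{eqn:g-con-fi} yields
\[
T_i^{\bq}(\underline{\Eq}_i^{N_i}) = q_{ii}^{-\binom{N_i}{2}}\Fq_i^{N_i}L_i^{N_i}, \qquad T_i^{\bq}(\underline{\Fq}_i^{N_i}) = q_{ii}^{\binom{N_i}{2}}K_i^{-N_i}\Eq_i^{N_i},
\]
both in $\Zc_{\bq}$. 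If $\gamma \in \fO^{\rho_i(\bq)}_+\setminus\{\alpha_i\}$, I choose a reduced expression $\omega_0^{\bq} = \sigma_i^{\bq}\sigma_{i_2}\cdots\sigma_{i_\ell}$ beginning with $\sigma_i^{\bq}$; the tail enumerates exactly $\varDelta^{\rho_i(\bq)}_+\setminus\{\alpha_i\}$ via $\gamma_k := s_{i_2}^{\rho_i(\bq)}\cdots s_{i_{k-1}}(\alpha_{i_k})$ with $s_i^{\bq}(\gamma_k)=\beta_k$, and the chained Lusztig operators give $T_i^{\bq}\bigl(T_{i_2}^{\rho_i(\bq)}\cdots T_{i_{k-1}}(\underline\Eq_{i_k})\bigr)=\Eq_{\beta_k}$. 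Taking $N^{\rho_i(\bq)}_{\gamma_k}=N^{\bq}_{\beta_k}$-th powers lands in $\Zc_{\bq}^+$; the $\Fq$-case is analogous.

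The main obstacle is that the tail $\sigma_{i_2}\cdots\sigma_{i_\ell}$ has length $\ell-1$, hence is not a full reduced expression for $\omega_0^{\rho_i(\bq)}$ — the enumerated roots $\gamma_k$ miss precisely $\alpha_i$, and a priori the root vectors built from the tail could differ from PBW generators chosen from another reduced expression. Input (iii) resolves this: the polynomial subalgebra of $\Zc_{\rho_i(\bq)}^+$ generated by the tail-based $N$-th-power root vectors (together with $\underline\Eq_i^{N_i}$ for the missing Cartan root $\alpha_i$, handled by the previous simple-root case) coincides intrinsically with $\Zc_{\rho_i(\bq)}^+$, so the above verification covers all generators of $\Zc_{\rho_i(\bq)}$. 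The reverse inclusion follows by running the same argument after swapping $\bq$ and $\rho_i(\bq)$ (using $\rho_i^2=\id$), yielding the desired isomorphism $T_i^{\bq}:\Zc_{\rho_i(\bq)}\to \Zc_{\bq}$.
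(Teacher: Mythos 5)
Your proposal follows the same skeleton as the paper's proof: fix a reduced expression $\omega_0^{\bq}=\sigma_i^{\bq}\sigma_{i_2}\cdots\sigma_{i_\ell}$ beginning with $\sigma_i$, compute $T_i^{\bq}$ on the group-like generators via $s_i^{\bq}$, split the root-vector generators into the $\alpha_i$ case and the tail cases, and obtain the reverse inclusion by symmetry. The scalar you obtain in the $\alpha_i$ case, $q_{ii}^{-\binom{N_i}{2}}$, is in fact the correct power (the paper writes $q_{ii}^{\binom{N_{\alpha_i}}{2}}$), but since only nonvanishing matters the sign is immaterial.

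The one place where your argument has a genuine, if small, gap is precisely the "obstacle" you flag. Invoking the intrinsic characterization $\Zc_{\rho_i(\bq)}^+=\bigcap_{j\in\I}\ker\partial_j$ from \eqref{eq:Z+-intersection-ker-derivations} shows that the \emph{subalgebra} $\Zc_{\rho_i(\bq)}^+$ is independent of the reduced expression, but it does not by itself say that the tail-based Cartan root-vector powers together with $\underline\Eq_i^{N_i}$ form a generating set for it: for that you still need to know that $\sigma_{i_2}^{\rho_i(\bq)}\cdots\sigma_{i_\ell}$ is a prefix of a bona fide reduced expression of $\omega_0^{\rho_i(\bq)}$ and that the single missing PBW vector corresponds to $\alpha_i$. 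The paper supplies exactly this step: by \cite[Corollary 3]{HY-groupoid} the tail extends to $\omega_0^{\rho_i(\bq)}=\sigma_{i_2}^{\rho_i(\bq)}\cdots\sigma_{i_\ell}\sigma_j$ for some $j$, and a short positivity/enumeration argument then forces $\sigma_{i_2}^{\rho_i(\bq)}\cdots\sigma_{i_\ell}(\alpha_j)=\alpha_i$. Adding that extension lemma (and the identification of the last root) closes the gap; with it inserted, your proof and the paper's coincide.
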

\pf
By \eqref{eq:Z+-intersection-ker-derivations}, $\Zc_{\bq}$ (defined in terms of the root vectors which  
depend on the expression of $\omega_0^{\bq}$)
is indeed indedependent of such expression; in particular we may choose $\omega_0^{\bq} = \sigma_{i_1}^{\bq} \dots \sigma_{i_\ell}$ such that $i_1=i$. For simplicity we set $\bp=\rho_i(\bq)$. As $\sigma_{i_2}^{\bp} \dots \sigma_{i_\ell}$ is reduced, we may extend it to a reduced expression of $\omega^{\bp}$ \cite[Corollary 3]{HY-groupoid}:
\begin{align*}
\omega_0^{\bp} &= \sigma_{i_2}^{\bp} \dots \sigma_{i_\ell} \sigma_j & \text{for some } & j\in\I.
\end{align*}
We set $\beta_k'=\sigma_{i_1}(\beta_k)=\sigma_{i_2}^{\bp} \dots \sigma_{i_{k-1}}(\alpha_{i_k})$, $k\in\I_{2,\ell}$.
Hence
\begin{align*}
\{ \beta_k': k\in\I_{2,\ell}  \}
&=s_i^{\bp}\left(\varDelta^{\bq}_+ -\{\alpha_i\} \right) =
\varDelta^{\bp}_+ -\{\alpha_i\}.
\end{align*}
As $\sigma_{i_2}^{\bp} \dots \sigma_{i_\ell}(\alpha_j)\in \varDelta^{\bp}_+$, $\sigma_{i_2}^{\bp} \dots \sigma_{i_\ell}(\alpha_j) \neq \beta_k'$ for $k\in\I_{2,\ell}$, 
we have that $\sigma_{i_2}^{\bp} \dots \sigma_{i_\ell}(\alpha_j)=\alpha_i$. Then $\{N_{\beta'}: \beta'\in\varDelta^{\bp}_+\}=\{N_{\beta}: \beta\in\varDelta^{\bq}_+\}$, so
$\lcm \{N_{\beta'}: \beta'\in\varDelta^{\bp}_+\}=\Nt$, and
\begin{align*}
T_i^{\bq} \big(K_{i}^{\pm \Nt}\big) & =K_i^{\mp \Nt} \in \Zc_{\bq}, & 
T_i^{\bq} \big(K_{\beta_k'}^{\pm \Nt}\big) & = 
K_{s_i^{\bq}(\beta_k')}^{\pm \Nt} = K_{\beta_k}^{\pm \Nt} \in \Zc_{\bq}, & &k>1,
\\
T_i^{\bq} \big(K_{i}^{\pm \Nt}\big) & =K_i^{\mp \Nt} \in \Zc_{\bq}, & 
T_i^{\bq} \big(K_{\beta_k'}^{\pm \Nt}\big) & = 
K_{s_i^{\bq}(\beta_k')}^{\pm \Nt} = K_{\beta_k}^{\pm \Nt} \in \Zc_{\bq}, & &k>1.
\end{align*}

\medbreak
Let $\beta\in\fO^{\rho_i(\bq)}$. If $\beta=\beta_k'$ for some $k\in\I_{2,\ell}$, then
$s_i^{\bq}(\beta_k')=\beta_k$ and $N_{\beta_k'}=N_{\beta_k}$, hence 
\begin{align*}
T_i^{\bq} \big(K_{\beta_k'}^{\pm N_{\beta_k'}}\big) & = K_{s_i^{\bq}(\beta_k')}^{\pm N_{\beta_k'}} = K_{\beta_k}^{\pm N_{\beta_k}} \in \Zc_{\bq},
&
T_i^{\bq}\big(\Eq_{\beta_k'}^{N_{\beta_k'}}\big) & = T_{i_1}^{\bq} T_{i_2} \dots T_{i_{k-1}}(\Eq_{i_k}^{N_{\beta_k}}) = \Eq_{\beta_k}^{N_{\beta_k}} \in \Zc_{\bq}.
\end{align*}
Otherwise $\beta=\alpha_i$, so $i$ is a Cartan vertex and
\begin{align*}
T_i^{\bq}(K_{\beta}^{\pm N_{\beta}}) & = K_{i}^{\mp N_{\alpha_i}} \in \Zc_{\bq},
&
T_i^{\bq}(\Eq_{\beta}^{N_{\beta}}) & = 
T_i^{\bq}(\Eq_{i}^{N_{\alpha_i}})= (\Fq_i L_i)^{N_{\alpha_i}} 
= q_{ii}^{\binom{N_{\alpha_i}}{2}} \Fq_i^{N_{\alpha_i}} L_i^{N_{\alpha_i}}
\in \Zc_{\bq}.
\end{align*}
Analogously, $T_i^{\bq}(L_{\beta}^{\pm N_{\beta}}), T_i^{\bq}(\Fq_{\beta}^{N_{\beta}})\in \Zc_{\bq}$ for all $\beta\in\fO_{\rho_i(\bq)}$, so $T_i^{\bq}(\Zc_{\rho_i (\bq)}) \subseteq \Zc_{\bq}$.
Applying $T_i^{\bp}$ we get the opposite inclusion.
\epf

Let $\Lambda'_{\bq}$ be the subgroup of $\Gamma$ generated by $K_{\beta}^{\pm N_{\beta}}$, $L_{\beta}^{\pm N_{\beta}}$, $\beta \in \fO^{\bq}_{+}$ and let $\Lambda_{\bq}$ be the subgroup generated by $\Lambda'_{\bq}$ and $K_{\beta}^{\pm\Nt}$, $L_{\beta}^{\pm\Nt}$, $\beta \in \varDelta^{\bq}_+$: we have that $\Zc_{\bq}^0=\Cset \Lambda_{\bq}$.

\medbreak
Next we check that $\Lambda'_{\bq}=\Lambda_{\bq}$ for $\bq$ as in the Appendix, not of type super $A$, so the subalgebra $\Zc_{\bq}$ is generated by \eqref{eq:generators-Zc-Cartan-roots} and  coincides with the one in \cite[p. 18]{Ang-dpn}.

\begin{proposition}\label{prop:Zq-families}
Assume that $\bq$ belongs to one of the families in the Appendix \ref{sec:Nichols-specialization}.
\begin{enumerate}[leftmargin=*,label=\rm{(\alph*)}]
\item If $\bq$ is not of type $\supera{k-1}{\theta - k}$, $k \in \I_{\lfloor\frac{\theta+1}{2} \rfloor}$, then $\Lambda'_{\bq}=\Lambda_{\bq}$ and $\Zc_{\bq}^0$ is generated by \eqref{eq:generators-Zc-Cartan-roots}.
\item Let $\bq$ be of type $\supera{k-1}{\theta - k}$, $k \in \I_{\lfloor\frac{\theta+1}{2} \rfloor}$, $\eta\in\varDelta_+^{\bq}-\fO^{\bq}_+$. Then 
$\Lambda_{\bq}\simeq \Lambda'_{\bq} \times \langle K_{\eta}^{\Nt}\rangle \times \langle L_{\eta}^{\Nt} \rangle$, and 
$\Zc_{\bq}^0$ is generated by $\{K_{\beta}^{\pm N_\beta}, L_{\beta}^{\pm N_\beta} : \beta \in \varDelta^{\bq}_+ \}$ 
and $K_{\eta}^{\pm\Nt}$, $L_{\eta}^{\pm\Nt}$.
\end{enumerate}
\end{proposition}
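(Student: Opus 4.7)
The strategy is to compare $\Lambda'_\bq$ and $\Lambda_\bq$ as subgroups of $\Gamma = \Gamma^+ \times \Gamma^-$ by translating into a lattice problem. Since both generating sets split across the $\Gamma^+$ and $\Gamma^-$ factors and the two sides are entirely symmetric, I would argue only on the $\Gamma^+$ side. Identifying $\Gamma^+ \simeq \Z^{\theta}$ via $K_i \leftrightarrow \alpha_i$, the subgroup $\Lambda'_\bq \cap \Gamma^+$ corresponds to the sublattice
$$\Omega' := \sum_{\beta \in \fO^\bq_+} \Z \cdot N_\beta\beta \subseteq \Z^\theta,$$
while $\Lambda_\bq \cap \Gamma^+$ corresponds to $\Omega := \Omega' + \sum_{\gamma \in \varDelta^\bq_+} \Z\cdot \Nt\gamma$. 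Because $N_\beta \mid \Nt$ for every $\beta \in \varDelta^\bq_+$, one has $\Nt\beta = (\Nt/N_\beta)(N_\beta\beta) \in \Omega'$ for each Cartan $\beta$, so the nontrivial content is whether $\Nt\gamma \in \Omega'$ for every non-Cartan $\gamma \in \varDelta^\bq_+ \setminus \fO^\bq_+$.

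By Theorem \ref{thm:Cartan-roots} the set $\varPi^\bq$ is a basis of the root system $\wfO^\bq = \{N_\beta\beta : \beta \in \fO^\bq\}$, so the $\Qset$-span of $\Omega'$ in $\Z^\theta \otimes \Qset$ has dimension exactly $\rank \g_\bq$. I would then split into cases using the classification of one-parameter families in the Appendix. For Cartan type every root is Cartan and (a) is vacuous. For the modular families $\Bgl(4)$ and $\Brown(2)$, and for each super family other than $\supera{k-1}{\theta-k}$, one has $\rank\g_\bq = \theta$, so $\Omega'$ has full rank in $\Z^\theta$; a finite inspection of the explicit PBW data shows that for each non-Cartan $\gamma$ the vector $\Nt\gamma$ admits an integer expansion $\Nt\gamma = \sum c_\beta N_\beta\beta$ with $\beta$ ranging over Cartan roots. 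This gives $\Omega = \Omega'$ and proves (a).

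For $\bq$ of super A type $\supera{k-1}{\theta-k}$ one has $\rank\g_\bq = \theta - 1$, so $\Omega' \subsetneq \Omega$ with rank deficit one. I would show using the explicit root system of $\supera{k-1}{\theta-k}$ that, for any fixed non-Cartan $\eta$, every $\gamma \in \varDelta^\bq_+ \setminus \fO^\bq_+$ satisfies $\Nt\gamma \equiv a_\gamma \Nt\eta \pmod{\Omega'}$ for some $a_\gamma \in \Z$, and that no nonzero integer multiple of $\Nt\eta$ lies in $\Omega'$ (because $\eta$ projects nontrivially onto the one-dimensional cokernel of $\Omega' \otimes \Qset \hookrightarrow \Z^\theta \otimes \Qset$). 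Hence $\Omega/\Omega' \simeq \Z$ is generated by the image of $\Nt\eta$, and combining with the parallel statement on the $\Gamma^-$ side yields the direct product decomposition $\Lambda_\bq \simeq \Lambda'_\bq \times \langle K_\eta^{\Nt}\rangle \times \langle L_\eta^{\Nt}\rangle$. The generating set for $\Zc_\bq^0$ then follows by noting that $\Zc_\bq^0 = \Cset\Lambda_\bq$.

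The principal obstacle is the case-by-case verification in non-super-A families: while the rank argument ensures that the candidate integer relations $\Nt\gamma = \sum c_\beta N_\beta\beta$ are the only possibilities over $\Qset$, one must actually confirm integrality for each $\gamma$ using the tables in the Appendix, particularly for the modular types $\Bgl(4)$, $\Brown(2)$ and the exceptional super families $\superb{}{}$, $\superd{}{}$, $\superda{}$, $\superf$, $\superg$. For super $A$ the main technical point is instead the converse: showing that $\Nt\eta$ has infinite order modulo $\Omega'$, which amounts to exhibiting a single linear functional on $\Z^\theta$ that vanishes on every Cartan root but not on $\eta$.
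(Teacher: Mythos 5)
Your proposal follows essentially the same route as the paper's proof: split across the $\Gamma^+$ and $\Gamma^-$ factors, translate into a lattice comparison in $\Z^\theta$, and then combine the rank count coming from Theorem \ref{thm:Cartan-roots} with a case-by-case inspection of the tabulated families. Your lattice-theoretic framing and the observation for super $A$ (a single functional detecting the coefficient of $\alpha_k$, which kills every Cartan root but not $\eta$) are clean reformulations of the paper's direct computation; those parts are fine.

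The one step missing from your write-up, which the paper addresses first and which your ``finite inspection of the explicit PBW data'' actually relies on, is the reduction to a single representative in each Weyl-equivalence class. The tables in Appendix \ref{sec:Nichols-specialization} list only one Dynkin diagram per equivalence class, while the hypothesis is merely that $\bq$ lies in one of those families, i.e.\ is Weyl-equivalent to such a diagram. The paper handles this by noting that the Lusztig isomorphisms $T_i^{\bq}$ restrict (via $s_i^{\bq}$ on $\Gamma^\pm$) to group isomorphisms $\Lambda_\bq \simeq \Lambda_{\rho_i\bq}$ and $\Lambda'_\bq \simeq \Lambda'_{\rho_i\bq}$, so the quotient $\Lambda_\bq/\Lambda'_\bq$ only depends on the Weyl-equivalence class. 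In your language this is the statement that $\Omega(\bq) = s^{\bq}\big(\Omega(\bq_0)\big)$ and $\Omega'(\bq) = s^{\bq}\big(\Omega'(\bq_0)\big)$ for the appropriate element $s^{\bq}\in GL(\Z^\theta)$ of the Weyl groupoid, hence $\Omega/\Omega'$ is invariant. You should state this explicitly; otherwise the case-by-case check is only carried out for the tabulated diagrams, not for all $\bq$ covered by the statement. With that reduction added, your argument matches the paper's.
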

\pf
First we notice that $T_i^{\bq}$ restricts to group isomorphisms $\Lambda_{\bq}\simeq \Lambda_{\rho_i\bq}$, $\Lambda_{\bq}'\simeq \Lambda_{\rho_i\bq}'$ since the restrictions $(T_i^{\bq})_{|U_{\rho_i\bq}^{+0}}:U_{\rho_i\bq}^{+0}\to U_{\bq}^{+0}$ and $(T_i^{\bq})_{|U_{\rho_i\bq}^{-0}}:U_{\rho_i\bq}^{-0}\to U_{\bq}^{-0}$ are given by $s_i^{\bq}$.  Hence it is enough to consider one matrix for each Weyl equivalence class.

\medbreak
Assume that $\bq$ is as in the Appendix \ref{sec:Nichols-specialization} and is not of type $\supera{k-1}{\theta - k}$. If $\bq$ is
of Cartan type, then $\varDelta_+^{\bq}=\fO^{\bq}_+$, hence $\Lambda'_{\bq}=\Lambda_{\bq}$.
For the other types we check the equality $\Lambda'_{\bq}=\Lambda_{\bq}$ case-by-case when the Dynkin diagram is the one in Tables \ref{tab:Appendix-super} and \ref{tab:Appendix-modular}.

\medbreak
Let $\bq$ be of type $\supera{k-1}{\theta - k}$ with Dynkin diagram in Table \ref{tab:Appendix-super}, $\eta\in\varDelta_+^{\bq}-\fO^{\bq}_+$. Then $\fO^{\bq}_+=\{\alpha_{ij}|i\le j<k \text{ or }k<i\le j\}$, $N_{\beta}=N$ for all $\beta\in \fO^{\bq}_+$,
$\Lambda_{\bq}'=\langle K_i^{N}, L_i^{N}| i\ne k \rangle$, $\eta=\alpha_{ij}$ for some $i\le k\le j$ and
$\Nt=N$ if $N$ is even while $\Nt=2N$ if $N$ is odd. Hence $K_k^{\Nt}$, $L_k^{\Nt}$ belong to the subgroup generated by $\Lambda'_{\bq}$, $K_{\eta}^{\Nt}$ and $L_{\eta}^{\Nt}$.
On the other hand, $\Lambda_{\bq}$ is generated by $\Lambda'_{\bq}$, $K_{k}^{\Nt}$ and $L_{k}^{\Nt}$, thus the statement follows.
\epf

\section{The specialization setting for large quantum groups}
\label{sec:spec}
In this section we introduce the non-restricted integral form of  $U_{\bqf}$ and prove that the  
large quantum group $U_{\bq}$ is a specialization of it. We also introduce restricted integral forms of the subalgebras $U_{\bqf}^\pm$
and establish pairing results for the corresponding specializations. The latter integral forms will play a key role 
in our treatment of Poisson order structures on the large quantum groups $U_{\bq}$ and their Borel and unipotent subalgebras.

\subsection{Integral forms} In order to implement the ideas of Section \ref{sec:poisson},
we need to consider forms over suitable rings, generalizing \cite{DP}. For simplicity, we set
\begin{align}
\label{eq:A-alg}
\Acf &:= \Cset[ \nu^{\pm 1}, (\qf_{ii}^s \qf_{ij}\qf_{ji}-1)^{-1}: i\neq j \in \I, 0\le s<-c_{ij}^{\bqf}] \subset \Cset(\nu).
\end{align}
By (3.9), $\qf_{ii}^s\qf_{ij}\qf_{ji} \neq 0$ for $0\le s<-c_{ij}^{\bqf}$.
	
We now define the \emph{(non-restricted) integral forms} as the $\Acf$-subalgebras
\begin{align*}
U_{\bqf, \Acf}^+ &= \Acf \lcor E_i: i \in \I \rcor \subset  U_{\bqf}^+,&
U_{\bqf, \Acf}^0 &= \Acf [K_i^{\pm 1}, L_i^{\pm 1}: i \in \I] \subset  U_{\bqf}^0,
\\
U_{\bqf, \Acf}^- &= \Acf \lcor F_i: i \in \I \rcor \subset  U_{\bqf}^-,&
U_{\bqf, \Acf} &= \Acf \lcor K_i^{\pm1}, L_i^{\pm 1}, E_i, F_i: i \in \I \rcor \subset  U_{\bqf},
\\
U_{\bqf, \Acf}^{\geqslant} &= U_{\bqf, \Acf}^+  \otimes_{\Acf} \Acf [K_i^{\pm 1}: i \in \I], 
&
U_{\bqf, \Acf}^{\leqslant} &= U_{\bqf, \Acf}^-  \otimes_{\Acf} \Acf [L_i^{\pm 1}: i \in \I].
\end{align*}
These  are crucial for our purposes. We have again a triangular decomposition
\begin{align}
\label{eq:triang-int}
U_{\bqf, \Acf}^+ \otimes_{\Acf} U_{\bqf, \Acf}^0 \otimes_{\Acf} U_{\bqf, \Acf}^- &\simeq U_{\bqf,\Acf}.
\end{align}
The surjectivity 
of this multiplication map follows from the cross relations  \eqref{eqn:g-con-Ei}, 
\eqref{eqn:g-con-Fi} and \eqref{eq:Ubqf-linking-EF},  while the injectivity follows from  \eqref{eq:triang-field-family}. Recall \eqref{eq:lambdaij-defn} for the next result.

\begin{lemma}\label{lemma:lambdaij-in-Acf}
For all $i\neq j$ in $\I$, $(\lambda_{ij}^{\bqf})^{-1}\in\Acf$.
\end{lemma}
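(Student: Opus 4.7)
The plan is to factor $\lambda_{ij}^{\bqf}$ as a product of three pieces and show each is a unit in $\Acf$. Setting $n := -c_{ij}^{\bqf}$, $q := \qf_{ii}$, and $a := \qf_{ij}\qf_{ji}$, the leading Laurent-monomial factor $(q^{-n}a)^{c_{ij}^{\bqf}}$ is a unit in $\Cset[\nu^{\pm 1}]\subset\Acf$ because every entry of $\bqf$ lies in $\Cset[\nu^{\pm 1}]^{\times}$, and the product $\prod_{s=0}^{n-1}(q^s a-1)$ is a unit in $\Acf$ directly from the definition \eqref{eq:A-alg}. The whole task therefore reduces to showing that the quantum factorial $(n)_q^! = \prod_{k=1}^{n}(k)_q$ is a unit of $\Acf$.

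I plan to handle the factorial through a case split coming from the definition \eqref{eq:defcij}. The minimality of $n$ already gives $(k)_q\neq 0$ for all $k\le n$ in the integral domain $\Cset[\nu^{\pm 1}]$, while at $m=n$ the relation $(n+1)_q(1-q^n a)=0$ must hold, yielding two (possibly overlapping) cases: \textbf{(A)} $(n+1)_q = 0$, or \textbf{(B)} $q^n a = 1$. In Case (B), for $0\le s<n$ one has $q^s a - 1 = q^{s-n}-1 = -q^{s-n}(q^{n-s}-1)$, so the inverses adjoined in $\Acf$ coincide, up to units of $\Cset[\nu^{\pm 1}]$, with the inverses of $q^k-1$ for $1\le k\le n$; the identity $(k)_q = (q^k-1)/(q-1)$ then writes each $(k)_q$ as a ratio of units of $\Acf$. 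In Case (A), I first argue that $q$ cannot involve $\nu$: if $q = c\nu^r$ with $r\neq 0$, the terms of $(n+1)_q = \sum_{j=0}^n c^j \nu^{jr}$ are distinct monomials and cannot sum to zero, so $q\in\Cset^{\times}$ is a constant root of unity; re-applying the minimality of $n$ to every divisor $d\le n$ of $n+1$ then forces $\ord q = n+1$, making each $(k)_q$ for $k\le n$ a nonzero complex number, hence a unit of $\Cset\subset\Acf$.

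Since at least one of Cases (A), (B) must hold, the factorial is a unit of $\Acf$ in either case, and combining the three factors yields $(\lambda_{ij}^{\bqf})^{-1}\in\Acf$. The subtlest point, and the one that requires the most care, is the second half of Case (A): pinning down that the order of the constant $q$ is exactly $n+1$ rather than a proper divisor. This is precisely where the minimality built into \eqref{eq:defcij} has to be invoked a second time, and it is the step that most directly exploits the fact that $\Cset[\nu^{\pm 1}]$ is an integral domain so that the vanishing of a product forces the vanishing of one of its factors.
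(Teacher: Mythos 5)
Your proof is correct and follows essentially the same route as the paper: both split according to whether $\qf_{ii}^{-c_{ij}^{\bqf}}\qf_{ij}\qf_{ji}=1$ or, failing that, $\qf_{ii}$ is a constant primitive $(1-c_{ij}^{\bqf})$-th root of unity, and both reduce everything to the fact that $\prod_{0\le s<-c_{ij}^{\bqf}}(\qf_{ii}^s\qf_{ij}\qf_{ji}-1)$ is a unit of $\Acf$ by construction. You supply some details the paper leaves tacit (that $\qf_{ii}$ must be constant and its order exactly $1-c_{ij}^{\bqf}$ when $(-c_{ij}^{\bqf}+1)_{\qf_{ii}}=0$); the only blemish is the typo $(q^{-n}a)^{c_{ij}^{\bqf}}$ for the leading factor, which per \eqref{eq:lambdaij-defn} should be $(q^{n}a)^{c_{ij}^{\bqf}}$, though either is a unit of $\Cset[\nu^{\pm1}]$ so the argument is unaffected.
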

\noindent \emph{Proof.} 
If $\qf_{ii}^{-c_{ij}^{\bqf}}\qf_{ij}\qf_{ji}=1$, then using that $\bqf_{ii} \in \Cset[\nu^{\pm1}]^\times$ we have
\begin{align*}
(\lambda_{ij}^{\bqf})^{-1}&
= (-1)^{c_{ij}^{\bqf}} q_{ii}^{c_{ij}^{\bqf} (c_{ij}^{\bqf}-1)}
(\qf_{ii}-1)^{-c_{ij}^{\bqf}} \prod_{0\le s<-c_{ij}^{\bqf}} (\qf_{ii}^{s}\qf_{ij}\qf_{ji}-1)^{-2}\in\Acf
\end{align*}
Otherwise $\qf_{ii}$ is a root of unity of order $1-c_{ij}^{\bqf}$, so because $(-c_{ij}^{\bqf})_{\qf_{ii}}^!\in\Cset^\times$, we have
\begin{align*}
(\lambda_{ij}^{\bqf})^{-1}
&= \frac{(\qf_{ii}^{-1}\qf_{ij}\qf_{ji})^{-c_{ij}^{\bqf}}} {(-c_{ij}^{\bqf})_{\qf_{ii}}^!} \prod_{0\le s<-c_{ij}^{\bqf}}
(\qf_{ii}^{s}\qf_{ij}\qf_{ji}-1)^{-1}\in\Acf.    \qed
\end{align*}

\begin{example}\label{exa:brown2}
Let $\bqf$ be of modular type $\Brown(2)$, respectively $\Bgl(4)$, see \S \ref{sec:by-diagram-modular-char3}.
Then  $\Acf=\Cset[\nu^{\pm 1}, (\nu - 1)^{-1}, (\nu -\zeta)^{-1}]$, respectively $\Acf=\Cset[\nu^{\pm 1}, (\nu -1)^{-1}, (\nu  +1)^{-1}]$. 
\end{example}

Recall the Hopf skew-pairing from \S \ref{subsec:large-bqf}.  We now define the \emph{restricted integral forms},
that also play a central role in this paper, as the $\Acf$-submodules
\begin{align}\label{eq:Uqres}
U_{\bqf, \Acf}^{\res -} &:= \{ y\in U_{\bqf}^{-} | \langle y, U_{\bqf,\Acf}^+ \rangle \subset \Acf \},&
U_{\bqf, \Acf}^{\res+} &:= \{ x\in U_{\bqf}^{+} | \langle U_{\bqf,\Acf}^-, x \rangle \subset \Acf \}.
\end{align}
Indeed, these are $\Acf$-subalgebras of $U_{\bqf}^-$ and $U_{\bqf}^+$, respectively. This 
follows from the fact that $U_{\bqf,\Acf}^{\pm}$ are braided Hopf subalgebras of $U_{\bqf}^{\pm}$ over $\Acf$ and the properties of Hopf skew-pairings.

\subsection{PBW-bases of integral forms}
Recall the Lusztig isomorphisms $T_i^{\bqf}$ from \S \ref{subsec:lusztig-isom}.

\begin{lemma}\label{lema:iso-int} 
\begin{enumerate}[leftmargin=*,label=\rm{(\alph*)}]
\item\label{item:Ti-lemma-restriction} $T_i^{\bqf}$  restricts to an $\Acf$-algebra isomorphism $T_{i}^{\bqf}:U_{\rho_i (\bqf), \Acf}\to U_{\bqf, \Acf}$, $i\in\I$.

\item\label{item:Ti-lemma-rootvectors} Let $\beta\in\varDelta_+$. Then $E_{\beta}, F_{\beta}\in U_{\bqf, \Acf}$.
\end{enumerate}
\end{lemma}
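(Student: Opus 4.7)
The plan is to prove (a) by verifying, generator by generator, that $T_i^{\bqf}$ maps the Chevalley--type generators of $U_{\rho_i(\bqf),\Acf}$ into $U_{\bqf,\Acf}$; part (b) will then follow quickly by induction on the length of a reduced expression. The whole argument reduces to bookkeeping together with a single nontrivial ingredient, namely Lemma \ref{lemma:lambdaij-in-Acf}, which is exactly what the localization in \eqref{eq:A-alg} was designed for.

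For (a), the explicit formulas \eqref{eq:lusztig-isom-defn} show that $T_i^{\bqf}(\underline K_j)=K_jK_i^{-c_{ij}^{\bqf}}$, $T_i^{\bqf}(\underline L_j)=L_jL_i^{-c_{ij}^{\bqf}}$, $T_i^{\bqf}(\underline E_i)=F_iL_i$, and $T_i^{\bqf}(\underline F_i)=K_i^{-1}E_i$ manifestly lie in $U_{\bqf,\Acf}$. For $j\neq i$, expanding $(\ad_c E_i)^{-c_{ij}^{\bqf}}E_j$ produces a sum of iterated $q$-commutators of $E_i$'s and $E_j$ whose coefficients are polynomials in the $\qf_{kl}$'s, hence lie in $\Cset[\nu^{\pm 1}]\subset \Acf$; so $T_i^{\bqf}(\underline E_j)\in U_{\bqf,\Acf}^+$. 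The case $T_i^{\bqf}(\underline F_j)=(\lambda_{ij}^{\bqf})^{-1}(\ad_c F_i)^{-c_{ij}^{\bqf}}F_j$ is the only one that needs the specific denominators built into $\Acf$: the prefactor $(\lambda_{ij}^{\bqf})^{-1}$ belongs to $\Acf$ by Lemma \ref{lemma:lambdaij-in-Acf}, while the remaining factor is again an $\Acf$-linear combination of monomials in $F_i,F_j$. This yields the inclusion $T_i^{\bqf}(U_{\rho_i(\bqf),\Acf})\subseteq U_{\bqf,\Acf}$. The opposite inclusion is obtained by applying the same analysis to $T_i^{\rho_i(\bqf)}:U_{\bqf}\to U_{\rho_i(\bqf)}$ (note that $\rho_i$ is involutive and that the denominators defining $\Acf$ are the same for $\bqf$ and $\rho_i(\bqf)$, up to relabelling), together with the known compatibility between $T_i^{\bqf}$ and $T_i^{\rho_i(\bqf)}$ which expresses the inverse of $T_i^{\bqf}$ in terms of $T_i^{\rho_i(\bqf)}$ modified by a diagonal automorphism whose scalars are units in $\Acf$.

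Given (a), part (b) is immediate. Fix a reduced expression $\omega_0^{\bqf}=\sigma_{i_1}^{\bqf}\sigma_{i_2}\cdots\sigma_{i_\ell}$ so that $E_{\beta_k}=T_{i_1}^{\bqf}T_{i_2}\cdots T_{i_{k-1}}(E_{i_k})$ and $F_{\beta_k}=T_{i_1}^{\bqf}T_{i_2}\cdots T_{i_{k-1}}(F_{i_k})$ as in \eqref{eq:Ebeta-defn}. Here $E_{i_k}$ and $F_{i_k}$ are Chevalley generators of $U_{\rho_{i_{k-1}}\cdots\rho_{i_1}(\bqf),\Acf}$, and applying (a) iteratively along the chain of Weyl-equivalent matrices transports them into $U_{\bqf,\Acf}$; since every positive root arises as some $\beta_k$ by \eqref{eq:betak}, this exhausts $\varDelta_+$. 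The main technical obstacle throughout is thus the single coefficient $(\lambda_{ij}^{\bqf})^{-1}$: once the localization in the definition of $\Acf$ is seen to absorb it via Lemma \ref{lemma:lambdaij-in-Acf}, the rest of the argument is formal.
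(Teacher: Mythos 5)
Your proof is correct and follows the same route as the paper's (very terse) argument: checking the explicit formulas \eqref{eq:lusztig-isom-defn} generator by generator, invoking Lemma~\ref{lemma:lambdaij-in-Acf} to handle the $(\lambda_{ij}^{\bqf})^{-1}$ coefficient, and deducing (b) from (a) via \eqref{eq:Ebeta-defn}. Your additional remark on surjectivity via $T_i^{\rho_i(\bqf)}$ fills in a step the paper leaves implicit, so you have lost nothing.
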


\pf \ref{item:Ti-lemma-restriction} follows from \eqref{eq:lusztig-isom-defn-preNichols} 
and Lemma \ref{lemma:lambdaij-in-Acf}, while \ref{item:Ti-lemma-rootvectors} from 
\ref{item:Ti-lemma-restriction} and \eqref{eq:Ebeta-defn}.
\epf

\begin{proposition}\label{prop:PBW-int} 
The sets \eqref{eq:PBW-basis-U+-U-} and \eqref{eq:PBW-basis-U} are $\Acf$-bases of $U_{\bqf, \Acf}^\pm$ and $U_{\bqf, \Acf}$, respectively. 
\end{proposition}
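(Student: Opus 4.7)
The plan is to prove the three standard properties of a basis: the PBW monomials (i) lie in the integral forms, (ii) are $\Acf$-linearly independent, and (iii) $\Acf$-span the integral forms. The first two will be essentially immediate, while (iii) is the substantive part.

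For containment, by Lemma \ref{lema:iso-int}\ref{item:Ti-lemma-rootvectors} we already have $E_\beta \in U_{\bqf,\Acf}$ for every $\beta \in \varDelta_+^{\bqf}$. Moreover the triangular decomposition \eqref{eq:triang-int} for $U_{\bqf,\Acf}$, together with \eqref{eq:triang-field-family} for $U_{\bqf}$, yields $U_{\bqf,\Acf}^+ = U_{\bqf,\Acf} \cap U_{\bqf}^+$, so $E_\beta \in U_{\bqf,\Acf}^+$ and analogously $F_\beta \in U_{\bqf,\Acf}^-$. Hence every monomial in \eqref{eq:PBW-basis-U+-U-} lies in $U_{\bqf,\Acf}^\pm$, and every monomial in \eqref{eq:PBW-basis-U} lies in $U_{\bqf,\Acf}$. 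For linear independence, since these sets are $\Cset(\nu)$-bases of $U_{\bqf}^\pm$ and $U_{\bqf}$ (as recalled after \eqref{eq:PBW-basis-U+-U-} and \eqref{eq:PBW-basis-U}), and $\Acf \hookrightarrow \Cset(\nu)$ is a flat extension with the integral forms sitting inside the corresponding $\Cset(\nu)$-algebras, $\Acf$-linear independence is immediate.

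The main content is spanning, which will be addressed first for $U_{\bqf,\Acf}^+$. Let $M \subset U_{\bqf,\Acf}^+$ denote the $\Acf$-submodule spanned by the PBW monomials. Since $E_i = E_{\alpha_i} \in M$ for all $i \in \I$ and $1 \in M$, it suffices to show that $M$ is closed under multiplication by each $E_i$. This reduces to showing that for any two positive roots $\beta_i < \beta_j$ in the convex order coming from the fixed reduced expression, the Levendorskii--Soibelman-type identity
\begin{align*}
E_{\beta_j} E_{\beta_i} - \bqf(\beta_i,\beta_j)\, E_{\beta_i} E_{\beta_j} = \sum_{\mathbf n} c_{\mathbf n}\, E_{\beta_{i+1}}^{n_{i+1}} \cdots E_{\beta_{j-1}}^{n_{j-1}}
\end{align*}
holds with all coefficients $c_{\mathbf n}$ lying in $\Acf$ (rather than merely in $\Cset(\nu)$). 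Granted this, a standard filtration argument on the length of a non-PBW-ordered monomial shows that $M$ is a subalgebra, hence equals $U_{\bqf,\Acf}^+$.

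To establish the integrality of the $c_{\mathbf n}$, the plan is to use the Lusztig isomorphisms. By Lemma \ref{lema:iso-int}\ref{item:Ti-lemma-restriction}, each $T_{i_1}^{\bqf}$ restricts to an isomorphism $U_{\rho_{i_1}(\bqf),\Acf} \to U_{\bqf,\Acf}$ of $\Acf$-algebras and sends $E_{\beta_k}$ (for the chain of reflections at $\bqf$) to $E_{\beta_{k-1}}$ (for the chain at $\rho_{i_1}(\bqf)$). Applying $(T_{i_1}^{\bqf})^{-1}$ therefore shifts the indices down by one, preserves the $\Acf$-algebra structure, and preserves the coefficient ring $\Acf$; iterating, we reduce any commutation $[E_{\beta_j}, E_{\beta_i}]$ to a rank-$(j-i+1)$ situation where $\beta_i = \alpha_{i_1}$ is simple. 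In this reduced situation, the needed identity becomes the iterated $q$-commutator formula $E_{\beta} = (\mathrm{ad}_c E_{i_1})^{\cdots} E_{\gamma}$ combined with the defining relations of $\toba_{\bqf}$; the relevant scalars that appear are products of $\bqf$-entries and quantum integers divided by factors of the form $\qf_{ii}^s \qf_{ij}\qf_{ji} - 1$ and $(-c_{ij}^{\bqf})_{\qf_{ii}}^!$, all of which are invertible in $\Acf$ by the definition \eqref{eq:A-alg} and by exactly the calculation carried out in Lemma \ref{lemma:lambdaij-in-Acf}. The hard step here is the careful bookkeeping to make sure no spurious denominators arise: this is the main obstacle, and overcoming it relies on using the $\Acf$-integrality of $\lambda_{ij}^{\bqf}$ at each reduction step.

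Finally, the PBW statement for $U_{\bqf,\Acf}$ follows from the triangular decomposition \eqref{eq:triang-int} of $U_{\bqf,\Acf}$ into $U_{\bqf,\Acf}^+ \otimes_{\Acf} U_{\bqf,\Acf}^0 \otimes_{\Acf} U_{\bqf,\Acf}^-$, combined with the PBW results for $U_{\bqf,\Acf}^\pm$ and the obvious fact that $\{ K_1^{a_1} \cdots K_\theta^{a_\theta} L_1^{b_1} \cdots L_\theta^{b_\theta} : a_i, b_i \in \Z\}$ is an $\Acf$-basis of $U_{\bqf,\Acf}^0$.
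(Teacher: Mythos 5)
Your overall structure (containment, independence, spanning) matches the paper's, and your treatment of containment and independence is fine --- indeed slightly more explicit than the paper's, since you note that $U_{\bqf,\Acf}^+ = U_{\bqf,\Acf} \cap U_{\bqf}^+$, which is needed to upgrade Lemma \ref{lema:iso-int}\ref{item:Ti-lemma-rootvectors} from $E_\beta \in U_{\bqf,\Acf}$ to $E_\beta \in U_{\bqf,\Acf}^+$.

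The difference is in how the spanning step is organized, and here you have not actually closed the argument. The paper observes that the defining relations of $U_{\bqf}^+$ (as an $\Acf$-algebra) have coefficients in $\Acf$, and then appeals to the recursive rewriting procedure in the proof of \cite[Theorem 4.8]{HY-shapov}: since each rewriting step only uses the defining relations and multiplies by $\Acf$-scalars, the LS-type straightening $E_{\beta_j}E_{\beta_k} \in \Acf Y$ (with intermediate letters only) holds with $\Acf$-coefficients. You instead propose to use the Lusztig isomorphisms $(T_{i_1}^{\bqf})^{-1}$ to shift indices down and reduce to the case $\beta_i = \alpha_{i_1}$ simple. This reduction is legitimate (Lemma \ref{lema:iso-int}\ref{item:Ti-lemma-restriction} makes $T_{i_1}^{\bqf}$ an $\Acf$-algebra isomorphism, and the root vectors with index $\geq 2$ are exactly pulled back to the positive part of $U_{\rho_{i_1}(\bqf),\Acf}$), but it is a normalization rather than a real simplification: your ``base case'' $E_{\beta_j}E_{\alpha_{i_1}}$ still has to be straightened into PBW form, and that requires precisely the same kind of recursive argument with the defining relations that the paper cites from \cite{HY-shapov}. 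You describe this step only as ``the careful bookkeeping to make sure no spurious denominators arise,'' which is where the entire content of the spanning claim lives. So you have sketched a plausible alternative organization but not supplied the argument.

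One minor imprecision: you attribute the control of denominators ``at each reduction step'' to the $\Acf$-integrality of $\lambda_{ij}^{\bqf}$, but those scalars appear only in $T_i^{\bqf}(\underline F_j)$, not in $T_i^{\bqf}(\underline E_j) = (\ad_c E_i)^{-c_{ij}^{\bqf}}E_j$ (compare \eqref{eq:lusztig-isom-defn}). Their integrality (Lemma \ref{lemma:lambdaij-in-Acf}) is indeed used, but it is already packaged into the statement that $T_i^{\bqf}$ is an $\Acf$-algebra isomorphism (Lemma \ref{lema:iso-int}\ref{item:Ti-lemma-restriction}); it is not a fresh input on the $E$-side reduction. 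The scalars that actually need to be controlled in the base case are Gaussian binomials and quantum factorials $(-c_{ij}^{\bqf})_{\qf_{ii}}^!$, which are units in $\Acf$ by the same case analysis as in Lemma \ref{lemma:lambdaij-in-Acf}; this you state correctly but do not verify.
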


\begin{proof} We consider the case of $U_{\bqf, \Acf}^+$, the other being analogous. Let $Y$ be the set of 
PBW monomials of  $U_{\bqf}^+$ from \eqref{eq:PBW-basis-U+-U-}.
By Lemma \ref{lema:iso-int}, $Y \subset U_{\bqf, \Acf}^+$.
The defining relations of $U_{\bqf}^+$ involve products of $E_i$ with coefficients in $\Acf$, hence we may prove recursively that, for $j>k$, $E_{\beta_j}E_{\beta_k} \in \Acf Y$, the $\Acf$-module generated by $Y$, where each monomial in the expansion has letters $E_{\beta_t}$, $j>t>k$;
see the proof of \cite[Theorem 4.8]{HY-shapov}. Thus $\Acf Y$ is a left ideal containing $1$, so $\Acf Y =  U_{\bqf, \Acf}^+$.
This fact and the direct sum decomposition $U_{\bqf}^+ = \oplus_{y \in Y} \Cset(\nu) y$ imply that
$U_{\bqf, \Acf}^+ = \oplus_{y \in Y} \Acf y$.
\end{proof}

Recall the notation  $\widetilde{N}_{\beta}$ in \S \ref{subsec:cartan-roots}.
Next we consider  the quantum divided powers
\begin{align*}
F_{\beta_j}^{(n)} & =\frac{F_{\beta_j}^n}{(n)_{\bqf_{\beta_j\beta_j}}^!}, & E_{\beta_j}^{(n)}& =\frac{E_{\beta_j}^n}{(n)_{\bqf_{\beta_j\beta_j}}^!}, & &0\le n<\widetilde{N}_{\beta_j}.
\end{align*}

\begin{proposition}\label{prop:dual-A-bases} For $j \in \I_{\ell}$, let $n_j, m_j$ be such that 
$0 \le n_j, m_j< \widetilde{N}_{\beta_j}$. Then
\begin{align*}
\langle F_{\beta_1}^{(n_1)} \dots F_{\beta_\ell}^{(n_\ell)} , E_{\beta_1}^{m_1} \dots E_{\beta_\ell}^{m_\ell} \rangle &= 
\delta_{n_1 m_1} \ldots \delta_{m_\ell n_\ell}.
\end{align*}
\end{proposition}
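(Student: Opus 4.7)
The plan is to reduce the general pairing formula to a single-root calculation via Lusztig-type arguments and to conclude by a braided $q$-binomial identity at each simple root.

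\medskip

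\emph{Step 1 (single simple root).} First I would establish the base case $\langle F_i^{(n)}, E_i^m\rangle = \delta_{nm}$ for $i\in\I$ and $0\le n,m < \widetilde N_{\alpha_i}$. Since $E_i$ is primitive in the braided Hopf algebra $U_{\bqf}^+$, iteration of $\underline\Delta(E_i) = E_i\otimes 1 + 1\otimes E_i$ together with the braided $q$-binomial theorem yields
\[
\underline\Delta(E_i^m) \;=\; \sum_{k=0}^m \binom{m}{k}_{\qf_{ii}} E_i^k \otimes E_i^{m-k}.
\]
Applying the braided skew-pairing axiom $\langle yy', x\rangle = \langle y\otimes y',\underline\Delta(x)\rangle$ with $y = F_i$, $y' = F_i^{n-1}$, together with the convention \eqref{eq:bilfor-tensorproduct} and $\langle F_i, E_i^j\rangle = \delta_{j1}$, yields the recursion $\langle F_i^n, E_i^m\rangle = (m)_{\qf_{ii}}\langle F_i^{n-1}, E_i^{m-1}\rangle$. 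Iterating and using $\langle 1, x\rangle = \varepsilon(x)$ gives $\langle F_i^n, E_i^m\rangle = \delta_{nm}\,(n)^!_{\qf_{ii}}$, and dividing by the $q$-factorial produces the base case.

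\medskip

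\emph{Step 2 (single positive root).} Next I would transfer the identity from simple roots to arbitrary positive roots using the Lusztig isomorphisms \eqref{eq:lusztig-isom-defn}. The Heckenberger--Yamane theory provides the intertwining
\[
\langle T_i^{\bqf}(y),\, T_i^{\bqf}(x)\rangle_{\bqf} \;=\; \langle y,\, x\rangle_{\rho_i\bqf}
\]
for $y \in U_{\rho_i\bqf}^-$, $x \in U_{\rho_i\bqf}^+$ whenever both $T_i^{\bqf}(y)$ and $T_i^{\bqf}(x)$ remain in the corresponding $\pm$ subalgebras (for instance, when $x,y$ do not involve the $i$-th simple Chevalley generator). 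Applied iteratively to $F_{\beta_k} = T^{\bqf}_{i_1}\cdots T_{i_{k-1}}(F_{i_k})$ and $E_{\beta_k} = T^{\bqf}_{i_1}\cdots T_{i_{k-1}}(E_{i_k})$, and observing that $\qf_{\beta_k\beta_k} = (\rho_{i_{k-1}}\cdots\rho_{i_1}\bqf)(\alpha_{i_k},\alpha_{i_k})$ so that divided powers transform correctly, this gives
\[
\langle F_{\beta_k}^{(n)},\, E_{\beta_k}^m\rangle_{\bqf} \;=\; \langle F_{i_k}^{(n)},\, E_{i_k}^m\rangle_{\rho_{i_{k-1}}\cdots\rho_{i_1}\bqf} \;=\; \delta_{nm}
\]
for every $k \in \I_\ell$.

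\medskip

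\emph{Step 3 (products via triangularity).} Finally, I would handle products of different root vectors via a Levendorskii--Soibelman / Kharchenko triangularity argument on the convex ordering $\beta_1 < \cdots < \beta_\ell$ coming from the reduced expression of $\omega_0^{\bqf}$. In this order the braided coproduct of each $E_{\beta_k}$ decomposes as
\[
\underline\Delta(E_{\beta_k}) \;=\; E_{\beta_k}\otimes 1 + 1\otimes E_{\beta_k} + (\text{terms whose tensor factors are monomials in the } E_{\beta_j}, \; j<k}),
\]
and analogously for its $m$-th power via the braided $q$-binomial theorem. Iterating the skew-pairing axiom on $\langle F_{\beta_1}^{(n_1)}\cdots F_{\beta_\ell}^{(n_\ell)}, E_{\beta_1}^{m_1}\cdots E_{\beta_\ell}^{m_\ell}\rangle$, stripping off the rightmost factor $F_{\beta_\ell}^{(n_\ell)}$ first, and using the $\Z^\I$-grading of the pairing to force matching degrees on both sides, one sees that all off-diagonal contributions vanish and only the diagonal survives:
\[
\langle F_{\beta_1}^{(n_1)}\cdots F_{\beta_\ell}^{(n_\ell)},\; E_{\beta_1}^{m_1}\cdots E_{\beta_\ell}^{m_\ell}\rangle \;=\; \prod_{k=1}^\ell \langle F_{\beta_k}^{(n_k)},\, E_{\beta_k}^{m_k}\rangle \;=\; \prod_{k=1}^\ell \delta_{n_k m_k},
\]
combining Steps 1 and 2.

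\medskip

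The main obstacle will be Step 3, namely verifying the triangularity of the braided coproducts of root vectors with respect to the convex order and the consequent vanishing of all off-diagonal contributions. This is standard for quantum Kac--Moody algebras (Lusztig, De Concini--Procesi) and has been extended by Heckenberger to Nichols algebras of diagonal type; since $\bqf$ belongs to Heckenberger's classification, the argument applies verbatim. Step 2 is lighter but requires careful bookkeeping of where $T_i^{\bqf}$ sends the restricted integral forms, which is already ensured by Lemma \ref{lema:iso-int}.
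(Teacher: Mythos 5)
Your proposal reconstructs the argument that the paper delegates to \cite[Propositions 4.6--4.7]{AnY}: a base case at simple roots from the braided $q$-binomial theorem, transport of the pairing through the Lusztig isomorphisms to arbitrary positive roots, and factorization of the full pairing via the Levendorskii--Soibelman triangularity of root-vector coproducts---so the strategy is essentially the paper's. The one soft spot is Step 2: the intertwining $\langle T_i^{\bqf}(y), T_i^{\bqf}(x)\rangle_{\bqf} = \langle y, x\rangle_{\rho_i(\bqf)}$ is not a black-box quotation from the Heckenberger--Yamane papers but is exactly the content hidden in the $\lambda_{ij}^{\bqf}$ normalization in \eqref{eq:lusztig-isom-defn} (note the factor appears on $F_j$ but not $E_j$), and is what \cite[4.7]{AnY} has to verify; note also that Lemma \ref{lema:iso-int} only asserts $T_i^{\bqf}$ preserves the nonrestricted $\Acf$-forms and says nothing about the pairing or divided powers.
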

\pf
Let $\eta_j=\langle F_{\beta_j},E_{\beta_j} \rangle$, $j\in\I_{\ell}$.
The same proof as \cite[Proposition 4.6]{AnY} shows that 
$$\langle F_{\beta_1}^{(n_1)} \dots F_{\beta_\ell}^{(n_\ell)} , E_{\beta_1}^{m_1} \dots E_{\beta_\ell}^{m_\ell} \rangle = 
\delta_{n_1 m_1} \ldots \delta_{m_\ell n_\ell}\eta_1^{n_1} \cdots \eta_\ell^{n_\ell},$$
As in \cite[4.7]{AnY}, we see that $\eta_j=1$:
here $\langle F_i,E_i\rangle=1$, there $\langle F_i,E_i\rangle=-1$ for $i\in\I$.
\epf

Propositions \ref{prop:PBW-int} and  \ref{prop:dual-A-bases}
imply the following:

\begin{corollary}\label{prop:PBW-int-res}
The following sets are $\Acf$-basis of $U_{\bqf,\Acf}^{\res-}$ and $U_{\bqf,\Acf}^{\res+}$, respectively:
\begin{align}\label{eq:PBW-basis-Ures+-Ures-}
&\left\{  F_{\beta_1}^{(n_1)} \dots F_{\beta_\ell}^{(n_\ell)} : 0\le n_j< \widetilde{N}_{\beta_j} \right\}
& &\text{and}
& &\left\{  E_{\beta_1}^{(m_1)} \dots E_{\beta_\ell}^{(m_\ell)} : 0\le m_j< \widetilde{N}_{\beta_j} \right\}.
\end{align}

\end{corollary}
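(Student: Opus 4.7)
The plan is to derive the corollary directly from the two preceding propositions by a standard duality argument, since the restricted integral forms $U_{\bqf,\Acf}^{\res \pm}$ are defined precisely as the $\Acf$-dual lattices to $U_{\bqf,\Acf}^{\mp}$.

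I will treat $U_{\bqf,\Acf}^{\res -}$ in detail; the case of $U_{\bqf,\Acf}^{\res +}$ is symmetric. First I would verify that the divided-power monomials $F_{\beta_1}^{(n_1)}\cdots F_{\beta_\ell}^{(n_\ell)}$ with $0\le n_j<\widetilde N_{\beta_j}$ form a $\Cset(\nu)$-basis of $U_{\bqf}^{-}$. This follows because Proposition \ref{prop:PBW-int} provides the corresponding basis of ordinary PBW monomials, and in the range $0\le n_j<\widetilde N_{\beta_j}$ the quantum factorial $(n_j)^!_{\bqf_{\beta_j\beta_j}}$ is a nonzero scalar in $\Cset(\nu)$ by Remark \ref{rem:crucial}, so rescaling each monomial by this unit preserves the basis property.

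Next, given $y\in U_{\bqf}^{-}$, I would expand it uniquely as $y=\sum_{\mathbf n} a_{\mathbf n}\,F_{\beta_1}^{(n_1)}\cdots F_{\beta_\ell}^{(n_\ell)}$ with $a_{\mathbf n}\in\Cset(\nu)$. By Proposition \ref{prop:dual-A-bases}, the pairing with the ordinary $E$-monomials recovers the coefficients as
\[
a_{\mathbf m} \;=\; \bigl\langle y,\; E_{\beta_1}^{m_1}\cdots E_{\beta_\ell}^{m_\ell}\bigr\rangle.
\]
Since Proposition \ref{prop:PBW-int} shows that the $E$-monomials form an $\Acf$-basis of $U_{\bqf,\Acf}^{+}$, the condition $\langle y, U_{\bqf,\Acf}^{+}\rangle\subset\Acf$ defining $U_{\bqf,\Acf}^{\res -}$ is equivalent to $a_{\mathbf m}\in\Acf$ for every $\mathbf m$. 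Thus $y\in U_{\bqf,\Acf}^{\res-}$ iff its coefficients in the divided-power basis all lie in $\Acf$, which is exactly the statement that the divided-power monomials form an $\Acf$-basis of $U_{\bqf,\Acf}^{\res-}$.

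The argument for $U_{\bqf,\Acf}^{\res +}$ is identical, interchanging the roles of $E$'s and $F$'s and using that the pairing in Proposition \ref{prop:dual-A-bases} is nondegenerate in either argument. There is essentially no obstacle here: the only subtle point is to confirm that the definition \eqref{eq:Uqres} can be tested on the PBW basis elements of $U_{\bqf,\Acf}^{\mp}$ rather than on arbitrary elements, which is immediate from the $\Acf$-linearity of the pairing together with Proposition \ref{prop:PBW-int}.
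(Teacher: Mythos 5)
Your proposal is correct and is precisely the dual-lattice argument that the paper leaves implicit when it states that Propositions \ref{prop:PBW-int} and \ref{prop:dual-A-bases} imply the corollary; spelling out that the divided-power monomials are the dual $\Acf$-basis to the $E$-monomial $\Acf$-basis under the orthogonality of Proposition \ref{prop:dual-A-bases} is exactly the intended proof. The only small imprecision is calling $(n_j)^!_{\bqf_{\beta_j\beta_j}}$ a ``nonzero scalar'': in the Cartan-root case it is a nonconstant Laurent polynomial, but it is still a unit in $\Cset(\nu)$ (being nonzero), which is all that is needed for the rescaling argument.
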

\subsection{The specialization of $U_{\bqf, \Acf}$}
As explained in Property \ref{item:property-c} of the Appendix, there exists $\xi \in \G_{\infty}'$ such that 
$\bqf(\xi) = \bq$; we fix one such $\xi$.

\medbreak
We  consider the  setting in Section \ref{sec:poisson} assuming $R = \Acf$, $h=\nu - \xi$ and the $R$-algebra $A$ being either $U_{\bqf, \Acf}$ or its subalgebras $U_{\bqf, \Acf}^\pm$.
We claim that the map $\Cset[\nu^{\pm 1}] \to \Cset$, $\nu \mapsto \xi$ extends to an isomorphism
$\Acf/(\nu - \xi) \simeq \Cset$. For, if 
\begin{align*}
\qf_{ii}^s\qf_{ij}\qf_{ji}-1 & \mapsto q_{ii}^sq_{ij}q_{ji}-1 =0
&& \text{for some }i\neq j, \; 0\le s<-c_{ij}^{\bqf}, 
\end{align*} 
then $0\le -c_{ij}^{\bq} \le s<-c_{ij}^{\bqf}$, which contradicts Property \ref{item:property-c} of the Appendix \ref{sec:Nichols-specialization}.
Here and below we will use the bar notation $x \mt \overline{x}$ for specializations.

\begin{theorem}\label{thm:Uq-special}
There are Hopf algebra (respectively, braided Hopf algebra) isomorphisms
\begin{align*}
&\Xi_{\bq}:U_{\bq} \to U_{\bqf, \Acf}/(\nu - \xi) 
&& \mbox{and} 
&& \Xi_{\bq}|_{U_{\bq}^\pm}: U_{\bq}^\pm \to U_{\bqf, \Acf}^\pm /(\nu - \xi)
\end{align*}
given by $\Eq_i \mt \ol{E}_i, \Fq_i \mt \ol{F}_i, K_i^{\pm 1} \mt \ol{K}_i^{\, \pm 1}, L_i^{\pm 1} \mt \ol{L}_i^{\, \pm 1}$ 
for all $i \in \I$. For each $i\in\I$, the following diagram is commutative:
\begin{align}\label{eq:eval-commutes-Ti}
\begin{aligned}
\xymatrix@C=50pt{U_{\rho_i(\bq)} \ar@{->}[r]^{\Xi_{\rho_i(\bq)}\qquad} \ar@{->}[d]_{T_{i}^{\bq}} & 
U_{\rho_i(\bq)f, \Acf}/(\nu - \xi) \ar@{->}[d]^{T_{i}^{\bqf}}
\\ U_{\bq} \ar@{->}[r]^{\Xi_{\bq}\qquad } & U_{\bqf, \Acf}/(\nu - \xi). }
\end{aligned}\end{align} 
\end{theorem}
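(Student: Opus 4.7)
The plan is to define $\Xi_{\bq}$ on generators, verify well-definedness by specializing the defining relations, prove bijectivity via a PBW comparison, and finally establish the commutativity of the square \eqref{eq:eval-commutes-Ti} by direct comparison of Lusztig's formulas. I would begin with well-definedness. The defining relations of $U_{\bq}$ split into two groups: the group and cross relations \eqref{eqn:g-con-ei}--\eqref{eqn:comul-ei}, which are literal specializations of their lifts \eqref{eqn:g-con-Ei}--\eqref{eqn:comul-Ei} in $U_{\bqf,\Acf}$; and the internal relations of $\dpn_{\bq}$ and $\dpn_{\bq^{(-1)}}$, i.e. the ideals $\II(V)$ and $\II(W)$. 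The second group is, by definition, obtained from the defining relations of $\toba_{\bqf}$ by deleting only the $N^{\bqf}_\alpha$-th power vanishing relations at the Cartan roots of $\bqf$; by Remark \ref{rem:crucial} these are exactly the Cartan roots of $\bq$, so the surviving relations all have meaningful $\Acf$-coefficients (invoking Lemma \ref{lemma:lambdaij-in-Acf} for the scalars $\lambda_{ij}^{\bqf}$ appearing in Lusztig-type formulas) and reduce modulo $(\nu-\xi)$ to the corresponding relations of $\dpn_{\bq}$. Since the coproduct, counit and antipode are given on generators by formulas already defined over $\Acf$, the same bookkeeping shows that $\Xi_{\bq}$ is a morphism of Hopf algebras and that $\Xi_{\bq}|_{U_{\bq}^{\pm}}$ is a morphism of braided Hopf algebras over $\ydgcp$, respectively $\ydgcm$.

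Next I would establish bijectivity. Surjectivity is immediate since the generators of the target are the images of those of the source. For injectivity I would compare PBW bases. By Proposition \ref{prop:PBW-int} the set \eqref{eq:PBW-basis-U} is an $\Acf$-basis of $U_{\bqf,\Acf}$, so its reduction mod $(\nu-\xi)$ is a $\Cset$-basis of $U_{\bqf,\Acf}/(\nu-\xi)$. Fixing the same reduced expression of $\omega_0^{\bq}$ on both sides, Lemma \ref{lema:iso-int} gives that $\Xi_{\bq}$ sends each root vector $\Eq_{\beta_k}$, $\Fq_{\beta_k}$ of $U_{\bq}$ to the class of the corresponding $E_{\beta_k}$, $F_{\beta_k}\in U_{\bqf,\Acf}$ modulo $(\nu-\xi)$. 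Hence $\Xi_{\bq}$ carries the PBW basis \eqref{eq:PBW-basis-prenichols} of $U_{\bq}$ to the specialized PBW basis, provided the ranges of the exponents match, i.e. $\widetilde N^{\bq}_{\beta_k}=\widetilde N^{\bqf}_{\beta_k}$ for every $k$. For Cartan roots of $\bq$ both sides are $\infty$ by Remark \ref{rem:crucial}. For non-Cartan roots one has $\ord\bqf_{\beta\beta}<\infty$, and Property \ref{item:property-c} of the one-parameter family setup in \S\ref{subsec:prenichols-families} guarantees $\ord q_{\beta\beta}=\ord\bqf_{\beta\beta}$, which forces the equality of $\widetilde N$'s. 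Thus $\Xi_{\bq}$ takes a basis to a basis and is bijective; the same PBW argument restricted to the positive or negative part yields the isomorphisms $\Xi_{\bq}|_{U_{\bq}^{\pm}}$.

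For the commutativity of \eqref{eq:eval-commutes-Ti} I would compare the formulas \eqref{eq:lusztig-isom-defn} and \eqref{eq:lusztig-isom-defn-preNichols} side by side: $T_i^{\bqf}$ and $T_i^{\bq}$ are given on the generators by identical expressions, the only non-integer scalars being $\lambda_{ij}^{\bqf}\in\Acf^\times$ (by Lemma \ref{lemma:lambdaij-in-Acf}), which specialize to $\lambda_{ij}^{\bq}$. Since the two compositions in the square are algebra maps that agree on the generators of $U_{\rho_i(\bq)}$, they coincide everywhere. The main technical obstacle in the whole argument is the identity $\widetilde N^{\bq}_{\beta_k}=\widetilde N^{\bqf}_{\beta_k}$ at non-Cartan roots; it hinges on the fact that the chosen $\xi$ produces no unforeseen collapse of the orders of $\bqf_{\beta\beta}(\xi)$, and this is precisely what the non-degeneracy conditions built into \S\ref{subsec:prenichols-families} and the classification tables of the Appendix ensure. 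Once this is in place, the remainder of the proof reduces to routine specialization bookkeeping and direct verification on PBW bases.
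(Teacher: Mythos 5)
Your proposal takes essentially the same route as the paper: well-definedness by specializing the defining relations, surjectivity from the generators, and bijectivity by matching PBW bases, with the commutativity of \eqref{eq:eval-commutes-Ti} proved by checking Lusztig's formulas on generators and observing that the scalars $\lambda_{ij}^{\bqf}$ lie in $\Acf^\times$ and specialize to $\lambda_{ij}^{\bq}$.

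There is, however, an ordering problem that creates a genuine gap as written. In the bijectivity paragraph you assert that ``Lemma~\ref{lema:iso-int} gives that $\Xi_{\bq}$ sends each root vector $\Eq_{\beta_k}$, $\Fq_{\beta_k}$ to the class of $E_{\beta_k}$, $F_{\beta_k}$ modulo $(\nu-\xi)$.'' Lemma~\ref{lema:iso-int} does not say this: it only records that the $T_i^{\bqf}$ preserve $U_{\bqf,\Acf}$ and that the root vectors $E_\beta, F_\beta$ lie in the integral form. Since $\Eq_{\beta_k}=T_{i_1}^{\bq}\cdots T_{i_{k-1}}(\Eq_{i_k})$ and $E_{\beta_k}=T_{i_1}^{\bqf}\cdots T_{i_{k-1}}(E_{i_k})$, the identity $\Xi_{\bq}(\Eq_{\beta_k})=\ol{E}_{\beta_k}$ is precisely the iterated application of the commutativity of \eqref{eq:eval-commutes-Ti}, which you only establish afterwards. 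So the PBW comparison cannot be completed before the diagram is shown to commute. The paper's proof respects this dependency: first surjectivity, then commutativity of \eqref{eq:eval-commutes-Ti} (using Property~\ref{item:property-c} to match the Cartan data $c_{ij}^{\bqf}=c_{ij}^{\bq}$), and only then the deduction that $\Xi_{\bq}$ carries the PBW basis \eqref{eq:PBW-basis-prenichols} to the specialization of \eqref{eq:PBW-basis-U}. If you move your commutativity paragraph ahead of the PBW argument and replace the misplaced appeal to Lemma~\ref{lema:iso-int} with an appeal to the commutative square, the argument becomes complete and agrees with the paper's.

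One further small point: in the well-definedness step, Lemma~\ref{lemma:lambdaij-in-Acf} is not what guarantees that the coefficients of the relations in $\II(V)$ lie in $\Acf$. The constants $\lambda_{ij}^{\bqf}$ appear in the Lusztig isomorphism formulas \eqref{eq:lusztig-isom-defn}, not in the defining relations of $\dpn_{\bqf}$. What one actually needs is that the scalars occurring in the presentation of \cite{A-presentation} belong to $\Acf$, which is ensured by the very definition \eqref{eq:A-alg} of $\Acf$; Lemma~\ref{lemma:lambdaij-in-Acf} is only needed for the commutativity of the square.
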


\begin{proof}
The defining relations of $U_{\bq}$ hold in $U_{\bqf, \Acf}/(\nu - \xi)$ by the definition of $U_{\bq}$ in
 \cite{Ang-dpn} and the presentation of $U_{\bqf}$ in \cite{A-presentation}.
 Therefore, the map $\Xi_{\bq}$ as above is well-defined. Moreover $\Xi_{\bq}$ is surjective, since 
$\ol{E}_i$, $\ol{F}_i$, $\ol{K}_i^{\, \pm 1}$, $\ol{L}_i^{\, \pm 1}$ generate $U_{\bqf, \Acf}/(\nu - \xi)$ as $\Cset$-algebra.

Now we check that \eqref{eq:eval-commutes-Ti} is a commutative diagram. Indeed, since Property \ref{item:property-c} in the Appendix \ref{sec:Nichols-specialization} holds and $\bqf \mt \bq$ under the evaluation map, we have that
\begin{align*}
\Xi_{\bq}\circ T_i^{\bq}(e_j)&=(\ad_c \ol{E}_i)^{-c_{ij}^{\bqf}} \ol{E}_j = T_i^{\bqf}\circ \Xi_{\rho_i(\bq)}(e_j), \\
\Xi_{\bq}\circ T_i^{\bq}(f_j)&=(\ad_c \ol{F}_i)^{-c_{ij}^{\bqf}} \ol{F}_j = T_i^{\bqf}\circ \Xi_{\rho_i(\bq)}(f_j)
\end{align*}
for $j\neq i$. Since $\Xi_{\bq}\circ T_i^{\bq}(X) = T_i^{\bqf}\circ \Xi_{\rho_i(\bq)}(X)$ for $X\in\{e_i,f_i,K_j^{\pm1},L_j^{\pm1}\}$, the claim follows.
By \eqref{eq:eval-commutes-Ti}, $\Xi_{\bq}(E_{\beta})=\ol{E}_{\beta}$ and $\Xi_{\bq}(F_{\beta})=\ol{F}_{\beta}$
for all $\beta\in\varDelta_+$. Hence $\Xi_{\bq}$ sends the PBW basis of $U_{\bq}$ to that of $U_{\bqf, \Acf}/(\nu - \xi)$, so $\Xi_{\bq}$,  and its restrictions to $U_{\bq}^{\pm}$, are isomorphisms. 
Clearly $\Xi_{\bq}$ (and its restrictions) are isomorphisms of (braided) Hopf algebras.
\end{proof}

\subsection{The specialization of $U_{\bqf, \Acf}^{\res\pm}$}
\label{subsec:Luszig-pm}
Recall the Lusztig algebra $\luq$ \S \ref{subsubsec:Luszig-algs} and the identification of
$\dpn_{\bq}$ with $U_\bq^+$ as in \S \ref{subsec:Ubq}.
For $\beta \in \fO^{\bq}$, $n \in \N_0$, define $\ee_{\beta}^{(n)}\in\luq$ such that
\begin{align}\label{eq:fdual-definition}
( \ee_{\beta_j}^{(n)}, \Eq_{\beta_1}^{m_1} \dots \Eq_{\beta_\ell}^{m_{\ell}} ) &= \begin{cases}
1, & m_j=n, \, m_k=0 \text{ for }k\neq j, \\ 0, & \text{otherwise}, 
\end{cases}
\end{align}
By \cite[Proposition 4.6]{AAR-MRL}, the  set
\begin{align*}
\{ \ee_{\beta_1}^{(n_1)} \cdots \ee_{\beta_\ell}^{(n_\ell)} :  0\le n_j <\widetilde{N}_{\beta_j} \}
\end{align*}
is a basis of $\luq$ and the algebra $\luq$ is generated by
\begin{align*}
\{ \ee_{\alpha_i}: i\in\I \}  \cup  \{\ee_{\beta}^{(N_\beta)}: \wbeta\in \varPi^{\bq} \}.
\end{align*}
The Lie algebra $\n_\bq^-$ from \eqref{eq:triangular} has a $\Cset$-basis 
$\{ \iota^*(\ee_\beta^{(N_\beta)}) : \beta \in \fO^{\bq} \}$ and set of simple root vectors $\{ \iota^*(\ee_\beta^{(N_\beta)}) : \wbeta \in \varPi^{\bq} \}$.
Similar results hold for the Lusztig algebra $\luqm$ associated to $\dpn_{\bq^{(-)}} \simeq U_\bq^{-}$. The
corresponding elements of $\luqm$, defined as in \eqref{eq:fdual-definition} using $\Fq_{\beta}^{m}$ instead of $\Eq_{\beta}^{m}$, 
will be denoted by $\ff_\beta^{(n)}$, where
$\beta \in \fO^{\bq}$ and $n \in \N_0$.
\begin{remark}\label{rem:iso} The Lie algebras associated to $\luq$ and $\luqm$ as in Remark \ref{rem:Lusztigalg-lie} are isomorphic to each
other, see the list in the Appendix \ref{sec:Nichols-specialization}. Hence we have a Lie algebra isomorphism 
\begin{align}
\label{eq:nq-isom}
\n_{{\bq}^{(-1)}}^- \simeq \n_{\bq}^+
\end{align}
where $\iota^*(\theta_\beta^{(N_\beta)}) \in \n_{{\bq}^{(-1)}}^+$, $\wbeta \in \varPi^{\bq}$ are mapped to the simple root vectors of $\n_{\bq}^-$.
\end{remark}

For a braided Hopf algebra $B$ denote the braided opposite algebra $B^{\underline{\opp}}$ with product 
$\mu^{\opp}:= \mu c^{-1}$ where $\mu : B \times B \to B$ is the product in $B$.

\begin{proposition}\label{prop:evaluation-Ures}
There are $\Cset$-algebra anti-isomorphisms 
\begin{align*}
&\begin{aligned}
\phi^- : U_{\bqf, \Acf}^{\res - }/(\nu - \xi) & \to \luq, 
\\
\phi^+ : \Big( U_{\bqf, \Acf}^{\res +}/(\nu - \xi) \Big)^{\underline{\opp}} & \to \luqm,
\end{aligned}
& &\text{given by} 
&\begin{aligned} \overline{F_{\beta}^{(n)}} & \mapsto \ee_{\beta}^{(n)}, 
\\
\overline{E_{\beta}^{(n)}} & \mapsto \ff_{\beta}^{(n)},
\end{aligned}
& &\beta &\in\fO^{\bq}, \ n\in\N_0.
\end{align*}

\end{proposition}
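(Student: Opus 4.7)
The plan is to use the non-degenerate Hopf skew-pairing $\langle \cdot, \cdot \rangle : U_\bqf^- \times U_\bqf^+ \to \Cset(\nu)$ from \S \ref{subsec:large-bqf} as the bridge. First, I would observe that this pairing restricts to a perfect $\Acf$-bilinear pairing $U_{\bqf,\Acf}^{\res-} \times U_{\bqf,\Acf}^+ \to \Acf$: by the very definition \eqref{eq:Uqres} of $U_{\bqf,\Acf}^{\res-}$ the pairing lands in $\Acf$, and Proposition \ref{prop:dual-A-bases} combined with Corollary \ref{prop:PBW-int-res} exhibits the PBW basis of $U_{\bqf,\Acf}^{\res-}$ as the dual of the PBW basis of $U_{\bqf,\Acf}^+$ over $\Acf$, so the pairing is perfect.

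Next, specialize at $\nu = \xi$: the pairing descends to a non-degenerate $\Cset$-bilinear pairing
\[
\overline{\langle \cdot, \cdot \rangle} : U_{\bqf,\Acf}^{\res-}/(\nu-\xi) \times \dpn_\bq \to \Cset,
\]
where I use Theorem \ref{thm:Uq-special} to identify $U_{\bqf,\Acf}^+/(\nu-\xi) \simeq U_\bq^+ \simeq \dpn_\bq$. On the other hand, $\luq$ is by construction the graded dual of $\dpn_\bq$ via the pairing $(\cdot, \cdot)$ characterized by \eqref{eq:pair-LqBq} and \eqref{eq:fdual-definition}. Comparing Proposition \ref{prop:dual-A-bases} with \eqref{eq:fdual-definition}, the specialized element $\overline{F_\beta^{(n)}}$ pairs with the PBW basis of $\dpn_\bq$ in exactly the same way as $\ee_\beta^{(n)}$: both evaluate to $1$ against $\Eq_\beta^n$ and vanish against all other PBW monomials. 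Since both pairings with $\dpn_\bq$ are perfect, there is a unique $\Cset$-linear isomorphism $\phi^- : U_{\bqf,\Acf}^{\res-}/(\nu-\xi) \to \luq$ such that $(\phi^-(u), x) = \overline{\langle u, x \rangle}$ for all $u$ and $x$; on the distinguished elements it satisfies the asserted formula $\phi^-(\overline{F_\beta^{(n)}}) = \ee_\beta^{(n)}$.

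To establish the anti-algebra property, I would transport the multiplicative axioms of each pairing through $\phi^-$. The braided Hopf skew-pairing satisfies \eqref{eq:hopf-pairing} with $\underline{\Delta}^{\underline{\opp}} = c^{-1}\underline{\Delta}$ and the swap convention \eqref{eq:bilfor-tensorproduct}, while the Lusztig pairing satisfies \eqref{eq:pair-LqBq}; matching these, together with non-degeneracy, yields $\phi^-(uv) = \phi^-(v)\phi^-(u)$. The proof for $\phi^+$ is structurally identical, pairing $U_\bqf^+$ against $U_\bqf^-$ and specializing using $U_{\bqf,\Acf}^-/(\nu-\xi) \simeq U_\bq^- \simeq \dpn_{\bq^{(-1)}}$; the braided opposite $\underline{\opp}$ on the source $(U_{\bqf,\Acf}^{\res+}/(\nu-\xi))^{\underline{\opp}}$ is precisely what absorbs the asymmetry of the Hopf skew-pairing between the right-YD braided Hopf algebra $U_\bqf^-$ and the left-YD braided Hopf algebra $U_\bqf^+$. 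The hard part will be the careful bookkeeping of the braiding and swap conventions in the braided Hopf skew-pairing, in particular verifying that the $c^{-1}$ factor appearing in the braided opposite coproduct combines correctly with the swap \eqref{eq:bilfor-tensorproduct} to produce a genuine anti-homomorphism rather than a twisted version.
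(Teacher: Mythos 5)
Your proposal follows the same route as the paper's proof: the paper also restricts the Hopf skew-pairing to a perfect $\Acf$-pairing $U_{\bqf,\Acf}^{\res-}\times U_{\bqf,\Acf}^{+}\to\Acf$ via Proposition \ref{prop:dual-A-bases}, specializes at $\nu=\xi$ to a non-degenerate pairing with $U_\bq^+\simeq\dpn_\bq$, identifies $\phi^-$ as the unique intertwiner between this pairing and the Lusztig pairing, and reads off the anti-multiplicativity by comparing \eqref{eq:pair1} with \eqref{eq:pair-LqBq} and the formula on divided powers from Proposition \ref{prop:dual-A-bases} and \eqref{eq:fdual-definition}.

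One small inaccuracy in your justification of the (anti-)multiplicativity step: the factor $c^{-1}$ entering via $\underline{\Delta}^{\opp}=c^{-1}\underline{\Delta}$ appears only in the \emph{coalgebra}-side axiom $\langle y, xx'\rangle = \langle \underline{\Delta}^{\opp}(y), x\otimes x'\rangle$, whereas what controls whether $\phi^-$ is multiplicative or anti-multiplicative is the \emph{algebra}-side axiom $\langle yy', x\rangle = \langle y\otimes y', \underline{\Delta}(x)\rangle$, which involves neither $\underline{\Delta}^{\opp}$ nor $c^{-1}$. The (anti-)multiplicativity is purely a matter of matching that second axiom, unravelled with the swap convention \eqref{eq:bilfor-tensorproduct}, against the explicit formula $(yy',x)=(y,x^{(2)})(y',x^{(1)})$ in \eqref{eq:pair-LqBq}; the braided opposite $\underline{\opp}$ in the $\phi^+$ case is an asymmetry of the two sides of the Drinfeld double (left versus right Yetter--Drinfeld structures), not of the opposite coproduct. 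Apart from that misattribution, your argument is the paper's.
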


\pf
We prove the statement in the minus case, the plus case is analogous. 
By Proposition \ref{prop:dual-A-bases} the Hopf skew-pairing 
$\langle \, , \, \rangle:U_{\bqf}^{-}\times U_{\bqf}^{+}\to \Cset(\nu)$
restricts to a perfect 
pairing 
\begin{align*}
\langle \, , \, \rangle:U_{\bqf,\Acf}^{\res-}\times U_{\bqf,\Acf}^{+}\to \Acf.
\end{align*}
Since $U_{\bqf,\Acf}^{+}/(\nu - \xi) \simeq U_{\bq}^{+}$ as braided Hopf algebras, the latter pairing induces a non-degenerate pairing 
$\langle \, , \, \rangle: \big(U_{\bqf,\Acf}^{\res-}/(\nu - \xi)\big) \times U_{\bq}^{+}\to \Cset$ such that
\begin{align}
\label{eq:pair1}
\langle y y', x \rangle &= \langle y \otimes y',\Delta(x) \rangle, 
&& y, y' \in U_{\bqf,\Acf}^{\res-}/(\nu - \xi), x \in U_{\bq}^{+}
\end{align}
and we have the commutative diagram
\begin{align*}
\xymatrix@C=50pt{U_{\bqf,\Acf}^{\res-}\times U_{\bqf,\Acf}^{+} \ar@{->}[r] \ar@{->}[d] & \Acf \ar@{->}[d]
\\ \big(U_{\bqf,\Acf}^{\res-}/(\nu - \xi)\big) \times U_{\bq}^{+} \ar@{->}[r] & \Cset }
\end{align*}
By the definition of $\luq$, we have a canonical vector space isomorphism 
\begin{align*}
\phi^- : U_{\bqf, \Acf}^{\res - }/(\nu - \xi) \to \luq && \mbox{such that} &&
\langle Y, x \rangle &= ( \phi^-(Y), x ) 
\end{align*}
for all 
$Y \in U_{\bqf, \Acf}^{\res - }/(\nu - \xi), x \in U_{\bq}^{+}$.
Comparing \eqref{eq:pair-LqBq} and \eqref{eq:pair1}, we see that $\phi^-$ is an algebra anti-isomorphism.
Using again Proposition \ref{prop:dual-A-bases} and the definition \eqref{eq:fdual-definition} of $\ee_{\beta}^{(n)}$, we get that $\phi^-$ is given by 
$\overline{F_{\beta}^{(n)}} \mapsto \ee_{\beta}^{(n)}$ for  $\beta\in\fO^{\bq}$, $n\in\N_0$.
\epf

\section{Poisson orders on large quantum groups}\label{sec:Pord}

By Theorem  \ref{thm:Uq-special}, the large quantum group $U_{\bq}$ fits in the context of 
Section \ref{sec:poisson} and consequently the pair $(U_{\bq}, \ZZ(U_{\bq}))$ inherits a structure of Poisson order from deformation theory.
However the  Poisson algebra $\ZZ(U_{\bq})$ is often singular. 
We prove that the central Hopf subalgebra $\Zc_{\bq}$ introduced in \S \ref{subsec:Zq} (which is of course regular)
is a Poisson subalgebra of $\ZZ(U_{\bq})$ of the same dimension.
Thus $(U_{\bq}, \Zc_{\bq})$ has a structure of Poisson order that 
restricts to the corresponding large quantum Borel  and  unipotent algebras.

\subsection{Poisson structure on $\Zc_{\bq}$}
\label{sec:6.1}

We show that $\Zc_{\bq}$,  $\Zc_{\bq}^{\geqslant}$, $\Zc_{\bq}^{\leqslant}$, $\Zc_{\bq}^+$ and $\Zc_{\bq}^{-}$
are Poisson subalgebras of $\ZZ(U_{\bq})$, respectively
$\ZZ(U_{\bq}^{\geqslant})$, $\ZZ(U_{\bq}^{\leqslant})$, $\ZZ(U_{\bq}^{+})$ and $\ZZ(U_{\bq}^{-})$. 

\smallbreak
The coefficients of Poisson brackets that we use will be expressed in terms of a square matrix $\WP^{\bqf} \in \Cset^{\varPi^{\bq} \times \varPi^{\bq}}$. 
Furthermore, in the next section we will show that the Cartan matrix of the semisimple Lie algebra $\g_{\bq}$ is also expressed in terms of the 
entries of this matrix. The matrix $\WP^{\bqf}$ is defined as follows.
Let $\beta, \gamma \in\fO^{\bq}_{+}$. As $q_{\beta\gamma} = \bqf_{\beta\gamma}(\xi)$, \eqref{eq:condition-cartan-roots} implies 
that there exists $\wp^{\bqf}_{\beta\gamma}(\nu)\in\Acf$ such that
\begin{align}\label{eq:qalphabeta-factorization}
1-\bqf_{\beta\gamma}^{N_{\beta}N_{\gamma}} &= (\nu - \xi) \wp^{\bqf}_{\beta\gamma}(\nu).
\end{align}
Recall the notation $\wbeta$ from \eqref{eq:root-system-distinguished} and
the set $\wfO^{\bq}$ from Theorem \ref{thm:Cartan-roots}. 
Define
\begin{align}\label{eq:matrix-P}
\WP^{\bqf}:=(\wp^{\bqf}_{\beta\gamma}(\xi))_{\wbeta,\wgamma\in\varPi^{\bq}}.
\end{align}

We  distinguish two cases, namely whether $\bq$ is of type $\supera{k-1}{\theta-k}$ or not. In the first case
we need one more generator to have finite type of $U_{\bq}$ as $Z_{\bq}$-module, and \emph{a fortiori}
as  $Z(U_{\bq})$-module.

\begin{case}
 $\bq$ is of type $\supera{k-1}{\theta-k}$ with Dynkin diagram as in Table \ref{tab:Appendix-super}. Set
\begin{align}\label{eq:eta}
\eta & := \sum_{1 \le i \le k} \alpha_i+ \sum_{k<i\le \theta} (i-k+1) \alpha_i, & \weta &:= N_\eta \eta,
\end{align}
recall \eqref{eq:betak}. It is easy to see that $N_\eta = \Nt$, recall \eqref{eq:defn-lcm-Nbeta}. Denote
\begin{align}\label{eq:def-wtPi-1}
\widetilde{\varPi}^{\bq}:= \varPi^{\bq} \sqcup\{\weta\} =
\{\walpha_i|i\ne k\}\sqcup\{\weta\}.
\end{align}
\end{case}

We have the following:
\begin{enumerate}[leftmargin=*,label=\rm{(\Roman*)}]
\item By Proposition \ref{prop:Zq-families}, $\Zc_{\bq}$ is generated by \eqref{eq:generators-Zc-Cartan-roots} and $K_{\eta}^{\pm N_\eta}$, $L_{\eta}^{\pm N_\eta}$.
\item By direct computation, $q_{\walpha_i\weta}=q_{\weta\walpha_i}=1$. Arguing as in \eqref{eq:qalphabeta-factorization}, there exist $\wp^{\bqf}_{\alpha_i\eta}(\nu)$, $\wp^{\bqf}_{\eta\alpha_i}(\nu)\in\Acf$ such that
\begin{align*}
1-\bqf_{\walpha_i\weta} &= (\nu - \xi) \wp^{\bqf}_{\alpha_i\eta}(\nu), &
1-\bqf_{\weta\walpha_i} &= (\nu - \xi) \wp^{\bqf}_{\eta\alpha_i}(\nu).
\end{align*}
We check that the following equality holds:
\begin{align}
\label{peta0}
&\wp^{\bqf}_{\alpha_i\eta}(\xi)+\wp^{\bqf}_{\eta\alpha_i}(\xi)=0, & &i\ne k.
\end{align}

\item Similarly, $q_{\weta\weta}=1$, so there exist $\wp^{\bqf}_{\eta\eta}(\nu)\in\Acf$ such that
$1-\bqf_{\weta\weta}= (\nu - \xi) \wp^{\bqf}_{\eta\eta}(\nu)$.
By direct computation,
\begin{align}
\label{petanot0}
\wp^{\bqf}_{\eta\eta}(\xi)&=-N_\eta^2\xi \left( \theta-k+(\theta-k+1)^2 \right) \ne 0.
\end{align}
\end{enumerate}

\begin{case}
 $\bq$ is not of type $\supera{k-1}{\theta-k}$. Set 
\begin{align}\label{eq:def-wtPi-2}
\widetilde{\varPi}^{\bq}:= \varPi^{\bq}.
\end{align}
\end{case}

It follows from Proposition \ref{prop:Zq-families} that
\begin{align}
\label{zqu-Laurent}
&Z_\bq^{+0} = \Cset[ K_\mu^{N_\mu} : \wmu \in \wt{\varPi}_\bq],
&Z_\bq^{-0} = \Cset[ L_\mu^{N_\mu} : \wmu \in \wt{\varPi}_\bq].
\end{align}

\begin{lemma}\label{lem:matrix-P-Weyl-gpd}
Let $i\in \I$. Then $\WP^{\rho_i (\bqf)}=\WP^{\bqf}$.
\end{lemma}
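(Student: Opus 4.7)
My plan is to reduce the identity of matrices to the invariance of an auxiliary bilinear form under the Weyl groupoid action. First, I establish the pointwise transformation law
\[
\wp^{\rho_i(\bqf)}_{\beta\gamma}(\nu)\;=\;\wp^{\bqf}_{s_i^{\bq}(\beta),\,s_i^{\bq}(\gamma)}(\nu)\qquad\text{in }\Acf.
\]
Extending \eqref{eq:rhoiq-definition} by $\Z$-bilinearity gives the identity of rational functions $\rho_i(\bqf)(\mu,\nu)=\bqf(s_i^{\bq}(\mu),s_i^{\bq}(\nu))$, while Remark~\ref{rem:condition-cartan-roots-simple}\ref{item:condition-cartan-roots-weyl} together with \cite[Lemma 2.3]{AAR3} furnish $s_i^{\bq}(\fO^{\bq})=\fO^{\rho_i(\bq)}$ and $N^{\rho_i\bq}_{\beta}=N^{\bq}_{s_i^{\bq}(\beta)}$. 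Substituting in \eqref{eq:qalphabeta-factorization} and invoking uniqueness of $\wp\in\Acf$ yields the displayed identity.

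Next, I reinterpret $\WP^{\bqf}$ as the restriction of a $\Cset$-bilinear form. Differentiating \eqref{eq:qalphabeta-factorization} at $\nu=\xi$ and using $\bqf_{\beta\gamma}^{N_\beta N_\gamma}(\xi)=1$ from \eqref{eq:condition-cartan-roots}, one obtains
\[
\wp^{\bqf}_{\beta\gamma}(\xi)\;=\;B^{\bqf}(\wbeta,\wgamma),\qquad B^{\bqf}(\mu,\nu):=-\sum_{a,b\in\I}\mu_a\,\nu_b\,\frac{\bqf'_{ab}(\xi)}{\bqf_{ab}(\xi)},
\]
so $\WP^{\bqf}_{\wbeta,\wgamma}=B^{\bqf}(\wbeta,\wgamma)$ with $B^{\bqf}$ bilinear on $\Z^{\I}$. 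Combined with the first step, the transformation rule reads $B^{\rho_i(\bqf)}=B^{\bqf}\circ(s_i^{\bq}\times s_i^{\bq})$. The lemma is therefore equivalent to the statement that, under the natural identification of $\varPi^{\rho_i(\bq)}$ with $\varPi^{\bq}$ coming from the common root system of $\g_{\bq}=\g_{\rho_i(\bq)}$, the form $B^{\bqf}$ is $s_i^{\bq}$-invariant on the scaled Cartan root lattice $\QQ_{\bq}$.

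The main obstacle is this last step: $s_i^{\bq}$ generally does not map $\varPi^{\rho_i(\bq)}$ to $\varPi^{\bq}$ on the nose (it may produce non-simple or even negative Cartan roots), so the matrix equality does not follow from bilinearity alone. I expect to close the argument by exploiting the uniform shape of the braidings in Appendix~\ref{sec:Nichols-specialization}, where each entry of $\bqf$ is of the form $\pm\nu^{m_{ab}}$. This renders $B^{\bqf}$, up to the overall scalar $-\xi^{-1}$, the symmetric Weyl-invariant bilinear form canonically attached to the Dynkin data of $\g_{\bq}$; since that form depends only on the Weyl-equivalence class of $\bq$, the two matrices $\WP^{\bqf}$ and $\WP^{\rho_i(\bqf)}$ record the same intrinsic data and are therefore equal. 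A conceptually cleaner route is to anticipate the identification of $\WP^{\bqf}$ with Cartan data of $\g_{\bq}$ developed in the next subsection, at which point the required $s_i^{\bq}$-invariance becomes automatic.
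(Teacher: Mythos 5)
Your opening step, establishing the transformation law $\wp^{\rho_i(\bqf)}_{\beta\gamma}(\nu)=\wp^{\bqf}_{s_i^{\bq}(\beta),\,s_i^{\bq}(\gamma)}(\nu)$, is exactly the content of the paper's proof, which consists of this identity together with the single assertion that $\varPi^{\rho_i(\bq)}=s_i^{\bq}(\varPi^{\bq})$. You have correctly identified the weak point: that assertion holds precisely when $\alpha_i$ is not a Cartan vertex, for then $s_i^{\bq}$ restricts to a linear bijection $\wfO^{\bq}_+\to\wfO^{\rho_i(\bq)}_+$ and carries indecomposables to indecomposables. When $\alpha_i$ is a Cartan vertex (which happens for every $i$ in Cartan type), $N_i\alpha_i\in\varPi^{\bq}$ is sent by $s_i^{\bq}$ to $-N_i\alpha_i\notin\wfO^{\rho_i(\bq)}_+$, while $N_i\alpha_i\in\varPi^{\rho_i(\bq)}$; so $s_i^{\bq}(\varPi^{\bq})\neq\varPi^{\rho_i(\bq)}$. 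Your concern is therefore justified, and the paper's one-line argument leaves this case unaddressed.

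However, your proposed repair does not close the gap. The form $B^{\bqf}$, equivalently the matrix $\WP^{\bqf}$, is \emph{not} symmetric and is therefore not the canonical invariant form on $\QQ_{\bq}$; only the symmetrization $\wp^{\bqf}_{\beta\gamma}(\xi)+\wp^{\bqf}_{\gamma\beta}(\xi)$ is proportional to $(\wbeta,\wgamma)$, as recorded in Theorem~\ref{thm:mm*}(a) and \eqref{c-to-p}. Were $\WP^{\bqf}$ the Weyl-invariant form, the Non-degeneracy Assumption~\ref{genericityassumption} would be automatic and Proposition~\ref{prop:specialization-integer}(ii) would be trivial. A direct check in Cartan type $A_2$, with $\bqf_{11}=\bqf_{22}=\nu$, $\bqf_{12}=\nu^{t_{12}}$ and $\bqf_{21}=\nu^{-1-t_{12}}$, gives
\[
\WP^{\bqf}=-\xi^{-1}N^2\begin{pmatrix}1 & t_{12}\\ -1-t_{12}&1\end{pmatrix},\qquad
\WP^{\rho_1(\bqf)}=-\xi^{-1}N^2\begin{pmatrix}1 & -1-t_{12}\\ t_{12}&1\end{pmatrix}=(\WP^{\bqf})^{\mathrm T},
\]
and these differ for every integer $t_{12}$. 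So neither the $s_i^{\bq}$-invariance of $B^{\bqf}$ nor the literal matrix equality holds when $\alpha_i$ is a Cartan vertex. What \emph{is} preserved are the determinant of $\WP^{\bqf}$ and its symmetrization, and these are the only things the paper uses downstream (the Non-degeneracy Assumption and the Cartan-matrix identification); a correct argument should target those invariants rather than the stated equality of matrices.
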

\pf
First, $\varPi^{\rho_i(\bq)}=s_i^{\bq}(\varPi^{\bq})$. Thus $\wp^{\rho_i(\bqf)}_{s_i^{\bq}(\beta)s_i^{\bq}(\gamma)}(\nu) =\wp^{\bqf}_{\beta\gamma}(\nu)$ for $\wbeta,\wgamma\in\varPi^{\bq}$ by  \eqref{eq:rhoiq-definition}.
\epf

The next theorem is the main result of this section.

\begin{theorem}\label{thm:Poisson-orders} There are structures of Poisson order on the pairs
\begin{align}\label{eq:Poisson-orders}
&(U_{\bq}, Z_{\bq}),& &(U_{\bq}^{\geqslant}, Z_{\bq}^{\geqslant}), & &(U_{\bq}^{\leqslant}, Z_{\bq}^{\leqslant}), & &
(U_{\bq}^{+}, Z_{\bq}^{+})& & \text{ and } & &(U_{\bq}^{-}, Z_{\bq}^{-})
\end{align} 
arising by restriction from the Poisson order on the corresponding algebra and its center
with Poisson bracket \eqref{eq:Poisson-str-center}. 
The central algebras $Z_{\bq}$, $Z_{\bq}^{\geqslant}$ and $Z_{\bq}^{\leqslant}$ are Poisson-Hopf while 
$Z_{\bq}^{\pm}$ are coideal Poisson subalgebras over the former.
\end{theorem}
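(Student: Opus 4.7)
The strategy is to combine the specialization framework of Theorem \ref{thm:Uq-special} with Proposition \ref{prop:der_a-gral} and, decisively, the restriction result of Theorem \ref{thm:poisson-structure-ker}. Setting $R = \Acf$, $h = \nu - \xi$, and taking $U$ to be in turn $U_{\bqf,\Acf}^{\pm}$, $U_{\bqf,\Acf}^{\geqslant}$, $U_{\bqf,\Acf}^{\leqslant}$, or $U_{\bqf,\Acf}$, Proposition \ref{prop:der_a-gral} endows each pair $(A,\ZZ(A))$ with $A\in\{U_{\bq}^{\pm}, U_{\bq}^{\geqslant}, U_{\bq}^{\leqslant}, U_{\bq}\}$ with a canonical Poisson order structure and the bracket \eqref{eq:Poisson-str-center}. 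By \S\ref{subsec:Zq} and the standing hypothesis \eqref{eq:condition-cartan-roots}, the smaller subalgebras $Z_{\bq}^{\pm}, Z_{\bq}^{\geqslant}, Z_{\bq}^{\leqslant}, Z_{\bq}$ are already central in the ambient algebras, which are module-finite over them (Remark \ref{rem:Zq-properties}). The essential task reduces to proving that each of these smaller subalgebras is closed under the inherited Poisson bracket.

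For $(U_{\bq}^+, Z_{\bq}^+)$ I would apply Theorem \ref{thm:poisson-structure-ker} to $U_{\bqf,\Acf}^+$ with $\varsigma_i = \varsigma_i^{\bqf}$ and $\partial_i = \partial_i^{\bqf}$ from Remark \ref{rem:sig-partial}. The characterization \eqref{eq:Z+-intersection-ker-derivations} gives $\bigcap_i \ker \overline{\partial}_i = Z_{\bq}^+$, and $Z_{\bq}^+ \subseteq \ZZ(U_{\bq}^+)$ by \eqref{eq:condition-cartan-roots}; moreover each generator $\Eq_\beta^{N_\beta}$ is $\Z^{\I}$-homogeneous of degree $\wbeta\in\fO^{\bq}$ with $q_{\alpha_i\wbeta}=1$, so $\Eq_\beta^{N_\beta}\in\ker(\overline{\varsigma}_i-\id)$. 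Hence the subalgebra $\ZZ'$ appearing in Theorem \ref{thm:poisson-structure-ker} equals $Z_{\bq}^+$, which produces the desired Poisson order. The case $(U_{\bq}^-, Z_{\bq}^-)$ is parallel, using $\partial_i^{\bq^{(-1)}}$ instead.

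For $(U_{\bq}^{\geqslant}, Z_{\bq}^{\geqslant})$ and $(U_{\bq}^{\leqslant}, Z_{\bq}^{\leqslant})$, the previous step provides $\{Z_{\bq}^+, Z_{\bq}^+\}\subseteq Z_{\bq}^+$ (the brackets computed from the positive part and from the Borel coincide, as both come from the same commutator in the lift), and the remaining brackets between generators are obtained directly from \eqref{eq:qalphabeta-factorization}. The group-like generators commute pairwise, and in $U_{\bqf,\Acf}^{\geqslant}$ one has
\begin{align*}
[K_\mu^{N_\mu}, E_\beta^{N_\beta}] \;=\; \bigl(\bqf_{\mu\beta}^{N_\mu N_\beta}-1\bigr)\, E_\beta^{N_\beta} K_\mu^{N_\mu} \;=\; -(\nu-\xi)\, \wp^{\bqf}_{\mu\beta}(\nu)\, E_\beta^{N_\beta} K_\mu^{N_\mu},
\end{align*}
so after specialization $\{K_\mu^{N_\mu}, \Eq_\beta^{N_\beta}\} = -\wp^{\bqf}_{\mu\beta}(\xi)\, \Eq_\beta^{N_\beta} K_\mu^{N_\mu}\in Z_{\bq}^{\geqslant}$; the super-$A$ generator $K_\eta^{\pm\Nt}$ is handled identically, using $q_{\alpha_i\weta}=q_{\weta\weta}=1$. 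Leibniz then yields closure on all of $Z_{\bq}^{\geqslant}$, and $Z_{\bq}^{\leqslant}$ is symmetric. For $(U_{\bq},Z_{\bq})$ the mixed $K$--$L$, $K$--$F$, and $E$--$L$ brackets are settled the same way; only $\{\Eq_\beta^{N_\beta}, \Fq_\gamma^{N_\gamma}\}$ is genuinely new. Here I would invoke Theorem \ref{th:Zq-invariant-under-Ti} together with the commuting square \eqref{eq:eval-commutes-Ti}: since $T_i^{\bqf}$ preserves the integral form $U_{\bqf,\Acf}$ by Lemma \ref{lema:iso-int}, it specializes to a Poisson isomorphism $Z_{\rho_i(\bq)}\to Z_{\bq}$, and iterating reduces the problem to $\beta,\gamma$ simple in some matrix of the Weyl equivalence class of $\bq$. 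For $\beta\ne\gamma$ simple, $E_\beta$ and $F_\gamma$ commute in the lift and the bracket vanishes; for $\beta=\gamma$ a rank-one computation inside the subalgebra generated by $E_\beta,F_\beta,K_\beta,L_\beta$ expresses $[E_\beta^{N_\beta},F_\beta^{N_\beta}]$ as a Laurent polynomial in $K_\beta^{N_\beta}$ and $L_\beta^{\pm N_\beta}$, placing the specialized bracket in $Z_{\bq}$.

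The Poisson--Hopf assertions for $Z_{\bq},Z_{\bq}^{\geqslant},Z_{\bq}^{\leqslant}$ are then immediate from Proposition \ref{prop:der_a-gral-centralHopf}: each is a central Hopf subalgebra that we have just shown to be closed under the Poisson bracket, hence coincides with the minimal Poisson-Hopf subalgebra containing itself. The coideal claim for $Z_{\bq}^{\pm}$ is read off the coproduct formula $\Delta(\Eq_\beta^{N_\beta}) = K_\beta^{N_\beta}\otimes \Eq_\beta^{N_\beta} + \Eq_\beta^{N_\beta}\otimes 1$ and its $\Fq$-analogue in the bosonization, combined with the closure just established. The main technical obstacle is the $E$--$F$ step of the previous paragraph: the Weyl-groupoid reduction only brings one root to simple form at a time, and one must verify that the classical rank-one commutator computation produces elements of (the appropriately transported) $Z_{\bq}$ uniformly across the Cartan, super and modular families -- which is precisely what condition \eqref{eq:condition-cartan-roots}, the factorization \eqref{eq:qalphabeta-factorization}, and Lemma \ref{lemma:lambdaij-in-Acf} are designed to secure.
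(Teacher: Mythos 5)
Your outline correctly follows the paper for the positive and negative parts---applying Theorem \ref{thm:poisson-structure-ker} to $U_{\bqf,\Acf}^{\pm}$ with $\varsigma_i^{\bqf}$ and $\partial_i^{\bqf}$, and invoking \eqref{eq:Z+-intersection-ker-derivations}---and for the torus--root-vector brackets computed directly from the factorization \eqref{eq:qalphabeta-factorization}. The Poisson--Hopf and coideal conclusions in your last paragraph are also sound. However, your treatment of the cross bracket $\{\Eq_\beta^{N_\beta},\Fq_\gamma^{N_\gamma}\}$ with $\beta\ne\gamma$ has a genuine gap. You propose to iterate the Lusztig isomorphisms so that both $\beta$ and $\gamma$ become simple roots in some matrix of the Weyl equivalence class, and then to invoke $[E_i,F_j]=0$ for $i\ne j$ in the lift. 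But two distinct positive (Cartan) roots cannot in general be sent simultaneously to simple roots by the Weyl groupoid: already in rank two, $\alpha_1$ and $\alpha_1+\alpha_2$ cannot both be made simple, since the difference of two distinct simple roots is never a root. So the situation you are reducing to is never reached for most pairs $\beta\ne\gamma$.

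The paper's argument at this step is of a different nature, and your proposal needs to be replaced by it. Writing $\beta=\beta_j$, $\gamma=\beta_k$ with $j<k$ in the convex enumeration of $\Delta^{\bq}_+$, one applies $(T_{i_1}^{\bqf}\cdots T_{i_j}^{\bqf})^{-1}$; under this composite $E_\beta^{N_\beta}$ is \emph{not} sent to a simple root vector of $U^+$, but to $K_{i_j}^{-N_{i_j}}F_{i_j}^{N_{i_j}}$, an element of the $\leqslant$-part times a group-like, while $F_\gamma^{N_\gamma}$ goes to $F_{\gamma'}^{N_{\gamma'}}$ for a still-positive $\gamma'$. The mixed $E$--$F$ bracket is thus converted into a bracket entirely inside $Z_{\rho_{i_j}\cdots\rho_{i_1}(\bq)}^{\leqslant}$ (up to the torus factor), a case that has already been handled. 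Transporting back via $T_{i_1}^{\bq}\cdots T_{i_j}^{\bq}$, which preserves the central subalgebra by Theorem \ref{th:Zq-invariant-under-Ti} and commutes with specialization by \eqref{eq:eval-commutes-Ti}, lands the bracket in $Z_{\bq}$. Your plan is correct only for $\beta=\gamma$: there a single chain of $T_i$'s does reduce to a simple root, and the explicit rank-one commutator \eqref{eq:conmut-EN-FN} combined with the limit \eqref{eq:s-beta-spec} yields the closed formula \eqref{eq:bracket-e-f}. Replace the ``both-simple'' step for $\beta\ne\gamma$ with the convex-order reduction above and the argument goes through.
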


In each of the pairs in \eqref{eq:Poisson-orders} the second algebra is central  to the first. 
Using PBW bases one easily shows that in each case the first algebra is a finitely generated module over the second one 
in the pair; here the introduction of the  generators \eqref{eq:generators-Zc-new}  for  super type A is essential. 
Because of Theorem  \ref{thm:Uq-special} and Proposition \ref{prop:der_a-gral-centralHopf} we are reduced to prove:

\begin{proposition}\label{prop:Poisson-Zq}
The subalgebras $\Zc_{\bq}^{\pm}$, $\Zc_{\bq}^{\geqslant}$, $\Zc_{\bq}^{\leqslant}$ and $\Zc_{\bq}$
are Poisson subalgebras of $\ZZ(U_{\bq}^{\pm})$, $\ZZ(U_{\bq}^{\geqslant})$,  $\ZZ(U_{\bq}^{\leqslant})$ and
$\ZZ(U_{\bq})$, respectively, under the Poisson bracket \eqref{eq:Poisson-str-center}.
\end{proposition}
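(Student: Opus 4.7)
The plan is to establish $\{\Zc_\bq^\star, \Zc_\bq^\star\} \subseteq \Zc_\bq^\star$ for each of the five subalgebras in a cascading fashion that reuses the smaller cases. For the unipotent parts $\Zc_\bq^+$ and $\Zc_\bq^-$, I will apply Theorem \ref{thm:poisson-structure-ker} directly, taking $U = U_{\bqf,\Acf}^+$ (respectively $U_{\bqf,\Acf}^-$), $h = \nu-\xi$, and for the endomorphisms and skew-derivations the $\Acf$-algebra automorphisms $\varsigma_i^\bqf$ and $(\id,\varsigma_i^\bqf)$-derivations $\partial_i^\bqf$ of Remark \ref{rem:sig-partial} (both $\Acf$-linear because $\bqf$ has entries in $\Acf$). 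Their specializations on $U_\bq^+$ are $\varsigma_i^\bq$ and $\partial_i^\bq$, and Remark \ref{rem:Zq-properties}(c) gives $\bigcap_i \ker\partial_i^\bq = \Zc_\bq^+$. On a generator $\Eq_\beta^{N_\beta}$ of $\Z^\I$-degree $N_\beta\beta$, the automorphism $\varsigma_i^\bq$ acts by $q_{\alpha_i\beta}^{N_\beta}$, which equals $1$ thanks to \eqref{eq:condition-cartan-roots}. Combined with the centrality of $\Zc_\bq^+$ in $U_\bq^+$ noted in \S\ref{subsec:Zq}, this identifies $\Zc_\bq^+$ with the subalgebra in \eqref{eq:poisson-structure-subalgebra}, and Theorem \ref{thm:poisson-structure-ker} yields the Poisson subalgebra assertion.

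For $\Zc_\bq^\geqslant$ (and symmetrically $\Zc_\bq^\leqslant$) I will invoke the Leibniz rule to reduce the task to checking $\{x,y\} \in \Zc_\bq^\geqslant$ only on the algebra generators $\Eq_\beta^{N_\beta}$, $\beta \in \fO_+^\bq$, and $K_\mu^{\pm N_\mu}$, $\wmu \in \wt{\varPi}^\bq$. The bracket of two $\Eq$-generators lies in $\Zc_\bq^+$ by the first step; the bracket of two $K$-generators vanishes because the group-likes already commute in $U_{\bqf,\Acf}^\geqslant$. The mixed bracket is handled directly: from $K_\mu^{N_\mu} E_\beta^{N_\beta} = \bqf_{\mu\beta}^{N_\mu N_\beta} E_\beta^{N_\beta} K_\mu^{N_\mu}$ together with \eqref{eq:qalphabeta-factorization} one reads off
\begin{align*}
\{K_\mu^{N_\mu},\Eq_\beta^{N_\beta}\} \;=\; -\wp^\bqf_{\mu\beta}(\xi)\,\Eq_\beta^{N_\beta}K_\mu^{N_\mu} \;\in\; \Zc_\bq^\geqslant.
\end{align*}

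The global case is the main obstacle. Once more by Leibniz, the only brackets that are not already covered by the two Borel cases or by trivial commutations of group-likes are $\{K_\mu^{N_\mu},\Fq_\gamma^{N_\gamma}\}$ and $\{L_\mu^{N_\mu},\Eq_\beta^{N_\beta}\}$, computed as in the preceding paragraph, and the genuinely new bracket $\{\Eq_\beta^{N_\beta},\Fq_\gamma^{N_\gamma}\}$. Centrality of $\Zc_\bq$ in $U_\bq$ gives $[E_\beta^{N_\beta}, F_\gamma^{N_\gamma}] \in (\nu-\xi)\,U_{\bqf,\Acf}$, so the bracket is well defined. To show it lands in $\Zc_\bq$ rather than merely in $\ZZ(U_\bq)$, I will use the Weyl-groupoid invariance of $\Zc_\bq$ from Theorem \ref{th:Zq-invariant-under-Ti}: the Lusztig isomorphisms $T_i^\bq$ intertwine the Poisson brackets on $\Zc_{\rho_i(\bq)}$ and $\Zc_\bq$ because by Lemma \ref{lema:iso-int} they lift to $\Acf$-algebra isomorphisms $T_i^\bqf$ of the integral forms, and \eqref{eq:eval-commutes-Ti} says these lifts commute with specialization. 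This lets me reduce to the case $\beta = \alpha_j$ with $j$ a Cartan vertex. In that situation the rank-one subalgebra $\Cset(\nu)\langle E_j, F_j, K_j^{\pm 1}, L_j^{\pm 1}\rangle \subset U_\bqf$ is essentially a quantum $\mathfrak{sl}_2$ at parameter $\bqf_{jj}$, and a direct induction on $n$ starting from the linking relation \eqref{eq:Ubqf-linking-EF} yields the classical De Concini–Procesi identity $[E_j^{N_j}, F_j^{N_j}] \equiv (\nu-\xi)\,c_j(K_j^{N_j} - L_j^{-N_j}) \pmod{(\nu-\xi)^2}$ for some $c_j \in \Acf$, while the PBW bases of \S\ref{subsec:lusztig-isom} together with a $\Z^\I$-degree argument force $[E_j^{N_j}, F_\gamma^{N_\gamma}] \in (\nu-\xi)^2 U_{\bqf,\Acf}$ whenever $\wgamma \ne \walpha_j$. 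The hard part will be this last explicit rank-one identity, but because the reduction above confines the computation to a single Cartan vertex at a time, the numerous combinatorial cases from the Appendix do not actually enter.
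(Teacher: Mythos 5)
Your treatment of the unipotent and Borel parts matches the paper's proof in both method and substance: applying Theorem \ref{thm:poisson-structure-ker} to $U^\pm_{\bqf,\Acf}$ with $\varsigma_i^{\bqf}$, $\partial_i^{\bqf}$, and then checking the mixed $E$--$K$ brackets directly via \eqref{eq:qalphabeta-factorization}. The reduction to the rank-one case via the Lusztig isomorphisms and the commutativity \eqref{eq:eval-commutes-Ti} is also the paper's route.

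The gap is the final claim: that a $\Z^\I$-degree argument forces $[E_j^{N_j}, F_\gamma^{N_\gamma}] \in (\nu-\xi)^2\, U_{\bqf,\Acf}$ whenever $\wgamma \neq \walpha_j$. A degree argument only constrains which PBW monomials $E^{\mathbf m}K^{\mathbf a}L^{\mathbf b}F^{\mathbf n}$ of degree $N_j\alpha_j - N_\gamma\gamma$ can appear in the expansion; it says nothing about the order of vanishing of their coefficients at $\nu = \xi$. And in fact the vanishing is false: e.g., already in Cartan type $A_2$, with $\gamma = \alpha_1 + \alpha_2$ and $j=1$, the bracket $\{\Eq_1^N, \Fq_{\alpha_1+\alpha_2}^N\}$ is a nonzero element of $\Zc_\bq$ of degree $-N\alpha_2$ (a multiple of $\Fq_2^N$ times group-likes), as one expects from the Poisson bracket on the double $B^+\times B^-$ being nondegenerate on the big cell. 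So the off-diagonal $\{\Eq_\beta^{N_\beta}, \Fq_\gamma^{N_\gamma}\}$ need to be shown to lie in $\Zc_\bq$, not to vanish.

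The correct completion (and what the paper does) is: after applying one \emph{more} Lusztig isomorphism $T_{i_j}$ than you do — i.e. $T_{i_1}^{\bqf}\cdots T_{i_j}$ — the root vector $E_{\beta_j}^{N_{\beta_j}}$ pulls back (up to a unit of $\Acf$) to $K_{i_j}^{-N_{i_j}} F_{i_j}^{N_{i_j}}$ and $F_{\gamma}^{N_\gamma}$ pulls back to $F_{\gamma'}^{N_{\gamma'}}$, so the transformed bracket is $\{K_{i_j}^{-N_{i_j}}\Fq_{i_j}^{N_{i_j}}, \Fq_{\gamma'}^{N_{\gamma'}}\}$ in $U_{\bp}$ with $\bp = \rho_{i_j}\cdots\rho_{i_1}(\bq)$. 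This lands inside $\Zc_{\bp}$ by the Leibniz rule together with the already-established Borel/unipotent cases for $\bp$ (the $\{\Fq,\Fq\}$ and $\{K,\Fq\}$ brackets); transporting back by $T_{i_1}^{\bq}\cdots T_{i_j}$ and invoking Theorem \ref{th:Zq-invariant-under-Ti} gives $\{\Eq_\beta^{N_\beta},\Fq_\gamma^{N_\gamma}\} \in \Zc_\bq$. Your rank-one identity and the transport machinery are all fine; you just need to replace the false vanishing claim with this transport argument for $\wgamma \neq \walpha_j$.
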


Observe that $\Zc_{\bq}^{\pm}$, $\Zc_{\bq}^{\geqslant}$ and $\Zc_{\bq}^{\leqslant}$
are Poisson subalgebras of $\Zc_{\bq}$.

\medbreak
\noindent \emph{Proof.} 
We apply Theorem \ref{thm:poisson-structure-ker} to the algebra $U_{\bqf, \Acf}^+$, the automorphisms $\varsigma_i^{\bqf}$ and the $(\id,\varsigma_i^{\bqf})$-derivations 
$\partial_i^{\bqf}$, $i\in\I$ to conclude that $\ZZ'$ defined as in \eqref{eq:poisson-structure-subalgebra}
is a Poisson subalgebra of $\ZZ(U_{\bq}^+)$. Now we have that
\begin{align*}
\overline{\varsigma}_i^{\;  \bqf} & \overset{\star}{=} \varsigma_i^{\bq}, & 
\overline{\partial}_i^{\; \bqf} & \overset{\ast}{=} \partial_i^{\bq}, & 
\Zc_{\bq}^+ &\subset \cap_{i\in\I} \ker (\varsigma_i^{\bq}-\id).
\end{align*}
The equality $\star$ holds since $\bq = \bqf(\xi)$, while $\ast$ holds because both skew-derivations act in the same way on the generators of $U_{\bq}^+$. 
The inclusion holds since $\Zc_{\bq}^+ \subset \ZZ(U_{\bq})$: indeed  $\varsigma_i^{\bq}(x)=K_ix K_i^{-1} =x$ for all $x \in Z_{\bq}^+$. 
From this inclusion and \eqref{eq:Z+-intersection-ker-derivations}, it follows that $\ZZ'=\Zc_{\bq}^+$. The proof for $\Zc_{\bq}^-$ is analogous.
The restriction of the Poisson structure to $\Zc_{\bq}^{0\pm}$ vanishes by the definition \eqref{eq:Poisson-str-center}.

\medbreak

Next we prove the statement for $\Zc_{\bq}^{\geqslant}$. 
Let $\beta,\gamma\in \fO^{\bq}_{+}$. We have
\begin{align*}
\{ \Eq_{\beta}^{N_{\beta}}, K_{\gamma}^{N_{\gamma}} \} &= 
\overline{\frac{[E_{\beta}^{N_{\beta}}, K_{\gamma}^{N_{\gamma}}]}{\nu - \xi}} = \overline{\frac{1-\bqf_{\beta\gamma}^{N_{\beta}N_{\gamma}}}{\nu - \xi} E_{\beta}^{N_{\beta}}K_{\gamma}^{N_{\gamma}}} 
\overset{\eqref{eq:qalphabeta-factorization}}{=} \wp^{\bqf}_{\beta \gamma} (\xi) \Eq_{\beta}^{N_{\beta}} K_{\gamma}^{N_{\gamma}}\in \Zc_{\bq}^\geqslant.
\end{align*}
If $\bq$ is of type $\supera{k-1}{\theta - k}$ and $\eta$ is as in \eqref{eq:eta}, 
then $K_{\eta}^{N_\eta}\in \ZZ(U_{\bq})$ and $\{ \Eq_{\beta}^{N_{\beta}}, K_{\eta}^{N_\eta} \} \in 
\Cset \Eq_{\beta}^{N_{\beta}} K_{\eta}^{N_\eta} \subset
\Zc_{\bq}^\geqslant$.
This proves the claim in light of Proposition \ref{prop:Zq-families} and Property (I) in \S \ref{sec:6.1}. 
Similarly, one shows that
\begin{align*}
\{ \Eq_{\beta}^{N_{\beta}}, L_{\mu}^{N_{\mu}}\} &\in \Cset \Eq_{\beta}^{N_{\beta}} L_{\mu}^{N_{\mu}},&
\{ \Fq_{\beta}^{N_{\beta}}, K_{\mu}^{N_{\mu}} \} &\in \Cset \Eq_{\beta}^{N_{\beta}} K_{\mu}^{N_{\mu}}, &
\{ \Fq_{\beta}^{N_{\beta}}, L_{\mu}^{N_{\mu}} \} &\in \Cset \Fq_{\beta}^{N_{\beta}} L_{\mu}^{N_{\mu}},
\end{align*}
for all $\beta \in  \fO^{\bq}_{+}, \wmu \in \widetilde{\varPi}^{\bq}$. 
This finishes the proof for $\Zc_{\bq}^{\leqslant}$ and reduces that of $\Zc_{\bq}$
to proving that $\{ \Eq_{\beta}^{N_{\beta}}, \Fq_{\gamma}^{N_{\gamma}} \}  \in \Zc_{\bq}$ for all  $\beta, \gamma \in  \fO^{\bq}_{+}$. 
For this we use the enumeration of the positive roots using the longest element of the Weyl groupoid. 
First we assume that $\beta=\beta_j$, $\gamma=\beta_k$ for $1\le j<k\le \ell$. Let $\bp=\rho_{i_j} \dots \rho_{i_1}(\bq)$, $\gamma'= s_{i_j}^{\bp} \dots s_{i_1}(\gamma)$, 
so $N_{\gamma'}=N_{\gamma}$. We have that
\begin{align*}
\{ \Eq_{\beta}^{N_{\beta}}, \Fq_{\gamma}^{N_{\gamma}} \} &= 
\overline{\frac{[E_{\beta}^{N_{\beta}}, F_{\gamma}^{N_{\gamma}}]}{\nu - \xi}} =
\overline{\frac{T_{i_1}^{\bqf} \dots T_{i_j}([K_{i_j}^{-N_{i_j}} F_{i_{j}}^{N_{i_j}}, F_{\gamma'}^{N_{\gamma'}}])}{\nu - \xi}}
\\ & \overset{\eqref{eq:eval-commutes-Ti}}{=}
T_{i_1}^{\bq} \dots T_{i_j}
\left(\overline{\frac{[K_{i_j}^{-N_{i_j}} F_{i_{j}}^{N_{i_j}}, F_{\gamma'}^{N_{\gamma'}}]}{\nu - \xi}}\right) = T_{i_1}^{\bq} \dots T_{i_j} \left( \left\{ K_{i_j}^{-N_{i_j}} \Fq_{i_{j}}^{N_{i_j}}, \Fq_{\gamma'}^{N_{\gamma'}} \right\} \right).
\end{align*}
By the statements already proved, $\left\{ K_{i_j}^{-N_{i_j}} \Fq_{i_{j}}^{N_{i_j}}, \Fq_{\gamma'}^{N_{\gamma'}} \right\}\in\Zc_{\bp}$. Hence
\begin{align*}
\{ \Eq_{\beta}^{N_{\beta}}, \Fq_{\gamma}^{N_{\gamma}} \} & \in
T_{i_1}^{\bq} \dots T_{i_j} (\Zc_{\bp}) \overset{\text{Theorem \ref{th:Zq-invariant-under-Ti}}}{=} \Zc_{\bq}.
\end{align*}
The case $j>k$ is proved analogously. Now assume that $\beta=\gamma$. We start with the case $\beta=\alpha_i$ for some $i\in\I$ (a simple Cartan root). Using \eqref{eq:Ubqf-linking-EF} we prove recursively that
\begin{align}\label{eq:conmut-EN-FN}
[E_i^N,F_i^N] &= \sum_{t=1}^{N} (t)_{\qf_{ii}}^! \binom{N}{t}^2_{\qf_{ii}} F_i^{N-t} \prod_{s=0}^{t-1} \left(  K_i \qf_{ii}^{2t-2N-s} -L_i^{-1} \right) E_i^{N-t}, & &N\in\N.
\end{align}

Let $t\in\I_{N_i-1}$. As $q_{ii}$ is a primitive $N_i$-th root of unity and $\overline{\qf_{ii}} = q_{ii}$, 
\begin{align*}
\wp^{\bqf}_{\alpha_i\alpha_i}(\xi) &= \overline{\frac{1 - \qf_{ii}^{N_i^2}}{\nu - \xi}}  
= \overline{\frac{(1 - \qf_{ii}^{N_i})( 1 + \qf_{ii}^{N_i} + \cdots + \qf_{ii}^{N_i(N_i-1)}) }{\nu - \xi}}  = N_i \overline{\frac{1 - \qf_{ii}^{N_i}}{\nu - \xi}}.
\end{align*}
Hence,
\begin{align}
\label{eq:s-beta-spec}
\overline{\frac{(N_i)_{\qf_{ii}}^!}{\nu - \xi}} &= \overline{\frac{1 - \qf_{ii}^{N_i}}{\nu - \xi}} \cdot \frac{(1 - q_{ii}) \ldots (1- q_{ii}^{N_i -1})}{(1- q_{ii})^{N_i}} = 
\frac{\wp^{\bqf}_{\alpha_i\alpha_i}(\xi)}{(1-q_{ii})^{N_i}} \cdot
\end{align}
From this we obtain,
\begin{align*}
\{ \Eq_{i}^{N_i}, \Fq_{i}^{N_i} \} &= \overline{\frac{[E_{i}^{N_{i}}, F_{i}^{N_{i}}]}{\nu - \xi}} =
\overline{\frac{(N_i)_{\qf_{ii}}^!}{\nu - \xi}}
\prod_{s=0}^{N_i-1} (K_i q_{ii}^{-s}-L_i)
=
\frac{-\wp^{\bqf}_{\alpha_i\alpha_i}(\xi)}{(q_{ii}-1)^{N_i}}
(K_i^{N_i}-L_i^{-N_i})\in\Zc_{\bq}.
\end{align*}
Next, if $\beta$ is not simple, say $\beta=\beta_j$ for some $j \in \I_{\ell}$, then using 
Theorem \ref{th:Zq-invariant-under-Ti} and \eqref{eq:eval-commutes-Ti} again
\begin{align}
\label{eq:bracket-e-f}
\{ \Eq_{\beta}^{N_{\beta}}, \Fq_{\beta}^{N_{\beta}} \} &=
T_{i_1}^{\bq} \dots T_{i_{j-1}}
\Big(\overline{\frac{[E_{i_{j}}^{N_{i_j}}, F_{i_j}^{N_{i_j}}]}{\nu - \xi}}\Big)
=\frac{-\wp^{\bqf}_{\beta\beta}(\xi)}{(q_{\beta\beta}-1)^{N_{\beta}}}
(K_{\beta}^{N_{\beta}}-L_{\beta}^{N_{\beta}}) \in \Zc_{\bq}. \qed
\end{align}

\section{The associated Poisson algebraic groups}\label{sec:PLgrps}
In this section we describe the Poisson algebraic groups that correspond to the Poisson-Hopf algebras 
$Z_{\bq}$, $Z_{\bq}^{\geqslant}$ and $Z_{\bq}^{\leqslant}$.
We prove that, as algebraic groups, they are isomorphic to Borel subgroups of connected semisimple algebraic groups but of adjoint type
(and not of simply connected type as in previous works) and direct products of such Borel subgroups.
The dual Lie bialgebras of the three tangent Lie algebras are proved to constitute a Manin triple, 
the ample Lie algebra in which is reductive. It is shown that the resulting Lie bialgebra structures are 
the ones from the Belavin--Drinfeld classification \cite{BD} for the empty BD-triple
 and arbitrary choice of the continuous parameters. 
The results completely determine the 
Poisson structures on the three kinds of algebraic groups in question.
  
\subsection{The positive and negative parts of the dual tangent Lie bialgebra of $M_{\bq}$} 
Let $M_{\bq}$, $M_{\bq}^\pm$, $M_{\bq}^{\pm 0}$, $M_{\bq}^\geqslant$ and $M_{\bq}^\leqslant$ be the complex 
algebraic groups which are 
equal to the maximal spectra of the commutative Hopf algebras
$\Zc_{\bq}$, $\Zc_{\bq}^\pm$, $\Zc_{\bq}^{\pm 0}$, $\Zc_{\bq}^\geqslant$ and $ \Zc_{\bq}^\leqslant$, respectively. 
Here the Hopf algebra structures on $\Zc_{\bq}^\pm$ are the restrictions of the braided Hopf algebra structures
on $U^\pm_{\bq}$ to $\Zc_{\bq}^\pm$ \cite{Ang-dpn}.

Since $\Zc_{\bq}$ is a finitely generated Poisson-Hopf algebra which is an integral domain, 
$M_{\bq}$ is a connected Poisson algebraic group (see \S \ref{subsec:B.2} for background). 
Analogously, $M_{\bq}^\geqslant$, $M_{\bq}^\leqslant$ and  $M_{\bq}^{\pm 0}$ are connected Poisson algebraic groups, 
and $M_{\bq}^\pm$ are connected unipotent algebraic groups. The latter are not Poisson algebraic groups;
they are isomorphic to certain Poisson homogeneous spaces for $M_{\bq}^\geqslant$ and $M_{\bq}^\leqslant$ (see \S \ref{Poisson-geom-unip}). 
The tensor product decompositions 
$Z_{\bq} \simeq Z_{\bq}^\geqslant \otimes Z_{\bq}^\leqslant$ 
from \S \ref{sec:Z} give rise to the decomposition of algebraic groups 
\begin{equation}
\label{M-decomp}
M_{\bq} \simeq M_{\bq}^\geqslant \times M_{\bq}^\leqslant.
\end{equation}
This is not a direct product decomposition of Poisson algebraic groups (because $Z_{\bq} \simeq Z_{\bq}^\geqslant \otimes Z_{\bq}^\leqslant$
is a tensor product decomposition of commutative but not Poisson algebras). However,
the canonical projections $M_{\bq} \twoheadrightarrow M_{\bq}^\geqslant$ and $M_{\bq} \twoheadrightarrow M_{\bq}^\leqslant$
are homomorphisms of Poisson algebraic groups because $Z_{\bq}^\geqslant$ and $Z_{\bq}^\leqslant$ are 
Poisson-Hopf subalgebras of $Z_{\bq}$. 

Denote by $\mm_{\bq}$, $\mm_{\bq}^{\geqslant}$ and $\mm_{\bq}^{\leqslant}$ the tangent Lie bialgebras of  $M_{\bq}$, $M_{\bq}^\geqslant$ and $M_{\bq}^\leqslant$ 
(see the Appendix \ref{subsec:B.1} for background and notations). Eq. \eqref{M-decomp} gives rise to the direct sum decomposition of Lie algebras
\[
\mm_{\bq} \simeq \mm_{\bq}^\leqslant \oplus \mm_{\bq}^\geqslant.
\]
The Lie coalgebra structure on $\mm_{\bq}$, fully described below, has cross terms.
The dual of the tangent Lie bialgebra $\mm_{\bq}^* = T^*_1 M_{\bq}$ is computed as the linearization at the identity element $1$ of $M_{\bq}$
of its Poisson structure by using \eqref{eq:linear}. 
The maximal ideal $\Mg_1$ of $\Cset[M_{\bq}] \simeq \Zc_{\bq}$ of functions vanishing at 1 coincides with the 
augmentation ideal of $\Zc_{\bq}$. In the proofs below we will use the identification $T^*_1 M \simeq \Mg_1 / \Mg_1^2$
where the differential $d_1(g)$ of a function $g \in \Cset[M_{\bq}]$ at $1 \in M_{\bq}$
is sent to the class of $g - g(1)$ in $\Mg_1 / \Mg_1^2$ for $g \in \Cset[M_{\bq}]$.
The Lie algebra $\mm_{\bq}^*$ has the $\Cset$-basis:
\begin{align}
\label{eq:T*-basis}
&\big{\{} d_1 (\Eq_\be^{N_\be}),  d_1 (\Fq_\be^{N_\be}), d_1 (K_\mu^{N_\mu}), d_1 (L_\mu^{N_\mu}) 
: \be \in \fO^{\bq}_{+}, \wmu \in \wt{\varPi}^{\bq}
\big{\}}.
\end{align}
By Proposition \ref{prop:Poisson-Zq}, the subspaces 
\[
(\mm_{\bq}^+)^{*} := \oplus_{\be \in \fO^{\bq}_{+}} \Cset \, d_1(\Eq_\be^{N_\be}) \quad \mbox{and} \quad 
(\mm_{\bq}^-)^{*} := \oplus_{\be \in \fO^{\bq}_{+}} \Cset \,  d_1(\Fq_\be^{N_\be})
\]
are Lie subalgebras of $\mm_{\bq}^*$. The dual Lie bialgebras $(\mm_{\bq}^{\geqslant})^{*}$ and $(\mm_{\bq}^{\leqslant})^{*}$ 
are canonically identified with the Lie sub-bialgebras of $\mm_{\bq}^*$
\begin{equation}
\label{m><ident}
(\mm_{\bq}^+)^{*} \oplus  \big( \oplus_{\wmu \in \wt{\varPi}^{\bq}} d_1(K_\mu^{N_\mu}) \big) \quad \mbox{and} \quad
(\mm_{\bq}^-)^{*} \oplus  \big( \oplus_{\wmu \in \wt{\varPi}^{\bq}} d_1(L_\mu^{N_\mu}) \big). 
\end{equation}

Recall the notation from \S \ref{subsec:Luszig-pm}.
It follows from the triangular decomposition \eqref{eq:triangular} of the semisimple Lie algebra 
$\g_{\bq}$ associated to  $\bq$ that 
the set of simple roots of  $\g_{\bq}$ can be identified with $\varPi^{\bq}$. 
Denote the entries of the Cartan matrix of $\g_{\bq}$  by 
\begin{align*}
&c_{\wbeta \, \wgamma}, &&\wbeta, \wgamma \in \varPi^{\bq}.
\end{align*}
Throughout the section we will assume the identification $\n_{{\bq}^{(-1)}}^- \simeq \n_{\bq}^+$ from \eqref{eq:nq-isom}, so 
$\g_{\bq} = \n_{\bq}^+ \oplus \h_{\bq} \oplus \n_{\bq}^-$ will be identified with $\n_{\bq^{(-1)}}^- \oplus \h_{\bq} \oplus \n_{\bq}^-$.
By the definitions of $\n_{{\bq}^{(-1)}}^-$ and $\n_{\bq}^-$, $\g_{\bq}$ has a set of Chevalley generators 
\begin{align*}
&\{ x_\beta, y_\beta, h_\beta : \wbeta \in \varPi^{\bq} \}
\end{align*}
such that $x_\beta \in \Cset^{\times} \iota^*(\ff_\beta)$ and $y_\beta \in \Cset^{\times} \iota^*(\ee_\beta)$, respectively
(here $\ee_{\beta}$ and $\ff_{\beta}$ are defined in \eqref{eq:fdual-definition} and the subsequent paragraph).
 In this way the 
root lattice of $\g_{\bq}$ is identified with $\QQ_{\bq}$ by setting $\deg x_\beta = - \deg y_\beta = N_\beta \beta$, 
$\deg h_\beta =0$ for $\wbeta \in \varPi^{\bq}$.

\medbreak
We will need the following reductive Lie algebra
\[
\wt{\g}_{\bq} := 
\begin{cases}
\g_{\bq} \oplus \Cset, & \mbox{if $\bq$ is of type $\supera{k-1}{\theta-k}$}
\\
\g_{\bq}, & \mbox{otherwise}.
\end{cases}
\]
See the comments after Theorem \ref{thm:Poisson-orders}.
By \eqref{eq:triangular}, it has the triangular decomposition 
\[
\wt{\g}_{\bq} =  \n_\bq^+ \oplus \wt{\h}_{\bq} \oplus \n_\bq^-
\]
where the Cartan subalgebra is given by 
\[
\wt{\h}_{\bq} := 
\begin{cases}
\h_{\bq} \oplus \Cset, & \mbox{if $\bq$ is of type $\supera{k-1}{\theta-k}$}
\\
\h_{\bq}, & \mbox{otherwise}.
\end{cases}
\]
In the $\supera{k-1}{\theta-k}$ case denote by $h_\eta$ a non-zero central element of $\wt{\g}_{\bq}$. 
In that case we have $\wt{\g}_{\bq} = \g_{\bq} \oplus \Cset h_\eta$ and 
$\wt{\h}_{\bq} = \h_{\bq} \oplus \Cset h_\eta$.

\begin{proposition}\label{prop:lem2} We have a $\QQ_{\bq}$-graded Lie algebra isomorphism 
$(\mm_{\bq}^{\pm})^{*} \simeq \n_{\bq}^\pm$ given by 
\begin{align*}
&d_1 ( \Eq_\be^{N_\be} ) \mt s_{\beta} \iota^*( \ff_\be^{(N_\be)}), && \mbox{respectively}
&& d_1 ( \Fq_\be^{N_\be} ) \mt - s_{\beta}\iota^*( \ee_\be^{(N_\be)})
\end{align*}
for all $\be \in \fO^{\bq}_{+}$, where $s_{\beta}:= \dfrac{\wp^{\bqf}_{\beta\beta}(\xi)}{(1-q_{\beta\beta})^{N_{\beta}}}$.
In the plus case we use the identification \eqref{eq:nq-isom}.
\end{proposition}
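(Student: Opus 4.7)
The plan has two main parts: establishing the graded vector space isomorphism and then verifying the Lie bracket compatibility via the pairing-based duality that is central to the paper.

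For the first part, Remark \ref{rem:Zq-properties}(a) shows that $Z_\bq^+$ (resp.\ $Z_\bq^-$) is a polynomial algebra in the generators $\{\Eq_\beta^{N_\beta}\}_{\beta \in \fO^\bq_+}$ (resp.\ $\{\Fq_\beta^{N_\beta}\}_{\beta \in \fO^\bq_+}$). Therefore the augmentation ideal $\Mg_1^\pm \subset Z_\bq^\pm$ is freely generated by these elements, and their differentials form a basis of $(\mm_\bq^\pm)^* = \Mg_1^\pm/(\Mg_1^\pm)^2$. On the target side, $\n_\bq^\pm$ has a root space decomposition indexed by the positive roots of $\wfO^{\bq}_+$, which is in bijection with $\fO^{\bq}_+$ by \eqref{eq:root-system-distinguished}, and the $\QQ_\bq$-degrees match since $\deg \Eq_\beta^{N_\beta} = N_\beta \beta$. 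Hence the stated assignment (composed with the identification \eqref{eq:nq-isom} in the $+$ case) is a well-defined graded linear isomorphism.

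For the second part, the Lie bracket on $(\mm_\bq^\pm)^*$ is the linearization $[d_1 f, d_1 g] = d_1\{f,g\}$ modulo the square of the augmentation ideal. Rather than attempting a direct case-by-case computation of $\{\Eq_\beta^{N_\beta}, \Eq_\gamma^{N_\gamma}\}$ (which would require the explicit defining relations of $\dpn_\bq$ that this paper is designed to avoid), I plan to dualize using the perfect Hopf skew-pairing
\begin{align*}
\langle \cdot, \cdot\rangle : U_{\bqf,\Acf}^{\res-} \times U_{\bqf,\Acf}^+ \to \Acf
\end{align*}
of \S \ref{subsec:Luszig-pm}, together with its companion for the $-$ case. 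The skew-pairing identities \eqref{eq:hopf-pairing} convert the class of $\overline{[\Eq_\beta^{N_\beta}, \Eq_\gamma^{N_\gamma}]/(\nu-\xi)}$ modulo $(\Mg_1^+)^2$ into the evaluation of suitable elements against the commutator $[F_\beta^{(N_\beta)}, F_\gamma^{(N_\gamma)}]$ in $U_{\bqf,\Acf}^{\res-}/(\nu-\xi)$. By Proposition \ref{prop:evaluation-Ures}, this commutator is matched (up to the anti-isomorphism $\phi^-$) with a bracket in $\luq$, and projection onto the primitive part yields the Lie bracket on $\cP(\zz_\bq) \simeq \n_\bq^-$ of Remark \ref{rem:Lusztigalg-lie}. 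The $+$ case is symmetric: it uses the companion pairing together with the identification \eqref{eq:nq-isom}. The scalar $s_\beta$ is forced by the diagonal relation \eqref{eq:bracket-e-f} already computed in Section \ref{sec:Pord}: at a simple Cartan root $\wbeta \in \varPi^\bq$, the explicit form of $\{\Eq_\beta^{N_\beta}, \Fq_\beta^{N_\beta}\}$ normalizes the image of $d_1(\Eq_\beta^{N_\beta})$ against the Chevalley generator $\iota^*(\ff_\beta^{(N_\beta)})$ of $\n_\bq^+$.

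The main obstacle will be the careful bookkeeping of signs arising from three sources: (i) $\phi^\mp$ being algebra \emph{anti-}isomorphisms, which reverses the order in all commutators; (ii) the braided opposite convention in $\phi^+$; and (iii) the Lie algebra identification \eqref{eq:nq-isom} between $\n_{\bq^{(-1)}}^-$ and $\n_\bq^+$. These three contributions must combine consistently so as to produce the asymmetric signs visible in the proposition statement ($+s_\beta$ for the $+$ case versus $-s_\beta$ for the $-$ case). Once they are tracked, the identification of Lie algebra structures reduces to the product-coproduct duality of the Hopf skew-pairing, and the verification can be carried out by checking compatibility on the generators indexed by $\varPi^\bq$, from which the bracket on all of $\fO^\bq_+$ follows by the Chevalley relations inherited from $\n_\bq^\pm$.
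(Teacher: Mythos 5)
The first part of your plan (the graded vector-space identification) is correct and matches the paper. The difficulty is in the second part, and the central mechanism you propose does not work.

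You write that the skew-pairing identities in \eqref{eq:hopf-pairing} ``convert the class of $\overline{[\Eq_\beta^{N_\beta}, \Eq_\gamma^{N_\gamma}]/(\nu-\xi)}$ modulo $(\Mg_1^+)^2$ into the evaluation of suitable elements against the commutator $[F_\beta^{(N_\beta)}, F_\gamma^{(N_\gamma)}]$.'' But a Hopf skew-pairing relates the \emph{product} on one side to the \emph{coproduct} on the other: $\langle yy', x\rangle = \langle y\otimes y', \Delta(x)\rangle$ and $\langle y, xx'\rangle = \langle \Delta^{\opp}(y), x\otimes x'\rangle$. There is no identity exchanging a commutator of $E$'s for a commutator of $F$'s across the pairing, so this ``dualization'' step would fail. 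The pairing does enter, but only inside Proposition \ref{prop:evaluation-Ures}, where it defines the map $\phi^{\mp}$ identifying $U^{\res\mp}_{\bqf,\Acf}/(\nu-\xi)$ with the Lusztig algebra; the proposition itself is proved by a direct comparison of structure constants on a single side at a time, not by a duality between the $E$ and $F$ sides.

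The paper's actual route, which you would need, is: (i) expand $[F_\beta^{N_\beta}, F_\gamma^{N_\gamma}]$ in the PBW $\Acf$-basis modulo $(\nu-\xi)^2$, obtaining $(\nu-\xi)a^\delta_{\beta\gamma}(\nu)F_\delta^{N_\delta}$ plus a $(\nu-\xi)$-multiple of a degree $\geqslant 2$ polynomial $g_{\beta\gamma}$ in the PBW generators; (ii) take $d_1$ and use $g_{\beta\gamma}\in\Mg_1^2$ to get $[d_1(\Fq_\beta^{N_\beta}), d_1(\Fq_\gamma^{N_\gamma})] = a^\delta_{\beta\gamma}(\xi)\,d_1(\Fq_\delta^{N_\delta})$; (iii) divide the same expansion by $(N_\beta)^!_{\qf_{\beta\beta}}(N_\gamma)^!_{\qf_{\gamma\gamma}}$ to pass to divided powers, and specialize using $\overline{(N_\beta)^!_{\qf_{\beta\beta}}/(\nu-\xi)} = s_\beta$ from \eqref{eq:s-beta-spec} to obtain $[s_\beta\overline{F_\beta^{(N_\beta)}}, s_\gamma\overline{F_\gamma^{(N_\gamma)}}] = a^\delta_{\beta\gamma}(\xi)\,s_\delta\overline{F_\delta^{(N_\delta)}}$; (iv) transport through $\phi^\mp$ and $\iota^*$. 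The same structure constants $a^\delta_{\beta\gamma}(\xi)$ appear on both ends, which is what makes the comparison go. In particular $s_\beta$ arises as the divided-power renormalization $\overline{(N_\beta)^!_{\qf_{\beta\beta}}/(\nu-\xi)}$, \emph{not} from the cross-bracket \eqref{eq:bracket-e-f}. Invoking \eqref{eq:bracket-e-f} to fix $s_\beta$ is a different computation (a bracket between the $\geqslant$ and $\leqslant$ halves), and a normalization obtained from it cannot on its own justify the two internal Lie algebra isomorphisms $(\mm_\bq^\pm)^*\simeq\n_\bq^\pm$. Similarly, the suggestion to verify the map only on simple Cartan roots ``from which the bracket on all of $\fO^\bq_+$ follows by the Chevalley relations'' presupposes that the map respects brackets, which is what is being proved; the paper avoids this circularity by working with the full structure-constant argument uniformly across all Cartan roots. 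Finally, the sign bookkeeping (anti-isomorphism of $\phi^-$, braided opposite in $\phi^+$, identification \eqref{eq:nq-isom}) is a real issue, but it has to be resolved by tracking the anti-homomorphism property through \eqref{eq:commutator-divided-powers} together with the observation that $\bq_{\beta\gamma}=1$ on $\wfO^\bq_+$ trivializes the braided opposite on the relevant subalgebra; asserting that the three contributions ``must combine consistently'' is not a proof.
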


\pf First we prove the minus case. Let $\beta,\gamma\in\fO^{\bq}_{+}$. Since $\overline{F_{\beta}^{N_{\beta}}}=\Fq_{\beta}^{N_\beta}, \overline{F_{\gamma}^{N_{\gamma}}}=\Fq_{\gamma}^{N_\gamma} \in\Zc_{\bq}$ and the subalgebra $\Zc_{\bq}$ is closed under the Poisson bracket $\{\cdot , \cdot\}$ by Proposition \ref{prop:Poisson-Zq}, using Proposition \ref{prop:PBW-int} we obtain
\begin{align}\label{eq:commutator-U-}
[F_{\beta}^{N_{\beta}}, F_{\gamma}^{N_{\gamma}}] \equiv \sum_{\delta\in\fO^{\bq}_{+}}(\nu - \xi)a_{\beta\gamma}^{\delta}(\nu) F_{\delta}^{N_{\delta}}
+(\nu - \xi) g_{\beta\gamma} \mod (\nu - \xi)^2 U_{\bqf,\Acf}^{-}, 
\end{align}
where $a_{\beta\gamma}^{\delta}(\nu)\in\Acf$ and $g_{\beta\gamma}$ is a non-commutative polynomial in $\{F_{\delta}^{N_{\delta}} : \delta \in \fO^{\bq}_{+}\}$ involving monomials of degree $\ge 2$. Since $U_{\bqf}$ is $\Z^{\I}$-graded, the sum in the right-hand side has at most one non-zero term, when $N_{\beta}\beta + N_{\gamma}\gamma = N_{\delta}\delta$ for some $\delta\in\fO^{\bq}_{+}$. Therefore
\begin{align}
\label{eq:commutator-U-eval}
[d_1(\Fq_{\beta}^{N_{\beta}}), d_1(\Fq_{\gamma}^{N_{\gamma}})] &=
d_1 \big( \{ \Fq_{\beta}^{N_{\beta}}, \Fq_{\gamma}^{N_{\gamma}}\} \big)
=d_1  \Bigg( \, \overline{\frac{[F_{\beta}^{N_{\beta}}, F_{\gamma}^{N_{\gamma}}]}{\nu - \xi}} \, \Bigg)
\\
&=\sum_{\delta\in\fO^{\bq}_{+}} a_{\beta\gamma}^{\delta}(\xi) d_1(\Fq_{\delta}^{N_{\delta}})
+ d_1 (g_{\beta\gamma}) =\sum_{\delta\in\fO^{\bq}_{+}} a_{\beta\gamma}^{\delta}(\xi) d_1(\Fq_{\delta}^{N_{\delta}}),
\notag
\end{align}
because $g_{\beta \gamma}\in{\Mg_1}^2$. From \eqref{eq:commutator-U-} and since $U_{\bqf,\Acf}^{-}$ is $\N_0$-graded connected, we see that
\begin{align*}
[F_{\beta}^{(N_{\beta})}, F_{\gamma}^{(N_{\gamma})}] \equiv \sum_{\delta\in\fO^{\bq}_{+}} a_{\beta\gamma}^{\delta}(\nu)   
\frac{(\nu - \xi)(N_{\delta})_{q_{\delta\delta}}^!}{(N_{\beta})_{q_{\beta\beta}}^!(N_{\gamma})_{q_{\gamma\gamma}}^!}F_{\delta}^{(N_{\delta})}
\mod (\nu - \xi) U_{\bqf,\Acf}^{\res-}.
\end{align*}
It follows from \eqref{eq:s-beta-spec} that
\begin{align*}
\overline{\frac{(N_{\beta})_{q_{\beta\beta}}^!}{\nu - \xi}} &=
\frac{\wp^{\bqf}_{\beta\beta}(\xi)}{(1-q_{\beta\beta})^{N_{\beta}}} =s_{\beta}.
\end{align*}
Hence in $U_{\bqf,\Acf}^{\res-}/(\nu - \xi)$ we have
\begin{align}
\label{eq:commutator-divided-powers}
\begin{aligned}
[s_{\beta} \overline{F_{\beta}^{(N_{\beta})}}, s_{\gamma} \overline{F_{\gamma}^{(N_{\gamma})}}] &= \sum_{\delta\in\fO^{\bq}_{+}} a_{\beta\gamma}^{\delta}(\xi) s_{\delta} \overline{F_{\delta}^{(N_{\delta})}}
\\
& =\begin{cases}
a_{\beta\gamma}^{\delta}(\xi) s_{\delta} \overline{F_{\delta}^{(N_{\delta})}} & \text{ if }\exists \delta\in\fO^{\bq}_{+}: \wdelta=\wbeta+\wgamma,
\\
0, & \text{otherwise}.
\end{cases}
\end{aligned}
\end{align}
The statement of the lemma follows from this identity, \eqref{eq:commutator-U-eval} and Proposition \ref{prop:evaluation-Ures}.
The plus case is proved analogously, using Remark \ref{rem:iso} and  that $\bq_{\beta \gamma} =1$ for all $\beta, \gamma \in \wfO^{\bq}_+$.
\epf

The last part of the proof gives the following fact about the structure of Lusztig algebras which is of independent interest. Recall
$\ee_{\beta}$ defined in \eqref{eq:fdual-definition}.

\begin{corollary} The braided Hopf algebra projection $\iota^* : \luq \twoheadrightarrow U(\n_{\bq}^-)$ (recall \eqref{eq:extension-braided-lu})
has an algebra section $U(\n_{\bq}^-) \to \luq$ given by
\begin{align*}
\iota^{*}(\ee_{\beta}^{(N_{\beta})}) &\mapsto \ee_{\beta}^{(N_{\beta})}, & &\beta\in\fO^{\bq}_{+}.
\end{align*}
\end{corollary}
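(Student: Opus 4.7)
The plan is to extract the section directly from identity \eqref{eq:commutator-divided-powers} in the proof of Proposition \ref{prop:lem2}, transported through the algebra anti-isomorphism $\phi^-$ of Proposition \ref{prop:evaluation-Ures}. The key observation is that the same coefficients $a_{\beta\gamma}^\delta(\xi)$ that encode the Lie bracket of $\n_\bq^-$ also encode the associative commutator of the elements $\ee_\beta^{(N_\beta)}$ inside $\luq$.

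First I would apply $\phi^-$ to both sides of \eqref{eq:commutator-divided-powers}. Since $\phi^-$ reverses the order of multiplication (so $[X,Y] \mapsto -[\phi^-(X),\phi^-(Y)]$) while mapping $\overline{F_\beta^{(N_\beta)}}$ to $\ee_\beta^{(N_\beta)}$, one obtains the identity
\begin{align*}
[s_\beta\, \ee_\beta^{(N_\beta)}, s_\gamma\, \ee_\gamma^{(N_\gamma)}] = - \sum_{\delta\in\fO^{\bq}_+} a_{\beta\gamma}^\delta(\xi)\, s_\delta\, \ee_\delta^{(N_\delta)}
\end{align*}
in $\luq$ equipped with its associative commutator. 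Next, \eqref{eq:commutator-U-eval} combined with the identification $d_1(\Fq_\beta^{N_\beta}) \leftrightarrow -s_\beta\iota^*(\ee_\beta^{(N_\beta)})$ of Proposition \ref{prop:lem2} yields exactly the same formula in $\n_\bq^-$ with $\iota^*(\ee_\beta^{(N_\beta)})$ in place of $\ee_\beta^{(N_\beta)}$.

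Consequently, the linear map $\sigma : \n_\bq^- \to \luq$ defined on the basis $\{\iota^*(\ee_\beta^{(N_\beta)}) : \beta\in\fO^{\bq}_+\}$ by $\iota^*(\ee_\beta^{(N_\beta)}) \mapsto \ee_\beta^{(N_\beta)}$ is a Lie algebra homomorphism, when $\luq$ is viewed as a Lie algebra under its commutator bracket. By the universal property of $U(\n_\bq^-)$, $\sigma$ extends uniquely to an associative algebra map $\sigma : U(\n_\bq^-) \to \luq$. The composition $\iota^*\circ\sigma$ is then an algebra endomorphism of $U(\n_\bq^-)$ that fixes each generator $\iota^*(\ee_\beta^{(N_\beta)}) \in \n_\bq^-$, hence equals $\id_{U(\n_\bq^-)}$, showing that $\sigma$ is a section of $\iota^*$.

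The only delicate point is the sign bookkeeping through the anti-isomorphism $\phi^-$ together with the verification that the two bracket formulas genuinely coincide after this transport; once this matches, the extension to the enveloping algebra and the check of the section property are formal consequences of the universal property.
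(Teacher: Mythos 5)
Your proof is correct and takes essentially the same route as the paper: both transport the commutator formula \eqref{eq:commutator-divided-powers} through $\phi^-$, match it against the bracket in $\n_\bq^-$, and then invoke the universal property of $U(\n_\bq^-)$ (equivalently its presentation by a Lie algebra basis and commutator relations) to obtain the algebra map, checking the section property on generators. The only minor presentational difference is that the paper obtains the structure constants of $\n_\bq^-$ a priori from the root-space decomposition and then determines them by applying $\iota^*$, whereas you compute them directly from \eqref{eq:commutator-U-eval} via the identification of Proposition \ref{prop:lem2}; these are equivalent computations, and your explicit sign-tracking through the anti-isomorphism $\phi^-$ is consistent on both sides.
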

\pf
By Proposition \ref{prop:evaluation-Ures} and \eqref{eq:commutator-divided-powers},
\begin{align}\label{eq:commutator-divided-powers-bis}
[\ee_{\beta}^{(N_{\beta})}, \ee_{\gamma}^{(N_{\gamma})}] &=\begin{cases}
\tfrac{a_{\beta\gamma}^{\delta}(\xi) s_{\delta} }{s_{\beta}s_{\gamma}}\ee_{\delta}^{(N_{\delta})}, & \text{ if }\exists \delta\in\fO^{\bq}_{+}: \wdelta=\wbeta+\wgamma,
\\
0, & \text{otherwise}.
\end{cases}
\end{align}

On the other hand, set $\mathtt{x}_{\wbeta}:=\iota^{*}(\ee_{\beta}^{(N_{\beta})})$. As $\n_{\bq}^-$ is the positive part of $\g_{\bq}$ and each $\mathtt{x}_{\wbeta}$ has weight $\wbeta$, there exist $a_{\wbeta\,\wgamma}\in\Bbbk$, $\wbeta,\wgamma\in\wfO^{\bq}_+$, such that 
\begin{align}\label{eq:defn-rels-n-bq}
[\mathtt{x}_{\wbeta},\mathtt{x}_{\wgamma}] &= \begin{cases}
a_{\wbeta\,\wgamma} \mathtt{x}_{\wbeta+\wgamma}, & \wbeta+\wgamma\in \wfO^\bq_{+},
\\
0, & \wbeta+\wgamma\notin \wfO^\bq_{+}.
\end{cases}
\end{align}
Applying $\iota^*$ to \eqref{eq:commutator-divided-powers-bis} we obtain that $a_{\wbeta\,\wgamma}=\tfrac{a_{\beta\gamma}^{\delta}(\xi) s_{\delta} }{s_{\beta}s_{\gamma}}$ 
for each pair $\wbeta,\wgamma\in\wfO^{\bq}_+$ such that $\wbeta+\wgamma\in \wfO^\bq_{+}$. 
Therefore the existence of the algebra map $U(\n_{\bq}^-) \to \luq$ as above follows since $U(\n_{\bq}^-)$ is presented by generators $\mathtt{x}_{\wbeta}$, $\wbeta\in\wfO^{\bq}_+$, and relations \eqref{eq:defn-rels-n-bq}, and the corresponding relations for $\ee_{\beta}^{(N_{\beta})}$ hold in $\luq$
by \eqref{eq:commutator-divided-powers-bis}.
\epf

\subsection{The dual tangent Lie bialgebra of $M_{\bq}$} 
\begin{lemma}
\label{lem:brackets} The following equalities hold in the Lie algebra $\mm_{\bq}^*$:
\begin{align*}
[ d_1(\Eq_\beta^{N_\beta}), d_1(\Fq_\gamma^{N_\gamma}) ] &= - \delta_{\beta \gamma}
\frac{\wp^{\bqf}_{\beta \beta}(\xi)}{(q_{\beta\beta}-1)^{N_\beta}}
(d_1(K_\beta^{N_\beta}) + d_1( L_\beta^{N_\beta})), &&
\wbeta, \wgamma \in \varPi^{\bq},
\end{align*}
and
\begin{align*}
[ d_1(K_\mu^{N_\mu}), d_1(\Eq_\beta^{N_\beta}) ] &= - \wp^{\bqf}_{\mu \beta}(\xi) d_1(\Eq_\beta^{N_\beta}), &
[ d_1(K_\mu^{N_\mu}), d_1(\Fq_\beta^{N_\beta}) ] &= \wp^{\bqf}_{\mu \beta}(\xi) d_1(\Fq_\beta^{N_\beta}),
\\
[ d_1(L_\mu^{N_\mu}), d_1(\Eq_\beta^{N_\beta}) ] &= - \wp^{\bqf}_{\beta \mu}(\xi) d_1(\Eq_\beta^{N_\beta}), &
[ d_1(L_\mu^{N_\mu}), d_1(\Fq_\beta^{N_\beta}) ] &=  \wp^{\bqf}_{\beta \mu}(\xi) d_1(\Fq_\beta^{N_\beta})
\end{align*}
for all $\beta \in \fO^{\bq}_{+}$, $\wmu \in \wt{\varPi}^{\bq}$
\end{lemma}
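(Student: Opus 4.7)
The strategy is to compute each bracket directly from the explicit Poisson structure on $\Zc_\bq$ established in the proof of Proposition \ref{prop:Poisson-Zq}, and then linearize at the identity via the identification $\mm_\bq^* \simeq \Mg_1/\Mg_1^2$ together with the Leibniz rule
\[
d_1(fg) \;=\; f(1)\, d_1(g) + g(1)\, d_1(f), \qquad f, g \in \Zc_\bq.
\]
The key input is that the group-like generators $K_\mu^{N_\mu}, L_\mu^{N_\mu}$ evaluate to $1$ at the identity, whereas $\varepsilon(\Eq_\beta^{N_\beta}) = \varepsilon(\Fq_\beta^{N_\beta}) = 0$, so that upon differentiation only one of the two factors of each term survives.

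For the four mixed Cartan--unipotent brackets, the computation of $\{K_\mu^{N_\mu}, \Eq_\beta^{N_\beta}\}$ is representative. By the commutation relations \eqref{eqn:g-con-Ei}, extended to the PBW root vectors and their $N$-th powers, one has $K_\mu^{N_\mu} E_\beta^{N_\beta} = \bqf_{\mu\beta}^{N_\mu N_\beta} E_\beta^{N_\beta} K_\mu^{N_\mu}$ inside $U_{\bqf,\Acf}$; combining this with \eqref{eq:qalphabeta-factorization} and the definition \eqref{eq:Poisson-str-center} of the Poisson bracket yields
\[
\{K_\mu^{N_\mu}, \Eq_\beta^{N_\beta}\} \;=\; -\,\wp^{\bqf}_{\mu\beta}(\xi)\, \Eq_\beta^{N_\beta} K_\mu^{N_\mu}.
\]
Applying $d_1$ and using $K_\mu^{N_\mu}(1) = 1$, $\Eq_\beta^{N_\beta}(1) = 0$ gives the first stated formula. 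The $L$-$E$, $K$-$F$ and $L$-$F$ brackets follow identically from \eqref{eqn:g-con-Ei}--\eqref{eqn:g-con-Fi}; the order of indices inside $\wp^{\bqf}$ is dictated by which of $E$ or $F$ is commuted past which of $K$ or $L$, and the overall sign arises from the antisymmetry of the Poisson bracket.

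For the $[E,F]$ bracket, I split into two cases. When $\wbeta = \wgamma$, I invoke the explicit computation \eqref{eq:bracket-e-f},
\[
\{\Eq_\beta^{N_\beta}, \Fq_\beta^{N_\beta}\} \;=\; \frac{-\wp^{\bqf}_{\beta\beta}(\xi)}{(q_{\beta\beta}-1)^{N_\beta}}\bigl(K_\beta^{N_\beta} - L_\beta^{-N_\beta}\bigr),
\]
and linearize. Using $d_1(L_\beta^{-N_\beta}) = -d_1(L_\beta^{N_\beta})$, which follows by differentiating $L_\beta^{N_\beta} L_\beta^{-N_\beta} = 1$ at the identity, the stated formula drops out. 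When $\wbeta$ and $\wgamma$ are distinct elements of $\varPi^\bq$, I argue by $\QQ_\bq$-weights: the Poisson bracket is $\QQ_\bq$-homogeneous, so $\{\Eq_\beta^{N_\beta}, \Fq_\gamma^{N_\gamma}\}$ has weight $\wbeta - \wgamma$; by inspection of the basis \eqref{eq:T*-basis}, the weights of $\mm_\bq^*$ lie in $\wfO^\bq \cup \{0\}$, and since $\wbeta - \wgamma$ is a non-zero difference of distinct simple roots of $\g_\bq$, it is not a root of $\g_\bq$, so the corresponding weight space of $\mm_\bq^*$ is zero.

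The main obstacle is the careful bookkeeping of the index orderings $\wp^{\bqf}_{\mu\beta}$ versus $\wp^{\bqf}_{\beta\mu}$, which are genuinely distinct in general (as already witnessed by \eqref{peta0}), together with the signs that arise both from the antisymmetry of the Poisson bracket and from inverting group-like elements; once the commutation relations \eqref{eqn:g-con-Ei}--\eqref{eqn:g-con-Fi} and the formula \eqref{eq:qalphabeta-factorization} are applied consistently, the calculation is mechanical.
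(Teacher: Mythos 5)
Your proof is correct and follows essentially the same strategy as the paper's: the four Cartan–unipotent brackets come from the commutation relations \eqref{eqn:g-con-Ei}--\eqref{eqn:g-con-Fi} together with \eqref{eq:qalphabeta-factorization} and Leibniz at the identity; the $\wbeta=\wgamma$ case of the $[E,F]$ bracket is read off \eqref{eq:bracket-e-f} using $d_1(L_\beta^{-N_\beta})=-d_1(L_\beta^{N_\beta})$; and the $\wbeta\neq\wgamma$ case is killed by the $\QQ_\bq$-grading of $\mm_\bq^*$ together with the fact that the basis \eqref{eq:T*-basis} has no vectors of weight $\wbeta-\wgamma$. Your write-up supplies some intermediate details (the explicit commutation $K_\mu^{N_\mu} E_\beta^{N_\beta}=\bqf_{\mu\beta}^{N_\mu N_\beta}E_\beta^{N_\beta}K_\mu^{N_\mu}$, the appeal to simple-root differences not being roots) that the paper leaves implicit, but the route is the same.
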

\pf 
The case of $\wbeta \neq \wgamma \in \varPi^{\bq}$ of the first identity follows from the fact that \eqref{eq:T*-basis}  is a basis of the Lie algebra $\mm_{\bq}^*$ 
and that the latter is $\QQ_{\bq}$-graded. 
The case $\wbeta = \wgamma \in \varPi^{\bq}$ is a consequence of \eqref{eq:bracket-e-f} since
$d_1( L_\beta^{N_\beta}) = - d_1( L_\beta^{-N_\beta})$, which in turn follows since the value of $L_\beta^{N_\beta}$
at the identity of $M_{\bq}$ equals $1$.
The other four identities follow from \eqref{eqn:g-con-Ei}--\eqref{eqn:g-con-Fi}.
\epf

Since the polynomials $\nu^n - a$ are separable over $\Cset$ for $a \neq 0$, we infer from \eqref{eq:qalphabeta-factorization} that 
\begin{align*}
&\wp^{\bqf}_{\beta \beta}(\xi)  \neq 0 && \mbox{for all} \; \; \be \in \fO^{\bq}_{+}.
\end{align*}

\begin{theorem}\label{thm:mm*} \begin{enumerate}[leftmargin=*,label=\rm{(\alph*)}]
\item\label{item:thm-a} The Cartan matrix of the semisimple Lie algebra $\g_{\bq}$ is given by
\begin{align*}
c_{\wbeta \, \wgamma} = \frac{ \wp^{\bqf}_{\beta \gamma}(\xi)  
+  \wp^{\bqf}_{\gamma \beta}(\xi) }{ \wp^{\bqf}_{\beta \beta}(\xi)}, && \wbeta, \wgamma \in \varPi^{\bq}.
\end{align*} 
\medbreak\item\label{item:thm-b} There is a ($\QQ_{\bq}$-graded) Lie algebra isomorphism $\wt{\g}_{\bq} \oplus \wt{\h}_{\bq} \simeq \mm_{\bq}^*$ such that
\begin{align*}
x_\beta \mt  d_1(\Fq_\beta^{N_\beta}), \; \; y_\beta \mt \frac{(q_\beta -1)^{N_\beta}}{\wp^{\bqf}_{\beta \beta}(\xi)^2} d_1(\Eq_\beta^{N_\beta}), \; \; 
h_\mu \mt \frac{1}{ \wp^{\bqf}_{\mu \mu}(\xi)} (d_1(K_\mu^{N_\mu}) + d_1( L_\mu^{N_\mu}))
\end{align*}
for $\wbeta \in \varPi^{\bq}$, $\wmu \in \wt{\varPi}^{\bq}$,
and $\wt{\h}_{\bq}$ maps to the subspace 
\[
\big\{ 
\sum_{{\mu} \in \wt{\varPi}^{\bq}} a_\mu d_1(K_\mu^{N_\mu}) + b_\mu d_1( L_\mu^{N_\mu})  : 
\sum_{{\wmu} \in \wt{\varPi}^{\bq}}  \wp^{\bqf}_{\mu \gamma}(\xi) a_\mu + \wp^{\bqf}_{\gamma \mu}(\xi) b_\mu =0, 
\forall \wgamma \in \wt{\varPi}^{\bq}
\big\}
\]
of the abelian Lie algebra $\oplus_{\wbeta \in \wt{\varPi^{\bq}}} ( \Cset d_1(K_\beta^{N_\beta}) + \Cset d_1( L_\beta^{N_\beta}) )$.
\end{enumerate}
\end{theorem}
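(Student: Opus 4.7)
The plan is to combine Proposition \ref{prop:lem2} (which identifies $(\mm_{\bq}^{\pm})^{*} \simeq \n_{\bq}^{\pm}$ as $\QQ_{\bq}$-graded Lie algebras) with Lemma \ref{lem:brackets} (which supplies the cross-brackets in $\mm_{\bq}^*$). Note first that the Cartan-like subspace $\mathfrak{t} := \Span\{d_1(K_\mu^{N_\mu}), d_1(L_\mu^{N_\mu}) : \wmu \in \wt{\varPi}^{\bq}\}$ is abelian, since the grouplikes $K_\mu, L_\nu$ pairwise commute in $U_{\bqf}$ and Poisson brackets of mutually commuting elements vanish.

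Next I introduce the candidate generators $X_\beta := d_1(\Fq_\beta^{N_\beta})$, $Y_\beta := \tfrac{(q_{\beta\beta}-1)^{N_\beta}}{\wp^{\bqf}_{\beta\beta}(\xi)^2} d_1(\Eq_\beta^{N_\beta})$ and $H_\mu := \tfrac{1}{\wp^{\bqf}_{\mu\mu}(\xi)}(d_1(K_\mu^{N_\mu}) + d_1(L_\mu^{N_\mu}))$ inside $\mm_{\bq}^*$. A direct computation using Lemma \ref{lem:brackets} yields $[H_\mu, H_\nu]=0$, $[X_\beta, Y_\gamma] = \delta_{\beta\gamma} H_\beta$, together with $[H_\mu, X_\beta] = \kappa_{\wmu\wbeta} X_\beta$ and $[H_\mu, Y_\beta] = -\kappa_{\wmu\wbeta} Y_\beta$, where $\kappa_{\wmu\wbeta}$ denotes the right-hand side of (a). For part (a), by Proposition \ref{prop:lem2} the elements $\{X_\beta\}_{\wbeta \in \varPi^{\bq}}$ generate a $\QQ_{\bq}$-graded Lie algebra isomorphic to $\n_{\bq}^-$ in which $X_\beta$ is a simple root vector of weight $\wbeta$; moreover $H_\mu = [X_\mu, Y_\mu]$ is forced by this identification to correspond to the Chevalley coroot $h_\mu^\vee$ in $\g_{\bq}$. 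The inner derivation $\ad H_\mu$ preserves the $\QQ_{\bq}$-grading and acts by a scalar on the one-dimensional simple root space at $\wbeta$; transporting via Proposition \ref{prop:lem2} and matching with the Chevalley action of $h_\mu$ on $\n_{\bq}^-$ forces $\kappa_{\wmu\wbeta} = c_{\wmu\wbeta}$, with the normalization $\kappa_{\wmu\wmu} = 2 = c_{\wmu\wmu}$ anchoring the identification.

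Once (a) is in hand, the assignments in (b) satisfy all Chevalley-Serre relations (the Serre relations come from Proposition \ref{prop:lem2}, the Chevalley ones from the computation above), so they extend to a Lie algebra homomorphism $\Phi : \wt{\g}_{\bq} \oplus \wt{\h}_{\bq} \to \mm_{\bq}^*$. The $\wt{\h}_{\bq}$-summand maps to the center of $\Phi(\wt{\g}_{\bq} \oplus \wt{\h}_{\bq})$, which must lie in $\mathfrak{t}$ and consist of those combinations $\sum (a_\mu d_1(K_\mu^{N_\mu}) + b_\mu d_1(L_\mu^{N_\mu}))$ commuting with every $X_\beta$ and $Y_\beta$; applying Lemma \ref{lem:brackets} converts this commuting condition into the linear system displayed in the theorem. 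Bijectivity then follows by a dimension count: both sides have dimension $2|\fO^{\bq}_+| + 2|\wt{\varPi}^{\bq}|$ (in the super $A$ case the extra generator $\weta \in \wt{\varPi}^{\bq} \setminus \varPi^{\bq}$ contributes on both sides via \eqref{petanot0}). The main obstacle is the compatibility in part (a): recognizing the intrinsic Cartan matrix of $\g_{\bq}$ (defined via the root system $\wfO^{\bq}$) as the explicit combination of entries of $\WP^{\bqf}$, which hinges on identifying $H_\mu = [X_\mu, Y_\mu]$ with the simple coroot of $\g_{\bq}$ through the $\QQ_{\bq}$-graded isomorphism of Proposition \ref{prop:lem2}.
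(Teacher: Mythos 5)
Your opening moves are sound and match the paper's: the candidate elements $X_\beta, Y_\beta, H_\mu$ are exactly the paper's $\wh{x}_\beta, \wh{y}_\beta, \wh{h}_\mu$, the bracket identities you extract from Lemma \ref{lem:brackets} are correct, and the framing of part (b) via a Chevalley--Serre homomorphism and a dimension count is essentially what the paper does. However, there is a genuine gap at the crux of part (a). You assert that $H_\mu = [X_\mu, Y_\mu]$ ``is forced by this identification to correspond to the Chevalley coroot $h_\mu^\vee$,'' and then that transporting $\ad H_\mu$ through Proposition \ref{prop:lem2} and ``matching with the Chevalley action'' yields $\kappa_{\wmu\wbeta} = c_{\wmu\wbeta}$, anchored by $\kappa_{\wmu\wmu}=2$. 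This does not follow. Proposition \ref{prop:lem2} identifies only $(\mm_{\bq}^{\pm})^* \simeq \n_{\bq}^{\pm}$; it says nothing about the Cartan part, and $H_\mu$ lives outside $(\mm_{\bq}^{\pm})^*$. A degree-$0$ derivation of $\n_{\bq}^-$ is determined by its scalars on \emph{all} simple root spaces, so knowing $\ad H_\mu$ acts by $2$ on $X_\mu$ does not pin down its action on $X_\beta$ for $\beta \ne \mu$ --- the ``anchoring'' is vacuous, as $\kappa_{\wmu\wmu}=2$ is the trivial identity $2\wp^{\bqf}_{\mu\mu}(\xi)/\wp^{\bqf}_{\mu\mu}(\xi)=2$. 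What you are trying to prove (that $H_\mu$ behaves like the coroot) is precisely the content of part (a), so this step is circular.

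The missing ingredient is the $\mathfrak{sl}_2$-representation-theoretic argument. Having verified that $\mathfrak{s}_\mu := \Cset X_\mu \oplus \Cset H_\mu \oplus \Cset Y_\mu \simeq \mathfrak{sl}_2$, one observes from Lemma \ref{lem:brackets} that $[X_\mu, Y_\gamma]=0$ for $\wgamma \ne \wmu$, so $Y_\gamma$ is a highest weight vector for $\mathfrak{s}_\mu$ of weight $-\kappa_{\wmu\wgamma}$. The isomorphism of Proposition \ref{prop:lem2} transports the Serre relations of $\n_{\bq}^+$: $\ad_{Y_\mu}^{1-c_{\wmu\wgamma}}(Y_\gamma)=0$ while $\ad_{Y_\mu}^j(Y_\gamma)\ne 0$ for $j \le -c_{\wmu\wgamma}$. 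Finite-dimensional $\mathfrak{sl}_2$-module theory then forces the highest weight to equal $-c_{\wmu\wgamma}$, whence $\kappa_{\wmu\wgamma} = c_{\wmu\wgamma}$. You need this argument (or an equivalent); without it, part (a) is unproved, and part (b), which takes (a) as input, does not yet close. A smaller point: for (b), the dimension count gives surjectivity only once injectivity on $\wt{\g}_{\bq} \oplus \wt{\h}_{\bq}$ is established; injectivity on $\wt{\g}_{\bq}$ follows from Proposition \ref{prop:lem2} (via simple ideals meeting $\n_{\bq}^\pm$ nontrivially), but injectivity on the $\wt{\h}_{\bq}$ summand needs the invertibility of $\wt{\WP}^{\bqf}$ (the Non-degeneracy Assumption), which you should invoke explicitly.
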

\pf (a) For $\wbeta \in \varPi^{\bq}$ and $\wmu \in \wt{\varPi}^{\bq}$,
define the following elements of $\mm_{\bq}^*$: 
\begin{align*}
\wh{x}_\beta :=  d_1(\Fq_\gamma^{N_\gamma}), \; \; \wh{y}_\beta := \frac{(q_\beta -1)^{N_\beta}}{\wp^{\bqf}_{\beta \beta}(\xi)^2} d_1(\Eq_\beta^{N_\beta}), \; \; 
\wh{h}_\mu := \frac{1}{ \wp^{\bqf}_{\mu \mu}(\xi)} (d_1(K_\mu^{N_\mu}) + d_1( L_\mu^{N_\mu}))
\end{align*} 
and the Lie subalgebra $\g_\bq(\beta) := \Cset \wh{x}_\beta \oplus \Cset \wh{h}_\beta \oplus \Cset \wh{y}_\beta$. Lemma \ref{lem:brackets} 
implies that, for all $\wbeta \in \varPi^{\bq}$, $[\wh{h}_\be, \wh{x}_\beta] = 2 \wh{x}_\beta$,  $[\wh{h}_\be, \wh{y}_\beta] = - 2 \wh{y }_\beta$, 
$[\wh{x}_\be, \wh{y}_\beta] = \wh{h}_\beta$, so
$\g_{\bq}(\beta) \simeq {\mathfrak{sl}}_2$. 

Now take $\wbeta \neq \wgamma \in \varPi^{\bq}$ and consider $\g_{\bq}$ as a $\g_{\bq}(\beta)$-module under the adjoint action. 
It follows from Lemma \ref{lem:brackets} that 
\begin{align*}
&[\wh{x}_\beta, \wh{y}_\gamma] =0 \quad \mbox{and} \quad [\wh{h}_\beta,  \wh{y}_\gamma] 
= - \frac{ \wp^{\bqf}_{\beta \gamma}(\xi)  +  \wp^{\bqf}_{\gamma \beta}(\xi) }{ \wp^{\bqf}_{\beta \beta}(\xi)} \wh{x}_\gamma, 
\end{align*}
so $\wh{y}_\gamma$ is a highest weight vector for $\g_{\bq}(\beta) \simeq {\mathfrak{sl}}_2$ of weight 
$- \frac{ \wp^{\bqf}_{\beta \gamma}(\xi)  +  \wp^{\bqf}_{\gamma \beta}(\xi) }{ \wp^{\bqf}_{\beta \beta}(\xi)} \omega$ where 
$\omega$ denotes the fundamental weight of  ${\mathfrak{sl}}_2$. The isomorphism of Proposition \ref{prop:lem2} and the Serre relations in 
$\n_{\bq}^-$ imply that 
\begin{align*}
&\ad^{- c_{\beta \gamma} +1}_{\wh{y}_\beta} (\wh{y}_\gamma) =0 \quad \mbox{and} \quad 
\ad^{j}_{\wh{y}_\beta} (\wh{y}_\gamma)  \neq 0 \; \; \mbox{for} \; \; j \leqslant - c_{\beta \gamma}.
\end{align*}
Hence, $\ad^{- c_{\beta \gamma}}_{\wh{y}_\beta} (\wh{y}_\gamma)$ is the lowest weight vector of the (irreducible)
$\g_{\bq}(\beta)$-module generated by $\wh{y}_\gamma$, which forces 
\[
c_{\wbeta \, \wgamma} = \frac{ \wp^{\bqf}_{\beta \gamma}(\xi)  
+  \wp^{\bqf}_{\gamma \beta}(\xi)}{ \wp^{\bqf}_{\beta \beta}(\xi)} \cdot
\]
This proves part (a). It also proves that the assignment $x_\beta \mt \wh{x}_\beta$, 
$y_\beta \mt \wh{y}_\beta$, $h_\beta \mt \wh{h}_\beta$ for $\wbeta \in \varPi^{\bq}$ defines 
a  $\QQ_{\bq}$-graded Lie algebra homomorphism $\varphi : \g_{\bq} \to \mm_{\bq}^*$ which is an 
embedding by Proposition \ref{prop:lem2} and the linear independence of 
$\{ d_1(K_\be^{N_\be}), d_1(L_\be^{N_\be}) : \be \in \fO^{\bq}_{+} \}$. 
Here we use the canonical isomorphism $\n_{\bq}^\pm \to \n_{\bq}^\mp$
obtained by restricting the Chevalley involution of $\g_{\bq}^\pm$.

If $\bq$ is of type $\supera{k-1}{\theta-k}$, then $\wh{h}_\eta$ is in the center of $\mm_{\bq}^*$ by 
\eqref{peta0} and Lemma \ref{lem:brackets}. Furthermore, $\wh{h}_\eta \notin \varphi (\g_{\bq})$ 
by the definition of $d(K_\mu^{N_\mu})$ and $d(L_\mu^{N_\mu})$ for $\wmu \in \wt{\varPi}^{\bq}$. 
Hence, $\varphi$ extends to an embedding
\begin{equation}
\label{eq:mu}
\varphi : \wt{\g}_{\bq} \to \mm_{\bq}^*
\end{equation}
by setting $\varphi(h_\eta) := \wh{h}_\eta$ if $\bq$ is of type $\supera{k-1}{\theta-k}$. Denote
\[
(\mm_{\bq}^{0})^{*}  := \oplus_{\wmu \in \wt{\varPi}^{\bq}} \Big( \Cset d_1(K_\mu^{N_\mu}) \oplus \Cset d_1(L_\mu^{N_\mu}) \Big).
\]
Let $(\mm_{\bq}^{0})'$ be the intersection of the kernels of the functionals 
$\{ l_\gamma : \wgamma \in \wt{\varPi}^{\bq} \}$ on $(\mm_{\bq}^{0})^{*}$ given by
\begin{align*}
&l_\gamma( d_1(K_\mu^{N_\mu })) := \wp^{\bqf}_{\mu \gamma}(\xi), 
&& l_\gamma( d_1(L_\mu^{N_\mu }) ) := \wp^{\bqf}_{\gamma \mu}(\xi).
\end{align*}

Part (a) of the theorem, the constructed embedding \eqref{eq:mu}, and eq. \eqref{petanot0} imply that 
$(\mm_{\bq}^{0})'  \cap \Im \varphi =0$. Hence, $\dim (\mm_{\bq}^{0})' \leq \dim (\mm_{\bq}^{0})^* - \dim \wt{\h}_\bq = \dim \wt{\h}_{\bq}$. 

Since the number of the above functionals equals $|\wt{\varPi}^{\bq}| = \dim  \wt{\h}_{\bq} $, we have 
$$\dim (\mm_{\bq}^{0})' \geqslant \dim  \h_{\bq}.$$
It follows from part (a) that $\dim (\mm_{\bq}^{0})' \leqslant \dim  \h_{\bq}$, Hence
$$\dim \h_{\bq} \geq \dim (\mm_{\bq}^{0})^* - \dim \wt{\h}_\bq = \dim \wt{\h}_{\bq}.$$
Therefore $\dim  (\mm_{\bq}^{0})' = \dim \wt{\h_{\bq}} = \dim (\mm_{\bq}^{0})^*/2$ and $\mm_{\bq}^* = \varphi(\wt{\g}_\eta) \oplus (\mm_{\bq}^{0})'$. 
Taking a vector space isomorphism $\h_{\bq} \simeq (\mm_{\bq}^{0})'$ and combining it with the embedding $\varphi$, gives the needed Lie algebra isomorphism for part (b).
\epf

Let $(\cdot , \cdot )$ be the invariant symmetric bilinear form on $\g_{\bq}$ for which the induced form on the dual of the Cartan subalgebra of $\g_{\bq}$ 
satisfies $(\underline{\beta}, \underline{\beta} ) =2$ for short roots $\underline{\beta} \in \varPi^\bq$. 
We extend it to a non-degenerate invariant symmetric bilinear form on $\wt{\g}_{\bq}$, 
where in the case when $\bq$ is of type $\supera{k-1}{\theta-k}$ we let 
\begin{align*}
&(h_\eta, \g_\bq) =0,
&&(h_\eta, h_\eta)=2.
\end{align*}
In this case we identify 
\[
\wt{\h}_\bq \cong \h_\bq^* \oplus \Cset \weta,
\]
where $\weta \in \wt{\h}_\bq^*$ is such that $\langle \weta, h_\eta \rangle =2$ and $\langle \weta, h_{\wbeta} \rangle =0$ 
for all $\wbeta \in \varPi^\bq$.

We will identify $\wt{\h}_{\bq}$ with $\wt{\h}_{\bq}^*$
via the bilinear form $(.,.)$. Define the scalars
\[
\kappa_\mu : =  2 \wp^{\bqf}_{\mu \mu}(\xi) (\underline{\mu} , \underline{\mu})^{-1} 
\quad \quad \mbox{for} \quad \wmu \in \wt{\varPi}^\bq.
\]
For $\wbeta \in \varPi^\bq$, the scalar $\kappa_\beta$ 
only depends on the simple factor of $\wt{\g}_{\bq}$ of which $\wbeta \in \varPi^\bq$ is a root,
because by Theorem \ref{thm:mm*}(a), 
\begin{equation}
\label{c-to-p}
c_{\beta \gamma} = \frac{ \wp^{\bqf}_{\beta \gamma}(\xi)  +  \wp^{\bqf}_{\gamma \beta}(\xi) }{ \wp^{\bqf}_{\beta \beta}(\xi)} = 
\frac{2 (\underline{\beta}, \underline{\gamma})}{(\underline{\beta} , \underline{\beta})} \cdot
\end{equation}
If $\bq$ is of type $\supera{k-1}{\theta-k}$, then $( \weta, \weta) =2$.

Proposition \ref{prop:specialization-integer} (i) tells us that 
each large quantum group $U_{\bq}$ is realized as a specialization of an integral form of a one-parameter quantum 
group  $U_\bqf$ in infinitely many different ways parametrized by integers $t_{ij} \in\mathbb Z$ for $i<j\in\I$.
Furthermore, by part (ii) of that proposition, for a generic choice of the parameters $t_{ij} \in \mathbb Z$,  $i<j\in\I$, 
the matrix with entries $\wp^{\bqf}_{\beta \gamma}(\xi)$ for $\wbeta, \wgamma \in \varPi^{\bq}$ is non-degenerate.
In the remaining part of the paper we will assume the following:
\medskip

\begin{genericityassumption}\label{genericityassumption}
The specialization parameters $t_{ij} \in \mathbb Z$,  $i<j\in\I$ in 
Proposition \ref{prop:specialization-integer} are chosen in such a way that 
the matrix  $\WP^{\bqf}$ in \eqref{eq:matrix-P} is non-degenerate.
\end{genericityassumption}

It follows from \eqref{peta0}--\eqref{petanot0} that the matrix 
\[
\wt{\WP}^{\bqf}:=(\wp^{\bqf}_{\mu\gamma}(\xi))_{\wmu,\wgamma\in \wt{\varPi}^{\bq}}
\]
is invertible.

\begin{remark}
\label{rem:ident}
In what follows we will identify the Lie algebras 
\begin{equation}
\label{eq:ident}
\mm_{\bq}^* \simeq \wt{\g}_{\bq} \oplus \wt{\h}_{\bq}
\end{equation}
via 
the isomorphism from Theorem \ref{thm:mm*}. In particular, $x_\beta, y_\beta, h_\mu$ for $\wbeta \in \varPi^{\bq}, \wmu \in \wt{\varPi}^{\bq}$
will be viewed as elements of $\mm_{\bq}^*$.
We also fix the identification of abelian Lie algebras
\begin{equation}
\label{eq:h-ident}
\big\{ 
\sum_{\wmu \in \wt{\varPi}^{\bq}} a_\mu d_1(K_\mu^{N_\mu}) + b_\mu d_1( L_\mu^{N_\mu})  : 
\sum_{\wmu \in \wt{\varPi}^{\bq}}  \wp^{\bqf}_{\mu \gamma}(\xi) a_\mu + \wp^{\bqf}_{\gamma \mu}(\xi) b_\mu =0, 
\forall \wgamma \in \wt{\varPi}^{\bq}
\big\} \simeq \wt{\h}_{\bq}
\end{equation}
for Theorem  \ref{thm:mm*}(b)
by sending 
$\sum_{{\underline{\mu}} \in \wt{\varPi}^{\bq}} a_\mu d_1(K_\mu^{N_\mu}) + b_\mu d_1( L_\mu^{N_\mu})  \mt 
\sum_{{\underline{\mu}} \in \wt{\varPi}^{\bq}} b_\mu \kappa_\mu \underline{\mu}$,
using the identification of  $\wt{\h}_{\bq}$ with $\wt{\h}_{\bq}^*$ via the form $(.,.)$.
Since both Lie algebras in \eqref{eq:h-ident} have the same dimensions, we only need to show that this map is injective. An element in its kernel 
has $b_\mu=0$ for all $\wmu \in \wt{\varPi}^{\bq}$ and thus, 
$\sum  \wp^{\bqf}_{\mu \gamma}(\xi) a_\mu =0$ for all $\wgamma \in \wt{\varPi}^{\bq}$.
The Non-degeneracy Assumption \ref{genericityassumption} implies that $a_\mu =0$ for $\wmu \in \wt{\varPi}^{\bq}$.
\end{remark}
 
Consider the Borel subalgebras $\wt{\bg}_{\bq}^\pm := \n_\bq^\pm \oplus \wt{\h}_\bq$ of $\wt{\g}_{\bq}$. We have
\begin{equation}
\label{eq:m-sub-bh}
(\mm_{\bq}^{\geqslant})^{*} \subset \wt{\bg}_{\bq}^- \oplus \wt{\h}_{\bq} \quad \mbox{and} \quad 
(\mm_{\bq}^{\leqslant})^{*} \subset \wt{\bg}_{\bq}^+ \oplus \wt{\h}_{\bq}
\end{equation}
in the identification \eqref{m><ident} of $(\mm_{\bq}^{\geqslant})^{*}$ and $(\mm_{\bq}^{\leqslant})^{*}$ 
with Lie subalgebras of $\mm_{\bq}^*$.
Using the Non-degeneracy Assumption \ref{genericityassumption} one more time, we obtain
that the projection into the first component $\mm_{\bq}^* \simeq \wt{\g}_{\bq} \oplus \wt{\h}_{\bq} \to \wt{\g}_{\bq}$ restricts to the Lie algebra isomorphisms 
\begin{equation}
\label{eq:m-iso-b}
(\mm_{\bq}^{\geqslant})^{*} \simeq \wt{\bg}_{\bq}^- \quad \mbox{and} \quad 
(\mm_{\bq}^{\leqslant})^{*}  \simeq \wt{\bg}_{\bq}^+. 
\end{equation}
We next describe the embeddings \eqref{eq:m-sub-bh}. Denote the linear maps $\wt{\WP}, \wt{\WP}^{\, \mathrm{T}} \in \End(\wt{\h}_{\bq})$:
\begin{align}\label{eq:def-PT}
&\wt{\WP}(\underline{\mu}) := \sum_{{\underline{\gamma}} \in \wt{\varPi}^{\bq}}  \wp^{\bqf}_{\mu \gamma}(\xi) \underline{ \gamma}, 
&&\wt{\WP}^{\, \mathrm{T}}(\underline{\mu}): = \sum_{{\underline{\gamma}} \in \wt{\varPi}^{\bq}}  \wp^{\bqf}_{\gamma \mu}(\xi) \underline{ \gamma}.
\end{align}
Because of the Non-degeneracy Assumption \ref{genericityassumption}, the matrix $\wt{\WP}^{\bqf}$ is invertible, 
and thus both endomorphisms are invertible.

Denote by $(\!(\cdot , \cdot )\!)$ the invariant symmetric bilinear form on $\wt{\g}_{\bq}$, which is a rescaling 
of $(\cdot , \cdot )$ by $\kappa_{\mu}^{-1}$ on each simple factor of $\wt{\g}_{\bq}$ and on the one-dimensional 
center of $\wt{\g}_{\bq}$ if $\bq$ is of type $\supera{k-1}{\theta-k}$. It satisfies
\begin{align*}
&(\!( d_1(K_\mu^{N_\mu}) + d_1(L_\mu^{N_\mu}), d_1(K_\gamma^{N_\gamma}) + d_1(L_\gamma^{N_\gamma}) )\!)  =
\wp^{\bqf}_{\mu \mu}(\xi) \wp^{\bqf}_{\gamma \gamma}(\xi) (\!( h_\mu, h_\gamma )\!) 
\\
&= \wp^{\bqf}_{\mu \mu}(\xi) \wp^{\bqf}_{\gamma \gamma}(\xi) \kappa_\gamma^{-1} \frac{2 c_{\mu \gamma}}{ ( \underline{\gamma}, \underline{\gamma})} = 
\wp^{\bqf}_{\mu \gamma}(\xi)  +  \wp^{\bqf}_{\gamma \mu}(\xi), \quad \forall \wmu, \wgamma \in \wt{\varPi}^\bq.
\cdot
\end{align*}
This implies that the form $(\!(\cdot , \cdot )\!)$ has a unique extension to an invariant symmetric bilinear form on $\mm_{\bq}^*$ such that
\begin{align} 
&(\!( d_1(K_\mu^{N_\mu}),  d_1(L_\gamma^{N_\gamma}) )\!)  = \wp^{\bqf}_{\mu \gamma}(\xi), 
\label{form-LK}
\\
&(\!( d_1(K_\mu^{N_\mu}), d_1(L_\mu^{N_\mu}) )\!) = (\!( d_1(K_\gamma^{N_\gamma}), d_1(L_\gamma^{N_\gamma}) )\!)  = 0
\label{form-KK}
\end{align} 
for $\wmu, \wgamma \in \wt{\varPi}^{\bq}$. The Non-degeneracy Assumption \ref{genericityassumption} implies that the bilinear 
form $(\!(\cdot , \cdot )\!)$ on $\mm_{\bq}^*$ is non-degenerate. 

One easily verifies that the orthogonal complement in $\mm_{\bq}^*$ of $\wt{\g}_{\bq}$ equals $\wt{\h}_{\bq}$.
\begin{proposition}
\label{prop:m*} 
For all large quantum groups $U_{\bq}$ satisfying the Non-degeneracy Assumption \ref{genericityassumption}, 
the subalgebras $(\mm_{\bq}^{\geqslant})^{*} \subset \wt{\bg}_{\bq}^- \oplus \wt{\h}_{\bq}$ 
and $(\mm_{\bq}^{\leqslant})^{*} \subset \wt{\bg}_{\bq}^+ \oplus \wt{\h}_{\bq}$ are given by
\begin{align*}
&(\mm_{\bq}^{\geqslant})^{*} = \{ (y +h, - h) : y \in \n_{\bq}^-, h \in \wt{\h}_{\bq} \}, 
&&(\mm_{\bq}^{\leqslant})^{*} = \{ (x +h, \WP^{-1} \WP^{\, \mathrm{T}}(h)) : x \in \n_{\bq}^+, h \in \wt{\h}_{\bq} \}.
\end{align*}
\end{proposition}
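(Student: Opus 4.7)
The proposition amounts to describing the images of the Cartan generators $d_1(K_\mu^{N_\mu})$ and $d_1(L_\mu^{N_\mu})$ for $\wmu \in \wt{\varPi}^\bq$ under the isomorphism $\mm_\bq^* \simeq \wt{\g}_\bq \oplus \wt{\h}_\bq$ of Theorem \ref{thm:mm*}(b). Indeed, Proposition \ref{prop:lem2} combined with that theorem places $d_1(\Eq_\beta^{N_\beta})$ inside $\n_\bq^- \subset \wt{\g}_\bq$ and $d_1(\Fq_\beta^{N_\beta})$ inside $\n_\bq^+ \subset \wt{\g}_\bq$, both with zero second component, while \eqref{eq:m-sub-bh} guarantees that each $d_1(K_\mu^{N_\mu})$ lies in the subspace $\wt{\h}_\bq \oplus \wt{\h}_\bq \subset \wt{\g}_\bq \oplus \wt{\h}_\bq$. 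Hence only a direct-sum decomposition of the Cartan generators with respect to these two copies of $\wt{\h}_\bq$ is needed.

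My plan is to decompose $d_1(K_\mu^{N_\mu}) = u_\mu + v_\mu$ uniquely, with $u_\mu$ in the image of the Cartan $\wt{\h}_\bq \subset \wt{\g}_\bq$ and $v_\mu$ in the image of the second $\wt{\h}_\bq$ summand. Writing $u_\mu = \sum_\nu c_\nu^\mu (d_1(K_\nu^{N_\nu}) + d_1(L_\nu^{N_\nu}))$, which automatically lies in the first component by Theorem \ref{thm:mm*}(b), the condition that $v_\mu = d_1(K_\mu^{N_\mu}) - u_\mu$ satisfy the defining constraint of the second copy of $\wt{\h}_\bq$ becomes, after applying \eqref{c-to-p}, a linear system in the $c_\nu^\mu$ whose coefficient matrix is $C_{\nu\gamma} = c_{\nu\gamma}\wp^{\bqf}_{\nu\nu}(\xi)$; this matrix is invertible thanks to the Non-degeneracy Assumption \ref{genericityassumption}. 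Reading off the first component of $d_1(K_\mu^{N_\mu})$ from $u_\mu$ via $d_1(K_\nu) + d_1(L_\nu) \mapsto \wp^{\bqf}_{\nu\nu}(\xi) h_\nu$ and the second from $v_\mu$ via the identification of Remark \ref{rem:ident}, a direct calculation exploiting the fact that $\kappa_\mu$ is constant on each simple factor of $\wt{\g}_\bq$ (and hence $\mathrm{diag}(\kappa_\nu)$ commutes with the block-diagonal matrix $C$) yields $d_1(K_\mu^{N_\mu}) \mapsto (h^K_\mu, -h^K_\mu)$ in $\wt{\h}_\bq \oplus \wt{\h}_\bq$. The family $\{h^K_\mu\}$ then spans $\wt{\h}_\bq$ by a second application of the Non-degeneracy Assumption, giving the description of $(\mm_\bq^\geqslant)^*$.

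The same procedure applied to $d_1(L_\mu^{N_\mu})$ produces a transposed linear system, in which the rows of $\wt{\WP}^T$ replace those of $\wt{\WP}$; this yields precisely the operator $\wt{\WP}^{-1}\wt{\WP}^T$ relating the two components of the image, and hence the description of $(\mm_\bq^\leqslant)^*$. The main technical obstacle is the coordinate bookkeeping: tracking the form-identification $\wt{\h}_\bq \simeq \wt{\h}_\bq^*$ under which $h_\mu \leftrightarrow 2\wmu/(\wmu,\wmu)$, the rescaling factors $\kappa_\mu$, and the extra vertex $\weta$ that appears in the $\supera{k-1}{\theta-k}$ case (handled using \eqref{peta0}--\eqref{petanot0}). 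A more conceptual alternative, which I would carry out as a sanity check, is to show using \eqref{form-LK}--\eqref{form-KK} and the orthogonality of $\wt{\g}_\bq$ with the second $\wt{\h}_\bq$ summand that $(\mm_\bq^{\geqslant})^*$, $(\mm_\bq^{\leqslant})^*$, and the right-hand sides of the claimed equalities are all Lagrangian subalgebras of $\mm_\bq^*$ with respect to $(\!(\cdot,\cdot)\!)$; matching the already-determined $\n_\bq^\mp$ components would then force equality by a dimension count.
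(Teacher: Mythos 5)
Your plan of solving an explicit linear system for the coefficients $c_\nu^\mu$ would eventually yield the result, but it contains a misattribution and does more work than the argument requires. The coefficient matrix $(c_{\nu\gamma}\wp^{\bqf}_{\nu\nu}(\xi))$ is \emph{not} invertible ``thanks to the Non-degeneracy Assumption'': by \eqref{c-to-p} it equals the diagonal matrix $\diag(\wp^{\bqf}_{\nu\nu}(\xi))$ composed with the Cartan matrix of the semisimple Lie algebra $\g_\bq$, hence is invertible unconditionally, with the $\eta$-block in the $\supera{k-1}{\theta-k}$ case handled by \eqref{peta0}--\eqref{petanot0}. The Non-degeneracy Assumption \ref{genericityassumption} is needed for the proposition, but its role is elsewhere: it is what makes the identification $\wt{\h}_\bq^{(2)}\simeq\wt{\h}_\bq$ of Remark~\ref{rem:ident} (and therefore the containments $(\mm_\bq^{\geqslant})^*\subset\wt{\bg}_\bq^-\oplus\wt{\h}_\bq$, $(\mm_\bq^{\leqslant})^*\subset\wt{\bg}_\bq^+\oplus\wt{\h}_\bq$) meaningful, and the paper invokes it precisely to pass from the derived linear relation $\sum_\mu\wp^{\bqf}_{\mu\gamma}(\xi)(c_\mu+b_\mu)=0$ (for all $\wgamma$) to $c_\mu+b_\mu=0$.

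Moreover, you do not need to \emph{solve} your linear system at all. The direct sum decomposition $\mm_\bq^*=\wt{\g}_\bq\oplus\wt{\h}_\bq^{(2)}$ is already established in Theorem~\ref{thm:mm*}(b). Writing $d_1(K_\mu^{N_\mu})=h+(h_1+h_2)$ with $h=\sum_\nu c_\nu\big(d_1(K_\nu^{N_\nu})+d_1(L_\nu^{N_\nu})\big)$ and $h_1+h_2=\sum_\nu a_\nu\,d_1(K_\nu^{N_\nu})+\sum_\nu b_\nu\,d_1(L_\nu^{N_\nu})\in\wt{\h}_\bq^{(2)}$, the $L$-coefficient of the left-hand side vanishes, so $c_\nu+b_\nu=0$ with no computation. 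Since the identification of Remark~\ref{rem:ident} reads off only the $L$-coefficients ($\sum b_\nu\kappa_\nu\underline{\nu}$), the second component of $d_1(K_\mu^{N_\mu})$ is automatically $-h$. The paper runs this through the form $(\!(\cdot,\cdot)\!)$, and then derives the $\leqslant$ case by a formal exchange of roles plus \eqref{eq:PPT} rather than carrying out a ``transposed'' computation from scratch as you suggest. Your alternative Lagrangian-subalgebra sanity check is a legitimately different slant and is closely tied to the Manin triple of Theorem~\ref{thm:mm*-Manin}(b), but verifying isotropy of the asserted right-hand sides with respect to $(\!(\cdot,\cdot)\!)$ requires the same $\kappa_\mu$- and $\wp^{\bqf}_{\mu\gamma}$-bookkeeping, so it confirms rather than shortens the argument.
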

\begin{proof} Denote the first (abelian) Lie algebra in \eqref{eq:h-ident} by $\wt{\h}_\bq^{(2)}$. Fix
\begin{align*}
&h := \sum_{{\underline{\mu}} \in \wt{\varPi}^{\bq}} c_\mu (d_1(K_\mu^{N_\mu}) + d_1( L_\mu^{N_\mu}) ),
&&h_1 := \sum_{{\underline{\mu}} \in \wt{\varPi}^{\bq}} a_\mu d_1(K_\mu^{N_\mu}), 
&&&h_2 := \sum_{{\underline{\mu}} \in \wt{\varPi}^{\bq}} b_\mu d_1(L_\mu^{N_\mu}).
\end{align*}
If $h_1 + h_2 \in \h_\bq^{(2)}$, then $l_\gamma(h_1) = - l_\gamma(h_2)$ for all $\underline{\gamma} \in \wt{\varPi}^{\bq}$, which is equivalent to
\begin{equation}
\label{eq:PPT}
\wt{\WP} \big( \sum_{{\underline{\mu}} \in \wt{\varPi}^{\bq}} a_\mu \underline{\mu} \big) = 
- \wt{\WP}^{\, \mathrm{T}} \big( \sum_{{\underline{\mu}} \in \wt{\varPi}^{\bq}} b_\mu \underline{\mu} \big).
\end{equation}

By Theorem \ref{thm:mm*}(b), in the identification \eqref{eq:ident}, $d_1(K_\mu^{N_\mu}) + d_1( L_\mu^{N_\mu})$ 
corresponds to $\kappa_\mu  \underline{\mu}$ for all $\wmu \in \wt{\varPi}^\bq$.
Hence, the first statement of the proposition is equivalent to proving that for all
$h$, $h_1$, $h_2$ as above, if $h_1 + h_2 \in \wt{\h}_\bq^{(2)}$ and $h + h_1 + h_2 \in (\mm_{\bq}^{\geqslant})^{*}$, 
then $c_\mu = - b_\mu$ for $\underline{\mu} \in \wt{\varPi}^{\bq}$.  
From the condition $h_1 + h_2 \in \wt{\h}_\bq^{(2)}$ we obtain $(\!( d_1( L_\gamma^{N_\gamma}), h + h_2 )\!) =0 $ for all $\underline{\gamma} \in \wt{\varPi}^{\bq}$.
Thus 
\begin{align*}
&\sum_{{\underline{\mu}} \in \wt{\varPi}^{\bq}}  \wp^{\bqf}_{\mu \gamma}(\xi) (c_\mu + b_\mu) =0,
&& \forall {\underline{\gamma}} \in \wt{\varPi}^{\bq}.
\end{align*}
Now the first statement of the proposition follows from the  Non-degeneracy Assumption \ref{genericityassumption}.
The second one follows from the first by interchanging the roles of $(\mm_{\bq}^{\geqslant})^{*}$ and $(\mm_{\bq}^{\leqslant})^{*}$ and applying 
\eqref{eq:PPT}.
\end{proof}

We next describe the Lie coalgebra structure on $\mm_{\bq}^*$ and the 
corresponding Manin triple; see \S \ref{subsec:B.1} for background. 

\begin{theorem}\label{thm:mm*-Manin}
 For every choice of the specialization parameters 
$t_{ij} \in \mathbb Z$ satisfying the Non-degeneracy Assumption \ref{genericityassumption} the following hold:
\begin{enumerate}[leftmargin=*,label=\rm{(\alph*)}]
\item\label{item:thm-Manin-a} 
The Lie coalgebra structure of the Lie bialgebra $\mm_{\bq}^*$ is given by
\begin{align*}
&\delta(x_\beta) = d_1(L_\beta^{N_\beta}) \wedge x_\beta, 
&&\delta(y_\beta) = d_1(K_\beta^{N_\beta}) \wedge y_\beta,
&&\delta( d_1(K_\mu^{N_\mu}) ) = \delta ( d_1(L_\mu^{N_\mu}) ) =0
\end{align*}
for all $\wbeta \in \varPi^{\bq}, \wmu \in \wt{\varPi}^{\bq}$.
\medbreak
\item\label{item:thm-Manin-b} With respect to the bilinear form 
$(\!(\cdot , \cdot )\!)$, $(\mm_{\bq}^*,  (\mm_{\bq}^{\leqslant})^{*}, (\mm_{\bq}^{\geqslant})^{*})$ is a Manin triple.
\medbreak
\item\label{item:thm-Manin-c} The Lie coalgebra structures of $(\mm_{\bq}^{\geqslant})^{*}$ and $(\mm_{\bq}^{\leqslant})^{*}$ satisfy
\begin{align}
&(\!( \delta(y), x_1 \otimes x_2 )\!) = - (\!( y, [x_1, x_2] )\!),
&&(\!( \delta(x), y_1 \otimes y_2 )\!) = (\!( x, [y_1, y_2] )\!)
\label{form-ident}
\end{align}
for all $x, x_1, x_2 \in (\mm_{\bq}^{\leqslant})^{*}$ and $y, y_1, y_2 \in (\mm_{\bq}^{\geqslant})^{*}$. 
\end{enumerate}
\end{theorem}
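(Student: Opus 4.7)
The plan is to establish part (b) first---the Manin triple structure---then derive part (c) as its standard duality consequence, and finally deduce part (a) by combining direct computations on a few generators with the duality from (c).

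For (b), dualizing the Poisson-Hopf inclusions $\Zc_\bq^\geqslant, \Zc_\bq^\leqslant \hookrightarrow \Zc_\bq$ shows that $(\mm_\bq^\geqslant)^*$ and $(\mm_\bq^\leqslant)^*$ are Lie sub-bialgebras of $\mm_\bq^*$, and the direct sum decomposition $\mm_\bq^* = (\mm_\bq^\leqslant)^* \oplus (\mm_\bq^\geqslant)^*$ is immediate from the basis \eqref{eq:T*-basis} together with the factorization $\Zc_\bq \simeq \Zc_\bq^\geqslant \otimes \Zc_\bq^\leqslant$. To verify isotropy of each summand, I would use Proposition \ref{prop:m*} to identify them as explicit graphs inside $\wt\g_\bq \oplus \wt\h_\bq$ and then compute the relevant pairings using \eqref{form-LK}--\eqref{form-KK}, the invariance of the form on $\wt\g_\bq$, and the isotropy of $\n_\bq^\pm$ for the form there. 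Non-degeneracy of $(\!(\cdot , \cdot )\!)$ follows from the Non-degeneracy Assumption \ref{genericityassumption}, and invariance under the Lie bracket on $\mm_\bq^*$ is built into the construction of the form.

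Granting (b), part (c) is the standard cobracket/bracket duality in a Manin triple: the form identifies $(\mm_\bq^\leqslant)^*$ with the linear dual of $(\mm_\bq^\geqslant)^*$, and ad-invariance of $(\!(\cdot , \cdot )\!)$ under the bracket of $\mm_\bq^*$ translates directly into the two displayed identities. The sign asymmetry between them simply reflects the convention chosen for which Lagrangian is labeled positive.

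For part (a), the group-like generators $K_\mu^{N_\mu}, L_\mu^{N_\mu}$ give $\delta = 0$ immediately since $\Delta$ is symmetric on them. For the simple root powers $\Fq_i^{N_i}$, the quantum binomial formula applied to $\Delta(\Fq_i) = 1 \otimes \Fq_i + \Fq_i \otimes L_i^{-1}$---whose two summands $q_{ii}$-commute with $q_{ii}$ a primitive $N_i$-th root of unity---collapses all intermediate terms to yield $\Delta(\Fq_i^{N_i}) = 1 \otimes \Fq_i^{N_i} + \Fq_i^{N_i} \otimes L_i^{-N_i}$. Antisymmetrization modulo higher-order terms, together with $d_1(L_i^{-N_i}) = -d_1(L_i^{N_i})$, then yields the claimed $\delta(d_1(\Fq_i^{N_i}))$, and $\Eq_i^{N_i}$ is analogous. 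For non-simple $\wbeta \in \varPi^\bq$, a direct computation of $\Delta(\Fq_\beta^{N_\beta})$ is daunting, since $\Delta(\Fq_\beta)$ carries many lower-root correction terms and controlling them modulo $\mathfrak{m}^2 \otimes \mathfrak{m} + \mathfrak{m} \otimes \mathfrak{m}^2$ is the main obstacle of the theorem. I would instead invoke (c): the cobracket on $(\mm_\bq^\leqslant)^*$ is dual via $(\!(\cdot , \cdot )\!)$ to the known bracket on $(\mm_\bq^\geqslant)^*$ from Theorem \ref{thm:mm*}, and matching coefficients against the candidate $d_1(L_\beta^{N_\beta}) \wedge x_\beta$ in the explicit basis from Proposition \ref{prop:m*} verifies the formula.
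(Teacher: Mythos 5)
Your plan inverts the paper's logical order (the paper proves (a) directly, then (b) independently, then (c) from (a) and (b)), and the inversion introduces a genuine circularity.

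The crucial input the paper uses for part (a) is the identity \eqref{eq:copod-eNfN},
\[
\Delta(\Eq_\beta^{N_\beta}) = K_\beta^{N_\beta} \otimes \Eq_\beta^{N_\beta} + \Eq_\beta^{N_\beta} \otimes 1,
\qquad
\Delta(\Fq_\beta^{N_\beta}) = 1 \otimes \Fq_\beta^{N_\beta} + \Fq_\beta^{N_\beta} \otimes L_\beta^{-N_\beta},
\]
which holds for \emph{every} $\wbeta \in \varPi^{\bq}$, including the non-simple ones. This is a structural fact about the distinguished pre-Nichols algebra (the primitive--like behaviour of the Cartan power root vectors indexed by $\varPi^\bq$; see also its use in Example \ref{exa:bgl4-Ms-to-Bs}). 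Once you have it, (a) is immediate from Lemma \ref{lem:tanget-coalg} with \emph{no} error terms to control. Your quantum-binomial computation handles only the simple case $\beta=\alpha_i$; your belief that for non-simple $\wbeta\in\varPi^\bq$ one must ``control correction terms modulo $\mathfrak m^2\otimes\mathfrak m + \mathfrak m\otimes\mathfrak m^2$'' is the symptom of missing this identity.

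Your attempted patch --- deriving (c) from (b) by ``standard cobracket/bracket duality'' and then using (c) to finish (a) --- does not work. Part (b) is purely Lie-algebraic: it asserts that $(\mm_\bq^*,(\mm_\bq^\leqslant)^*,(\mm_\bq^\geqslant)^*)$ with the form $(\!(\cdot,\cdot)\!)$ is a Manin triple. A Manin triple induces \emph{canonical} Lie bialgebra structures on its two Lagrangians (Remark \ref{Manin-Drinfeld}), but (c) asserts that the \emph{pre-existing} Lie coalgebra structures on $(\mm_\bq^\geqslant)^*$ and $(\mm_\bq^\leqslant)^*$ --- coming from the Poisson-Hopf structure, i.e.\ from the dual tangent Lie bialgebras of $M_\bq^\geqslant$ and $M_\bq^\leqslant$ --- coincide with those canonical ones. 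That is a genuine verification, not a tautology. To see this is not automatic, take $\g=\Cset^4$ abelian, $\g_+ = \Cset e_1\oplus\Cset e_2$, $\g_-=\Cset f_1\oplus\Cset f_2$, pairing $(e_i,f_j)=\delta_{ij}$; put $\delta(e_1)=e_1\wedge e_2$ and $\delta=0$ on the other basis vectors. Then $(\g,\delta)$ is a Lie bialgebra, $\g_\pm$ are Lie sub-bialgebras, $(\g,\g_+,\g_-)$ is a Manin triple, yet the pre-existing cobracket on $\g_+$ is nonzero while the canonical Manin-triple cobracket is zero, so the analogue of (c) fails. The paper's proof of (c) therefore needs (a) as input: it first verifies \eqref{form-ident} on generators using the explicit cobracket formulas of (a) together with the isomorphism of Theorem \ref{thm:mm*}(b), and then propagates to all elements by induction on root height using ad-invariance of $(\!(\cdot,\cdot)\!)$. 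Since your route to (a) passes through (c), and your (c) is unjustified, the argument as written has a gap that cannot be closed without re-importing the content of (a).
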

\begin{remark}
\label{rem:bialg-str} 
\begin{enumerate}[leftmargin=*,label=\rm{(\alph*)}]
\item Part (a) of the theorem uniquely determines the Lie coalgebra structures of $\mm_{\bq}^*$, $(\mm_{\bq}^{\leqslant})^{*}$ and $(\mm_{\bq}^{\geqslant})^{*}$, 
since the set 
\[
\{x_\beta, y_\beta, d_1(L_\mu^{N_\mu}), d_1(L_\mu^{N_\mu}) : \wbeta \in \varPi^{\bq}, \wmu \in \wt{\varPi}^{\bq} \}
\]
and its appropriate subsets generate the Lie algebras $\mm_{\bq}^*$, $(\mm_{\bq}^{\leqslant})^{*}$ and $(\mm_{\bq}^{\geqslant})^{*}$.
\item By part (c) of the theorem, the Lie coalgebra structures of $\mm_{\bq}^*$, $(\mm_{\bq}^{\leqslant})^{*}$ and $(\mm_{\bq}^{\geqslant})^{*}$, 
are precisely the ones that are associated to a Manin triple as in Remark \ref{Manin-Drinfeld}(c). In particular,
we have the isomorphism of Lie bialgebras
\begin{align}
\label{eq:Lie-bialg}
&\mm_{\bq}^* \simeq D((\mm_{\bq}^{\leqslant})^{*}), 
&& (\mm_{\bq}^{\geqslant})^{*} \simeq ( ((\mm_{\bq}^{\leqslant})^{*})^*)^{\mathrm{op}} \simeq (\mm_{\bq}^{\leqslant})^{\mathrm{op}}.
\end{align}
\item The Lie bialgebra structures on the reductive Lie algebras $\mm_{\bq}^* \simeq \wt{\g}_{\bq} \oplus \wt{\h}_{\bq}$ from part (a) of 
the theorem correspond to empty Belavin--Drinfeld triples 
 and arbitrary
choice of the continuous parameters in their classification \cite{BD}.  
\end{enumerate} 
\end{remark}
\begin{proof}[Proof of Theorem \ref{thm:mm*-Manin}] Part (a) follows from Lemma \ref{lem:tanget-coalg} and the identities
\begin{align}
\label{eq:copod-eNfN}
\Delta(\Eq_\beta^{N_\beta}) &= K_\beta^{N_\beta} \otimes \Eq_\beta^{N_\beta} + \Eq_\beta^{N_\beta} \otimes 1,&
\Delta(\Fq_\beta^{N_\beta}) &=1 \otimes \Fq_\beta^{N_\beta} + \Fq_\beta^{N_\beta} \otimes L_\beta^{-N_\beta}.
\end{align}
for $\wbeta \in \varPi^{\bq}$ and the fact that $K_\mu^{N_\mu}$ and $L_\mu^{N_\mu}$ for $\wmu \in \wt{\varPi}^{\bq}$
are group-like elements. 

(b) The subalgebras $(\mm_{\bq}^{\geqslant})^{*}$ and $(\mm_{\bq}^{\leqslant})^{*}$ are orthogonal to their nilradicals because of the embeddings 
\eqref{eq:m-sub-bh}.
This, combined with \eqref{form-KK}, implies that they are isotropic subalgebras of $\mm_{\bq}^*$ with respect to the form 
$(\!(\cdot , \cdot )\!)$. The direct sum decomposition $\mm_{\bq} \simeq \mm_{\bq}^\leqslant \oplus \mm_{\bq}^\geqslant$ yields the desired result.

(c) Part (a) of the theorem and the isomorphism in Theorem \ref{thm:mm*}(b) imply at once the validity of the identities 
\eqref{form-ident} for $y = d_1(e_\beta^{N_\beta})$, $y= d_1(K_\mu^{N_\mu})$, 
$x = d_1(f_\beta^{N_\beta})$, $x= d_1(L_\mu^{N_\mu})$, where $\wbeta \in \varPi^{\bq}$ $\wmu \in \wt{\varPi}^{\bq}$,
and for all possible choices of $x_1, x_2, y_2, y_2$.
The general case follows by induction on root height when $x$, $y$ are chosen to be root vectors 
by using the invariance of the bilinear form $(\!(\cdot , \cdot )\!)$.
\end{proof}
\subsection{The Poisson algebraic groups $M_{\bq}^\geqslant$ and $M_{\bq}^\leqslant$} 
Combining the isomorphisms \eqref{eq:m-iso-b} and \eqref{eq:Lie-bialg}, we get the Lie algebra isomorphisms
\begin{equation}
\label{eq:m-isos}
\mm_{\bq}^{\geqslant} \simeq ((\mm_{\bq}^{\leqslant})^{*})_{\mathrm{op}} \simeq (\wt{\bg}_{\bq}^+)_{\mathrm{op}} \simeq \wt{\bg}_{\bq}^+ \quad \mbox{and} 
\quad
\mm_{\bq}^{\leqslant} \simeq (\mm_{\bq}^{\geqslant})^{*} \simeq \wt{\bg}_{\bq}^-,
\end{equation} 
where $(.)_{\mathrm{op}}$ stands for the opposite Lie algebra structure and $(\wt{\bg}_{\bq}^+)_{\mathrm{op}} \simeq \wt{\bg}_{\bq}^+$
is the standard Lie algebra isomorphism $x \mt -x$. 
The proof of Proposition \ref{prop:m*} shows that the corresponding pull back maps on the level of duals send 
\begin{align}
\label{eq:dual-iso}
&\wbeta \mt - d_1(K_\beta^{N_\beta}), 
&&\wbeta \mt d_1(L_\beta^{N_\beta}), 
&& \forall \wbeta \in \varPi^{\bq}.
\end{align}
The scalars $\kappa_\beta$ do not appear here because the form $(\!(\cdot , \cdot )\!)$ is a rescaling of the form $(\cdot , \cdot )$ 
by $\kappa_{\mu}^{-1}$ on each simple factor of $\wt{\g}_{\bq}$ and on the one-dimensional 
center of $\wt{\g}_{\bq}$ if $\bq$ is of type $\supera{k-1}{\theta-k}$.

Denote by $G_{\bq}$ the adjoint semisimple algebraic group with Lie algebra $\g_{\bq}$. Let 
\[
\wt{G}_\bq = 
\begin{cases}
G_\bq \times \Cset^\times, & \mbox{if $\bq$ is of type $\supera{k-1}{\theta-k}$}
\\
G_\bq, & \mbox{otherwise}.
\end{cases}
\]
In the former case the exponential map
\begin{equation}
\label{exp-wt}
\exp : \wt{\g}_\bq \to \wt{G}_\bq \quad \mbox{is given by} \quad
\exp( x + c h_\eta) = (\exp (x), \exp(c)),  
\end{equation}
where in the first component in the right hand side we use the exponential 
map $\exp : \g_\bq \to G_\bq$, and $x \in \g_\bq$, $c \in \Cset$.  
Denote by $\wt{B}^\pm_{\bq}$ the Borel subgroups of $\wt{G}_\bq$ corresponding to $\wt{\bg}_{\bq}^\pm$
and by $B^\pm_{\bq}$ the Borel subgroups of $G_\bq$ corresponding to $\wt{\bg}_{\bq}^\pm \cap \g_\bq$.
We have $\wt{B}_\bq^\pm = B_\bq^\pm \times \Cset^\times$ if $\bq$ is of type $\supera{k-1}{\theta-k}$,
and $\wt{B}_\bq^\pm = B_\bq^\pm$ otherwise.

Let $\wt{T}_{\bq} := \wt{B}^+_{\bq} \cap \wt{B}^-_{\bq}$ be the corresponding maximal torus of $\wt{G}_{\bq}$. 
Denote by $N_{\bq}^\pm \subset G_\bq$ the unipotent radicals of $\wt{B}^\pm_{\bq}$.

The groups of group-like elements of $Z_{\bq}^\geqslant$ and $Z_{\bq}^\leqslant$ are the free abelian groups 
on $K_\mu^{\pm N_\mu}$,  $\wbeta \in \wt{\varPi}^{\bq}$ and $L_\mu^{\pm N_\mu}$, $\mu \in \wt{\varPi}^{\bq}$, 
respectively. 
\begin{theorem}\label{thm:Ms-to-Bs}  For every choice of the specialization parameters 
$t_{ij} \in \mathbb Z$ satisfying the Non-degeneracy Assumption \ref{genericityassumption}, 
the Lie algebra isomorphisms \eqref{eq:m-isos} integrate to isomorphisms of algebraic groups.
\[
\tau_+ : M_{\bq}^\geqslant \stackrel{\simeq}{\longrightarrow} \wt{B}_{\bq}^+ 
\quad \mbox{and} \quad \tau_- : M_{\bq}^\leqslant \stackrel{\simeq}{\longrightarrow} \wt{B}_{\bq}^-.
\]
\end{theorem}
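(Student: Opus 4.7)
My plan is to integrate the Lie algebra isomorphisms \eqref{eq:m-isos} via parallel Levi decompositions of the two algebraic groups, integrate the torus and unipotent pieces separately, and then verify compatibility of the resulting semidirect product structures.

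I would first note that both $M_\bq^\geqslant$ and $\wt{B}_\bq^+$ are connected solvable complex algebraic groups (the former because $\Zc_\bq^\geqslant$ is a finitely generated commutative Hopf domain and $\mm_\bq^\geqslant \simeq \wt{\bg}_\bq^+$ is solvable). For $\wt{B}_\bq^+$ the standard Levi decomposition reads $\wt{B}_\bq^+ \simeq N_\bq^+ \rtimes \wt{T}_\bq$. For $M_\bq^\geqslant$, the factorization $\Zc_\bq^\geqslant \simeq \Zc_\bq^+ \otimes \Zc_\bq^{+0}$ of Remark \ref{rem:Zq-properties}, combined with the normality of the Hopf subalgebra $\Zc_\bq^+ \subset \Zc_\bq^\geqslant$ and the fact that $\Zc_\bq^{+0}$ is the group algebra of a finitely generated free abelian group (hence the coordinate ring of the torus $M_\bq^{+0}$), produces a split short exact sequence of algebraic groups
\[
1 \to M_\bq^+ \to M_\bq^\geqslant \to M_\bq^{+0} \to 1,
\]
giving $M_\bq^\geqslant \simeq M_\bq^+ \rtimes M_\bq^{+0}$, with $M_\bq^+$ connected (as $\Zc_\bq^+$ is a polynomial ring) and unipotent (since $\Lie M_\bq^+ \simeq \n_\bq^+$ is nilpotent by Proposition \ref{prop:lem2}).

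Next I would integrate the torus and unipotent parts separately. For the torus, the character lattice of $M_\bq^{+0}$ is the free abelian group on the group-likes $\{K_\mu^{N_\mu} : \wmu \in \wt{\varPi}^\bq\}$, while that of $\wt{T}_\bq$ is the root lattice of $\wt{\g}_\bq$ (as $\wt{G}_\bq$ is of adjoint type, with an extra $\Z$ summand from the central $\Cset^\times$ in the super $A$ case). The assignment $K_\mu^{N_\mu} \mt -\wmu$ from \eqref{eq:dual-iso} is an isomorphism of these lattices and integrates, via the anti-equivalence between algebraic tori over $\Cset$ and finitely generated free abelian groups, to an isomorphism of tori $\tau_+^0 : M_\bq^{+0} \stackrel{\simeq}{\to} \wt{T}_\bq$, whose derivative at the identity agrees with the restriction of \eqref{eq:m-isos} to the Cartan part by Theorem \ref{thm:mm*}(b). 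For the unipotent parts, since in characteristic zero the exponential map is an equivalence of categories between connected unipotent complex algebraic groups and nilpotent finite-dimensional complex Lie algebras, the isomorphism $\mm_\bq^+ \simeq \n_\bq^+$ integrates to an algebraic group isomorphism $\tau_+^u : M_\bq^+ \stackrel{\simeq}{\to} N_\bq^+$.

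Finally, combining the two pieces gives a morphism of varieties $\tau_+ : M_\bq^\geqslant \to \wt{B}_\bq^+$. The main remaining point, and the principal obstacle, is to check that $\tau_+$ is a group homomorphism, i.e.\ that the conjugation actions of $M_\bq^{+0}$ on $M_\bq^+$ and of $\wt{T}_\bq$ on $N_\bq^+$ are intertwined by $\tau_+^0$ and $\tau_+^u$. Since $M_\bq^{+0}$ and $\wt{T}_\bq$ are connected and $M_\bq^+$, $N_\bq^+$ are unipotent, such an action is determined by its derivative at the identity, and matching the derivatives reduces to equating the brackets $[\wt{\h}_\bq, \n_\bq^+] \subset \n_\bq^+$ inside $\wt{\bg}_\bq^+$ with the corresponding brackets in $\mm_\bq^\geqslant$ under \eqref{eq:m-isos}; this is precisely what is recorded in Lemma \ref{lem:brackets} combined with Theorem \ref{thm:mm*}. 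The construction of $\tau_-$ is entirely analogous, using $\Zc_\bq^\leqslant \simeq \Zc_\bq^- \otimes \Zc_\bq^{-0}$, the assignment $L_\mu^{N_\mu} \mt \wmu$ from \eqref{eq:dual-iso}, and the Lie algebra isomorphism $\mm_\bq^\leqslant \simeq \wt{\bg}_\bq^-$.
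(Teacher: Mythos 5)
Your proof takes a genuinely different route from the paper's. The paper defines $\tau_+$ in one stroke as the adjoint representation $M_\bq^\geqslant \to \Aut(\mm_\bq^\geqslant) \simeq \Aut(\wt{\bg}_\bq^+)$ (with an extra projection to $\Cset^\times$ in the super $A$ case), using that $G_\bq$ is adjoint to identify $B_\bq^+$ with $\Aut(\bg_\bq^+)^\circ$; the homomorphism property is then automatic and the work is in proving bijectivity by reducing to the unipotent radicals and the tori. You instead assemble $\tau_+$ from the two Levi pieces — integrating the torus factor by comparing character lattices and the unipotent factor by the $\exp$-equivalence of categories — and must afterward verify that the two semidirect-product structures are intertwined, which you correctly identify as the crux and reduce to matching Lie brackets, a fact already contained in the Lie-algebra isomorphism \eqref{eq:m-isos}. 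What your route buys is avoiding the characterization of the Borel as $\Aut(\bg_\bq^+)^\circ$; what it costs is the explicit homomorphism check that the paper gets for free.

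Two supporting claims are off and should be repaired, though neither affects the conclusion. First, $\Zc_\bq^+$ is \emph{not} a Hopf subalgebra of $\Zc_\bq^\geqslant$: in $\Zc_\bq^\geqslant$ one has $\Delta(\Eq_\beta^{N_\beta}) = K_\beta^{N_\beta} \otimes \Eq_\beta^{N_\beta} + \Eq_\beta^{N_\beta}\otimes 1$, which does not lie in $\Zc_\bq^+\otimes\Zc_\bq^+$, so "normality of the Hopf subalgebra $\Zc_\bq^+$" is the wrong justification. The split exact sequence $1 \to M_\bq^+ \to M_\bq^\geqslant \to M_\bq^{+0} \to 1$ holds, but it comes from the normal Hopf subalgebra $\Zc_\bq^{+0}\hookrightarrow \Zc_\bq^\geqslant$ with quotient $\Zc_\bq^+$ — equivalently, it is the Levi decomposition of the connected solvable group $M_\bq^\geqslant$. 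Second, Proposition \ref{prop:lem2} identifies the \emph{cotangent} Lie algebras $(\mm_\bq^\pm)^*$ (subalgebras of $\mm_\bq^*$ with the linearized Poisson bracket) with $\n_\bq^\pm$; it does not assert $\Lie M_\bq^+ \simeq \n_\bq^+$. The Lie algebra isomorphism you need to integrate must instead be obtained by restricting \eqref{eq:m-isos} to the nilradicals of $\mm_\bq^\geqslant$ and $\wt{\bg}_\bq^+$ (and then nilpotency, hence unipotence of $M_\bq^+$, follows; alternatively, $\Zc_\bq^+$ being polynomial already forces $M_\bq^+$ to be unipotent). With these citations corrected, the argument goes through.
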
 

Theorem \ref{thm:Ms-to-Bs} describes explicitly the algebraic groups $M_{\bq}^\geqslant$ and $M_{\bq}^\leqslant$. As an algebraic group, 
$M_{\bq} \simeq \wt{B}_{\bq}^+ \times \wt{B}_{\bq}^-$. The Poisson structures on $M_{\bq}^\geqslant$, $M_{\bq}^\leqslant$ and $M_{\bq}$
are the unique Poisson algebraic group structures that integrate the Lie bialgebras $\mm_{\bq}^\geqslant$, $\mm_{\bq}^\leqslant$ and $\mm_{\bq}$, whose
dual Lie bialgebras are described in Theorem \ref{thm:mm*-Manin}.

\begin{proof} We prove the first statement, the second being analogous.
Since $G_{\bq}$ is of adjoint type, the Borel subgroup $B_{\bq}^+$ is canonically identified with the identity component of $\Aut(\bg_{\bq}^+)$. The adjoint action of 
$M_{\bq}^\geqslant$ on $\mm_{\bq}^\geqslant \simeq \bg_{\bq}^+$ induces a surjective homomorphism 
$\tau_+^1 : M_{\bq}^\geqslant  \twoheadrightarrow B_{\bq}^+$. If $\bq$ is of type $\supera{k-1}{\theta-k}$, then we also have a canonical 
surjective homomorphism $\tau_+^2 : M_{\bq}^\geqslant  \twoheadrightarrow \Cset^\times$, whose pull back 
map $\Cset[\Cset^\times] \cong \Cset[\chi^{\pm 1}] \to \Cset [ M_{\bq}^\geqslant]$
is given by $\chi \mapsto K_\eta^{N_\eta}$, where $\chi$ is the identity character of $\Cset^\times$.
Define the homomorphism $\tau_+ : M_{\bq}^\geqslant  \twoheadrightarrow \wt{B}_{\bq}^+$ given by
\[
\tau_+ :=
\begin{cases}
(\tau_+^1, \tau_+^2), & \mbox{if $\bq$ is of type $\supera{k-1}{\theta-k}$}
\\
\tau_+^1, & \mbox{otherwise}.
\end{cases}
\]
It follows from \eqref{zqu-Laurent} that the homomorphism $\tau_+$ is surjective. It restricts to an isomorphism 
$\tau_+ : N(M_{\bq}^\geqslant) \stackrel{\simeq}{\longrightarrow} N_{\bq}^+$, where $N(M_{\bq}^\geqslant)$ is the unipotent radical of 
$M_{\bq}^\geqslant$. The homomorphism $\tau_+$ also restricts to a surjective homomorphism 
\begin{equation}
\label{eq:surj-tau}
\tau_+ : T(M_{\bq}^\geqslant)  \twoheadrightarrow \wt{T}_{\bq},
\end{equation}
where $T(M_{\bq}^\geqslant)$ is a maximal torus of $M_{\bq}^\geqslant$. The tori $T(M_{\bq}^\geqslant)$ and $\wt{T}_{\bq}$ are connected because 
$M_{\bq}^\geqslant$ and $\wt{B}_{\bq}^+$ are connected algebraic groups. In view of the Levi decompositions of 
$M_{\bq}^\geqslant$ and $\wt{B}_{\bq}^+$, in order to prove that $\tau_+$ is an isomorphism, it is sufficient to show that the restriction  
\eqref{eq:surj-tau} is an isomorphism. However, 
\begin{align*}
&\Cset[T(M_{\bq}^\geqslant)] \simeq \Cset[M_{\bq}^\geqslant/N(M_{\bq}^\geqslant)] \simeq \Cset[ G(\Cset[M_{\bq}^\geqslant])],
&&\Cset[\wt{T}_{\bq}] \simeq \Cset[\wt{B}_{\bq}^+/N_{\bq}^+] \simeq \Cset[ G(\Cset[\wt{B}_{\bq}^+])],
\end{align*}
where $G(H)$ denotes the group of group-like elements of a Hopf algebra $H$. 

The group of group-like elements of 
$\Cset[M_{\bq}^\geqslant] \simeq Z_{\bq}^\geqslant$ is the free abelian group with generators $K_\be^{N_\be}$, $\wbeta \in \varPi^{\bq}$.
The group of group-like elements of $\Cset[\wt{B}_{\bq}^+]$ equals the character lattice of $\wt{B}^+_\bq$, which is 
canonically identified with the lattice $\Zset \wt{\varPi}^{\bq}$.
The differentials at the identity element of the two generating sets are respectively $d_1 (K_\mu^{N_\mu})$ and $\wmu$, 
where $\wmu \in \wt{\varPi}^{\bq}$. Eq. \eqref{eq:dual-iso} implies that $\tau_+^* : G(\Cset[\wt{B}_{\bq}^+]) \to G(\Cset[M_{\bq}^\geqslant])$ is an isomorphism. 
Hence, $\tau_+^* : \Cset[\wt{T}_{\bq}] \to \Cset[T(M_{\bq}^\geqslant))]$ is an isomorphism and the same holds for \eqref{eq:surj-tau}. This completes the proof of the theorem. 
\end{proof}

\begin{example}\label{exa:bgl4-Ms-to-Bs}
Let $\bq$ be of type $\Bgl(4)$ and fix $N=\ord q$, $M=\ord (-q)$, see \S \ref{sec:by-diagram-modular-char3}. 
Let $\gamma=\alpha_1+2\alpha_2+3\alpha_3+\alpha_4$. Then $N_{\alpha_1}=N_{\alpha_2}=N$, $N_{\alpha_4}=N_{\gamma}=M$,
\begin{align*}
\wfO^{\bq}_+ &
=\{\walpha_1,\walpha_2, \walpha_1+\walpha_2, \walpha_4, \wgamma, \walpha_4+\wgamma\}.
\end{align*}
As shown previously, $\Delta(\Eq_\beta^{N_\beta})= \Eq_\beta^{N_\beta} \otimes 1+K_\beta^{N_\beta} \otimes \Eq_\beta^{N_\beta}$
for $\wbeta \in \varPi^{\bq}=\{\walpha_1,\walpha_2, \walpha_4, \wgamma\}$. We can check that
$\Eq_{\alpha_1+\alpha_2}=[\Eq_1,\Eq_2]_c$, $\Eq_{\alpha_4+\gamma}=[\Eq_{\gamma},\Eq_4]_c$ and
\begin{align*}
\Delta(\Eq_{\alpha_1+\alpha_2}^{N})= \Eq_{\alpha_1+\alpha_2}^{N} \otimes 1 +(q-1)^N \Eq_{\alpha_1}^{N} K_{\alpha_2}^N \otimes \Eq_{\alpha_2}^{N}
+K_{\alpha_1}^NK_{\alpha_2}^N \otimes \Eq_{\alpha_1+\alpha_2}^{N}, 
\\
\Delta(\Eq_{\alpha_4+\gamma}^M)= \Eq_{\alpha_4+\gamma}^M \otimes 1 + (q+1)^M \Eq_{\gamma}^M K_{\alpha_4}^M \otimes \Eq_{\alpha_4}^M
+K_{\alpha_4}^M K_{\gamma}^M \otimes \Eq_{\alpha_4+\gamma}^M.
\end{align*}

We now construct an explicit isomorphism between $Z_{\bq}^\geqslant$ and the algebra of functions over the Borel subgroup of $\operatorname{PSL}_3(\Cset)\times \operatorname{PSL}_3(\Cset)$. 
We consider the Levi decomposition $\widetilde{B}_3 \simeq N_3 \rtimes \widetilde{T}_3$ of the Borel subgroup of $\operatorname{SL}_3(\Cset)$, 
where 
\begin{align*}
\widetilde{T}_3 &= \{ \mathbf{t} =\diag(t_1, t_2, t_3) : t_i \in \Cset^\times, t_1 t_2 t_3 =1 \}, &
N_3 &= \Big\{ \mathbf{n} = 
\begin{pmatrix}
1 & t_{12} & t_{13} \\
0 & 1 & t_{12} \\
0 & 0 & 1 
\end{pmatrix}
: t_{ij} \in \Cset \Big\}.
\end{align*}
Let $a_i, x_{ij}: \widetilde{B}_3 \to \Cset$ be the coordinate functions sending $\mathbf{t} \mapsto t_i$
and $\mathbf{n} \mapsto t_{ij}$ respectively.
The coproducts of these coordinate functions are given by $\Delta(a_i) = a_i \otimes a_i$ and
\begin{align*}
\Delta(x_{12})&=  x_{12} \otimes 1 + a_1 a_2^{-1} \otimes x_{12}, \quad \quad 
\Delta(x_{23})= x_{23} \otimes 1 + a_2 a_3^{-1} \otimes x_{23}, 
\\
\Delta(x_{13})&= x_{13} \otimes 1 + x_{12} a_2 a_3^{-1} \otimes x_{23} + a_1 a_3^{-1} \otimes x_{13}.
\end{align*}
Denote $\Zset_3 = \langle ( \zeta, \zeta, \zeta) \rangle$, where $\zeta$ is a primitive 3rd root of unity. 
The Borel subgroup $B_3$ of 
$\operatorname{PSL}_3(\Cset)$ has Levi decomposition $B_3 \simeq N_3 \rtimes T_3$ where $T_3= \widetilde{T}/\Zset_3$, so 
\[
\Cset[B_3] = \Cset[N_3] \otimes \Cset[\widetilde{T}_3]^{\Zset_3} = 
\Cset[x_{12}, x_{23}, x_{13}, a_{12}^{\pm 1}, a_{23}^{\pm 1}],
\]
where $a_{12} := a_1 a_2^{-1}$ and $a_{23}= a_2 a_3^{-1}$. The coproducts of the coordinate functions on $B_3$ are given by $\Delta(a_{i i+1}) = a_{i i+1} \otimes a_{i i+1}$
and
\begin{align*}
\Delta(x_{12})&=  x_{12} \otimes 1 + a_{12} \otimes x_{12}, \quad \quad \Delta(x_{23})= x_{23} \otimes 1 + a_{23} \otimes x_{23}, 
\\
\Delta(x_{13})&= x_{13} \otimes 1 + x_{12} a_{23}  \otimes x_{23} + a_{12} a_{23} \otimes x_{13}.
\end{align*}
The Borel subgroup of $\operatorname{PSL}_3(\Cset)\times \operatorname{PSL}_3(\Cset)$ is isomorphic 
to $B_3 \times B_3$. We denote the coordinate functions $a_{i i+1}$ and $x_{ij}$
on the first and second copy of $B_3$ by superscripts 1 and 2.  
Now, clearly the map $\tau_+:Z_{\bq}^\geqslant \to \Cset[B_3 \times B_3]$ given by
\begin{align*}
K_{\alpha_1}^N&\mapsto a_{12}^1, &
K_{\alpha_2}^N&\mapsto a_{23}^1, &
K_{\alpha_3}^N&\mapsto a_{12}^2, &
K_{\gamma}^M&\mapsto a_{23}^2
\end{align*}
and
\begin{align*}
\Eq_{\alpha_1}^{N}&\mapsto x_{12}^1, & \Eq_{\alpha_2}^{N}&\mapsto x_{23}^1, & \frac{\Eq_{\alpha_1+\alpha_2}^{N}}{(q-1)^N}&\mapsto x_{13}^1,
\\
\Eq_{\alpha_4}^{M}&\mapsto x_{12}^2, & \Eq_{\gamma}^{M}&\mapsto x_{23}^2, & \frac{\Eq_{\alpha_4+\gamma}^{M}}{(q+1)^M}&\mapsto x_{13}^2
\end{align*}
is a Hopf algebra isomorphism. 
\end{example}
\section{Poisson geometry and representations}\label{sec:Poisson-geom}
In this section we describe the symplectic foliations and the torus orbits of symplectic leaves of the
Poisson algebraic groups $M_{\bq}$, $M_{\bq}^{\geqslant}$ and $M_{\bq}^{\leqslant}$, and the Poisson 
homogeneous spaces $M_{\bq}^{+}$ and $M_{\bq}^{-}$. 
Previous work in this direction dealt with the so called standard Poisson structures on simple algebraic groups (and their Borel subgroups) \cite{HL}, the dual Poisson algebraic groups \cite{DKP} 
and the related flag varieties \cite{GY}. See also \cite{CoV,DL,HLT}. 
The Poisson structures in Remark \ref{rem:bialg-str} are not of standard type in general and the results in this section 
can not be deduced from \cite{HL,DKP,GY}.   
For $z \in M_{\bq}$, respectively $M_{\bq}^{\geqslant}$, $M_{\bq}^{\leqslant}$, $M_{\bq}^{+}$, $M_{\bq}^{-}$, let 
$\hopf_z$, respectively $\hopf_z^{\geqslant}$, $\hopf_z^{\leqslant}$, $\hopf_z^{+}$, $\hopf_z^{-}$ be the algebra defined in
Theorem \ref{mainth:summary} \ref{item-mainth:findimalg-reps}, respectively \eqref{eq:def-small-algebra}.
The Poisson geometric results described above
provide information on the irreducible representations 
of the large quantum groups $U_{\bq}$ by reduction to the sheaf of algebras $\hopf_z$, $z \in M_{\bq}$.
Analogous results hold for $U_{\bq}^{\geqslant}$, $U_{\bq}^{\leqslant}$,
and  $U_{\bq}^{\pm}$.

\subsection{Representations of the large quantum groups  and symplectic foliations}\label{Poisson-geom-M} 
The Manin triple described in Theorem \ref{thm:mm*-Manin} and the identification $\mm_{\bq}^* \simeq \wt{\g}_{\bq} \oplus \wt{\h}_{\bq}$ 
equip $\wt{\g}_{\bq} \oplus \wt{\h}_{\bq}$ with a quasitriangular Lie bialgebra structure, which turns $\wt{G}_{\bq} \times \wt{T}_{\bq}$ into a Poisson algebraic group. 
The Poisson structure on $\wt{G}_{\bq} \times \wt{T}_{\bq}$ 
equals $L_g(r) - R_g(r)$ for $g \in \wt{G}_{\bq} \times \wt{T}_{\bq}$, where $r \in \wedge^2 (\wt{\g}_{\bq} \oplus \wt{\h}_{\bq})$ 
is the $r$-matrix for the Lie bialgebra structure on $\wt{\g}_{\bq} \oplus \wt{\h}_{\bq}$, 
and $L_g(-)$ and $R_g(-)$  refer to the left and right-invariant bivector fields on $\wt{G}_{\bq} \times \wt{T}_{\bq}$. 

\bigbreak
Let $\widetilde{M}_{\bq}^{\geqslant}$ and $\widetilde{M}_{\bq}^{\leqslant}$ be the connected Lie subgroups of $\wt{G}_{\bq} \times \wt{T}_{\bq}$ with Lie algebras 
$(\mm_{\bq}^{\leqslant})^{*}$ and $(\mm_{\bq}^{\geqslant})^{*}$. 
Proposition \ref{prop:m*} implies that $\widetilde{M}_{\bq}^{\leqslant}$ is an algebraic subgroup, while 
$\widetilde{M}_{\bq}^{\geqslant}$ is not necessarily a closed Lie subgroup. 
The projection onto the first component $\pi: \wt{G}_{\bq} \times \wt{T}_{\bq}  \twoheadrightarrow  \wt{G}_{\bq}$ gives the surjective Lie group homomorphisms 
\begin{align*}
&\pi_+ : \widetilde{M}_{\bq}^{\geqslant}  \twoheadrightarrow  \wt{B}_{\bq}^+,
&&\pi_- : \widetilde{M}_{\bq}^{\leqslant}  \twoheadrightarrow \wt{B}_{\bq}^-.
\end{align*}
Since $G_{\bq}$ is of adjoint type, the kernel of the exponential map $\exp : \h_{\bq} \twoheadrightarrow T_{\bq}$ equals $2 \pi i P_{\bq}\spcheck$, where $P_{\bq}\spcheck$
denotes the coweight lattice of $\g_{\bq}$. It follows from \eqref{exp-wt} that the kernel of the exponential map 
$\exp : \wt{\h}_{\bq} \twoheadrightarrow \wt{T}_{\bq}$ equals $2 \pi i \wt{P}_{\bq}\spcheck$, where $\wt{P}_{\bq}\spcheck$
is the coweight lattice of $\wt{\g}_{\bq}$ given by
\[
\wt{P}_{\bq}\spcheck = 
\begin{cases}
\wt{P}_{\bq}\spcheck \oplus \Zset h_\eta, & \mbox{if $\bq$ is of type $\supera{k-1}{\theta-k}$}
\\
\wt{P}_{\bq}\spcheck, & \mbox{otherwise}.
\end{cases}
\]

Denote the subgroup
\begin{align}
\label{eq:Cbq}
\wt{C}_{\bq}: = \exp \big( 2 \pi i \wt{\WP}^{-1} \wt{\WP}^{\, \mathrm{T}}(P_{\bq}\spcheck) \big) \subset \wt{T}_{\bq},
\end{align}
cf. \eqref{eq:def-PT}.
Proposition \ref{prop:m*} and the solvability of $\widetilde{M}_{\bq}^{\geqslant}$ and $\widetilde{M}_{\bq}^{\leqslant}$ give that 
\begin{align*}
\widetilde{M}_{\bq}^{\geqslant} &= (N_{\bq}^+ \times \{ 1 \}) \times \{ \exp (h, \wt{\WP}^{-1} \wt{\WP}^{\, \mathrm{T}}(h) ) : h \in \wt{\h}_{\bq} \}, 
\\
\widetilde{M}_{\bq}^{\leqslant} &= (N_{\bq}^- \times \{ 1 \}) \times \{ (t, t^{-1}) : t \in \wt{T}_{\bq} \}, 
\end{align*} 
from which one obtains that 
\begin{align*}
&\Ker \pi_+ = \{1\}, 
&& \Ker \pi_- = \{ 1\} \times \wt{C}_{\bq}.
\end{align*}
Composing $\pi_\pm$ with the isomorphisms from Theorem \ref{thm:Ms-to-Bs} leads to the isomorphisms 
\begin{align*}
&\tau_+^{-1} \pi_+  :  \widetilde{M}_{\bq}^{\geqslant} \stackrel{\simeq}{\longrightarrow} M_{\bq}^{\geqslant}, 
&\tau_-^{-1} \pi_-  :  \widetilde{M}_{\bq}^{\leqslant}/\Ker \pi_-  \stackrel{\simeq}{\longrightarrow} M_{\bq}^{\leqslant}.
\end{align*}
Their inverses give the canonical embeddings
\begin{align}
\label{eq:iota}
&j_+ : M_{\bq}^{\geqslant} \hookrightarrow \wt{G}_{\bq} \times \big( \wt{T}_{\bq}/ \wt{C}_{\bq} \big), 
&&j_- : M_{\bq}^{\leqslant} \hookrightarrow \wt{G}_{\bq} \times \big( \wt{T}_{\bq}/ \wt{C}_{\bq} \big).
\end{align}
Here we use that $\wt{G}_{\bq} \times \big( \wt{T}_{\bq}/ \wt{C}_{\bq} \big) \simeq \big( \wt{G}_{\bq} \times \wt{T}_{\bq} \big) / \Ker \pi_-$
and $\widetilde{M}_{\bq}^{\leqslant} \cap \Ker \pi_- = \{ 1 \}$.

\begin{remark}
If the matrix $\bqf$ is symmetric, then so is the matrix $\WP^{\bqf}$. This implies that
$\wt{\WP} = \wt{\WP}^{\, \mathrm{T}}$ and that the group $\wt{C}_{\bqf}$ is trivial. Then 
the continuous parameter accompanying the BD triple is as in Example \ref{exa:standard-poisson} and the Poisson structure is the standard one.
\end{remark}

\begin{theorem}\label{Poisson0}
Let $U_{\bq}$ be a large quantum group. For every choice of the specialization parameters 
$t_{ij} \in \mathbb Z$ satisfying the Non-degeneracy Assumption \ref{genericityassumption} the following hold:
\begin{enumerate}[leftmargin=*,label=\rm{(\alph*)}]
\item The symplectic leaves of the 
Poisson algebraic group $M_{\bq} \simeq M_{\bq}^{\geqslant} \times M_{\bq}^{\leqslant}$ are the inverse images $j^{-1} (\Oc \times {t} )$
under the map 
\begin{align*}
&j : M_{\bq} \to \wt{G}_{\bq} \times \big( \wt{T}_{\bq}/ \wt{C}_{\bq} \big),
&& j( m_+, m_-) := j_+(m_+)^{-1} j_- (m_-), 
&&&m_+ \in  M_{\bq}^{\geqslant}, m_- \in M_{\bq}^{\leqslant}, 
\end{align*}  
where $\Oc$ is a conjugacy class of $\wt{G}_{\bq}$ and $t \in \wt{T}_{\bq}/ \wt{C}_{\bq}$. The dimension of the symplectic leaf $j^{-1}(\Oc \times \{t\})$ equals $\dim \Oc$. 
\item If $j(z)$ and $j(z')$ are in the same conjugacy class of $\wt{G}_{\bq} \times \big( \wt{T}_{\bq}/ \wt{C}_{\bq} \big)$, then there is an algebra isomorphism
\[
\hopf_z \simeq \hopf_{z'}.
\]
\end{enumerate} 
\end{theorem}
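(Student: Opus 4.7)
For part (a), my plan is to exploit the Manin triple from Theorem \ref{thm:mm*-Manin} and apply Semenov--Tian-Shansky's description of symplectic leaves of a Poisson algebraic group in terms of conjugacy orbits in its associated double. First I would integrate the Manin triple $(\mm_\bq^*, (\mm_\bq^\leqslant)^*, (\mm_\bq^\geqslant)^*)$ to the Poisson algebraic group $\wt G_\bq \times \wt T_\bq$ equipped with the Semenov--Tian-Shansky (twisted conjugation) Poisson structure determined by its $r$-matrix, and then quotient by the central subgroup $\{1\} \times \wt C_\bq = \Ker \pi_-$ to obtain a Poisson algebraic group whose symplectic leaves are precisely the conjugacy classes of its underlying group.

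Next I would show that the map $j : M_\bq \to \wt G_\bq \times (\wt T_\bq/\wt C_\bq)$, $(m_+, m_-) \mapsto j_+(m_+)^{-1} j_-(m_-)$, is Poisson when the target carries this twisted conjugation structure. This is the standard factorization identity from dressing theory: Proposition \ref{prop:m*} already realizes $\widetilde{M}_{\bq}^{\geqslant}$ and $\widetilde{M}_{\bq}^{\leqslant}$ as Poisson-Lie subgroups of the double that are dual to one another under the bilinear form of Theorem \ref{thm:mm*-Manin}\ref{item:thm-Manin-b}, and the ratio map from their product to the double is the classical identification of the dual Poisson-Lie group $M_\bq \simeq M_\bq^{\geqslant} \times M_\bq^{\leqslant}$ with the open part of the double carrying its Heisenberg structure. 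Pulling back the foliation by conjugacy classes then produces the symplectic leaves $j^{-1}(\Oc \times \{t\})$ of $M_\bq$. The dimension equality $\dim j^{-1}(\Oc \times \{t\}) = \dim \Oc$ will follow from the numerical coincidence $\dim M_\bq = \dim \wt G_\bq + \dim \wt T_\bq$ (which forces $j$ to be generically finite), together with the standard fact that the rank of the Poisson bivector at a point equals the dimension of its image conjugacy class.

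For part (b), the plan is to invoke the Brown--Gordon machinery for Poisson orders \cite{BG}. Theorem \ref{thm:Poisson-orders} supplies the Poisson order structure on $(U_\bq, Z_\bq)$, and Proposition \ref{prop:der_a-gral}\ref{item:der_a-gral1} attaches to each $u \in U_\bq$ with $\overline u \in Z_\bq$ a derivation $D_u$ of $U_\bq$ whose projection to $Z_\bq$ is the Hamiltonian vector field $\{\overline u,-\}$. By part (a), the hypothesis that $j(z), j(z')$ lie in a common conjugacy class means that $z$ and $z'$ are in a common symplectic leaf of $M_\bq$, so they can be connected by a concatenation of Hamiltonian flows on $Z_\bq$; lifting these flows through the derivations $D_u$ yields an automorphism of $U_\bq$ sending $\Mg_z$ to $\Mg_{z'}$ and consequently inducing an algebra isomorphism $\hopf_z \simeq \hopf_{z'}$. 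The main obstacle will be the global integration of the $D_u$-flows, which in the purely algebraic category need not exponentiate to one-parameter subgroups of $\Aut(U_\bq)$; the remedy is to leverage the dressing description of symplectic leaves from part (a) to realize them as orbits of an algebraic action on $M_\bq$ whose infinitesimal generators are the Hamiltonian vector fields, and then to lift this algebraic action to $U_\bq$ via the derivations $D_u$.
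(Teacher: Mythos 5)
Your plan for part (a) follows the same high-level route as the paper -- integrate the Manin triple of Theorem \ref{thm:mm*-Manin}, realize $\widetilde{M}_{\bq}^{\geqslant}$ and $\widetilde{M}_{\bq}^{\leqslant}$ inside the double $(\wt{G}_{\bq}\times\wt{T}_{\bq})^{\times 2}$ via Proposition \ref{prop:m*}, and use the factorization map $(m_+,m_-)\mapsto m_+^{-1}m_-$ to relate symplectic leaves of the dual Poisson group $M_{\bq}$ to conjugacy classes. But there is a genuine gap: you assert that pulling back the conjugacy class foliation ``produces the symplectic leaves $j^{-1}(\Oc\times\{t\})$'' without addressing connectivity. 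The Semenov--Tian-Shansky dressing picture only tells you the leaves of $\widetilde{M}_{\bq}^{\geqslant}\times\widetilde{M}_{\bq}^{\leqslant}$ are the \emph{connected components} of its intersections with the double cosets $\diag(\wt{G}_{\bq}\times\wt{T}_{\bq})\cdot g\cdot\diag(\wt{G}_{\bq}\times\wt{T}_{\bq})$. A priori $j^{-1}(\Oc\times\{t\})$ is a union of leaves. The paper closes this gap by invoking the connectivity result \cite[Theorem 1.10]{Y} for complex reductive Poisson--Lie groups, which shows each such intersection is a dense, open, \emph{connected} subset of the double coset; this is also what delivers the dimension equality $\dim j^{-1}(\Oc\times\{t\})=\dim\Oc$ directly, rather than via the heuristic ``generically finite plus rank of the Poisson bivector'' argument you propose (which in any case requires knowing that the leaf dominates the conjugacy class, which is again the content of the cited theorem). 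You should also be careful not to conflate the Poisson--Lie structure on $\wt{G}_{\bq}\times\wt{T}_{\bq}$ coming from the Manin triple with a hypothetical ``twisted conjugation'' structure on $\wt{G}_{\bq}\times(\wt{T}_{\bq}/\wt{C}_{\bq})$ making $j$ Poisson; the paper never claims $j$ is a Poisson map, only that it intertwines dressing orbits and conjugacy classes, which is all that is needed.

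For part (b) you correctly invoke the Brown--Gordon machinery, but then attempt to re-derive the key step and flag ``global integration of the $D_u$-flows'' as an obstacle needing a remedy. This is unnecessary: \cite[Theorem 4.1]{BG} is precisely the statement that for an affine Poisson order two points of the same symplectic core give isomorphic fibers, and it already contains the relevant (analytic) integration argument over $\Cset$. The paper's proof of (b) is a one-line citation of that theorem together with Theorem \ref{thm:Poisson-orders} and part (a); your ``remedy'' via lifting an algebraic action through the derivations $D_u$ is both vague and not what is needed.
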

Note that, since $\wt{T}_{\bq}/ \wt{C}_{\bq}$ is abelian, each conjugacy class of $\wt{G}_{\bq} \times \big( \wt{T}_{\bq}/ \wt{C}_{\bq} \big)$ has the form 
$\Oc \times \{ t \}$, where $\Oc$ is a conjugacy class of $\wt{G}_{\bq}$ and $t \in \big( \wt{T}_{\bq}/ \wt{C}_{\bq} \big)$.
\begin{proof} (a) By \cite{RSTS}, since the Poisson algebraic group $\wt{G}_{\bq} \times \wt{T}_{\bq}$ is quasitriangular, its 
double Poisson algebraic group is canonically isomorphic to
\[
D(\wt{G}_{\bq} \times \wt{T}_{\bq}) \simeq \big(\wt{G}_{\bq} \times \wt{T}_{\bq} \big) \times  \big(\wt{G}_{\bq} \times \wt{T}_{\bq} \big). 
\]
Theorem \ref{thm:mm*-Manin}(b) implies that the dual Poisson Lie group of $\wt{G}_{\bq} \times \wt{T}_{\bq}$ is 
\[
\widetilde{M}_{\bq}^{\geqslant} \times \widetilde{M}_{\bq}^{\leqslant} \hookrightarrow  \big(\wt{G}_{\bq} \times \wt{T}_{\bq} \big) \times  \big(\wt{G}_{\bq} \times \wt{T}_{\bq} \big)
\]
with the opposite Poisson structure to the restriction of the one of the double.  
Both $\widetilde{M}_{\bq}^{\geqslant} \times \widetilde{M}_{\bq}^{\leqslant}$ and $M_{\bq} \simeq M_{\bq}^{\geqslant} \times M_{\bq}^{\leqslant}$ have the same tangent 
Lie bialgebra, hence the map 
\[
\tau:=(\tau_+^{-1} \pi_+, \tau_-^{-1} \pi_-) : \widetilde{M}_{\bq}^{\geqslant} \times \widetilde{M}_{\bq}^{\leqslant} \twoheadrightarrow M_{\bq}^{\geqslant} \times M_{\bq}^{\leqslant} \simeq M_{\bq}
\]
is a Poisson covering map. By the Semenov-Tian-Shansky dressing method \cite{STS}, 
we get that the symplectic leaves of $\widetilde{M}_{\bq}^{\geqslant} \times \widetilde{M}_{\bq}^{\leqslant}$ are the connected components of the intersections 
\[
\widetilde{M}_{\bq}^{\leqslant} \cap \big( \diag (\wt{G}_{\bq} \times \wt{T}_{\bq} ) \cdot g \cdot \diag (\wt{G}_{\bq} \times \wt{T}_{\bq} ) \big), 
\]
where $\diag \big(\wt{G}_{\bq} \times \wt{T}_{\bq} \big)$ denotes the diagonal of $(\wt{G}_{\bq} \times \wt{T}_{\bq})^{\times 2}$ and $g \in (\wt{G}_{\bq} \times \wt{T}_{\bq})^{\times 2}$. 
Now we apply \cite[Theorem 1.10]{Y} to obtain that each such intersection is a dense, open and connected subset of 
$\diag \big(\wt{G}_{\bq} \times \wt{T}_{\bq} \big) \cdot g \cdot \diag \big(\wt{G}_{\bq} \times \wt{T}_{\bq} \big)$.
Consider the map
\begin{align*}
&\widetilde{j} : \widetilde{M}_{\bq}^{\geqslant} \times \widetilde{M}_{\bq}^{\leqslant}  \to \wt{G}_{\bq} \times \wt{T}_{\bq}, 
&& \widetilde{j}(m_+, m_-) := m_+^{-1} m_-, 
&&& m_+ \in \widetilde{M}_{\bq}^{\geqslant}, m_- \in \widetilde{M}_{\bq}^{\leqslant}.  
\end{align*}
By a direct argument we conclude that each symplectic leaf of $\widetilde{M}_{\bq}^{\geqslant} \times \widetilde{M}_{\bq}^{\leqslant}$ is of the form 
\[
\Ss_{\Oc'} := \big( \widetilde{M}_{\bq}^{\geqslant} \times \widetilde{M}_{\bq}^{\leqslant} \big) \cap \Oc',
\]
where $\Oc'$ is a conjugacy class of $G_{\bq} \times T_{\bq}$, and that
\[
\dim \Ss_{\Oc'} = \dim \Oc'.
\]
Since $\tau : \widetilde{M}_{\bq}^{\geqslant} \times \widetilde{M}_{\bq}^{\leqslant} \twoheadrightarrow M_{\bq}$ is a covering of Poisson 
Lie groups, each symplectic leaf of $M_{\bq}$ is of the form $\tau(\Ss_{\Oc'})$. One easily verifies that the diagram 
\begin{align*}
\begin{aligned}
\xymatrix@C=50pt{\widetilde{M}_{\bq}^{\geqslant} \times \widetilde{M}_{\bq}^{\leqslant} \ar@{->}[r]^{\widetilde{j}} \ar@{->}[d]_{\tau} & 
\wt{G}_{\bq} \times \wt{T}_{\bq} \ar@{->}[d]^{\psi}
\\ M_{\bq} \ar@{->}[r]^{j} & \wt{G}_{\bq} \times \big( \wt{T}_{\bq}/ \wt{C}_{\bq} \big)}
\end{aligned}\end{align*} 
commutes, where $\psi:  \wt{G}_{\bq} \times \wt{T}_{\bq} \twoheadrightarrow \wt{G}_{\bq} \times \big( \wt{T}_{\bq}/ \wt{C}_{\bq} \big)$ is the canonical projection.
Clearly, $\psi(\Oc') = \Oc \times \{ t \}$, where $\Oc$ is a conjugacy class of $\wt{G}_{\bq}$ and $t \in \wt{T}_{\bq}/ \wt{C}_{\bq}$. 
Therefore all symplectic leaves of $M_{\bq}$ are of the form $\tau(\Ss_{\Oc'}) = j^{-1} \psi(\Oc') = j^{-1}(\Oc \times \{ t \})$
and $\dim j^{-1}(\Oc \times \{ t \}) = \dim \Oc' = \dim \Oc$. 

\medbreak
Part (b) follows from part (a),  and Theorems \ref{theorem:BG-reps} and  \ref{thm:Poisson-orders}.
\end{proof}
In regard to the irreducible representations of $U_{\bq}$ we wonder whether the De Concini--Kac--Procesi conjecture 
could be extended to the setting of Theorem \ref{Poisson0}, see \cite{DKP}.
\begin{question} Let $\Oc$ be a conjugacy class of $\wt{G}_{\bq}$, $t \in \wt{T}_{\bq}/ \wt{C}_{\bq}$ and $z \in j^{-1}(\Oc \times \{ t \})$. Does $\ell^{\dim \Oc /2}$ divide
the dimension of any  irreducible representation of $\hopf_z$?
\end{question} 
\subsection{The torus orbits of symplectic leaves and the representations of the large quantum Borel algebras} 
\label{Poisson-geom-Borel} The algebras $U_{\bq}$, $U_{\bq}^{\geqslant}$,  $U_{\bq}^{\leqslant}$ and $U_{\bq}^\pm$ are 
$\Z^{\I}$-graded with grading $\deg e_i = - \deg f_i =\alpha_i$, $\deg K_i = \deg L_i =0$ for $i\in \I$. This  leads to a 
canonical action of the torus $(\Cset^\times)^\I$ on these algebras by algebra automorphisms, which preserves the 
central subalgebras $Z_{\bq}$, $Z_{\bq}^{\geqslant}$,  $Z_{\bq}^{\leqslant}$ and $Z_{\bq}^\pm$.  

By a direct  comparison, one obtains that the $(\Cset^\times)^\I$-action on $Z_{\bq}^{\geqslant}$ corresponds to the left action of 
$\tau_+^{-1}(T_{\bq})$ on $M_{\bq}^{\geqslant}$ in the sense that every automorphism from the first one corresponds to
an automorphism from the second and vice versa. Similarly, the $(\Cset^\times)^\I$-action on $Z_{\bq}^{\leqslant}$ corresponds to the left action of 
$\tau_-^{-1}(T_{\bq})$ on $M_{\bq}^{\leqslant}$. Theorem \ref{Poisson0}(a) implies that the induced action of $(\Cset^\times)^\I$ on 
$M_{\bq}$ preserves the symplectic leaves of $M_{\bq}$. So, in regard to irreps of $U_{\bq}$, the $(\Cset^\times)^\I$-automorphisms 
of $U_{\bq}$ do not provide any additional information to that in Theorem \ref{Poisson0}(a). 

However, for $U_{\bq}^{\geqslant}$ and $U_{\bq}^{\leqslant}$, we do obtain additional representation theoretic information from the $(\Cset^\times)^\I$-action, 
as stated in next theorem. Let  $W_{\bq}$ be the Weyl group of $G_{\bq}$, i.e., that of  $\wt{G}_{\bq}$.  
\begin{theorem}
\label{Poisson+}
For every choice of the specialization parameters 
$t_{ij} \in \mathbb Z$ satisfying the Non-degeneracy Assumption \ref{genericityassumption} the following hold:
\begin{enumerate}[leftmargin=*,label=\rm{(\alph*)}]
\item The Poisson structure on $M_{\bq}^{\geqslant}$ is invariant under the left and right actions of $\tau^{-1}_+(\wt{T}_{\bq})$. 
The $\tau^{-1}_+(\wt{T}_{\bq})$-orbits of symplectic leaves of $M_{\bq}^{\geqslant}$ are the double Bruhat cells
\begin{align*}
&\tau^{-1}_+( \wt{B}^+_{\bq} \cap \wt{B}^-_{\bq} w \wt{B}^-_{\bq}), 
&& w \in W_{\bq}.
\end{align*}
\item If $\tau_+(z)$ and $\tau_+(z')$ are in the same double Bruhat cell, then there is an algebra isomorphism
\[
\hopf_z^{\geqslant} \simeq \hopf_{z'}^{\geqslant}.
\]
\end{enumerate}
\end{theorem}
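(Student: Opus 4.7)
The plan is to prove part (a) in two stages---first invariance of the Poisson structure under the torus action, then the description of torus orbits of symplectic leaves via Bruhat decomposition---and then obtain part (b) as a consequence of the Poisson order theory together with the torus symmetry.

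\emph{Step 1: Torus invariance.} The integral form $U_{\bqf,\Acf}^{\geqslant}$ carries a $\Z^{\I}$-grading with $\deg E_i = \alpha_i$ and $\deg K_i^{\pm 1} = 0$, and this grading is inherited by $U_\bq^\geqslant$ and restricts to $Z_\bq^\geqslant$. Since $\nu - \xi \in \Acf$ is homogeneous of degree $0$, the Poisson bracket on $Z_\bq^\geqslant$ defined by \eqref{eq:Poisson-str-center} is $\Z^{\I}$-graded. Consequently the $(\Cset^\times)^\I$-action on $Z_\bq^\geqslant$ induced from the grading preserves the Poisson bracket. As recorded in \S \ref{Poisson-geom-Borel}, this action corresponds under $\tau_+$ to the left action of $\tau_+^{-1}(\wt{T}_\bq)$ on $M_\bq^\geqslant$, proving left invariance. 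The invariance under the right action of $\tau_+^{-1}(\wt{T}_\bq)$ is established analogously from the opposite grading coming from the right $\Cset \Gamma^+$-coaction on $U_\bq^\geqslant$.

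\emph{Step 2: Symplectic leaves and torus orbits.} I would follow the same Semenov--Tian--Shansky dressing strategy as in the proof of Theorem \ref{Poisson0}, now restricted to the first factor $\widetilde{M}_\bq^\geqslant \hookrightarrow \wt{G}_\bq \times \wt{T}_\bq$ from \S \ref{Poisson-geom-M}. Since $\Ker \pi_+ = \{1\}$, the composition $\tau_+^{-1} \pi_+ : \widetilde{M}_\bq^\geqslant \stackrel{\simeq}{\to} M_\bq^\geqslant$ is a Poisson isomorphism, so it suffices to describe the symplectic leaves of $\widetilde{M}_\bq^\geqslant$ and then take $\wt{T}_\bq$-orbits. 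By \cite[Theorem 1.10]{Y}, each symplectic leaf is open and dense in an intersection
\begin{align*}
\widetilde{M}_\bq^\geqslant \cap \bigl( \diag(\wt{G}_\bq \times \wt{T}_\bq) \cdot g \cdot \diag(\wt{G}_\bq \times \wt{T}_\bq) \bigr).
\end{align*}
Since the left $\wt{T}_\bq$-action on $M_\bq^\geqslant$ corresponds under $\tau_+$ to left translation by the Cartan factor of $\wt{B}_\bq^+$, the $\wt{T}_\bq$-orbits of symplectic leaves project to the intersections $\wt{B}^+_\bq \cap \wt{B}^-_\bq w \wt{B}^-_\bq$ indexed by $w \in W_\bq$, exactly as in the standard Bruhat-type analysis of \cite{HL} (and its extensions \cite{KLS, GJS}) for Borel subgroups with Poisson structure coming from a Manin triple.

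\emph{Step 3: Part (b).} The $(\Cset^\times)^\I$-action on $U_\bq^\geqslant$ by algebra automorphisms preserves $Z_\bq^\geqslant$ and sends the maximal ideal $\Mg_z^\geqslant$ to $\Mg_{t \cdot z}^\geqslant$ for $t \in (\Cset^\times)^\I$, hence induces algebra isomorphisms $\hopf_z^\geqslant \simeq \hopf_{t \cdot z}^\geqslant$. Combined with the Brown--Gordon isomorphism theorem for central quotients across symplectic leaves \cite[Theorem 4.1]{BG}, applied to the Poisson order $(U_\bq^\geqslant, Z_\bq^\geqslant)$ from Theorem \ref{thm:Poisson-orders}, this yields $\hopf_z^\geqslant \simeq \hopf_{z'}^\geqslant$ whenever $\tau_+(z)$ and $\tau_+(z')$ lie in a common double Bruhat cell: first translate by a torus element to land in the same symplectic leaf, then apply the Brown--Gordon isomorphism.

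\emph{Main obstacle.} The delicate point will be Step~2: identifying the torus orbits of dressing orbits precisely with the double Bruhat cells $\wt{B}^+_\bq \cap \wt{B}^-_\bq w \wt{B}^-_\bq$. The Poisson structure on $\wt{B}_\bq^+$ produced by Theorem \ref{thm:mm*-Manin} corresponds to the empty Belavin--Drinfeld triple with \emph{arbitrary} continuous parameter governed by the matrix $\WP^{\bqf}$, so one must verify that the coset description of dressing orbits is independent of this continuous parameter and reduces to Weyl-group data alone. This will be handled by exhibiting a $\wt{T}_\bq$-equivariant parametrization of the relevant intersections with the twisted conjugacy classes and using the connectedness properties of double Bruhat cells.
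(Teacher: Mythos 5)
Your overall strategy matches the paper's: dressing-method description of symplectic leaves as in Theorem~\ref{Poisson0}, then Brown--Gordon plus a torus automorphism to compare $\hopf_z^\geqslant$ across points in the same double Bruhat cell. Steps~2 and~3 are essentially the paper's argument. The paper realizes Step~2 by invoking \cite[Theorem 2.7 and Proposition 2.15]{LY}, which are exactly the statements that identify orbits of symplectic leaves of $\widetilde{M}_\bq^\geqslant$ (with respect to either torus action) with Bruhat-type intersections; you have correctly located the main technical point but leave it as a promissory note without naming the right tool.

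Your Step~1 diverges from the paper, and as written it has a gap. The paper deduces both left \emph{and} right invariance in one stroke from \cite[Lemma 2.12]{LY}. You instead argue from the $\Z^{\I}$-grading. The grading argument does show that the Poisson bracket on $Z_\bq^\geqslant$ is preserved by the torus acting via $\Z^{\I}$-degree, but that action corresponds to \emph{conjugation} by $\tau_+^{-1}(\wt{T}_\bq)$ on $M_\bq^\geqslant$ (a function of degree $\gamma$ is an $\mathrm{Ad}$-eigenvector of weight $\gamma$), not to left translation. Under the isomorphism $\tau_+ : M_\bq^\geqslant \simeq \wt{B}_\bq^+$, left translation by $t$ scales $K_\beta^{N_\beta}$ nontrivially while the grading action fixes it ($\deg K_\beta = 0$); and right translation scales $K_\beta^{N_\beta}$ while fixing $\Eq_\beta^{N_\beta}$, so it is not an ``opposite grading''. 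The claim that right invariance follows ``analogously'' therefore does not work as stated. The cleanest repair is to use multiplicativity of the Poisson bivector: multiplicativity together with $\mathrm{Ad}_T$-invariance forces $\pi$ to vanish along the torus, and vanishing of $\pi$ on $T$ gives both left and right $T$-invariance of $\pi$. You would need to write this out; short of it, the paper's appeal to \cite[Lemma 2.12]{LY} is the more direct route.
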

\begin{proof}
(a) For a Lie subalgebra of $\wt{\g}_{\bq} \oplus \wt{\h}_{\bq}$, denote by $N(-)$ its normalizer in $\wt{G}_{\bq} \times \wt{T}_{\bq}$. By \cite[Lemma 2.12]{LY}, 
the left and right actions of $\widetilde{M}_{\geqslant} \cap N( (\mm_{\bq}^{\geqslant})^{*})$ on the Lie group $\widetilde{M}_{\geqslant}$ preserve its Poisson structure.  
By the definition of $\tau_+$, these actions correspond to the left and right actions of $\tau_+^{-1}(T_{\bq})$ on $M_{\bq}^{\geqslant}$, so the latter 
preserve the Poisson structure on $M_{\bq}^{\geqslant}$, because $\widetilde{M}_{\bq}^{\geqslant} \twoheadrightarrow M_{\bq}^{\geqslant}$ is a Poisson map. 

Applying \cite[Theorem 2.7 and Proposition 2.15]{LY} and the Bruhat decomposition of $\wt{G}_{\bq}$,
we obtain that the $\widetilde{M}_{\geqslant} \cap N( (\mm_{\bq}^{\geqslant})^{*})$-orbits of symplectic leaves of 
$\widetilde{M}_{\geqslant}$ (with respect to either action) are the intersections
\[
\widetilde{M}_{\geqslant} \cap \big( ( \wt{G}_{\bq} \times \wt{T}_{\bq}) w ( \wt{G}_{\bq} \times \wt{T}_{\bq} ) \big)
\]
for $w \in W_{\bq}$. Since $\widetilde{M}_{\bq}^{\geqslant} \twoheadrightarrow M_{\bq}^{\geqslant}$ is a Poisson covering map
and $\tau_+ : M_{\bq}^\geqslant \stackrel{\simeq}{\longrightarrow} \wt{B}_{\bq}^+$ is an isomorphism (Theorem \ref{thm:Ms-to-Bs}), 
the $\tau^{-1}_+(\wt{T}_{\bq})$-orbits of symplectic leaves of $M_{\bq}^{\geqslant}$ (with respect to either action) are the double Bruhat cells
$\tau^{-1}_+( \wt{B}^+_{\bq} \cap \wt{B}^-_{\bq} w \wt{B}^-_{\bq})$ for $w \in W_{\bq}$.

\medbreak
Part (b) follows from part (a),  Theorems  \ref{theorem:BG-reps} and \ref{thm:Poisson-orders},  and the fact that the left action of $\tau_+^{-1}(\wt{T}_{\bq})$ on 
$M_{\bq}^{\geqslant}$ comes from the $(\Cset^\times)^\I$-action on $U_{\bq}^{\geqslant}$ by algebra automorphisms.
\end{proof}

\begin{example}\label{exa:reps}
Let $\bq$ be of type $\Bgl(4)$. By Example \ref{exa:bgl4-Ms-to-Bs}, the corresponding algebraic group $\wt{G}_{\bq}$ is isomorphic to 
$\operatorname{PSL}_3(\Cset)\times \operatorname{PSL}_3(\Cset)$ whose Weyl group is $S_3 \times S_3$. 
Theorem \ref{Poisson+} implies that among the quotients $U_{\bq}^\geqslant/ \Mg^{\leqslant}_z U_{\bq}^\geqslant$ 
for $z$ in the maximal spectrum of $Z_{\bq}^{\geqslant}$, there are at most $|S_3 \times S_3| = (3!)^2 = 36$
isomorphism classes of finite dimensional algebras.
\end{example}

Analogously to Theorem \ref{Poisson+} one proves the following:
\begin{proposition}
\label{Poisson-}
For every choice of the specialization parameters 
$t_{ij} \in \mathbb Z$ satisfying the Non-degeneracy Assumption \ref{genericityassumption} the following hold:
\begin{enumerate}[leftmargin=*,label=\rm{(\alph*)}]
\item The Poisson structure on $M_{\bq}^{\leqslant}$ is invariant under the left and right actions of $\tau^{-1}_-(\wt{T}_{\bq})$. 
The $\tau^{-1}_-(\wt{T}_{\bq})$-orbits of symplectic leaves of $M_{\bq}^{\leqslant}$ are the double Bruhat cells
\begin{align*}
&\tau^{-1}_-( \wt{B}^-_{\bq} \cap \wt{B}^+_{\bq} w \wt{B}^+_{\bq}), 
&& w \in W_{\bq}.
\end{align*}
\item If $\tau_-(z)$ and $\tau_-(z')$ are in the same double Bruhat cell, then $\hopf_z^{\leqslant} \simeq \hopf_{z'}^{\leqslant}$ as algebras.
\end{enumerate}
\end{proposition}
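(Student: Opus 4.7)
The plan is to mirror the proof of Theorem \ref{Poisson+}, with the necessary adjustments to account for the fact that $\Ker \pi_- = \{1\} \times \wt{C}_{\bq}$ is nontrivial, whereas $\Ker \pi_+$ was trivial. I will begin from the identification $\widetilde{M}_{\bq}^{\leqslant} \subset \wt{G}_{\bq} \times \wt{T}_{\bq}$ and use the Semenov-Tian-Shansky dressing machinery of Lu--Yakimov as in the previous theorem.

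First, I would apply \cite[Lemma 2.12]{LY} to the Lie subalgebra $(\mm_{\bq}^{\leqslant})^{*} \subset \wt{\g}_{\bq} \oplus \wt{\h}_{\bq}$ to conclude that both the left and the right actions of the normalizer subgroup $\widetilde{M}_{\bq}^{\leqslant} \cap N((\mm_{\bq}^{\leqslant})^{*})$ on $\widetilde{M}_{\bq}^{\leqslant}$ preserve its Poisson structure. Using the explicit description of $(\mm_{\bq}^{\leqslant})^{*}$ from Proposition \ref{prop:m*}, this normalizer contains the image of $\wt{T}_{\bq}$ under the map $t \mapsto (t, t^{-1})$, which under $\tau_-^{-1} \pi_-$ corresponds precisely to $\tau_-^{-1}(\wt{T}_{\bq}) \subset M_{\bq}^{\leqslant}$. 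Since $\tau_-^{-1} \pi_- : \widetilde{M}_{\bq}^{\leqslant}/\Ker \pi_- \stackrel{\simeq}{\to} M_{\bq}^{\leqslant}$ is a Poisson isomorphism (equivalently $\widetilde{M}_{\bq}^{\leqslant} \twoheadrightarrow M_{\bq}^{\leqslant}$ is a Poisson covering), this establishes that the induced left and right actions of $\tau_-^{-1}(\wt{T}_{\bq})$ preserve the Poisson structure on $M_{\bq}^{\leqslant}$.

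Next, for the classification of symplectic leaves, I would invoke \cite[Theorem 2.7 and Proposition 2.15]{LY} combined with the Bruhat decomposition of $\wt{G}_{\bq}$ to deduce that the $(\widetilde{M}_{\bq}^{\leqslant} \cap N((\mm_{\bq}^{\leqslant})^{*}))$-orbits of symplectic leaves of $\widetilde{M}_{\bq}^{\leqslant}$ are exactly the intersections
\begin{equation*}
\widetilde{M}_{\bq}^{\leqslant} \cap \bigl( (\wt{G}_{\bq} \times \wt{T}_{\bq}) w (\wt{G}_{\bq} \times \wt{T}_{\bq}) \bigr), \quad w \in W_{\bq}.
\end{equation*}
Pushing these through the Poisson covering $\widetilde{M}_{\bq}^{\leqslant} \twoheadrightarrow M_{\bq}^{\leqslant}$ and using the isomorphism $\tau_- : M_{\bq}^{\leqslant} \stackrel{\simeq}{\to} \wt{B}_{\bq}^-$ from Theorem \ref{thm:Ms-to-Bs}, together with the fact that intersecting $\wt{B}_{\bq}^-$ with the Bruhat cell $\wt{B}_{\bq}^+ w \wt{B}_{\bq}^+$ in $\wt{G}_{\bq}$ recovers the double Bruhat cell $\wt{B}^-_{\bq} \cap \wt{B}^+_{\bq} w \wt{B}^+_{\bq}$, yields the claimed description of the $\tau^{-1}_-(\wt{T}_{\bq})$-orbits of symplectic leaves.

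For part (b), I would first observe that the $(\Cset^\times)^{\I}$-action on $U_{\bq}^{\leqslant}$ by algebra automorphisms (from the $\Z^{\I}$-grading with $\deg f_i = -\alpha_i$, $\deg L_i = 0$) preserves $Z_{\bq}^{\leqslant}$ and induces, via $\tau_-$, precisely the left action of $\tau_-^{-1}(\wt{T}_{\bq})$ on $M_{\bq}^{\leqslant}$. Since two points $z, z'$ in the same double Bruhat cell lie in the same torus orbit of symplectic leaves by part (a), I can combine this with the Poisson order structure on $(U_{\bq}^{\leqslant}, Z_{\bq}^{\leqslant})$ from Theorem \ref{thm:Poisson-orders} and the isomorphism-across-symplectic-leaves theorem \cite[Theorem 4.1]{BG} to conclude $\hopf_z^{\leqslant} \simeq \hopf_{z'}^{\leqslant}$. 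The main subtlety here, compared to the $\geqslant$ case, is the presence of the kernel $\wt{C}_{\bq}$ in $\pi_-$; I expect the hardest technical point to be verifying that this kernel lies inside the normalizer $N((\mm_{\bq}^{\leqslant})^{*})$ so that the quotient $\widetilde{M}_{\bq}^{\leqslant}/\Ker \pi_-$ inherits a well-defined Poisson structure compatible with the torus action, but this follows from the fact that $\wt{C}_{\bq} \subset \wt{T}_{\bq}$ is central in $\wt{G}_{\bq} \times \wt{T}_{\bq}$.
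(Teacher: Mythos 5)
Your proposal is correct and follows essentially the same route the paper intends, since the paper's ``proof'' of Proposition~\ref{Poisson-} is just the phrase ``Analogously to Theorem~\ref{Poisson+}.'' You correctly adapt the Lu--Yakimov dressing argument, and you rightly flag the only genuine difference from the $\geqslant$ case (the nontrivial discrete kernel $\wt{C}_{\bq}$ of $\pi_-$) and resolve it via centrality of $\{1\}\times\wt{C}_{\bq}$ in $\wt{G}_{\bq}\times\wt{T}_{\bq}$; one could also note that the descent of the Poisson structure along $\widetilde{M}_{\bq}^{\leqslant}\twoheadrightarrow M_{\bq}^{\leqslant}$ is already guaranteed because both sides carry Poisson group structures with the same tangent Lie bialgebra, as established in Theorem~\ref{thm:Ms-to-Bs} and used in Theorem~\ref{Poisson0}.
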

\subsection{Poisson homogeneous spaces and irreps of large quantum unipotent algebras} 
\label{Poisson-geom-unip} 
Since
$Z_{\bq}^+$ is the algebra of coinvariants for the coaction of $Z_{\bq}^{0+}$ on $Z_{\bq}^{\geqslant}$ obtained by restricting the coaction 
of  $U_{\bq}^{0+}$ on $Z_{\bq}^{\geqslant}$, and analogously for the negative part, we have isomorphisms of Poisson algebras
\begin{align}
\label{eq:P-hom-iso}
&Z_{\bq}^+ \simeq \Cset[ M_{\bq}^{\geqslant}/ \tau_+^{-1}(\wt{T}_{\bq})],  
&&Z_{\bq}^- \simeq \Cset[ M_{\bq}^{\leqslant}/ \tau_-^{-1}(\wt{T}_{\bq})]. 
\end{align}
As shown in the previous subsection, the left and right actions of $\tau_+^{-1}(\wt{T}_{\bq})$ and $\tau_-^{-1}(\wt{T}_{\bq})$ 
on the Poisson algebraic groups $M_{\bq}^{\geqslant}$ and $M_{\bq}^{\leqslant}$ preserve their Poisson structures.
The right hand sides 
of the isomorphisms \eqref{eq:P-hom-iso} involve the coordinate rings of the resulting 
Poisson homogeneous spaces $M_{\bq}^{\geqslant}/ \tau_+^{-1}(\wt{T}_{\bq})$ and $M_{\bq}^{\leqslant}/ \tau_-^{-1}(\wt{T}_{\bq})$
obtained by taking quotients with respect to the right actions. 
The Poisson structures on $M_{\bq}^{\geqslant}/ \tau_+^{-1}(\wt{T}_{\bq})$ and $M_{\bq}^{\leqslant}/ \tau_-^{-1}(\wt{T}_{\bq})$
are invariant under the induced left actions of $\tau_+^{-1}(\wt{T}_{\bq})$ and $\tau_-^{-1}(\wt{T}_{\bq})$.
By Theorem \ref{thm:Ms-to-Bs}, $\tau_+$ restricts to the isomorphism of homogeneous spaces 
$\tau_+ : M_{\bq}^{\geqslant} / j_+^{-1}(\wt{T}_{\bq}) \stackrel{\simeq}{\longrightarrow} \wt{B}_{\bq}^+ /\wt{T}_{\bq}$. Denote the canonical isomorphism
\[
\upsilon : \wt{B}_{\bq}^+ /\wt{T}_{\bq} \stackrel{\simeq}{\longrightarrow} \wt{B}_{\bq}^+ \wt{B}_{\bq}^-/\wt{B}_{\bq}^{-} \subset \wt{G}_{\bq}/\wt{B}_{\bq}^-.
\]
\begin{theorem}
\label{Poisson-unip}
 For every choice of the specialization parameters 
$t_{ij} \in \mathbb Z$ satisfying the Non-degeneracy Assumption \ref{genericityassumption} the following hold:
\begin{enumerate}[leftmargin=*,label=\rm{(\alph*)}]
\item  
The $\tau^{-1}_+(\wt{T}_{\bq})$-orbits of symplectic leaves of $M_{\bq}^{\geqslant}/  \tau_+^{-1}(\wt{T}_{\bq})$ are the open Richardson varieties
\begin{align*}
&\tau^{-1}_+ \upsilon^{-1} \big( (\wt{B}_{\bq}^+ \wt{B}_{\bq}^- \cap \wt{B}^-_{\bq} w \wt{B}^-_{\bq} ) / \wt{B}_{\bq}^- \big), 
&& w \in W_{\bq}.
\end{align*}
\item If $\upsilon \tau_+(z)$ and $\upsilon \tau_+ (z')$ are in the same open Richardson variety, then there is an isomorphism of algebras
\[
\hopf_z^{+} \simeq \hopf_{z'}^{+}.
\]
\end{enumerate}
\end{theorem}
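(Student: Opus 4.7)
The plan is to follow the template of Theorem~\ref{Poisson+}, passing from the Poisson algebraic group $M_{\bq}^{\geqslant}$ to the Poisson homogeneous space obtained by quotienting with respect to the right action of $\tau_+^{-1}(\wt{T}_{\bq})$. By \eqref{eq:P-hom-iso}, this quotient is identified with $M_{\bq}^+$ (whose coordinate ring $Z_{\bq}^+$ is the coideal Poisson subalgebra of $Z_{\bq}^{\geqslant}$ appearing in Theorem~\ref{thm:Poisson-orders}), and via $\upsilon\circ\tau_+$ it further embeds onto the open Schubert cell $\wt{B}_{\bq}^+\wt{B}_{\bq}^-/\wt{B}_{\bq}^- \subset \wt{G}_{\bq}/\wt{B}_{\bq}^-$. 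The left action of $\tau_+^{-1}(\wt{T}_{\bq})$ descends to a Poisson action on the quotient, because it commutes with the right action being divided out and already preserves the Poisson structure on $M_{\bq}^{\geqslant}$ by Theorem~\ref{Poisson+}(a).

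For part (a), I would lift to the Poisson covering $\widetilde{M}_{\bq}^{\geqslant}$ from Section~\ref{sec:PLgrps}. On $\widetilde{M}_{\bq}^{\geqslant}/\wt{T}_{\bq}$ the symplectic foliation is analyzed by the Lu--Yakimov dressing machinery \cite[Theorem~2.7, Proposition~2.15]{LY}: the $\wt{T}_{\bq}$-orbits of symplectic leaves of a Poisson homogeneous space $G/H$ (with $G$ coming from a Manin triple and $H$ coisotropic) are cut out by intersections with double cosets for the two Lagrangian subgroups. Applied to the Manin triple of Theorem~\ref{thm:mm*-Manin} and the coisotropic subgroup $\wt{T}_{\bq}\subset \widetilde{M}_{\bq}^{\geqslant}$, this yields a description of the orbits of symplectic leaves as the quotients by $\wt{T}_{\bq}$ of intersections $\widetilde{M}_{\bq}^{\geqslant}\cap \big(\diag(\wt{G}_{\bq}\times\wt{T}_{\bq}) \cdot g \cdot \diag(\wt{G}_{\bq}\times\wt{T}_{\bq})\big)$; each such intersection is open, dense and connected inside the corresponding closed stratum by \cite[Theorem~1.10]{Y}.

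The next step is to identify these strata concretely on the flag variety side. Combining the Bruhat decomposition of $\wt{G}_{\bq}$ with Theorem~\ref{thm:Ms-to-Bs} and the definition of $\upsilon$, the $\tau_+^{-1}(\wt{T}_{\bq})$-orbits of the double Bruhat cells of Theorem~\ref{Poisson+}(a), descended via the right quotient by $\tau_+^{-1}(\wt{T}_{\bq})$ and then embedded into $\wt{G}_{\bq}/\wt{B}_{\bq}^-$, become precisely the intersections $\upsilon^{-1}\big((\wt{B}_{\bq}^+\wt{B}_{\bq}^-\cap \wt{B}_{\bq}^- w \wt{B}_{\bq}^-)/\wt{B}_{\bq}^-\big)$ for $w\in W_{\bq}$, i.e. the open Richardson varieties sitting inside the open Schubert cell.

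For part (b), the Poisson order $(U_{\bq}^+, Z_{\bq}^+)$ from Theorem~\ref{thm:Poisson-orders} fits the setting of \cite[Theorem~4.1]{BG}. The $(\Cset^\times)^{\I}$-grading automorphisms of $U_{\bq}^+$ preserve $Z_{\bq}^+$ and, via $\tau_+\circ\upsilon$, correspond to the left $\wt{T}_{\bq}$-action on the open Schubert cell; combining this with part (a), any two points $z,z'$ lying in the same open Richardson variety are connected by a composition of such an automorphism with the integration of a Hamiltonian flow along the connected symplectic leaf through them, producing the algebra isomorphism $\hopf_z^+\simeq \hopf_{z'}^+$ from \cite[Theorem~4.1]{BG}. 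The hard part will be the careful bookkeeping in the first two paragraphs: the Poisson structure on $M_{\bq}^+$ is not of standard type (it depends on the continuous BD parameters in Remark~\ref{rem:bialg-str}(c)), so one must check that the non-standard twist still produces the \emph{same} stratification by open Richardson varieties rather than a deformed version. This is precisely the regime treated in \cite{GY,LY,Y}, so once the Manin triple identifications from Theorem~\ref{thm:mm*-Manin} are pinned down, the argument should go through without modification.
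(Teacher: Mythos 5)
Your proposal follows essentially the same route as the paper, which proves part (a) by the phrase ``arguing as in the proof of Theorem \ref{Poisson+}(a)'' and part (b) by citing Theorem \ref{thm:Poisson-orders} together with \cite[Theorem~4.1]{BG}. You correctly fill in what that one-line pointer entails: lift along the Poisson covering $\widetilde{M}_{\bq}^{\geqslant} \twoheadrightarrow M_{\bq}^{\geqslant}$, apply the Lu--Yakimov dressing results and \cite[Theorem 1.10]{Y} to the Manin triple from Theorem \ref{thm:mm*-Manin}, then descend by the right $\tau_+^{-1}(\wt{T}_{\bq})$-action and translate via $\upsilon$; the Bruhat-theoretic identity $(\wt{B}_{\bq}^+\cap \wt{B}_{\bq}^- w\wt{B}_{\bq}^-)\wt{B}_{\bq}^- = \wt{B}_{\bq}^+\wt{B}_{\bq}^-\cap \wt{B}_{\bq}^- w\wt{B}_{\bq}^-$ that silently underlies your third paragraph is correct. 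Your closing worry about the non-standard BD twist is legitimate but already handled in the framework: the Lu--Yakimov and Yakimov strata depend only on the Manin-triple data and the ambient group decomposition, not on the particular continuous parameter, which is why the paper can invoke these references uniformly; also note the small slip that the subgroup being quotiented out inside $\widetilde{M}_{\bq}^{\geqslant}$ is $\pi_+^{-1}(\wt{T}_{\bq})$ (equivalently $T(\widetilde{M}_{\bq}^{\geqslant})$), not $\wt{T}_{\bq}$ itself, though $\pi_+$ identifies them.
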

\pf
Part (a) is proved arguing as in the proof of Theorem \ref{Poisson+}(a). 
Then (b) is a consequence of (a), and
Theorems \ref{theorem:BG-reps} and \ref{thm:Poisson-orders}.
\epf
An analogous result holds for the large quantum unipotent algebra $U_{\bq}^-$
and the torus orbits of symplectic leaves of the Poisson homogeneous space $M_{\bq}^{\leqslant}/ \tau_-^{-1}(\wt{T}_{\bq})$.

\appendix
\section{Families of finite-dimensional Nichols algebras}\label{sec:Nichols-specialization}
Let $\theta \in \N$, $\I = \I_{\theta}$. We fix a matrix $\bq = (q_{ij}) \in \Cset^{\I \times \I}$
such that $\dim \toba_{\bq} < \infty$. To insure centrality of $Z_{\bq}$ we require

\begin{property}\label{property:condition-cartan-roots-simple}
The matrix $\bq$ satisfies \eqref{eq:condition-cartan-roots-simple}, i.e.,
$q_{\alpha_i\beta}^{N_\beta}=1$,  for all $i\in\I$, $\wbeta\in\varPi^{\bq}$.
\end{property}

\begin{remark}\label{rem:condition-cartan-roots-simple-app}
If the Dynkin diagram of $\bq'$ is as in Tables \ref{tab:Appendix-cartan}, \ref{tab:Appendix-super} and \ref{tab:Appendix-modular}, then there is $\bq$ with the same Dynkin diagram that satisfies \eqref{eq:condition-cartan-roots-simple}; the proof is straightforward.
\end{remark}

If $\bq$ satisfies \eqref{eq:condition-cartan-roots-simple}, then
any matrix in its Weyl-equivalence class also does.
Let $\Cset[\nu^{\pm 1}]$ be the algebra of Laurent polynomials; its group of units is 
$\Cset[\nu^{\pm 1}]^\times = \Cset^{\times} \nu^{\Z}$. 
Let
\begin{align}\label{eq:braiding-matrix-parameter.app}
\bqf = ( \qf_{ij}) \in  \big( \Cset[\nu^{\pm 1}]^\times \big)^{\I \times \I}
\end{align}
For $x \in \Cset^{\times}$, we denote by $\bqf(x)$
the matrix obtained by the evaluation $\ev: \Cset[\nu^{\pm 1}] \to \Cset$, $\ev(\nu) = x$.
We seek for matrices \eqref{eq:braiding-matrix-parameter.app} with the following properties \ref{item:property-a}
and \ref{item:property-specialization-integer-3}. 

\begin{property}\label{item:property-a}
The Nichols algebra of the $\Cset(\nu)$-braided vector space of diagonal type with braiding matrix \eqref{eq:braiding-matrix-parameter.app}  has the same  arithmetic root system as $\bq$.
\end{property}

By inspection of the list in \cite{H-classif RS}--see also the exposition in \cite{AA-diag-survey}--we conclude that the only possible matrices
\eqref{eq:braiding-matrix-parameter.app} are those  Weyl-equivalent to the ones with Dynkin diagrams as
in Tables \ref{tab:Appendix-cartan}, \ref{tab:Appendix-super}
and \ref{tab:Appendix-modular} and that the following property holds.

\begin{property}\label{item:property-c} There exists an open subset $\emptyset \neq O \subseteq \ku^{\times}$ 
such that for any $x \in O$, the root systems and Weyl groupoids 
associated to $\bqf$ and $\bqf(x)$ are isomorphic.
Also there exists $\xi \in \G_{\infty}' \cap O$ 
with $N := \ord \xi \in [2, \infty)$ such that $\bq = \bqf(\xi)$. 
\end{property}

\begin{remark}\label{rem:twisting-generic}
(i). The Dynkin diagrams of the matrices $\bqf$ and $\bq$
locally have the form 
$\xymatrix{\overset{\qf_{ii}}{\underset{\ }{\circ}}\ar  @{-}[r]^{\widetilde{\bqf}_{ij}}  &
\overset{\bqf_{jj}}{\underset{\ }{\circ}}}$, 
respectively $\xymatrix{\overset{q_{ii}}{\underset{\ }{\circ}}\ar  @{-}[r]^{\widetilde{q}_{ij}}  &
\overset{q_{jj}}{\underset{\ }{\circ}}}$, 
where $\widetilde{\bqf}_{ij} = \qf_{ij} \qf_{ji}$, $\widetilde{q}_{ij} = q_{ij} q_{ji}$,
i.e., the Dynkin diagram does not determine completely the braiding matrix. We deal with this as follows.
Let  $\bp = (p_{ij}) \in \Cset^{\I \times \I}$ with the same Dynkin diagram as $\bq$. Then there exists 
$\bpf \in  \big( \Cset[\nu^{\pm 1}]^\times \big)^{\I \times \I}$  with the same Dynkin diagram as $\bqf$ such that
$\bp = \bpf(\xi)$. For, take $\bpf_{ii} = \bqf_{ii}$ and $\bpf_{ij} \in \Cset[\nu^{\pm 1}]^\times$ such that 
$p_{ij} = \bpf_{ij}(\xi)$ for $i < j$;
then  $\bpf_{ji} =\widetilde{\bqf}_{ij} \bpf_{ij}^{-1}$.

\medbreak
(ii). Assume that $\bqf$ satisfies \ref{item:property-a}.
Let  $\bpf$ be another matrix with the same diagram as \eqref{eq:braiding-matrix-parameter.app}.
Then $\bqf_{ij} = \bpf_{ij}\nu^{h_{ij} N}$, $i < j$ for a unique family  $(h_{ij})_{i<j\in\I}$ with $h_{ij}\in\mathbb Z$. 
\end{remark}

\begin{property}\label{item:property-specialization-integer-3} 
 $\WP^{\bqf}$ defined in \eqref{eq:matrix-P} is invertible.
\end{property}

Let $\NN$ be the diagonal matrix with entries $N_{\beta}$, $\wbeta\in\varPi^\bq$.
The matrix $\WP^{\bqf}$ is invertible if and only if the auxiliary matrix $\TT^{\bqf}$ is so,
where
\begin{align*}
\WP^{\bqf} = -\xi^{-1}\NN \TT^{\bqf} \NN.
\end{align*}

\begin{proposition}\label{prop:specialization-integer}
There exist matrices $\Matr = (\matr_{ij}) \in \Z^{\I \times \I}$
and $(\pt_{ij}) \in  (\Cset^{\times})^{\I \times \I}$ such that $C$ is symmetric and:

\begin{enumerate}[leftmargin=*,label=\rm{(\roman*)}]
\item\label{item;prop-AppA-1} There are infinitely many matrices $T = (t_{ij}) \in \Z^{\I \times \I}$ fulfilling
\begin{align}\label{eq:def-T-conditions}
t_{ii} &= \matr_{ii}, & t_{ij} + t_{ji} &= \matr_{ij} & &\text{ for all } i \neq j \in \I
\end{align}
such that the matrix $\bqf = (\bqf_{ij})$ defined by
\begin{align}\label{eq:def-qbf-T}
\bqf_{ij} &= \pt_{ij}\nu^{t_{ij}}, &\text{ for all } i, j \in \I
\end{align}
satisfies \ref{item:property-a}.

\item\label{item;prop-AppA-2} Among those $T$ in \ref{item;prop-AppA-1}, there infinitely many such that 
 $\bqf$ satisfies \ref{item:property-specialization-integer-3}.
\end{enumerate}
\end{proposition}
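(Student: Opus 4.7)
My plan is to read off $\Matr$ and $(\pt_{ij})$ from an explicit representative of the one-parameter family containing $\bq$, verify (i) by direct inspection of the Dynkin diagram, and prove (ii) by computing $\WP^{\bqf}$ in closed form and then running a density argument whose key ingredient is the identification of a certain quadratic form with (a rescaling of) the Killing form of $\g_{\bq}$. First, by Remark~\ref{rem:condition-cartan-roots-simple-app} and the tables of this appendix, fix a matrix $\bqf^0 = (\pt^0_{ij}\nu^{t^0_{ij}})$ realizing the relevant one-parameter family and satisfying $\bqf^0(\xi) = \bq$ together with \eqref{eq:condition-cartan-roots}; then set $\pt_{ij} := \pt^0_{ij}$, $\matr_{ii} := t^0_{ii}$, and $\matr_{ij} := t^0_{ij} + t^0_{ji}$ for $i \neq j$, so that $\Matr$ is symmetric by construction. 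For (i), I would take the infinite family $T = T^0 + N A$ with $A \in \Z^{\I \times \I}$ integer antisymmetric (zero diagonal). Each such $T$ satisfies \eqref{eq:def-T-conditions}, and the resulting $\bqf = (\pt_{ij}\nu^{t_{ij}})$ has the same diagonal entries and the same symmetric products $\bqf_{ij}\bqf_{ji} = \pt_{ij}\pt_{ji}\nu^{\matr_{ij}}$ as $\bqf^0$; hence their generalized Dynkin diagrams coincide, and their arithmetic root systems agree by \cite{H-classif RS}, yielding Property~\ref{item:property-a}. Because $\xi^N = 1$, we even retain $\bqf(\xi) = \bq$ and condition \eqref{eq:condition-cartan-roots} for every such $T$.

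For (ii) the plan is to compute $\WP^{\bqf}$ explicitly. Writing $\bqf_{\beta\gamma} = \pt_{\beta\gamma}\nu^{t_{\beta\gamma}}$ with $t_{\beta\gamma} := \sum_{i,j} \beta_i \gamma_j t_{ij}$, the centrality $q_{\beta\gamma}^{N_{\beta}N_{\gamma}} = 1$ forces $\pt_{\beta\gamma}^{N_{\beta}N_{\gamma}} = \xi^{-N_{\beta}N_{\gamma} t_{\beta\gamma}}$, hence
\begin{align*}
1 - \bqf_{\beta\gamma}^{N_{\beta}N_{\gamma}} \;=\; 1 - (\nu/\xi)^{N_{\beta}N_{\gamma} t_{\beta\gamma}},
\end{align*}
and a geometric series expansion gives $\wp^{\bqf}_{\beta\gamma}(\xi) = -\xi^{-1}\, N_{\beta}N_{\gamma}\, t_{\beta\gamma}$. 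Letting $U \in \Z^{\varPi^{\bq} \times \I}$ be the matrix whose $\wbeta$-row is the coordinate vector of $\wbeta$ in the basis $(\alpha_i)_{i \in \I}$, this reads $\WP^{\bqf} = -\xi^{-1}\, U T U^{\top}$, so invertibility of $\WP^{\bqf}$ reduces to that of $UTU^{\top}$.

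To finish, I would decompose $T = S + A'$ with $S$ symmetric ($S_{ii} = \matr_{ii}$, $S_{ij} = \matr_{ij}/2$) and $A'$ antisymmetric, giving $UTU^{\top} = USU^{\top} + UA'U^{\top}$. The crux is the invertibility of $USU^{\top}$. Combining the formula above with Theorem~\ref{thm:mm*}(a), the Cartan matrix of $\g_{\bq}$ satisfies $c_{\wbeta\wgamma} = 2\,(USU^{\top})_{\wbeta\wgamma}/(USU^{\top})_{\wbeta\wbeta}$, and the diagonal entries $(USU^{\top})_{\wbeta\wbeta} = -\xi\, \wp^{\bqf}_{\beta\beta}(\xi)$ are nonzero since $\wp^{\bqf}_{\beta\beta}(\xi) \neq 0$. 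Any symmetric bilinear form on $\Qset \varPi^{\bq}$ whose associated Cartan matrix is $c_{\wbeta\wgamma}$ must, on each simple factor of $\g_{\bq}$, be proportional to the restriction of the Killing form (propagating $B(\wbeta,\wbeta)/\kappa(\wbeta,\wbeta) = B(\wgamma,\wgamma)/\kappa(\wgamma,\wgamma)$ along edges of each connected simple Dynkin subdiagram), and the proportionality scalars are nonzero because the diagonals are. Hence $USU^{\top}$ is non-degenerate. Consequently $\det(UTU^{\top})$, regarded as a polynomial in the entries of $A'$, does not vanish at $A' = 0$ and is therefore not identically zero. Since the admissible integer $T$'s form the Zariski-dense $\Z$-lattice $T^{0} + N \cdot \{\text{integer antisymmetrics}\}$ in the corresponding $\Qset$-affine subspace, infinitely many of them render $\WP^{\bqf}$ invertible, proving (ii). The principal obstacle is this identification of $USU^{\top}$ with a rescaling of the Killing form on the root space of $\g_{\bq}$; once it is in place, the density argument in $\Qset$-coordinates is routine.
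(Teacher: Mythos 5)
Your proof is correct, and for part (ii) it takes a genuinely different route from the paper's. The paper establishes both parts by case-by-case inspection of the Cartan, super, and modular families, explicitly computing the auxiliary matrix $\TT^{\bqf}$ in each case (e.g.\ observing in Cartan type that $t_{ij}=0$ for $i<j$ makes $T$ lower triangular with nonzero diagonal). You instead (a) observe that the centrality condition $q_{\beta\gamma}^{N_\beta N_\gamma}=1$ forces $\bqf_{\beta\gamma}^{N_\beta N_\gamma}=(\nu/\xi)^{N_\beta N_\gamma t_{\beta\gamma}}$, yielding the clean uniform formula $\wp^{\bqf}_{\beta\gamma}(\xi)=-\xi^{-1}N_\beta N_\gamma t_{\beta\gamma}$, hence $\WP^{\bqf}=-\xi^{-1}UTU^{\mathrm{T}}$; and (b) feed the Cartan-matrix identity of Theorem~\ref{thm:mm*}(a) into a Killing-form rigidity argument to conclude that the symmetric part $USU^{\mathrm{T}}$ is automatically non-degenerate, so that the Zariski-density argument goes through with no case analysis at all. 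This buys uniformity and conceptual clarity — your derivation never splits on the type — at the cost of importing Theorem~\ref{thm:mm*}(a) from the body of the paper into the Appendix. One should check that this creates no circularity: Theorem~\ref{thm:mm*}(a) is proved before the Non-degeneracy Assumption~\ref{genericityassumption} is imposed, and its proof relies on the specialization machinery (hence on Proposition~\ref{prop:specialization-integer}(i)) but not on part (ii); so the chain (i) $\Rightarrow$ Theorem~\ref{thm:mm*}(a) $\Rightarrow$ (ii) is sound. The only mild imprecisions are stylistic: the specific point $T=S$ you evaluate at need not be an integer matrix (this is harmless since you only need non-vanishing on the $\Qset$-affine subspace), and the reference to Remark~\ref{rem:twisting-generic}(i) should be made explicit to justify that the table representative can be chosen with $\bqf^0(\xi)=\bq$.
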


\pf It suffices to fix one matrix for each Weyl-equivalence class, see Lemma \ref{lem:matrix-P-Weyl-gpd}. 
We check below \ref{item;prop-AppA-1} by case-by-case considerations computing also $\TT^{\bqf}$
and proving that it is invertible for infinitely many $T$.

\subsection{Cartan type} Let $\bq$ be in this class; then there is a Cartan matrix $A = (a_{ij})_{i,j \in \I}$
such that $q_{ij} q_{ji} = q_{ii}^{a_{ij}}$. We fix $d_i \in \I_3$ such that
$d_ia_{ij} = d_j a_{ji}$ for all $i,j \in \I$. 
The Lie algebra $\g_{\bq}$ has the same type except when $N$ is even and $A$ is 
of type $B_{\theta}$ or $C_{\theta}$, when they are interchanged.
In this case $\varPi^{\bq} = \{N_i\alpha_i: i\in \I\}$, so \eqref{eq:condition-cartan-roots-simple} becomes:
\begin{align}\label{eq:condition-cartan-roots-simple-app1}
q_{ij}^{N_j} &= 1,& &\text{ for all } i,j\in\I.
\end{align} 
The matrix $\bqf$ we are looking for should also satisfy
$\qf_{ij}\qf_{ji} = \qf_{ii}^{a_{ij}}$ for all $i\neq j$.
In all cases we take $\xi = q_{11}$ except for $B_{\theta}$, where $\xi = q_{\theta\theta}$;
see Table \ref{tab:Appendix-cartan}.  Set $t_{ii}= d_{i}$ and $\bqf_{ii}= \nu^{t_{ii}}$. 
Thus $\bqf_{ii}(\xi) = q_{ii}$ for all $i\in \I$. 
Recall that
\begin{align*}
(\nu - \xi) \wp^{\bqf}_{\alpha_i\alpha_j}(\nu) &= 1-\bqf_{ij}^{N_iN_j}.
\end{align*}
For instance $\wp^{\bqf}_{\alpha_i\alpha_i}(\nu) =\frac{ 1- \nu^{d_iN_i^2}}{\nu - \xi}$ hence
\begin{align}\label{eq:pab-Cartan-ii}
\wp^{\bqf}_{\alpha_i\alpha_i}(\xi)&= - \xi^{-1}d_{i}N_i^2 = - \xi^{-1}t_{ii}N_i^2.
\end{align}

Let  $i<j$. 
We see that there exists $d_j \in \I_3$ such that $N_j = N/d_j$. By \eqref{eq:condition-cartan-roots-simple-app1},
$q_{ij}$ is a power of $\xi^{d_j}$; choose $t_{ij}\in d_j\Z$ such that $\bqf_{ij} = \nu^{t_{ij}}$ 
satisfies $\bqf_{ij}(\xi) = \xi^{t_{ij}} = q_{ij}$.
Set $t_{ji}= d_{i}a_{ij}-t_{ij}$ and $\bqf_{ji}= \nu^{t_{ji}}$. 
We have defined $T$ satisfying \eqref{eq:def-T-conditions} and 
$\bqf$ turns out to be given by \eqref{eq:def-qbf-T} with $\pt_{ij} = 1$ for all $i,j$, 
i.e.,  \ref{item;prop-AppA-1} holds. Also for all $i\neq j$, 
$(\nu - \xi) \wp^{\bqf}_{\alpha_i\alpha_j}(\nu) = 1- \nu^{t_{ij}N_iN_j}$ and
\begin{align} \label{eq:pab-Cartan-ij}
\wp^{\bqf}_{\alpha_i\alpha_j}(\xi)&=- \xi^{-1}t_{ij}N_iN_j.
\end{align}
Therefore $\TT^{\bqf} = T$. Observe that if $t_{ij} =0$ for $i<j$, then 
$\det \TT^{\bqf} \neq 0$. By a standard argument, \ref{item;prop-AppA-2} holds.

\begin{table}[ht]
\caption{Cartan type}\label{tab:Appendix-cartan}
\begin{center}
\begin{tabular}{c  c c }\hline
Type & $\bqf$ & $N$
\\ \hline
$A_{\theta}$ &  $\xymatrix{ \overset{\nu}{\underset{\ }{\circ}}\ar  @{-}[r]^{\nu^{-1}}  &
\overset{\nu}{\underset{\ }{\circ}}\ar  @{-}[r]^{\nu^{-1}} &  \overset{\nu}{\underset{\
}{\circ}}\ar@{.}[r] & \overset{\nu}{\underset{\ }{\circ}} \ar  @{-}[r]^{\nu^{-1}}  &
\overset{\nu}{\underset{\ }{\circ}}}$ & 
\\ \hline
$B_{\theta}$,  $\theta \ge 2$
& $\xymatrix{ 
\overset{\,\,\nu^2}{\underset{\ }{\circ}}\ar  @{-}[r]^{\nu^{-2}} &
\overset{\,\,\nu^2}{\underset{\ }{\circ}}\ar@{.}[r] & \overset{\,\,\nu^2}{\underset{\
}{\circ}} \ar  @{-}[r]^{\nu^{-2}}  & \overset{\nu}{\underset{\ }{\circ}}}$ & {\small $>2$} 
\\ \hline
$C_{\theta}$,  $\theta \ge 3$
& $\xymatrix{ \overset{\nu}{\underset{\ }{\circ}}\ar  @{-}[r]^{\nu^{-1}}  &
\overset{\nu}{\underset{\ }{\circ}}\ar  @{-}[r]^{\nu^{-1}} &  \overset{\nu}{\underset{\
}{\circ}}\ar@{.}[r] & \overset{\nu}{\underset{\ }{\circ}} \ar  @{-}[r]^{\nu^{-2}}  &
\overset{\,\,\nu^2}{\underset{\ }{\circ}}}$ & {\small  $>2$} 
\\ \hline
$D_{\theta}$,  $\theta \ge 4$
& $\xymatrix@R-15pt{  & & &  \overset{\nu}{\circ} &\\
\overset{\nu}{\underset{\
}{\circ}}\ar  @{-}[r]^{\nu^{-1}} &  \overset{\nu}{\underset{\ }{\circ}}\ar@{.}[r] &
\overset{\nu}{\underset{\ }{\circ}} \ar  @{-}[r]^{\nu^{-1}}  & \overset{\nu}{\underset{\
}{\circ}} \ar @<0.7ex> @{-}[u]_{\nu^{-1}}^{\qquad} \ar  @{-}[r]^{\nu^{-1}} &
\overset{\nu}{\underset{\ }{\circ}}}$ & 
\\ \hline
$E_{\theta}$,  $\theta \in \I_{6,8}$
& $\xymatrix@R-15pt{ &  &   \overset{\nu}{\circ} &\\
\overset{\nu}{\underset{\ }{\circ}}\ar  @{-}[r]^{\nu^{-1}}  &  \overset{\nu}{\underset{\
}{\circ}}\ar@{.}[r]  & \overset{\nu}{\underset{\ }{\circ}} \ar @<0.7ex> @{-}[u]_{\nu^{-1}} \ar
@{-}[r]^{\nu^{-1}} &  \overset{\nu}{\underset{\ }{\circ}}  \ar  @{-}[r]^{\nu^{-1}} &
\overset{\nu}{\underset{\ }{\circ}}}$ &  
\\ \hline
$F_{4}$ 
& $\xymatrix{ \overset{\,\,\nu}{\underset{\ }{\circ}}\ar  @{-}[r]^{\nu^{-1}}  &
\overset{\,\,\nu}{\underset{\ }{\circ}}\ar  @{-}[r]^{\nu^{-2}} &   \overset{\nu^2}{\underset{\
}{\circ}} \ar  @{-}[r]^{\nu^{-2}}  &  \overset{\nu^2}{\underset{\ }{\circ}} }$  
& {\small $>2$}
\\ \hline
$G_2$  & $\xymatrix{  \overset{\,\,\nu}{\underset{\ }{\circ}} \ar  @{-}[r]^{\nu^{-3}} &
\overset{\nu^3}{\underset{\ }{\circ}}}$ & {\small $>3$} 
\\
\hline \end{tabular}
\end{center}
\end{table}

\subsection{Super type} 

Assume that the braiding matrix  $\bq$ is of super type; see \cite{AA-diag-survey} for details and below for $\superda{\alpha}$. 
Going over the list, we see that there exist

\begin{itemize}[leftmargin=*]\renewcommand{\labelitemi}{$\circ$}
\item $\xi \in \Cset^{\times}$, a root of 1 of order $N > 1$;

\smallbreak
\item a symmetric matrix $\Mat = (\mat_{ij})_{i,j \in \I} \in \Z^{\I\times \I}$ with
$\mat_{ij} = 1$ for at least one pair  $(i,j)$;

\smallbreak
\item a parity vector $\pt = (\pt_1, \dots, \pt_{\theta}) \in \{\pm 1 \}^{\I}$ 
with $\pt_i =-1$ when $\mat_{ii} =0$; such that
\end{itemize}
\begin{align*}
q_{ij}q_{ji}  &=  \xi^{\mat_{ij}}, & i&\neq j;& q_{ii} &=  \pt_i\xi^{\mat_{ii}},& i &\in \I. 
\end{align*} 

We describe in Table \ref{tab:Appendix-super} matrices $\bqf$ of super type, one for each
Weyl-equivalence class (here $\alpha_{( ij)} := \alpha_{i} + \dots +\alpha_{j}$ for $i < j$). 
Since the matrix $\bq$ has an analogous shape, we may assume that

\begin{itemize}[leftmargin=*]\renewcommand{\labelitemi}{$\circ$}
\item there exists $k \in \I$ such that $\{i\in \I: \pt_i =-1 \} = \{k\}$;

\smallbreak
\item there exists $h \in \I$, $h \neq k$, such that $\xi = q_{hh}$.
\end{itemize}

Recall that $\widetilde{\varPi}$ was defined in \eqref{eq:def-wtPi-1} and  \eqref{eq:def-wtPi-2}. Therefore we have:

\begin{itemize}[leftmargin=*]\renewcommand{\labelitemi}{$\circ$}
\item either  $\widetilde{\varPi}^{\bq} = \{N_i \alpha_{i}: i \in \I, i \neq k\}\cup \{\Nt\alpha_k\}$ ($\Nt=N$ if $N$ is even and $\Nt=2N$ if $N$ is odd) for type $\supera{k-1}{\theta - k}$
or else there exists a unique positive non-simple root $\beta$ such that 
$\widetilde{\varPi}^{\bq} = \{N_i \alpha_{i}: i \in \I, i \neq k\} \cup \{N_{\beta} \beta\}$;

\smallbreak
\item for $i \in \I$, $i \neq k$, we may (and do) choose $b_{ii} \in \{\pm1, \pm2, \pm3 \}$.
Then $N_i = LCD(b_{ii}, N)$; set $d_i = N / N_i$.
\end{itemize}

We start defining the  matrix $\bqf$.
First we take $t_{ii} = b_{ii}$ and $\bqf_{ii} = \pt_{i}q^{b_{ii}}$ for all $i\in \I$. 

\smallbreak
Condition \eqref{eq:condition-cartan-roots-simple} says that $q_{ij}^{N_j}=1$, for all $i\in\I$, 
$j \in \I \backslash \{k\}$.
Let $i < j$ with $j \neq k$; choose $t_{ij}\in d_j\Z$ and set 
$t_{ji} = \mat_{ji} -t_{ij}$.
Then $\bqf_{ij} = \nu^{t_{ij}}$ and $\bqf_{ji} = \nu^{t_{ji}}$
satisfy $\bqf_{ij}(\xi) = \xi^{t_{ij}} = q_{ij}$ and  $\bqf_{ij}\bqf_{ji} = \nu^{\mat_{ij}}$.

\smallbreak
Similarly, $q_{ki}^{N_i}=1$ for $k > i$, so choose $t_{ik}\in d_i\Z$ and set $t_{ki} = \mat_{ki} -t_{ik}$,
$\bqf_{ki} = \nu^{\mat_{ki} - t_{ik}}$ and  $\bqf_{ik} = \nu^{t_{ik}}$ so that $\bqf_{ki}(\xi) = \xi^{t_{ki}} =q_{ki}$. 
We have defined $T$ satisfying \eqref{eq:def-T-conditions} and 
$\bqf$ turns out to be given by \eqref{eq:def-qbf-T} with $\pt_{ii} = \pt_{i}$ and $\pt_{ij} = 1$ for all $i\neq j$, 
i.e.,  \ref{item;prop-AppA-1} holds.

\smallbreak
It remains to compute the matrix $\TT^{\bqf}$. Arguing as in the Cartan case we see that
\begin{align*}
\wp^{\bqf}_{\alpha_i\alpha_i}(\xi)&= - \xi^{-1} \pt_{i}\mat_{ii} N_i^2,&
\wp^{\bqf}_{\alpha_i\alpha_j}(\xi)&= - \xi^{-1}t_{ij}N_iN_j,& i, j &\in \I \backslash \{k\}.
\end{align*}

Assume that there exists $\gamma  \in \varPi^{\bq} \backslash \I$ (a non-simple Cartan root).
Then there exist $\pt_{\gamma} \in \{\pm 1 \} $ and $\mat_{\gamma\gamma}, \mat_{i\gamma}\in \Z$ such that
\begin{align*}
\bqf_{\gamma\gamma} &= \pt_{\gamma}\nu^{\mat_{\gamma\gamma}}, & \bqf_{i\gamma}\bqf_{\gamma i} &= \nu^{\mat_{i\gamma}}.
\end{align*}
Extend $(t_{ij})$ to a bilinear form $t: \Z^{\I} \times \Z^{\I} \to \Z$. 
Then for $k \neq i \in \I$,
\begin{align*}
 \wp^{\bqf}_{\gamma\gamma}(\xi) &= - \xi^{-1} \pt_{\gamma}\mat_{\gamma\gamma} N_{\gamma}^2,& 
 \wp^{\bqf}_{\alpha_i\gamma}(\xi) &= - \xi^{-1} t_{i\gamma}N_iN_{\gamma}, &
(\nu - \xi) \wp^{\bqf}_{\gamma\alpha_i}(\nu) &= - \xi^{-1} t_{\gamma i}N_iN_{\gamma}.
\end{align*}

All in all,  
$\TT^{\bqf}$ is of the form $\left(t_{\alpha \beta}\right)\in\Z^{\varPi^{\bq}\times \varPi^{\bq}}$, 
where $t_{\alpha\alpha} = \pt_{\alpha}\mat_{\alpha\alpha}$, 
$t_{r\alpha \beta} + t_{\beta \alpha} = \mat_{\alpha \beta}$ for $\alpha \neq \beta$.
Arguing as in  the Cartan case, we conclude that \ref{item;prop-AppA-2} holds.

\begin{table}[ht]
\caption{Super type}\label{tab:Appendix-super}
\begin{center}
\begin{tabular}{c  c c c  c}\hline
Type & $\bqf$ & $N$ & $\widetilde{\varPi}^{\bq}$ & $\g_{\bq}$
\\ \hline
${\small\xymatrix@R-25pt{\supera{k-1}{\theta - k}, \\  k \in \I_{\lfloor\frac{\theta+1}{2} \rfloor}}}$ 
&$\xymatrix@C-10pt{ \overset{\nu^{-1}}{\underset{\ }{\circ}}\ar  @{-}[r]^{\nu}  &
\overset{\nu^{-1}}{\underset{\ }{\circ}}\ar@{.}[r] &  \underset{k}{\overset{{-1}}{\circ}}
\ar@{.}[r] & \overset{\nu}{\underset{\ }{\circ}} \ar  @{-}[r]^{\nu^{-1}}  &
\overset{\nu}{\underset{\ }{\circ}}}$ 
& $> 2$ & ${\small\xymatrix@R-25pt{\{N\alpha_j \, | j\ne k \} \\  \cup\{\Nt\eta\} }}$
& ${\small A_{k-1} \times A_{\theta-k}}$
\\ \hline
${\small \xymatrix@R-25pt{\superb{k}{\theta - k}, \\  k\in\I_{\theta-1} }}$ 
& $\xymatrix@C-10pt{ \overset{\nu^{-2}}{\underset{\ }{\circ}}\ar  @{-}[r]^{\nu^2}  &
\overset{\nu^{-2}}{\underset{\ }{\circ}}\ar@{.}[r] &  \underset{k}{\overset{{-1}}{\circ}}
\ar@{.}[r] & \overset{\nu^2}{\underset{\ }{\circ}} \ar  @{-}[r]^{\nu^{-2}}  &
\overset{\nu}{\underset{\ }{\circ}}}$ & {\small  $\neq2,4$} & {\small\eqref{eq:Piq-B-odd}} &  
${\small \xymatrix@R-25pt{C_{k} \times B_{\theta-k} \\  C_{k} \times C_{\theta-k} }}$
\\ \hline
${\small\superd{k}{\theta - k}}$, 
& $\xymatrix@C-10pt{ \overset{\nu^{-1}}{\underset{\ }{\circ}}\ar  @{-}[r]^{\nu}  &
\overset{\nu^{-1}}{\underset{\ }{\circ}}\ar@{.}[r] &  \underset{k}{\overset{{-1}}{\circ}}
\ar@{.}[r] & \overset{\nu}{\underset{\ }{\circ}} \ar  @{-}[r]^{\nu^{-2}}  &
\overset{\nu^2}{\underset{\ }{\circ}}}$ & {\small $>2$} & {\small \eqref{eq:Piq-D-odd} } & $D_{k} \times C_{\theta-k}$
\\
$k < \frac{\theta}{2}$ & &  &  & $D_{k} \times B_{\theta-k}$
\\ \hline
$\xymatrix@R-25pt{\superda{\alpha}, \\ d_1,d_3 \in \N}$
& $\xymatrix@C-2pt{ \overset{\nu^{d_1}}{\circ}\ar  @{-}[r]^{\nu^{-d_1}}  &
\overset{-1}{\circ} \ar  @{-}[r]^{\nu^{-d_3}}  & \overset{\nu^{d_3}}{\circ}}$ & & {\small  \eqref{eq:Piq-D21}}
& {\small $A_1\times A_1\times A_1$}
\\ \hline
$\superf$ 
& $\xymatrix@C-10pt{ \overset{\,\, \nu^2}{\circ}\ar  @{-}[r]^{\nu^{-2}}  & \overset{\,\,
\nu^2}{\circ} \ar  @{-}[r]^{\nu^{-2}}  & \overset{\nu}{\circ} &
\overset{-1}{\circ} \ar  @{-}[l]_{\nu^{-1}}}$  
& {\small  $>2$} & {\small  \eqref{eq:Piq-F-odd}} & {\small $A_1\times B_3$}
\\ \hline
$\superg$  & $\xymatrix@C-5pt{ \overset{-1}{\circ}\ar  @{-}[r]^{\nu^{-1}}  &
\overset{\nu}{\circ} \ar  @{-}[r]^{\nu^{-3}}  & \overset{\,\, \nu^3}{\circ},}$ & {\small $N>3$} 
& {\small  \eqref{eq:Piq-G}} & {\small $A_1\times G_2$}
\\\hline \end{tabular}
\end{center}
\begin{align}\label{eq:Piq-B-odd}
&\{N_j\alpha_j \, | j\ne k\}\cup \{ N_{\alpha_{(k\theta)}} \alpha_{(k\theta)}\},
\\\label{eq:Piq-D-odd}
&\{N_j\alpha_j \, | j\ne k \}\cup \{N_{\alpha_{(k-1 \, \theta)}+\alpha_{(k \,\theta-1)}}(\alpha_{(k-1 \, \theta)}+\alpha_{(k \,\theta-1)})\},
\\\label{eq:Piq-D21}
 &\{N_1\alpha_1, N_3\alpha_3, N_{\alpha_1+2\alpha_2+\alpha_3}(\alpha_1+2\alpha_2+\alpha_3)\},
\\
\label{eq:Piq-F-odd}
&\{N_1\alpha_1, N_2\alpha_2, N_3\alpha_3, N_{\alpha_1+ 2\alpha_2+ 3\alpha_3+ 2\alpha_4}(\alpha_1+ 2\alpha_2+ 3\alpha_3+ 2\alpha_4)\},
\\ \label{eq:Piq-G}
&\{N_{\alpha_1+2\alpha_2+\alpha_3} (\alpha_1+2\alpha_2+\alpha_3), N_2\alpha_2, N_3\alpha_3\}.
\end{align}
\end{table}

\subsubsection*{Type $\superda{\alpha}$} 
The diagrams of this type are Weyl equivalent to the following one $\xymatrix@C-2pt{ \overset{r}{\circ}\ar  @{-}[r]^{r^{-1}}  & \overset{-1}{\circ} \ar  @{-}[r]^{s^{-1}}  & \overset{s}{\circ}}$, with $r,s, rs\ne 1$.
The corresponding Nichols algebra has finite dimension if and only if $r, s\in\G'_{\infty}$, $rs\ne 1$.
Let $\bq$ be a braiding matrix with this diagram satisfying \eqref{eq:condition-cartan-roots-simple}.
Fix a generator $\xi$ of the subgroup of $\G_{\infty}$ generated by $r, s$; we choose $d_1, d_3\in\N$ minimal such that $r=\xi^{d_1}$, $s=\xi^{d_3}$.
Then there exists a braiding matrix $\bqf$ as in Table \ref{tab:Appendix-super} such that $\bq=\bqf(\xi)$.

\subsection{Modular type}\label{sec:by-diagram-modular-char3}
The Nichols algebras in this family could be thought of as quantizations 
in $\charr 0$ of the 34-dimensional Lie algebras in $\charr 2$ from \cite{KW-exponentials}, respectively
the 10-dimensional  Lie algebras in $\charr 3$ introduced in \cite{BGL}.
The information on  this type is given in Table \ref{tab:Appendix-modular}.
The matrices $T$ and $\TT^{\bqf}$ are worked out as in the super case.
\epf

\begin{table}[ht]
\caption{Modular type}\label{tab:Appendix-modular}
\begin{center}
\begin{tabular}{c  c c c  c}\hline
Type & $\bqf$ & $N$ & $\widetilde{\varPi}^{\bq}$ & $\g_{\bq}$
\\ \hline
$\Bgl(4)$ 
&$\xymatrix@C-15pt{\overset{\nu}{\underset{\ }{\circ}}\ar  @{-}[r]^{\nu ^{-1}}  &
\overset{\nu}{\underset{\ }{\circ}}
\ar  @{-}[r]^{\nu^{-1}}  & \overset{-1}{\underset{\ }{\circ}}
\ar  @{-}[r]^{-\nu}  & \overset{-\nu^{-1}}{\underset{\ }{\circ}}}$ 
& $> 2$ & {\small \eqref{eq:Piq-Bgl-4}} & ${\small A_{2} \times A_{2}}$
\\ \hline
${\small \Brown(2)}$
& $\xymatrix@C-15pt{\overset{\zeta}{\underset{\ }{\circ}} \ar  @{-}[r]^{\nu^{-1}}  &
\overset{\nu}{\underset{\ }{\circ}}}$,  $\zeta \in \G'_3$
& {\small $\neq 3$} & {\small $\{2M\alpha_1+M\alpha_2,N\alpha_2\}$} & {\small $A_1\times A_1$}
\\ \hline
\end{tabular}
\end{center}
\begin{align}\label{eq:Piq-Bgl-4}
&\{N\alpha_1,N\alpha_2,M\alpha_4,M\alpha_1+2M\alpha_2+3M\alpha_3+M\alpha_4\}.
\end{align}
\end{table}

\section{Lie bialgebras and Poisson algebraic groups}\label{subsec:B.1}
We gather minimal background material on Lie bialgebras and Poisson algebraic groups  
for Sections  \ref{sec:PLgrps} and \ref{sec:Poisson-geom}. We refer to \cite[Section 2-7]{ES} for a full treatment.
\subsection{Lie bialgebras} 
Recall that a Lie bialgebra is a Lie algebra $\g$ equipped with a linear map $\delta : \g \to \wedge^2 \g$ such that
\begin{enumerate}[leftmargin=*,label=\rm{(\roman*)}]
\item the dual of the map $\delta$ defines a Lie algebra structure of $\g^*$ and
\item $\delta$ is a 1-cocycle, i.e., $\delta([a,b]) = \ad_a(\delta(b)) - \ad_b(\delta(a))$ for all $a, b \in \g$. 
\end{enumerate} 

The Lie bialgebras with opposite cobracket (same bracket) and opposite bracket (same cobracket) 
will be denoted by $\g_{\mathrm{op}}$ and $\g^{\mathrm{op}}$, respectively.
The dual Lie bialgebra $\g^*$ of $\g$ is the Lie bialgebra with Lie bracket and cobracket given by
\begin{align*}
&\langle [f, g], a \rangle = \langle f \otimes g, \delta(a) \rangle,
&&\langle \delta (f), a \otimes b \rangle = \langle f, [a,b] \rangle, 
&&&\forall a, b \in \g, f, g \in \g^*.
\end{align*}

The Drinfeld double $D(\g)$ of the Lie bialgebra $\g$ is a Lie bialgebra which is isomorphic to $\g \oplus \g^*$ as a vector space and 
is uniquely defined by the conditions:
\begin{enumerate}[leftmargin=*,label=\rm{(\alph*)}]
\item The canonical embeddings $\iota : \g \hookrightarrow D(\g)$ and $\iota^* : (\g^*)^{\mathrm{op}} \hookrightarrow D(\g)$ are embeddings of Lie bialgebras;
\item For $a \in \g \subset D(\g)$, $f \in \g^* \subset D(\g)$, $[x, f] = \ad_x^*(f) - \ad_f^*(x)$ in terms of the coadjoint actions of $\g$ and $\g^*$. 
\end{enumerate}  

A quadratic Lie algebra is a Lie algebra $\g$ equipped with an non-degenerate invariant  symmetric bilinear form $(.,.)$. A Manin triple is a triple $(\g, \g_+, \g_-)$ 
consisting of a quadratic Lie algebra $(\g, (.,.))$ and a pair of isotropic Lie subalgebras $\g_\pm \subset \g$.  

\begin{remark}
\label{Manin-Drinfeld}
The notions of Drinfeld double and Manin triple are equivalent in the case of finite dimensional Lie algebras:
\begin{enumerate}[leftmargin=*,label=\rm{(\alph*)}]
\item Each Drinfeld double $D(\g)$ is a quadratic Lie algebra with symmetric bilinear form
\begin{align*}
&( a + f, b +g) = \langle f, b \rangle + \langle g, a \rangle, && a, b \in \g, f, g \in \g^*.
\end{align*} 
With respect to this form, $(D(\g), \g, \g^*)$ is a Manin triple.
\item For a Manin triple $(\g, \g_+, \g_-)$, $\g_\pm$ have canonical Lie bialgebra structures given by
\begin{align*}
& (  \delta(a), f \otimes g ) = ( a,  [f, g] ), 
&&( \delta (f), a \otimes b ) = - (f, [a,b] ), 
&&&\forall a, b \in \g_+, f, g \in \g_-.
\end{align*}
Then $\g$, equipped with the Lie cobracket $\delta_{\g_+} + \delta_{\g_-}$, is isomorphic to the Drinfeld double of $\g_+$, 
and $\g_- \simeq (\g_+^*)^{\mathrm{op}}$.
\end{enumerate}
\end{remark}

Here is an important class of Lie bialgebras:
$(\g,\delta)$ is \emph{quasitriangular} if $\delta (x) = \ad x (r)$ for all $x\in \g$
where $r = \sum_i r_i \otimes
r^i \in \g \otimes \g$
satisfies the classical Yang-Baxter equation:
$$[r^{12},r^{13}]+[r^{12},r^{23}]+[r^{13},r^{23}]= 0;$$
here   $r^{12} =  r \otimes 1$,  $r^{13} =
\sum_i r_i \otimes 1 \otimes r^i $, $r^{23} =  1 \otimes r$.  In this case we set
$(\g,r):= (\g, \delta)$.  The Drinfled double is the archetypical example of a quasitriangular Lie bialgebra.

\medbreak
Let $r^{21} =\sum_i  r^i \otimes r_i$. 
Recall that a quasitriangular Lie bialgebra $(\g, r)$ is called
\emph{factorizable}  if $r + r^{21} \in S^2 \g$
defines a nondegenerate inner product on $\g^*$ \cite{RSTS}.

\subsection{Poisson algebraic groups} 
\label{subsec:B.2}
A (complex) Poisson algebraic group is an algebraic group $G$ equipped with 
a bivector field $\pi$ such that the product map
\[
(G, \pi) \times (G, \pi) \to (G, \pi)
\]
is Poisson. The coordinate ring $\Cset[G]$ has a canonical structure of 
commutative Poisson-Hopf algebra with Poisson bracket given by 
\begin{align*}
&\{ f, g \} := \langle df \otimes d g, \pi \rangle, && f, g \in \Cset[G],
\end{align*}
where $d f$ denotes the differential of $f$. Conversely, every finitely generated commutative
Poisson-Hopf algebra $H$ gives rise to the Poisson algebraic group $\MaxSpec H$.

The tangent Lie algebra $\g = T_1 G$ of every Poisson algebraic group $G$ has a canonical Lie bialgebra structure. 
The Poisson structure $\pi$ automatically vanishes at the identity element $1$ of $G$. 
The Lie cobracket on $\g$, or equivalently the Lie bracket on $\g^* \simeq T_1^* G$, is defined 
as the linearization of $\pi$ at $1$:
\begin{align}
\label{eq:linear}
&[d_1 (f), d_1 (g)] := d_1 \big( \{ f, g \} \big), &&f, g \in \Cset[G].
\end{align}

In Hopf algebra situations it is advantageous to describe the tangent Lie algebra $\g$ of an algebraic group $G$ by describing the corresponding Lie cobracket 
on $\g^* = T_1^* G$. 

\begin{lemma}\label{lem:tanget-coalg} Let $G$ be a complex algebraic group; as usual 
$\Delta(f) = f_{(1)} \otimes f_{(2)}$ for $f \in \Cset[G]$. Then the canonical Lie coalgebra structure on 
$T_1^* G \simeq \g^*$ is given by
\begin{align*}
&\delta( d_1 f) = d_1 f_{(1)} \wedge d_1 f_{(2)}, 
&& f \in \Cset[G].
\end{align*}
\end{lemma}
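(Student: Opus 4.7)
The plan is to verify the formula by the duality between the Lie algebra $\g = T_1 G$ and the Lie coalgebra structure on $\g^*$. Concretely, since $\delta : \g^* \to \wedge^2 \g^*$ is the cobracket dual to the Lie bracket on $\g$, it suffices to establish
\[
\langle a \otimes b, \, \delta(d_1 f) \rangle = \langle [a,b], \, d_1 f \rangle
\qquad \forall \, a, b \in \g, \; f \in \Cset[G].
\]
Under the identification of $\g$ with point derivations of $\Cset[G]$ at $1 \in G$, the right hand side of the claimed formula evaluated against $a \otimes b$ becomes $a(f_{(1)}) b(f_{(2)}) - b(f_{(1)}) a(f_{(2)})$. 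Here a preliminary observation simplifies bookkeeping: every $a \in \g$ vanishes on constants, so the ``boundary'' terms coming from the primitive-like part of $\Delta f$ (such as $f \otimes 1$ and $1 \otimes f$ when $f \in \Mg_1$) drop out automatically, and one really only picks up the truncated coproduct valued in $\Mg_1 \otimes \Mg_1$; this confirms that the right hand side depends only on the classes of $f_{(1)}$ and $f_{(2)}$ modulo $\Mg_1^2$, and the expression $d_1 f_{(1)} \wedge d_1 f_{(2)}$ is well defined.

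The key step is the computation of $\langle [a,b], d_1 f \rangle$. I would extend $a, b \in \g$ to the left-invariant vector fields $\tilde a, \tilde b$ on $G$ defined by $\tilde a_g := (dL_g)_1(a)$ and use the fundamental identity
\[
\tilde a(f) \;=\; a(f_{(2)}) \, f_{(1)},
\]
which is immediate from $(L_g^* f)(h) = f(gh) = f_{(1)}(g) f_{(2)}(h)$. Iterating this, applying coassociativity in the form $f_{(1)(1)} \otimes f_{(1)(2)} \otimes f_{(2)} = f_{(1)} \otimes f_{(2)} \otimes f_{(3)}$, and finally collapsing one index via the counit relation $\varepsilon(f_{(1)}) f_{(2)} = f$, I would obtain
\[
\tilde a (\tilde b f)(1) = a(f_{(1)}) b(f_{(2)}), \qquad \tilde b(\tilde a f)(1) = b(f_{(1)}) a(f_{(2)}).
\]
Subtracting and using $[a,b] = [\tilde a, \tilde b]_1$ then gives
\[
\langle [a,b], d_1 f \rangle = a(f_{(1)}) b(f_{(2)}) - b(f_{(1)}) a(f_{(2)}),
\]
matching $\langle a \otimes b, \, d_1 f_{(1)} \wedge d_1 f_{(2)} \rangle$ under the standard pairing on $\wedge^2 \g^*$, which proves the lemma.

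The only obstacle is bookkeeping with the Sweedler and wedge-product conventions; there is no substantial conceptual difficulty. One should fix at the outset the convention $\langle a\otimes b, x\wedge y\rangle = a(x)b(y) - a(y)b(x)$ and the convention that $\g$ carries the Lie bracket of left-invariant vector fields; with these choices the signs work out without modification. The result is standard (see, e.g., \cite[\S 2]{ES}) and the argument here is purely a matter of translating between Hopf-algebraic and geometric descriptions of the same object.
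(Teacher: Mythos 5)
The paper states Lemma \ref{lem:tanget-coalg} without proof, treating it as a recalled standard fact (cf.\ the reference to \cite[\S 2-7]{ES} at the head of the appendix); so there is no paper argument to compare against. Your proof supplies a correct derivation.

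Your computation is right. The identity $\tilde a(f)=a(f_{(2)})\,f_{(1)}$ for the left-invariant extension follows directly from $L_g^*f = f_{(1)}(g)f_{(2)}$, and iterating it with coassociativity and then specializing at $1$ (via $\varepsilon(f_{(1)})f_{(2)}=f$) gives $\tilde a(\tilde b f)(1)=a(f_{(1)})b(f_{(2)})$; subtracting the symmetric term yields $\langle[a,b],d_1f\rangle = a(f_{(1)})b(f_{(2)})-b(f_{(1)})a(f_{(2)})$, which equals $\langle a\otimes b,\, d_1f_{(1)}\wedge d_1f_{(2)}\rangle$ under the convention $\langle a\otimes b, x\wedge y\rangle = a(x)b(y)-a(y)b(x)$. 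Your preliminary remark that $d_1$ kills constants and $\Mg_1^2$ is the right way to see both that the terms $f\otimes 1$, $1\otimes f$ contribute nothing and that the right-hand side only depends on $d_1f$; alternatively, the duality identity you prove forces well-definedness on $\Mg_1/\Mg_1^2$ automatically, since the pairing against $[a,b]\in\g$ manifestly factors through $\Mg_1/\Mg_1^2$. One small point worth making explicit for the record: the bracket on $\g=T_1G$ is taken to be that of left-invariant vector fields; with right-invariant fields one would pick up an overall sign, which is compensated by choosing the opposite cobracket. With the left-invariant convention, as you say, no sign adjustments are needed.
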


\subsection{The classification of Belavin and Drinfeld} 
\label{subsec:B.3}

We fix  a complex finite-dimensional simple Lie algebra $\g$. Pick  a Cartan subalgebra $\h \subset \g$ and 
a set $\Delta \subset \h^*$  of simple roots.
The Casimir element $\Omega \in \g \otimes \g$ of $\g$ is the symmetric tensor 
associated to the Killing form of $\g$; the component of $\Omega$ in $\h \otimes \h$
is denoted by $\Omega_{0}$.

\begin{definition}
A Belavin-Drinfeld triple (BD-triple for short) is a triple
$(\Gamma_1, \Gamma_2, T)$ where $\Gamma_1,$ $\Gamma_2$ are subsets of 
$\Delta$ and
$T: \Gamma_1 \to \Gamma_2$ is a bijection that preserves the inner 
product
and satisfies the nilpotency condition:
for any $\alpha \in \Gamma_1$ there exists a positive integer $n$ for 
which
$T^n (\alpha)$ belongs to $\Gamma_2$ but not to $\Gamma_1$.
\end{definition}

Given  a BD-triple $(\Gamma_1, \Gamma_2, T)$, we denote by
$\widehat{\Gamma_i}$ the set of positive roots
lying in the subgroup generated by $\Gamma_i$, for $i = 1,2$.
There is an associated partial ordering on $\Phi^+$ given by
$\alpha \prec \beta$ if
$\alpha \in \widehat{\Gamma_1}$, 
$\beta \in \widehat{\Gamma_2}$, and $\beta = T^n(\alpha)$
for a positive integer $n$.

\medbreak
A \emph{continuous parameter} for the BD-triple 
$(\Gamma_1, \Gamma_2, T)$
is an element $\lambda \in \h^{\otimes 2}$ such that
\begin{align}
(T (\alpha) \otimes 1) \lambda + (1 \otimes \alpha) \lambda &= 0,
\quad \text{for all } \alpha \in \Gamma_1,
\\
\lambda + \lambda^{21} &= \Omega_0.
\end{align}

Let $\mathfrak{a}_1$, $\mathfrak{a}_2$ be the reductive subalgebras of $\g$ with Cartan 
subalgebras
generated by $h_{\alpha}$, $\alpha$ in $\Gamma_1$, resp. in $\Gamma_2$, 
and
with Dynkin diagrams $\Gamma_1$, respectively $\Gamma_2$. We extend $T$ to
a Lie algebra isomorphism $\widehat T: \mathfrak{a}_1\to \mathfrak{a}_2$.

\begin{theorem}  \cite{BD}.
Let $(\g, r)$ be  a factorizable  Lie bialgebra with underlying simple Lie algebra $\g$. 
Then there exist a Cartan subalgebra $\h$,
a set of simple roots $\Delta$, a BD-triple $(\Gamma_1, \Gamma_2, T)$,  a continuous parameter $\lambda$
and $t\in \ku - 0$ such that the $r$ is given by
\begin{equation}\label{BD}
r =t\Big(\lambda + \sum_{\alpha \in \Phi^+} x_{-\alpha} 
\otimes x_\alpha
+ \sum_{\alpha, \beta \in \Phi^+,  \alpha \prec \beta}
x_{-\alpha} \wedge x_\beta\Big),
\end{equation}
where $x_\beta \in \g_\beta$, $\beta \in \pm\Phi^{+}$, are root vectors  
normalized by
\begin{align}\label{norm1}
(x_\beta \vert x_{-\beta}) &= 1,& \text{for all }\beta &\in \Phi^+, &&
\\ \label{norm2}
\widehat T(x_\beta) &= x_{T(\beta)}, & \text{for all }\beta &\in 
\Gamma_1.
\end{align}
Reciprocally the matrix $r$ defined by \eqref{BD} satisfies the classical Yang-Baxter equation, hence defines 
a factorizable Lie bialgebra structure on $\g$.
\qed
\end{theorem}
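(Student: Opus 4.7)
The plan is to prove the two directions of the theorem separately, with the converse being essentially a direct verification and the forward direction being the substantive content. For the converse, I would expand $[r^{12},r^{13}]+[r^{12},r^{23}]+[r^{13},r^{23}]$ by splitting $r$ into its three pieces: the Cartan part $\lambda$, the standard tensor $\sum_{\alpha \in \Phi^+} x_{-\alpha} \otimes x_\alpha$, and the discrete correction $\sum_{\alpha \prec \beta} x_{-\alpha} \wedge x_\beta$. Then I would check that the weight components cancel: root-string identities handle the standard part, the condition $\lambda + \lambda^{21} = \Omega_0$ balances the Cartan cross terms, the relation $(T(\alpha) \otimes 1)\lambda + (1 \otimes \alpha)\lambda = 0$ eliminates the mixed Cartan/root contributions, and the nilpotency of $T$ ensures that the sum over $\alpha \prec \beta$ is finite and that chains of iterated brackets telescope to zero. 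Factorizability is immediate, since $r + r^{21} = t\,\Omega$ defines a non-degenerate form.

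For the forward direction, after rescaling one may assume $r + r^{21} = \Omega$. Fix a Cartan subalgebra $\h \subset \g$ and a Chevalley basis, and expand $r = \lambda + \sum_{\alpha,\beta \in \Phi} c_{\alpha,\beta}\, x_{-\alpha} \otimes x_\beta$ with $\lambda \in \h \otimes \h$. The symmetry condition forces $\lambda + \lambda^{21} = \Omega_0$ and pins the diagonal coefficients to $c_{\alpha,\alpha} = 1$ for $\alpha \in \Phi^+$. The heart of the argument is to substitute this ansatz into CYBE and decompose the result according to total weight in $\h \otimes \h \otimes \h$. Components of nonzero total $\h$-weight produce constraints of the form $c_{\alpha,\beta} \cdot (\text{linear in }\lambda) = 0$; combined with the root-string structure these force all but a sparse set of off-diagonal coefficients $c_{-\alpha,\beta}$ to vanish, and the support $S = \{(\alpha,\beta) : c_{-\alpha,\beta} \ne 0\} \subset \Phi^+ \times \Phi^+$ acquires the structure of the graph of a partial bijection $\alpha \mapsto \beta$.

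The main obstacle, and the combinatorial crux of the argument, is extracting a genuine BD-triple $(\Gamma_1,\Gamma_2,T)$ on \emph{simple} roots from this partial bijection on $\Phi^+$. The plan here is to use $\mathfrak{sl}_2$-triple arguments and the Jacobi-like identities produced by CYBE to show that the bijection respects brackets: if $(\alpha_1,\beta_1), (\alpha_2,\beta_2) \in S$ and $\alpha_1+\alpha_2 \in \Phi^+$, then $\beta_1+\beta_2 \in \Phi^+$ and $(\alpha_1+\alpha_2, \beta_1+\beta_2) \in S$. This forces the bijection to descend from subsets $\widehat{\Gamma}_1, \widehat{\Gamma}_2$ of positive roots to subsets $\Gamma_1, \Gamma_2$ of simple roots, and the inner-product identity $(\alpha_i,\alpha_j) = (T\alpha_i, T\alpha_j)$ is read off from the compatibility of the bracket with the Killing form. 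The nilpotency of $T$ is then forced by the finiteness of $\Phi^+$ together with the strict partial order induced by $\prec$: any infinite ascending chain $\alpha \prec T\alpha \prec T^2\alpha \prec \cdots$ would yield infinitely many distinct positive roots. Finally, projecting CYBE onto $\h \otimes \g \otimes \g$ and using the already-determined discrete data yields precisely the continuous-parameter conditions on $\lambda$, completing the classification.
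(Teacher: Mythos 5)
The paper cites this theorem from Belavin--Drinfeld without proof (note the \verb|\qed| placed inside the theorem statement, before the converse), so the only comparison is against the argument in \cite{BD} itself. Your sketch captures the overall shape of that argument, but two steps as written would fail. First, the ansatz $r = \lambda + \sum_{\alpha,\beta} c_{\alpha,\beta}\, x_{-\alpha}\otimes x_\beta$ with $\lambda\in\h\otimes\h$ silently omits the $\h\otimes\n$ and $\n\otimes\h$ blocks, and for an \emph{arbitrary} Cartan those blocks do not vanish: already for $\g=\mathfrak{sl}_2$, conjugating the standard $r$-matrix by $\Ad(\exp(\epsilon e))$ produces a factorizable CYBE solution whose $\h\otimes\g_\alpha$ component is $-\tfrac{\epsilon}{2}\,h\otimes e\neq 0$. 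Producing a Cartan for which the mixed blocks vanish is not a free normalization---it is the technical core of the classification, obtained by showing that the subalgebras $\Im(r_\pm)\subset\g$ cut out by the two half-maps of $r$ are opposite Borel subalgebras and taking $\h$ to be their intersection. Your plan takes the existence of such a Cartan for granted, so the weight decomposition that everything downstream rests on has not been justified.

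Second, the nilpotency argument is circular. You deduce nilpotency of $T$ from ``finiteness of $\Phi^+$ together with the strict partial order induced by $\prec$,'' but $\prec$ being a strict partial order (irreflexive, hence with no cycle $T^n\alpha=\alpha$) is, once finiteness is granted, \emph{equivalent} to what you want to prove. If $T^n\alpha=\alpha$, the chain $\alpha, T\alpha, T^2\alpha,\dots$ simply cycles inside a finite set, so finiteness of $\Phi^+$ gives no contradiction whatsoever. The absence of cycles has to be extracted directly from the CYBE constraints---in the Manin-triple formulation, from the requirement that the Lagrangian complement meet the diagonal trivially, which a $T$-cycle would violate---and your plan does not supply that input.
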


\begin{example}\label{exa:standard-poisson}
 We say that a BD-triple $(\Gamma_1, \Gamma_2, T)$ is \emph{empty} if 
$\Gamma_1 = \Gamma_2 = \emptyset$. 
In this case any $\lambda \in \h^{\otimes 2}$ is a continuous parameter; 
the choice $\lambda = \frac{1}{2}(\sum_i h_i \otimes h_i)$, for an orthonormal basis $h_i$ of $\h$, gives rise to the \emph{standard} Poisson structure.
\end{example}

\section{Symplectic cores and symplectic leaves}\label{subsec:symplectic-core}
Let $Z$ be an affine commutative Poisson algebra and $M \coloneqq \MaxSpec Z$.
As usual the point $x\in M$ corresponds to the ideal $\Mg_x$. The material below
is extracted from \cite{BG}.

\medbreak
The largest Poisson ideal  contained in an ideal $I$ of $Z$ is called the  
\emph{Poisson core} of $I$ and denoted $\Pg(I)$; it exists because the sum of
Poisson ideals is again a Poisson ideal. 
 If $I$ is prime, then so is $\Pg(I)$. 
  If $\Mg$ is maximal, then we say that $\Pg(\Mg)$ is Poisson primitive. 
Every prime Poisson ideal of $Z$ is an intersection of Poisson primitive ideals.

\begin{definition}
A \emph{symplectic core} is a class of the equivalence relation $\sim$   given by 
\begin{align*}
x \sim y    &\iff \Pg(\Mg_x) = \Pg(\Mg_y), & x,y& \in M.
\end{align*}
\end{definition}
 The equivalence class of $x\in M$ is  denoted by $\CC(x)$ and called the symplectic core of $x$.
Any symplectic core  is locally closed and smooth in its closure \cite[3.3]{BG}.

\medbreak
Assume for simplicity that $Z$ is regular, i.e., $M$ is smooth, see \cite[3.5]{BG} for the general case. 
Then $M$ becomes a  complex analytic Poisson manifold. Given $x\in M$, the \emph{symplectic leaf} $\Ls(x)$  
 is the maximal connected complex analytic submanifold of $M$ 
such that $x \in \Ls(x)$  and the restriction of the Poisson bracket to $\Ls(x)$
is nondegenerate at every point.
Concretely, the symplectic leaf $\Ls(x)$ is formed by the points which can be reached from $x$ 
 by a piecewise smooth curve, each segment of which is a trajectory of a hamiltonian vector field.
Symplectic leaves might be not algebraic  but they  determine the symplectic cores. 
Below the closure is relative to the Zariski topology. 

\begin{theorem}\cite[Th. 7.4]{Go} Let $\Ls$ be a symplectic leaf.
There is a unique symplectic core $\CC$ in $M$ with $\Ls \subset \CC \subseteq \overline{\Ls}$ and
  $\CC$ is the unique symplectic core dense in $\overline{\Ls}$.
In fact 
\begin{align}\label{eq:symplectic-core=leaf}
\CC = \overline{\Ls} \ \backslash \bigcup_{\substack {\Ks \text{ symplectic leaf} \\ \overline{\Ks} \subsetneq \overline{\Ls}}} \overline{\Ks}.
\end{align}
Each symplectic core $\CC$ in $M$  can be obtained as in \eqref{eq:symplectic-core=leaf}.
\end{theorem}

{\sc  Conflicts of interest and Data availability.}
On behalf of all authors, the corresponding author states that there is no conflict of interest. 
Data sharing not applicable to this article as no datasets were generated or analysed during the current study.

\end{document}